\newcommand{\MAYBE}[1]{\todo[color=orange!60]{#1}}  
\newcommand{\SELF}[1]{\todo[color=green!40]{#1}} 
\newcommand{\OMIT}[1]{\todo[color=gray!30]{#1}}  
\newcommand{\CITE}[1]{\todo[color=cyan!30]{#1}}  
\newcommand{\SELFL}[1]{\reversemarginpar\todo[color=green!50]{#1}} 
\newcommand{\OMITL}[1]{\reversemarginpar\todo[color=gray!40]{#1}}  
\newcommand{\CITEL}[1]{\reversemarginpar\todo[color=green!20]{#1}}  
\newcommand{\OMITR}[1]{\normalmarginpar\todo[color=gray!40]{#1}}  
\renewcommand{\arraystretch}{2}	
\newtheorem{theorem}{Theorem}[section]
\newtheorem{proposition}[theorem]{Proposition}
\newtheorem{corollary}[theorem]{Corollary}
\theoremstyle{definition}
\newtheorem{definition}[theorem]{Definition} 
\newtheorem{example}[theorem]{Example}
\newtheoremstyle{named}{}{}{\itshape}{}{\bfseries}{.}{.5em}{\thmnote{#3}}
\theoremstyle{named}
\def\N			{\mathds{N}}
\def\Z			{\mathds{Z}}
\def\R			{\mathds{R}}
\def\Id			{\mathds{1}}
\def\PP			{\mathds{P}}
\def\VV			{\mathcal{V}}
\def\WW			{\mathcal{W}}
\def\II			{\mathcal{I}}		
\def\MM			{\mathcal{M}}		
\def\AA			{\mathcal{A}}
\def\ii			{\mathbf{i}}				
\def\jj			{\mathbf{j}}				
\def\kk			{\mathbf{k}}				
\def\ll			{\mathbf{l}}				
\def\mm			{\mathbf{m}}				
\def\aa			{\mathbf{a}}				
\def\bb			{\mathbf{b}}				
\def\cc			{\mathbf{c}}				
\def\dd			{\mathbf{d}}				
\def\ee			{\mathbf{e}}				
\def\rr			{\mathbf{r}}				
\def\ss			{\mathbf{s}}
\def\xx			{\mathbf{x}}				
\def\yy			{\mathbf{y}}
\def\ie						{i.e.\ }
\def\resp					{resp.\ }
\newcommand\inner[1] 		{\langle #1 \rangle}		
\newcommand\pairs[2] 		{|#1>#2|}					
\newcommand\ord[1]			{\overline{#1}} 
\newcommand\scom[1] 		{\llbracket #1 \rrbracket}	
\newcommand\sacom[1] 		{\{\mskip-6mu\{ #1 \}\mskip-6mu\}}	
\newcommand\isp[1] 			{\lfloor #1 \rfloor}		
\newcommand\osp[1] 			{\lceil #1 \rceil}			
\def\lcontr					{\mathbin{\lrcorner}}		
\def\rcontr					{\mathbin{\llcorner}}		
\def\PP						{\mathcal{P}}				
\newcommand{\lH}[2][\star]	{\tensor[^{#1}]{{#2}}{}}	
\newcommand{\rH}[2][\star]	{{#2}^{#1}{}}				
\newcommand{\lHB}[1]		{\lH[\star_B]{#1}}			
\newcommand{\rHB}[1]		{\rH[\star_B]{#1}}			
\newcommand\hhat[2]			{#1\string^ {}^{#2}} 
\newcommand{\grade}[1]		{|#1|}						
\newcommand{\tgrade}[1]		{|#1|_{\text{top}}}			
\newcommand{\bgrade}[1]		{|#1|_{\text{bot}}}			
\newcommand{\igrade}[1]		{|#1|_{\text{in}}}			
\newcommand{\ograde}[1]		{|#1|_{\text{out}}}			
\newcommand\comp[2] 		{(#1)_{#2}}		
\def\pperp					{\simperp}					
\def\ext					{e}						
\DeclareMathOperator{\Span}{span}
\DeclareMathOperator{\Ann}{Ann}
\DeclareMathOperator{\End}{End}
\DeclareMathOperator{\Img}{Im}
\DeclareMathOperator{\codim}{codim}
\begin{document}

\title{Multivector Contractions Revisited, Part II}

\author{Andr\'e L. G. Mandolesi 
               \thanks{Instituto de Matemática e Estatística, Universidade Federal da Bahia, Av. Adhemar de Barros s/n, 40170-110, Salvador - BA, Brazil. E-mail: \texttt{andre.mandolesi@ufba.br}}}
               
\date{\today \SELF{v3.0 Separada}}

\maketitle

\abstract{
The theory of contractions of multivectors, and star duality, was reorganized in a previous article, and here we present some applications. 
First, we study inner and outer spaces associated to a general multivector $M$ via the equations $v \wedge M = 0$ and $v \lcontr M=0$.
They are then used to analyze special decompositions, factorizations and `carvings' of $M$,
to define generalized grades, 
and to obtain new simplicity criteria, including a reduced set of Plücker-like relations.
We also discuss how contractions are related to supersymmetry, and give formulas for supercommutators of multi-fermion creation and annihilation operators.

\vspace{.5em}
\noindent
{\bf Keywords:} Grassmann exterior algebra, Clifford geometric algebra, contraction, inner and outer space, simplicity, Plücker, supersymmetry.

\vspace{3pt}

\noindent
{\bf MSC:} 	15A75,	
			15A66,  
			81Q60	
}

\section{Introduction}

In \cite{Mandolesi_Contractions}, we reorganized and extended the theory of contractions or interior products of multivectors, and Hodge star duality. 
In this article, we discuss some applications, organizing and generalizing some known results, and obtaining new ones.
\Cref{sc:Spaces of Non-Simple Multivectors} studies inner and outer spaces \cite{Kozlov2000I,Rosen2019} associated to a general multivector $M$ via the solution sets of equations $v \wedge M = 0$ and $v \lcontr M =0$.
They are related to the outer and inner product null spaces of blades (simple multivectors) in Clifford Geometric Algebra \cite{BayroCorrochano2018},
	\CITE{Hitzer2012}
and provide information about the structure of $M$.
\Cref{sc:Special operations} uses them to analyze special ways in which $M$ can be written as a sum, exterior product or contraction of other elements.
\Cref{sc:Simplicity and Multivector Grades} defines generalized grades and gives simplicity criteria, conditions for a multivector to be a blade.
These include some little known ones from \cite{Eastwood2000}, and others which are new, like a reduced set of equations similar to the Plücker relations \cite{Gallier2020,Jacobson1996} used to describe Grassmannians \cite{Kozlov2000I}.
\Cref{sc:Supersymmetry} relates contractions to  supersymmetry \cite{Oziewicz1986,Shaw1983,Varadarajan2004}, and gives new formulas for supercommutators of multi-fermion creation and annihilation operators.

As before, $X$ is an $n$-dimensional Euclidean or Hermitian space, with inner or Hermitian product $\inner{\cdot,\cdot}$, and $\bigwedge X$ is its exterior algebra.
In general, we use $L,M,N$ for arbitrary multivectors, $F,G,H$ for homogeneous ones, and $A,B,C$ for simple ones (blades).
We use the results and notation of \cite{Mandolesi_Contractions} (see \Cref{tab:symbols}), and include some below for easy reference.

\begin{table}[]
	\scriptsize
	\centering
	\renewcommand{\arraystretch}{1}
	\begin{tabular}{ll}
		\toprule
		Symbol & Description 
		\\
		\cmidrule(lr){1-1} \cmidrule(lr){2-2} 
		%
		$\lcontr$, $\rcontr$ & Left and right contractions 
		\\
		$\lH{\! M}$, $\rH{M}$ & Left and right duals, $\lH{\! M} = \Omega \rcontr M$, $\rH{M} = M \lcontr \Omega$, for a unit $\Omega \in \bigwedge^n X$ 
		\\
		$[B]$ & $\Span\{v_1,\ldots,v_p\}$ for blade $B = v_1\wedge\cdots\wedge v_p \neq 0$; $[\lambda]=\{0\}$ for scalar $\lambda$ 
		\\
		$\grade{H}$ & Grade of a homogeneous multivector $H$  
		\\
		$\comp{M}{p}$ & Component of grade $p$ of $M$ 
		\\
		$\hat{M}$ & Grade involution, $\hat{M} = \sum_p (-1)^p \comp{M}{p}$  
		\\
		$\hhat{M}{k}$ & Composition of $k$ grade involutions 
		\\
		$\tilde{M}$ & Reversion, $\tilde{M} = \sum_p (-1)^\frac{p(p-1)}{2} \comp{M}{p}$  
		\\
		$P_V$, $P_B$, $\PP_\VV$ & Orthogonal projections $P_V:X \rightarrow V$, $P_B=P_{[B]}$, $\PP_\VV:\bigwedge X \rightarrow \VV$ 
		\\
		$\II^q_p$ & $\{ (i_1,\ldots,i_p)\in\N^p : 1\leq i_1 < \cdots<i_p\leq q \}$, and $\II^q_0 = \{\emptyset\}$ 
		\\
		$\MM^q_p$ & $\{ (i_1,\ldots,i_p)\in\N^p : 1\leq i_j \leq q, i_j\neq i_k \text{ if } j\neq k \}$, and $\MM^q_0 = \{\emptyset\}$ 
		\\
		$\II^q$, $\MM^q$ & $\II^q = \bigcup_{p=0}^q \II_p^q$ and $\MM^q = \bigcup_{p=0}^q \MM_p^q$
		\\
		$v_\rr$, $v_{i_1\cdots i_p}$ & $v_{i_1}\wedge\cdots\wedge v_{i_p}$ for $v_1,\ldots,v_q \in X$ and $\rr = (i_1,\ldots,i_p)$, and $v_\emptyset = 1$ 
		\\
		$|\rr|$ & $|\rr|=p$ for $\rr = (i_1,\ldots,i_p)$
		\\
		$\rr\ss$ & Unordered concatenation of $\rr,\ss\in \MM^q$ 
		\\
		$\ii \cup \jj$, $\ii \cap \jj$,$\ii \backslash \jj$,  $\ii \triangle \jj$ & Ordered union, intersection, difference, symmetric difference of $\ii,\jj\in\II^q$
		\\
		$\ii'$ & $(1,\ldots,q) \backslash \ii$ for $\ii\in\II^q$
		\\
		$\epsilon_{\rr}$ & Sign of the permutation that puts $\rr \in \MM^q$ in increasing order 
		\\
		$\ord{\rr}$ & Reordering of $\rr \in \MM^q$ so that $\ord{\rr}\in\II^q$ 
		\\
		$\pairs{\rr}{\ss}$ & Number of pairs $(i,j)$ with $i\in\rr$, $j\in\ss$, $i>j$ 
		\\
		$\pperp$ & Partial orthogonality, $U \pperp V \Leftrightarrow V^\perp \cap U \neq \{0\}$ 
		\\
		$\delta_{\mathbf{P}}$ & Propositional delta, 1 if $\mathbf{P}$ is true, 0 otherwise  
		\\
		$M\wedge \bigwedge V$ & $\{M\wedge N:N\in\bigwedge V\}$  
		\\
		\bottomrule
	\end{tabular}
	\caption{Some symbols defined in \cite{Mandolesi_Contractions}.}
	\label{tab:symbols}
\end{table}

\begin{proposition}\label{pr:Hodge wedge contr}
	$\rH{(M\wedge N)} = N\lcontr\rH{M}$ and $\rH{(M\rcontr N)} = N\wedge\rH{M}$, for $M,N\in\bigwedge X$.%
	\OMIT{p.\pageref{uso pr:Hodge wedge contr}}
\end{proposition}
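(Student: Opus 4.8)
The plan is to derive both identities from the defining formula $\rH{L} = L \lcontr \Omega$ together with the fundamental adjointness/associativity relations between $\wedge$ and $\lcontr$ established in \cite{Mandolesi_Contractions}. The key algebraic fact I would invoke is the ``mixed associativity'' rule $(M \wedge N) \lcontr \Omega = N \lcontr (M \lcontr \Omega)$ (contraction from the right by a product unfolds one factor at a time), which is the right-handed analogue of the standard $L \lcontr (M \wedge N)$ expansion. Granting that, the first identity is almost immediate:
\[
\rH{(M \wedge N)} = (M\wedge N)\lcontr\Omega = N \lcontr (M\lcontr\Omega) = N \lcontr \rH{M}.
\]

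For the second identity, $\rH{(M \rcontr N)} = N \wedge \rH{M}$, I would proceed by duality, exploiting that the right dual is (up to sign/normalization, and invertibly on each grade) a bijection on $\bigwedge X$, and that contraction is \emph{defined} via the dual of a wedge — or conversely, that $\wedge$ and $\rcontr$ are exchanged under starring. Concretely, one expects a relation of the form $\rH{(M \rcontr N)} = $ (something)$\,\wedge\, \rH{M}$ to fall out of applying $\rH{(-)}$ to the identity $M \rcontr N = \pm\,\rH[{}]{\bigl(N \wedge \lH{M}\bigr)}$ or an equivalent star-duality formula from \cite{Mandolesi_Contractions}, then simplifying the iterated duals using the involutivity properties of $\lH{(-)}$ and $\rH{(-)}$ recorded in the notation table. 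Alternatively, and perhaps cleanest, I would verify it on homogeneous components and, within those, on a basis of blades $e_\rr$ built from an orthonormal basis: expand $\Omega = e_{1\cdots n}$, compute $e_\ss \rcontr e_\rr$ and its dual directly from the combinatorial sign conventions ($\epsilon$, $\pairs{\cdot}{\cdot}$, complementation $\ii \mapsto \ii'$) in the table, and match with $e_\rr \wedge (e_\ss \lcontr \Omega)$.

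By bilinearity of all operations involved, it suffices to prove each identity for homogeneous $M, N$, and then — if taking the basis route — for basis blades; so I would reduce to that case at the outset and carry the general statement by linear extension.

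The main obstacle I anticipate is bookkeeping of signs: getting the right-contraction expansion rule and the behavior of the right dual under products stated with the correct sign conventions from \cite{Mandolesi_Contractions}, since left/right contractions and left/right duals differ precisely by grade-dependent signs (the $\hat{\ }$ and $\tilde{\ }$ operations), and a wrong convention would flip a sign in one of the two identities. The conceptual content is light; the care needed is entirely in matching the normalization of $\Omega$, the placement of the argument on the left versus right of $\lcontr$, and the interplay of the two dualities.
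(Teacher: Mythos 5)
This proposition is not proved in the paper at all: it is one of the results imported verbatim from Part I \cite{Mandolesi_Contractions} ``for easy reference'', so there is no in-paper argument to compare with, and your plan has to be judged on its own terms; so judged, it is essentially correct. For the first identity, the rule you call mixed associativity, $(M\wedge N)\lcontr L = N\lcontr(M\lcontr L)$, is exactly the composition law $\iota_M\iota_N=\iota_{N\wedge M}$ that this paper itself records in \Cref{sc:Supersymmetry} (it is the adjoint of $\ext_M\ext_N=\ext_{M\wedge N}$), and setting $L=\Omega$ gives $\rH{(M\wedge N)}=N\lcontr\rH{M}$ at once, as you say. For the second identity, your first route is close to circular: the formula you propose to star is, once corrected to $M\rcontr N=\lH{(N\wedge\rH{M})}$ and combined with the fact that $\lH{(\cdot)}$ and $\rH{(\cdot)}$ are mutually inverse, literally the statement to be proved, so it only helps if it is independently available from Part I. Your fallback of checking on basis blades is the honest way to close this, and it does go through with no stray sign: for $\Omega=v_{1\cdots n}$, $M=v_\ss$, $N=v_\rr$, both sides vanish unless $\rr\subset\ss$, and in that case \Cref{pr:epsilon} shows both sides equal $(-1)^{\pairs{\ss\backslash\rr}{\ss'}}\,v_{(\ss\backslash\rr)'}$. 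Two minor remarks: in the Hermitian case the operations are sesquilinear rather than bilinear (conjugate-linear in the dualized or contracting slot), but both sides of each identity carry the same conjugation pattern, so your reduction to homogeneous and basis elements remains legitimate; and within this paper the second identity can also be read off as the special case $B=\Omega$, $L\in\bigwedge X=\bigwedge[\Omega]$ of \Cref{pr:triple subblade}\ref{it:MAB}.
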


\begin{proposition}\label{pr:contr homog}
	$v\lcontr(M\wedge N) = (v\lcontr M)\wedge N + \hat{M}\wedge(v\lcontr N)$, and $v \wedge (M \lcontr N) = (M \rcontr v) \lcontr N + \hat{M} \lcontr (v \wedge N)$, for $v \in X$ and $M,N\in\bigwedge X$.
	\OMIT{p.\pageref{uso pr:contr homog}, p.\pageref{uso2 pr:contr homog}}
\end{proposition}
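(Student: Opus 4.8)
Both displayed formulas are graded Leibniz‑type rules, and I would prove the first directly — it is the familiar fact that $v\lcontr(\cdot)$ is an antiderivation of $\bigwedge X$ — and then obtain the second from it by duality, using \Cref{pr:Hodge wedge contr}. For the first identity, by linearity in $M$ it suffices to treat $M$ a product $a_1\wedge\cdots\wedge a_k$ of vectors, and I would induct on $k$. The case $k=0$ is immediate, since $v\lcontr\lambda=0$ and $\hat\lambda=\lambda$. The case $k=1$, with $M=a\in X$, is the statement $v\lcontr(a\wedge N)+a\wedge(v\lcontr N)=(v\lcontr a)\,N$, i.e.\ the anticommutation relation between contraction and exterior multiplication by a vector; this follows from the adjointness $\inner{v\lcontr L,L'}=\inner{L,v\wedge L'}$ after reducing $N$ to a basis blade, and is in any case recorded in \cite{Mandolesi_Contractions}. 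For the inductive step I would write $M=a\wedge M'$ with $a\in X$ and $\grade{M'}=k-1$, apply the case $k=1$ to $v\lcontr\bigl(a\wedge(M'\wedge N)\bigr)$, then the inductive hypothesis to $v\lcontr(M'\wedge N)$, and finally the case $k=1$ once more to $v\lcontr(a\wedge M')$ inside $(v\lcontr M)\wedge N$; collecting terms everything cancels, the grade involution appearing exactly because peeling off $a$ replaces $\hat M$ by $-a\wedge\widehat{M'}$.

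\textbf{Second identity.} Rather than redo the induction, I would deduce this from the first. Substituting $N=\rH{K}$ into the first identity and rewriting each term by means of \Cref{pr:Hodge wedge contr}: from $\rH{(P\rcontr Q)}=Q\wedge\rH{P}$ one gets $M\wedge\rH{K}=\rH{(K\rcontr M)}$, $(v\lcontr M)\wedge\rH{K}=\rH{\bigl(K\rcontr(v\lcontr M)\bigr)}$ and $\hat M\wedge\rH{(K\wedge v)}=\rH{\bigl((K\wedge v)\rcontr\hat M\bigr)}$, and from $\rH{(P\wedge Q)}=Q\lcontr\rH{P}$ one gets $v\lcontr\rH{K}=\rH{(K\wedge v)}$ and $v\lcontr\rH{(K\rcontr M)}=\rH{\bigl((K\rcontr M)\wedge v\bigr)}$. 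Since $\rH{(\cdot)}$ is invertible, being a version of the Hodge star, the first identity becomes
\begin{equation*}
	(K\rcontr M)\wedge v=K\rcontr(v\lcontr M)+(K\wedge v)\rcontr\hat M .
\end{equation*}
Now apply reversion, using $\widetilde{P\wedge Q}=\tilde Q\wedge\tilde P$, $\widetilde{P\lcontr Q}=\tilde Q\rcontr\tilde P$, $\widetilde{P\rcontr Q}=\tilde Q\lcontr\tilde P$, $\widetilde{\hat M}=\widehat{\tilde M}$ and $\tilde v=v$; renaming $\tilde M\mapsto M$ and $\tilde K\mapsto N$ yields exactly $v\wedge(M\lcontr N)=(M\rcontr v)\lcontr N+\hat M\lcontr(v\wedge N)$.

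\textbf{Main obstacle and alternative.} The content is entirely in the sign bookkeeping: in the first identity one must check that it is precisely the grade involution $\hat M$ (the factor $(-1)^{\grade M}$ on homogeneous parts), not a stray global sign, that makes the induction close up; in the second, one must invoke the correct half of \Cref{pr:Hodge wedge contr} at each step and keep track of how reversion interacts with both contractions and with the grade involution. In the Hermitian case one must also remember that $\inner{\cdot,\cdot}$ is conjugate‑linear in one slot and $v\lcontr(\cdot)$ conjugate‑linear in $v$, so the base‑case relation above should be read accordingly — but $v$ is held fixed throughout, so the identities themselves are unaffected. A self‑contained alternative for the second identity is the same induction on $\grade M$ as for the first, with the same base case and with the right‑handed antiderivation rule $(P\wedge Q)\rcontr v=P\wedge(Q\rcontr v)+(P\rcontr v)\wedge\hat Q$ — itself the reversion of the first identity — used in the inductive step; the bookkeeping there is of the same character.
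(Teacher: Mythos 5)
Your proof is sound, but note that this paper does not actually contain a proof of this proposition to compare against: it is one of the results restated verbatim from Part~I \cite{Mandolesi_Contractions} ``for easy reference'', so the only in-paper support is the citation. Judged on its own merits, your argument is the standard one and is correct. The induction on the number of vector factors of $M$ closes exactly as you describe (peeling off $a$ replaces $\hat M$ by $-a\wedge\widehat{M'}$, which is what absorbs the sign), and your derivation of the second identity checks out: substituting $N=\rH{K}$, rewriting every term via \Cref{pr:Hodge wedge contr}, cancelling the injective star map, and then applying reversion gives $(K\rcontr M)\wedge v=K\rcontr(v\lcontr M)+(K\wedge v)\rcontr\hat M$ and, after reverting and renaming, $v\wedge(M\lcontr N)=(M\rcontr v)\lcontr N+\hat M\lcontr(v\wedge N)$; the sign bookkeeping is right, as is your observation that the reversed form of the first identity is the right-handed antiderivation rule. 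Two presentation caveats: the reversion identities you invoke ($\widetilde{P\lcontr Q}=\tilde Q\rcontr\tilde P$ and its mirrors) and the $k=1$ anticommutation relation are themselves Part~I facts not restated in this paper, so the duality half of your proof leans on imported machinery roughly as much as a direct citation of the adjoint Leibniz rule would; and in the Hermitian case $\rH{(\cdot)}$ and the contraction are only conjugate-linear in the relevant slots, which is harmless here since you use only injectivity of the star and hold $v$ fixed, but is worth saying explicitly. These are not gaps.
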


\begin{proposition}\label{pr: v contr M wedge N = 0}
	For $v\in X$ and nonzero $M\in\bigwedge V$ and $N\in\bigwedge W$ with $V\cap W = \{0\}$, $v\lcontr(M\wedge N) = 0  \Leftrightarrow v\lcontr(MN) = 0 \Leftrightarrow v\lcontr M = v\lcontr N = 0$.
		\OMIT{p.\pageref{uso pr: v contr M wedge N = 0}}
\end{proposition}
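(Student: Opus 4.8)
The plan is to decouple the ``$\bigwedge V$-side'' from the ``$\bigwedge W$-side'' by passing to the tensor product $\bigwedge V\otimes\bigwedge W$, and to reduce all three conditions to the single identity
\[
	(v\lcontr M)\otimes N + \hat{M}\otimes(v\lcontr N) = 0,
\]
which I will call $(\star)$. The basic tool is that, since $V\cap W=\{0\}$, a basis of $V$ together with a basis of $W$ is a basis of $V\oplus W$; consequently both the exterior multiplication map $\mu_\wedge\colon\bigwedge V\otimes\bigwedge W\to\bigwedge X$, $P\otimes Q\mapsto P\wedge Q$, and the geometric multiplication map $\mu_\times\colon\bigwedge V\otimes\bigwedge W\to\bigwedge X$, $P\otimes Q\mapsto PQ$, are injective: the induced basis of $\bigwedge V\otimes\bigwedge W$ is sent, in each case, to part of a basis of $\bigwedge X$ (for $\mu_\times$ one uses the standard filtration of the geometric algebra by exterior degree, under which an ordered geometric monomial equals the corresponding wedge monomial modulo lower-degree terms, so the relevant change of basis is unitriangular). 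Note also that $v\lcontr M$ and $\hat{M}$ lie in $\bigwedge V$, while $v\lcontr N$ and $N$ lie in $\bigwedge W$, so all four elements appearing in $(\star)$ are in the relevant factors.

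Next I would invoke the two graded Leibniz rules for contraction by a vector. For the exterior product this is exactly \Cref{pr:contr homog}, which gives $v\lcontr(M\wedge N)=(v\lcontr M)\wedge N+\hat{M}\wedge(v\lcontr N)=\mu_\wedge\bigl((v\lcontr M)\otimes N+\hat{M}\otimes(v\lcontr N)\bigr)$. For the geometric product, from the identity relating $v\lcontr L$ to the geometric product (e.g.\ $v\lcontr L=\tfrac12(vL-\hat{L}v)$, from \cite{Mandolesi_Contractions}) together with $\widehat{MN}=\hat{M}\hat{N}$ and associativity, a one-line computation yields $v\lcontr(MN)=(v\lcontr M)N+\hat{M}(v\lcontr N)=\mu_\times\bigl((v\lcontr M)\otimes N+\hat{M}\otimes(v\lcontr N)\bigr)$. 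Since $\mu_\wedge$ and $\mu_\times$ are injective, each of $v\lcontr(M\wedge N)=0$ and $v\lcontr(MN)=0$ is therefore equivalent to $(\star)$.

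It remains to show that $(\star)$ holds if and only if $v\lcontr M=v\lcontr N=0$; this is the only part with real content. The ``if'' direction is immediate. For ``only if'', assume $(\star)$. If $v\lcontr M=0$, then $\hat{M}\otimes(v\lcontr N)=0$, and $M\ne 0$ forces $\hat{M}\ne 0$, so $v\lcontr N=0$; symmetrically, $v\lcontr N=0$ together with $N\ne 0$ forces $v\lcontr M=0$. So it suffices to rule out the case in which $v\lcontr M\ne 0$ and $v\lcontr N\ne 0$. In that case $(\star)$ reads $(v\lcontr M)\otimes N=-\hat{M}\otimes(v\lcontr N)$, an equality of nonzero rank-one tensors, which forces $v\lcontr M=\lambda\hat{M}$ for some scalar $\lambda\ne 0$. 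But if $k$ is the largest grade with $\comp{M}{k}\ne 0$ (necessarily $k\ge 1$, since otherwise $M$ is a scalar and $v\lcontr M=0$), then $\hat{M}$ has nonzero grade-$k$ component $(-1)^{k}\comp{M}{k}$, whereas $v\lcontr M$ has components only in grades $\le k-1$. This contradicts $v\lcontr M=\lambda\hat{M}$, and the proof is complete.

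The only ingredients that are not pure bookkeeping are the geometric-product Leibniz rule and the injectivity of $\mu_\times$ into the geometric algebra; I expect these to be the main (and rather mild) obstacle, but both are routine consequences of the filtration of $\bigwedge X$ by exterior degree under the geometric product. Everything else follows once one is working in the right arena, $\bigwedge V\otimes\bigwedge W$, where the $V$- and $W$-data are separated and the ``proportional'' case is killed by a top-grade comparison.
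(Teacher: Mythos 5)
Your argument is correct, but note that the paper itself states this proposition without proof --- it is imported from Part~I \cite{Mandolesi_Contractions} as background --- so there is no in-paper argument to compare against; what follows is an assessment of your route on its own terms. The reduction to the single tensor identity in $\bigwedge V\otimes\bigwedge W$ is sound: the twisted Leibniz rules $v\lcontr(M\wedge N)=(v\lcontr M)\wedge N+\hat{M}\wedge(v\lcontr N)$ (\Cref{pr:contr homog}) and $v\lcontr(MN)=(v\lcontr M)N+\hat{M}(v\lcontr N)$ hold, and since $v\lcontr M,\hat M\in\bigwedge V$ and $v\lcontr N,N\in\bigwedge W$, injectivity of the two multiplication maps does make each vanishing condition equivalent to your $(\star)$. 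Your justification of injectivity is the right one and, importantly, works with only $V\cap W=\{0\}$ (no orthogonality): for $\mu_\wedge$ it is the isomorphism $\bigwedge V\otimes\bigwedge W\cong\bigwedge(V\oplus W)$, and for $\mu_\times$ the unitriangular comparison of $u_\ii w_\jj$ with $u_\ii\wedge w_\jj$ modulo lower exterior degree is exactly what is needed. The only step with real content --- excluding cancellation between the two Leibniz terms --- is also handled correctly: equality of the nonzero simple tensors forces $v\lcontr M=\lambda\hat M$ with $\lambda\neq0$, which is impossible because $v\lcontr M$ has no component in the top grade $\tgrade{M}\geq 1$ of $\hat M$ (and $\tgrade{M}=0$ would force $v\lcontr M=0$). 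One small caveat: the auxiliary formula $v\lcontr L=\tfrac12(vL-\hat{L}v)$ is the real (bilinear) version and needs adjustment in the Hermitian setting, where contraction is conjugate-linear in $v$; but the twisted Leibniz rule for the geometric product, which is all you actually use, holds in both cases, so this does not affect the proof. Compared with the most standard argument --- expanding $M$ and $N$ in bases adapted to $V$ and $W$ and comparing coefficients --- your tensor-product formulation packages the same bookkeeping more cleanly, at the mild cost of having to verify injectivity of $\mu_\times$ explicitly.
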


\begin{proposition}\label{pr:wedge contr ker image}
	For $0 \neq v \in X$ and $M \in \bigwedge X$:
		\OMIT{p.\pageref{uso pr:wedge contr ker image}}
	\begin{enumerate}[i)]
		\item $v\wedge M = 0 \Leftrightarrow M = v\wedge N$ for $N\in\bigwedge X$. In particular, $N = \frac{v \lcontr M}{\|v\|^2}$.\label{it:wedge ker image}
		\item $v\lcontr M = 0 \Leftrightarrow M = v\lcontr N$ for $N\in\bigwedge X$. In particular, $N = \frac{v\wedge M}{\|v\|^2}$.\label{it:contr ker image}
	\end{enumerate}
\end{proposition}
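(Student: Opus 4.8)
The plan is to reduce both equivalences to the single ``contraction identity''
\begin{equation}\label{eq:contr-id-plan}
  v\lcontr(v\wedge M) + v\wedge(v\lcontr M) = \|v\|^2\, M ,
\end{equation}
valid for all $v\in X$ and $M\in\bigwedge X$. I would first obtain \eqref{eq:contr-id-plan} from the first formula of \Cref{pr:contr homog} by taking the homogeneous multivector there to be the vector $v$ itself (and relabelling the remaining argument): since $v\lcontr v = \langle v,v\rangle = \|v\|^2$ and $\hat v = -v$, that formula becomes $v\lcontr(v\wedge M) = (v\lcontr v)\wedge M + \hat v\wedge(v\lcontr M) = \|v\|^2 M - v\wedge(v\lcontr M)$, which is \eqref{eq:contr-id-plan}. (In the Hermitian setting one checks this is insensitive to which slot carries the conjugation, as $\|v\|^2\in\R$.)

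For part (\ref{it:wedge ker image}), the ``$\Leftarrow$'' direction is immediate: $M = v\wedge N$ gives $v\wedge M = v\wedge v\wedge N = 0$. For ``$\Rightarrow$'', if $v\wedge M = 0$ then the first term of \eqref{eq:contr-id-plan} vanishes, leaving $v\wedge(v\lcontr M) = \|v\|^2 M$; dividing by $\|v\|^2\neq 0$ exhibits $M = v\wedge N$ with $N = \frac{v\lcontr M}{\|v\|^2}$. Part (\ref{it:contr ker image}) is handled symmetrically: for ``$\Leftarrow$'', $M = v\lcontr N$ forces $v\lcontr M = v\lcontr(v\lcontr N) = 0$; for ``$\Rightarrow$'', if $v\lcontr M = 0$ then the second term of \eqref{eq:contr-id-plan} vanishes, giving $v\lcontr(v\wedge M) = \|v\|^2 M$, so $M = v\lcontr N$ with $N = \frac{v\wedge M}{\|v\|^2}$.

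The one point that is not purely formal is the identity $v\lcontr(v\lcontr N) = 0$ used in the ``$\Leftarrow$'' half of (\ref{it:contr ker image}) --- i.e.\ that contraction by a fixed vector is an antiderivation squaring to zero. I would either invoke this basic property from \cite{Mandolesi_Contractions}, or prove it on the spot by extending $v/\|v\|$ to an orthonormal basis of $X$ and checking that each double contraction of a basis blade vanishes. An alternative that sidesteps it is to deduce (\ref{it:contr ker image}) from (\ref{it:wedge ker image}) via the Hodge-duality formulas of \Cref{pr:Hodge wedge contr}: writing $M = \rH{L}$ and using $v\lcontr\rH{L} = \rH{(L\wedge v)}$, the hypothesis $v\lcontr M = 0$ becomes $v\wedge L = 0$, to which (\ref{it:wedge ker image}) applies; this route, however, needs an extra step to recover the explicit form $N = \frac{v\wedge M}{\|v\|^2}$ and to identify the image of $v\lcontr(\cdot)$, so I would prefer the direct argument above. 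Note that homogeneity of $M$ is never used.
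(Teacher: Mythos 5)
Your proof is correct: the anticommutation identity $v\lcontr(v\wedge M) + v\wedge(v\lcontr M) = \|v\|^2 M$ (which is exactly the relation $\scom{\iota_v,\ext_v}=\|v\|^2\Id$ recorded after \Cref{pr:exterior and interior}) does yield both directions, and your handling of $v\lcontr(v\lcontr N)=0$ via $\iota_M\iota_N=\iota_{N\wedge M}$ is the right way to close the one nontrivial step. Note that this proposition is only quoted here from Part I \cite{Mandolesi_Contractions}, so the present paper contains no proof to compare against; your argument is the standard one and is essentially what that reference does.
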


\begin{proposition}\label{pr:M=BBM}
	Let $B \neq 0$ be a blade, and $M\in\bigwedge X$.
		\OMIT{p.\pageref{uso pr:M=BBM}, p.\pageref{uso2 pr:M=BBM}}
	\begin{enumerate}[i)]
		\item $M = N \lcontr B$ for $N \in \bigwedge X \Leftrightarrow M = L \lcontr B$ for $L = \frac{B\rcontr M}{\|B\|^2} \in \bigwedge([B])$. 
		\item $M = B \wedge N$ for $N \in \bigwedge X \Leftrightarrow M = B \wedge L$ for $L = \frac{B \lcontr M}{\|B\|^2} \in \bigwedge([B]^\perp)$.
	\end{enumerate}
\end{proposition}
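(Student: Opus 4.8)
\emph{Proof proposal.} In both items the \emph{only if} direction carries all the content: the \emph{if} direction is trivial, since $N=L$ already witnesses the weaker existential statement. So one must show two things: that a solution of the given equation may be replaced by one whose cofactor lies in $\bigwedge([B])$, respectively $\bigwedge([B]^\perp)$; and that this cofactor is then forced to equal the explicit multivector $\frac{B\rcontr M}{\|B\|^2}$, respectively $\frac{B\lcontr M}{\|B\|^2}$. I would set $p=\grade{B}$, dispatch the case $p=0$ (where $[B]=\{0\}$) directly, and for $p\ge 1$ fix an orthonormal basis $e_1,\dots,e_n$ of $X$ with $[B]=\Span\{e_1,\dots,e_p\}$, so that $B=\beta\,e_1\wedge\cdots\wedge e_p$ for some scalar $\beta\neq 0$ and $\|B\|^2=|\beta|^2$. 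Everything then reduces to bookkeeping with the splitting $\bigwedge X\cong\bigwedge([B])\otimes\bigwedge([B]^\perp)$, under which each $N\in\bigwedge X$ is uniquely $N=\sum_{\ii\in\II^p}e_\ii\wedge N_\ii$ with $N_\ii\in\bigwedge([B]^\perp)$.

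For item (ii): since $B\wedge e_\ii=0$ for $\ii\neq\emptyset$, wedging $N$ with $B$ annihilates every summand but the one with $\ii=\emptyset$, so $M=B\wedge N=B\wedge L$ with $L:=N_\emptyset\in\bigwedge([B]^\perp)$; this settles the replacement. To identify $L$, I would first note that $B\lcontr M\in\bigwedge([B]^\perp)$ for every $M$ --- by \Cref{pr:contr homog}, contracting by $B$ one vector at a time kills every basis blade not divisible by $e_1\wedge\cdots\wedge e_p$ and sends $e_1\wedge\cdots\wedge e_p\wedge e_\jj$ to a multiple of $e_\jj$ --- and then that $B\lcontr(B\wedge L)=\|B\|^2L$ for $L\in\bigwedge([B]^\perp)$; the cleanest coordinate-free way to see the latter is via the adjunction between $\wedge$ and $\lcontr$ together with $\inner{B\wedge L,B\wedge L'}=\|B\|^2\inner{L,L'}$ for $L,L'\in\bigwedge([B]^\perp)$. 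Combining gives $\frac{B\lcontr M}{\|B\|^2}=\frac{B\lcontr(B\wedge L)}{\|B\|^2}=L$, as claimed. An alternative proof of (ii), leaning directly on \Cref{pr:wedge contr ker image}, proceeds by induction on $p$: write $B=v\wedge B'$ with $v\perp[B']$, peel off $v$ by that proposition to get $M=v\wedge L_1$ with $L_1\in\bigwedge(v^\perp)$, observe that the $v$-part of $N$ drops out so that in fact $L_1=B'\wedge N_0$ with $N_0\in\bigwedge(v^\perp)$, apply the inductive hypothesis for $B'$ \emph{inside} the space $v^\perp$, and recombine, using the associativity of contractions and $\|B\|^2=\|v\|^2\|B'\|^2$ to identify the cofactor.

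Item (i) is the mirror image: with $N=\sum_{\ii}e_\ii\wedge N_\ii$ as above, only the scalar part of each $N_\ii$ contributes to $N\lcontr B$, so $M=N\lcontr B=L\lcontr B$ with $L\in\bigwedge([B])$; moreover $B\rcontr M\in\bigwedge([B])$ for every $M$, and $B\rcontr(L\lcontr B)=\|B\|^2L$ for $L\in\bigwedge([B])$, whence $\frac{B\rcontr M}{\|B\|^2}=L$. Alternatively, (i) can be deduced from (ii) by a Hodge dual: $\rH{B}$ is a blade with $[\rH{B}]=[B]^\perp$, and by \Cref{pr:Hodge wedge contr} the map $\rH{(\,\cdot\,)}$ intertwines $B\wedge(\,\cdot\,)$ with $(\,\cdot\,)\lcontr\rH{B}$, so an instance of $M=N\lcontr B$ dualizes to an instance of item (ii) for the blade $\rH{B}$.

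The only real obstacle is organizational rather than conceptual: one has to set up the splitting $\bigwedge X\cong\bigwedge([B])\otimes\bigwedge([B]^\perp)$ and the two identities $B\lcontr(B\wedge L)=\|B\|^2L$ and $B\rcontr(L\lcontr B)=\|B\|^2L$ carefully, keeping them consistent with the sign and conjugation conventions of \cite{Mandolesi_Contractions}, and --- in the inductive variant --- be careful to invoke the inductive hypothesis \emph{within} $v^\perp$ rather than in $X$, and to track the scalar factors $\|v\|^2$ and $|\beta|^2$.
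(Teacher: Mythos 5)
Your argument is correct. Note that this proposition is one of the results quoted ``for easy reference'' from Part I (\cite{Mandolesi_Contractions}); the present paper contains no proof of it, so there is nothing to compare against line by line. On its own merits the proposal works: the nontrivial content is exactly the two identities $B\lcontr(B\wedge L)=\|B\|^2 L$ for $L\in\bigwedge([B]^\perp)$ and $B\rcontr(L\lcontr B)=\|B\|^2 L$ for $L\in\bigwedge([B])$, together with $B\lcontr M\in\bigwedge([B]^\perp)$ and $B\rcontr M\in\bigwedge([B])$ for all $M$, and your derivations of these (via the splitting $\bigwedge X\cong\bigwedge([B])\otimes\bigwedge([B]^\perp)$ and the adjunction) are sound; for a unit blade they are precisely the projection formulas of \Cref{pr:triple}, so the argument could even be compressed by citing that toolkit. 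Your dualization of item (i) to item (ii) via \Cref{pr:Hodge wedge contr} is also legitimate, provided one inverts the star on $M$ and tracks the resulting scalar when identifying $L$, and your caveats about the Hermitian conjugation conventions and about running the inductive variant inside $v^\perp$ are the right ones.
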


\begin{proposition}\label{pr:triple subblade}
	Let $A$ and $B$ be blades, and $L,M \in\bigwedge X$.\OMIT{p.\pageref{uso pr:triple subblade}}
	\begin{enumerate}[i)]
		\item $(A\rcontr M)\lcontr B = (P_A M)\wedge(A\lcontr B)$, if $[A] \subset [B]$. \label{it:AMB}
		\item $(M\rcontr L)\lcontr B = L\wedge(M\lcontr B)$, if $L \in \bigwedge [B]$. \label{it:MAB}
	\end{enumerate}
\end{proposition}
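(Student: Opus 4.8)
The plan is to prove part~\ref{it:MAB} first and deduce part~\ref{it:AMB} from it; throughout one may assume $A$ and $B$ are nonzero blades, the remaining cases being trivial. For part~\ref{it:MAB} I would induct on the grade $k$ of $L$. By linearity it is enough to take $L$ homogeneous, and then, using a fixed orthonormal basis of $[B]$, to take $L=v\wedge L'$ with $v\in[B]$ and $L'\in\bigwedge^{k-1}[B]$ (so $L'=1$ when $k=1$); the case $k=0$ is immediate. Two identities carry the argument. First, both sides of the uncurrying rule
\[
M\rcontr(v\wedge L')=(M\rcontr L')\rcontr v
\]
have Hodge star $(v\wedge L')\wedge\rH{M}$, by the relation $\rH{(M\rcontr N)}=N\wedge\rH{M}$ of \Cref{pr:Hodge wedge contr} applied three times, so the rule holds since the star is invertible. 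Second, as $v\in[B]$ and $B$ is a nonzero blade we have $v\wedge B=0$, so the second formula of \Cref{pr:contr homog}, with $M\rcontr L'$ in place of $M$ and $B$ in place of $N$, collapses to $\bigl((M\rcontr L')\rcontr v\bigr)\lcontr B=v\wedge\bigl((M\rcontr L')\lcontr B\bigr)$. Feeding in the induction hypothesis $(M\rcontr L')\lcontr B=L'\wedge(M\lcontr B)$ (trivial for $k=1$, as $M\rcontr 1=M$),
\[
(M\rcontr L)\lcontr B=\bigl((M\rcontr L')\rcontr v\bigr)\lcontr B=v\wedge\bigl((M\rcontr L')\lcontr B\bigr)=v\wedge L'\wedge(M\lcontr B)=L\wedge(M\lcontr B),
\]
closing the induction.

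For part~\ref{it:AMB} the point is that contracting a multivector into the blade $A$ depends only on its orthogonal projection onto $[A]$, \ie $A\rcontr M=A\rcontr(P_A M)$. This is standard and also follows from \Cref{pr:Hodge wedge contr}: $\rH{(A\rcontr M)}=M\wedge\rH{A}$, and exterior multiplication by the blade $\rH{A}$, whose span is $[A]^\perp$, annihilates every basis blade (in a basis adapted to $[A]\oplus[A]^\perp$) sharing a vector with $\rH{A}$, hence depends only on the component of $M$ in $\bigwedge[A]$; inverting the star gives the claim. Since $[A]\subset[B]$ we get $P_A M\in\bigwedge[A]\subset\bigwedge[B]$, so part~\ref{it:MAB} applies with $A$ in the role of $M$ and $P_A M$ in the role of $L$, giving $(A\rcontr P_A M)\lcontr B=(P_A M)\wedge(A\lcontr B)$; together with $A\rcontr M=A\rcontr(P_A M)$ this is precisely part~\ref{it:AMB}.

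The main obstacle is the sign bookkeeping in part~\ref{it:MAB}: one must use the correctly oriented uncurrying rule --- it is the form $(M\rcontr L')\rcontr v$, not $(M\rcontr v)\rcontr L'$, for which the reordering and grade-involution signs cancel, which is why it pays to extract it from \Cref{pr:Hodge wedge contr} rather than guess --- and one must check at each step that the $\hat{M}\lcontr(v\wedge B)$ term in \Cref{pr:contr homog} really vanishes, which happens precisely because every vector peeled off $L$ lies in $[B]$. A less elegant but routine alternative for both parts is to fix an orthonormal basis adapted to the flag $[A]\subset[B]\subset X$ and compare the two sides on basis blades $M=e_{\rr}$.
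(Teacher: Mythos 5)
Your proof is correct. Note that this paper does not actually prove \Cref{pr:triple subblade}: it is recalled from Part I (\cite{Mandolesi_Contractions}) without proof, so there is no internal argument to compare against --- but your write-up stands as a valid self-contained derivation from the two recalled identities you invoke. The key steps all check out: the uncurrying rule $M\rcontr(v\wedge L')=(M\rcontr L')\rcontr v$ does follow from $\rH{(M\rcontr N)}=N\wedge\rH{M}$ and invertibility of the star (and you are right that the other ordering would introduce a parity sign); the $\hat{M}\lcontr(v\wedge B)$ term in \Cref{pr:contr homog} vanishes exactly because $v\in[B]$ and $B$ is a nonzero blade; and the reduction of (i) to (ii) rests on $A\rcontr M=A\rcontr(P_A M)$, which is legitimate once one observes that the outermorphism extension of the orthogonal projection $P_{[A]}$ coincides with the orthogonal projection of $\bigwedge X$ onto $\bigwedge[A]$, so that $M\wedge\rH{A}=(P_AM)\wedge\rH{A}$ in an adapted basis. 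Two small points worth making explicit: the reduction of (ii) to basis blades $L=v\wedge L'$ requires both sides to be linear (not conjugate-linear) in $L$, which holds because the conjugate-linearities of $\rcontr$ in its second slot and of $\lcontr$ in its first slot cancel; and the ``trivial'' cases ($B$ or $A$ scalar or zero, $k=0$ with its conjugations) do work out under the paper's conventions. Your single induction on $\grade{L}$ plus duality is leaner than the triple induction of \cite[p.\,594]{Dorst2007} cited near \Cref{pr:reformulated triple prods}, and is in the same spirit as the algebraic machinery this paper builds on, so the approach fits the framework even if it cannot be matched line-by-line against Part I.
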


\begin{proposition}\label{pr:epsilon}
	$\epsilon_{\ii\jj} = (-1)^{\pairs{\ii}{\jj}}$ and $\epsilon_{\ii\jj\kk} = \epsilon_{\ii \jj} \, \epsilon_{\ii\kk} \, \epsilon_{\jj\kk}$,
	for pairwise disjoint $\ii,\jj,\kk \in\II^q$.\OMIT{p.\pageref{uso pr:epsilon}}
\end{proposition}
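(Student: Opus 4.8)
The plan is to read $\epsilon_{\rr}$ as the parity of the number of inversions of the sequence $\rr$, using the standard fact that the sign of the permutation sorting $\rr=(r_1,\ldots,r_p)$ into increasing order equals $(-1)^{\mathrm{inv}(\rr)}$, where $\mathrm{inv}(\rr)=\#\{(a,b): a<b,\ r_a>r_b\}$. The whole proposition then reduces to counting inversions in a concatenation of sequences that are each already increasing.

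For the first identity, write $\ii=(i_1<\cdots<i_p)$ and $\jj=(j_1<\cdots<j_m)$ (disjointness is needed so that $\ii\jj\in\MM^q$). In the concatenation $\ii\jj$ the first $p$ entries are increasing and the last $m$ entries are increasing, so any inversion $(a,b)$ must straddle the two blocks, i.e. $a\leq p<b$, and it occurs precisely when $i_a>j_{b-p}$. The number of such pairs is by definition $\pairs{\ii}{\jj}$, hence $\epsilon_{\ii\jj}=(-1)^{\pairs{\ii}{\jj}}$. (Equivalently, one could avoid the inversion formalism and compute $v_\ii\wedge v_\jj$ directly, moving each factor $v_{j_k}$ leftward past the factors $v_i$ with $i>j_k$ by anticommutativity of $\wedge$ on vectors; the sign picked up is again $(-1)^{\pairs{\ii}{\jj}}$.)

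For the second identity I would sort $\ii\jj\kk$ in two stages. First rearrange the initial block $\ii\jj$ into increasing order $\ii\cup\jj$ while leaving $\kk$ fixed; this contributes the sign $\epsilon_{\ii\jj}$. Next sort the sequence $(\ii\cup\jj)\kk$, which by the first identity contributes $(-1)^{\pairs{\ii\cup\jj}{\kk}}$, noting $\ii\cup\jj\in\II^q$ since $\ii,\jj$ are disjoint. Since signs of permutations multiply under composition, $\epsilon_{\ii\jj\kk}=\epsilon_{\ii\jj}\,(-1)^{\pairs{\ii\cup\jj}{\kk}}$. Finally, $\pairs{\cdot}{\cdot}$ depends only on the underlying index sets, and splitting the counted pairs according to whether the larger index lies in $\ii$ or in $\jj$ gives $\pairs{\ii\cup\jj}{\kk}=\pairs{\ii}{\kk}+\pairs{\jj}{\kk}$; together with the first identity this yields $\epsilon_{\ii\jj\kk}=\epsilon_{\ii\jj}\epsilon_{\ii\kk}\epsilon_{\jj\kk}$. (Symmetrically, one could prove it in one shot from $\mathrm{inv}(\ii\jj\kk)=\pairs{\ii}{\jj}+\pairs{\ii}{\kk}+\pairs{\jj}{\kk}$, since inversions in $\ii\jj\kk$ can only straddle two distinct blocks.)

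I do not expect a genuine obstacle here; the proof is bookkeeping. The two points that need a little care are the identification of $\epsilon_{\rr}$ with the inversion parity (or, in the alternative route, keeping track of the sign when shuffling basis vectors) and the additivity $\pairs{\ii\cup\jj}{\kk}=\pairs{\ii}{\kk}+\pairs{\jj}{\kk}$ over the disjoint union, which is where pairwise disjointness of $\ii,\jj,\kk$ is actually used.
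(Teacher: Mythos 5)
Your argument is correct: since $\ii,\jj,\kk$ are each increasing, all inversions of the concatenation straddle distinct blocks, so $\mathrm{inv}(\ii\jj)=\pairs{\ii}{\jj}$ and $\mathrm{inv}(\ii\jj\kk)=\pairs{\ii}{\jj}+\pairs{\ii}{\kk}+\pairs{\jj}{\kk}$, which together with the inversion-parity formula for the sign gives both identities; disjointness is used exactly where you say, to make the concatenations lie in $\MM^q$ and to split the pair counts. Note that this paper only quotes the proposition from Part I \cite{Mandolesi_Contractions} without reproving it, and your inversion-counting bookkeeping is precisely the standard argument such a proof takes, so there is nothing to flag.
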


\begin{proposition}\label{pr:higher order}
	Let $v_1,\ldots,v_p\in X$ and $M,N\in\bigwedge X$.\OMIT{p.\pageref{uso pr:higher order}}
	\begin{enumerate}[i)]
		\item $v_{1\cdots p}\lcontr(M\wedge N) = \sum_{\ii\in\II^p} \epsilon_{\ii'\ii} (\hhat{M}{p} \rcontr \hat{v}_{\ii}) \wedge (v_{\ii'} \lcontr N)$. \label{it:generalized Leibniz}
		\item $v_{1\cdots p} \wedge (M \lcontr N) = \sum_{\ii\in\II^p} \epsilon_{\ii\ii'} (\hat{v}_{\ii} \lcontr \hhat{M}{p}) \lcontr (v_{\ii'} \wedge N)$. \label{it:v1p wedge M contr N}
		%
	\end{enumerate}
\end{proposition}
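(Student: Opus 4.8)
The plan is to prove both identities by induction on $p$, using \Cref{pr:contr homog} (the degree-one Leibniz rules) as the inductive engine. Since both sides of each identity are linear in $M$ and in $N$, I may assume $M$ and $N$ homogeneous; then each $\hhat{M}{k}$ is just the scalar multiple $(-1)^{k\grade{M}}M$ of $M$, each $\hat v_\ii$ equals $(-1)^{|\ii|}v_\ii$, and I can pass freely between left and right contraction by a vector via $v\lcontr M=(-1)^{\grade{M}-1}M\rcontr v$. For $p=0$ both sides of (i) are $M\wedge N$; for $p=1$ the statement is exactly the first formula of \Cref{pr:contr homog} once the two summands ($\ii=(1)$ and $\ii=\emptyset$) are rewritten using the identities $\hat M\rcontr\hat v=v\lcontr M$ and $\hat M\rcontr 1=\hat M$, which I would record first as a warm-up. (They, together with the dual pair $\hat v\lcontr\hat M=\widehat{v\lcontr M}=M\rcontr v$, are also what makes the $p=1$ case of (ii) coincide with the second formula of \Cref{pr:contr homog}.)

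For the inductive step of (i), I would write $v_{1\cdots p}=v_1\wedge v_{2\cdots p}$ and use the associativity of $\lcontr$ over $\wedge$ from \cite{Mandolesi_Contractions} to express $v_{1\cdots p}\lcontr(M\wedge N)$ as an iterated contraction, peeling off one vector so that the induction hypothesis for $p-1$ applies to the inner contraction. Distributing the remaining single-vector contraction over each wedge $(\hhat{M}{p-1}\rcontr\hat v_\jj)\wedge(v_{\jj'}\lcontr N)$ produced by the hypothesis, via \Cref{pr:contr homog} again, yields two families of terms: in one the peeled vector is absorbed into the $N$-factor, producing $v_{\ii'}\lcontr N$ for a subset $\ii$ \emph{not} containing the peeled index; in the other it is absorbed into the $M$-factor, which I can rewrite as $\hhat{M}{p}\rcontr\hat v_\ii$ (for $\ii$ \emph{containing} the peeled index) using the mixed associativity $(A\lcontr B)\rcontr C=A\lcontr(B\rcontr C)$ and the rule expressing an iterated right contraction by a wedge, both from \cite{Mandolesi_Contractions}. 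Every $\ii\subseteq\{1,\ldots,p\}$ is then hit exactly once, and the recursion reproduces the claimed sum.

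The real work --- and the step I expect to be the bottleneck --- is the sign bookkeeping, in two parts. First, the grade-involution powers accumulated along the way (the $\hhat{M}{p-1}$ from the hypothesis, the extra hats introduced by \Cref{pr:contr homog}, and the signs from $\hat v_\jj$, from reordering the enlarged index tuple, and from swapping $\lcontr$ with $\rcontr$ on a vector) must combine to exactly $\hhat{M}{p}$. Second, the sign $\epsilon_{\jj'\jj}$ carried by the hypothesis, together with the permutation sign picked up when the peeled index migrates from $\jj$ into $\ii$ (or into $\ii'$), must telescope to $\epsilon_{\ii'\ii}$; this is where \Cref{pr:epsilon} is indispensable --- the cocycle identity $\epsilon_{\ii\jj\kk}=\epsilon_{\ii\jj}\,\epsilon_{\ii\kk}\,\epsilon_{\jj\kk}$ together with $\epsilon_{\ii\jj}=(-1)^{\pairs{\ii}{\jj}}$ is precisely what lets the $p$-level signs be assembled from the $(p-1)$-level ones.

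Part (ii) goes the same way, now peeling a vector off $v_{1\cdots p}\wedge(M\lcontr N)$ using the (associative) $\wedge$ and invoking the second formula of \Cref{pr:contr homog}, $v\wedge(M\lcontr N)=(M\rcontr v)\lcontr N+\hat M\lcontr(v\wedge N)$, in place of the first; the sign $\epsilon_{\ii\ii'}$ then emerges from the same \Cref{pr:epsilon} computation. Alternatively, (ii) can be deduced from (i) by applying the right Hodge dual and using both identities of \Cref{pr:Hodge wedge contr}, which exchange $\wedge$ with $\lcontr$ and $\rcontr$ with $\wedge$. The base cases $p=0,1$ of (ii) are verified exactly as for (i).
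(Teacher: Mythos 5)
Your induction is sound and would indeed prove both identities: reducing to homogeneous $M,N$, checking $p=0,1$ against \Cref{pr:contr homog} via $\hat M\rcontr\hat v=v\lcontr M$ and $\hat M\rcontr 1=\hat M$, peeling one vector with the associativity $(v_1\wedge v_{2\cdots p})\lcontr C=v_1\lcontr(v_{2\cdots p}\lcontr C)$, and assembling the signs with \Cref{pr:epsilon} is exactly the standard route, and your dual derivation of \ref{it:v1p wedge M contr N} from \ref{it:generalized Leibniz} via \Cref{pr:Hodge wedge contr} is also fine. However, be aware that this paper does not prove \Cref{pr:higher order} at all: it is one of the results recalled from Part~I \cite{Mandolesi_Contractions}, and the only argument given here is the remark immediately after the statement, namely that the displayed form is obtained from the Part~I version by the identity $v_{\ii'}\lcontr \hhat{M}{|\ii|} = \hhat{M}{p} \rcontr \hat{v}_{\ii'}$ and the relabeling $\ii\leftrightarrow\ii'$. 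So where the paper buys brevity by citing and rewriting an earlier formula, you reconstruct a self-contained proof (essentially re-proving the Part~I result, presumably by the same kind of induction used there); the only substantive work you defer, and correctly identify as the bottleneck, is the sign bookkeeping, for which the cocycle identity $\epsilon_{\ii\jj\kk}=\epsilon_{\ii\jj}\,\epsilon_{\ii\kk}\,\epsilon_{\jj\kk}$ together with $\epsilon_{\ii\jj}=(-1)^{\pairs{\ii}{\jj}}$ is indeed sufficient. If you carry out that bookkeeping explicitly, your argument stands on its own without appeal to Part~I beyond \Cref{pr:contr homog}.
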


These are slightly different from the versions in \cite{Mandolesi_Contractions}: 
e.g., in \ref{it:generalized Leibniz} we used $v_{\ii'}\lcontr \hhat{M}{|\ii|} = \hhat{M}{p} \rcontr \hat{v}_{\ii'}$ and relabeled $\ii \leftrightarrow \ii'$, for later convenience.

\begin{proposition}\label{pr:triple}
	Let $\ext_B(M)=B\wedge M$ and $\iota_B(M)=B\lcontr M$,
	for a unit blade $B$ and $M\in\bigwedge X$.
	Then $\Img \ext_B = B \wedge \bigwedge([B]^\perp)$ and
	$\Img \iota_B = \bigwedge ([B]^\perp)$.
	Also,
	$\iota_B(\ext_B(M)) =\PP_{\Img \iota_B} M$,
	and $\ext_B(\iota_B(M)) = \PP_{\Img \ext_B} M$.
	\OMIT{p.\pageref{uso pr:triple}}
\end{proposition}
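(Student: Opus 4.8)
The plan is to isolate one key fact --- that $\iota_B\circ\ext_B$ restricts to the identity map on $\bigwedge([B]^\perp)$, i.e.\ $\iota_B(\ext_B(L)) = L$ for all $L\in\bigwedge([B]^\perp)$; call this $(\ast)$ --- and then deduce everything formally. Two ingredients from \cite{Mandolesi_Contractions} will be used freely: that $\iota_B$ is the adjoint $\ext_B^\dagger$ of $\ext_B$ for the induced inner product on $\bigwedge X$ (so $\inner{\ext_B M,N} = \inner{M,\iota_B N}$), and that $B\lcontr M\in\bigwedge([B]^\perp)$ for every $M$ (which is contained in \Cref{pr:M=BBM}(ii) with $\|B\| = 1$).

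First I would settle the two image formulas. By \Cref{pr:M=BBM}(ii), a multivector lies in $\Img\ext_B$ precisely when it equals $B\wedge L$ with $L = B\lcontr M\in\bigwedge([B]^\perp)$, while trivially $B\wedge\bigwedge([B]^\perp)\subseteq\Img\ext_B$; hence $\Img\ext_B = B\wedge\bigwedge([B]^\perp)$. Likewise $\Img\iota_B\subseteq\bigwedge([B]^\perp)$ since every $B\lcontr M$ lands there, and the reverse inclusion is immediate from $(\ast)$; hence $\Img\iota_B = \bigwedge([B]^\perp)$.

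Next I would prove $(\ast)$. Since $[B]\perp[B]^\perp$, the induced inner product is multiplicative on these orthogonal factors, so $\inner{B\wedge L,B\wedge L'} = \inner{B,B}\,\inner{L,L'} = \inner{L,L'}$ for $L,L'\in\bigwedge([B]^\perp)$, using $\|B\| = 1$. Therefore, by the adjoint relation, $\inner{\iota_B(\ext_B L),L'} = \inner{\ext_B L,\ext_B L'} = \inner{L,L'}$ for every $L'\in\bigwedge([B]^\perp)$; as $\iota_B(\ext_B L) = B\lcontr(B\wedge L)$ itself lies in $\bigwedge([B]^\perp)$, testing against all of $\bigwedge([B]^\perp)$ forces $\iota_B(\ext_B L) = L$. (One can also avoid adjoints: apply \Cref{pr:M=BBM}(ii) to $M := B\wedge L$ to get $B\wedge L = B\wedge\bigl(B\lcontr(B\wedge L)\bigr)$, then cancel the map $B\wedge(\cdot)$, which is injective on $\bigwedge([B]^\perp)$ by the elementary linear independence of basis blades.)

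Finally, the projection formulas follow from the ``self-adjoint idempotent $=$ orthogonal projection'' principle. The operator $\iota_B\ext_B = \ext_B^\dagger\ext_B$ is self-adjoint, and idempotent because $\iota_B$ always maps into $\bigwedge([B]^\perp)$, on which $\iota_B\ext_B$ is the identity by $(\ast)$; its image therefore equals $\bigwedge([B]^\perp) = \Img\iota_B$, giving $\iota_B(\ext_B(M)) = \PP_{\Img\iota_B}M$. Symmetrically, $\ext_B\iota_B = \ext_B\ext_B^\dagger$ is self-adjoint and idempotent (using $\iota_B\ext_B\iota_B = \iota_B$, since $\iota_B$ maps into $\bigwedge([B]^\perp)$ where $\iota_B\ext_B$ acts as the identity), with image $\Img\ext_B$ (it is contained in $\Img\ext_B$ and contains $\ext_B\iota_B(\ext_B L) = \ext_B L$ for all $L$), so $\ext_B(\iota_B(M)) = \PP_{\Img\ext_B}M$. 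The only real content is $(\ast)$ --- that contracting by the unit blade $B$ undoes wedging by $B$ on $\bigwedge([B]^\perp)$ --- and everything downstream is bookkeeping, which is why I would prove $(\ast)$ first.
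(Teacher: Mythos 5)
Your proof is correct: the reduction of everything to the single identity $(\ast)$, the use of adjointness $\iota_B=\ext_B^\dagger$ together with $\inner{B\wedge L,B\wedge L'}=\inner{L,L'}$ on $\bigwedge([B]^\perp)$, and the ``self-adjoint idempotent $=$ orthogonal projection'' step are all sound, including the Hermitian case. Note, however, that this proposition is only quoted here for reference from Part I (\cite{Mandolesi_Contractions}); the present paper contains no proof of it, so there is nothing to compare line by line. Your route is a clean one; the only cosmetic point is the phrase ``$\ext_B\iota_B(\ext_B L)=\ext_B L$ for all $L$'', which you should either restrict to $L\in\bigwedge([B]^\perp)$ (which suffices, by your image formula $\Img\ext_B=B\wedge\bigwedge([B]^\perp)$) or justify for general $L$ via the already-proved identity $\iota_B\ext_B=\PP_{\bigwedge([B]^\perp)}$ and the fact that $\ext_B$ annihilates $\bigwedge([B]^\perp)^\perp$.
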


\section{Inner and outer spaces}\label{sc:Spaces of Non-Simple Multivectors}

A blade $B=v_1\wedge\cdots\wedge v_p \neq 0$ is often used to represent a subspace $[B] = \Span\{v_1,\ldots,v_p\} = \{v\in X:v\wedge B = 0\} = \{v\in X:v \lcontr B = 0\}^\perp$.
For a general multivector $M \in \bigwedge X$, this concept splits into two spaces, which, as we show, give important information about $M$.
	\CITE{Kozlov2000I,Rosen2019 have homogeneous case of some results}

\begin{definition}\label{df:isp osp}
	The \emph{inner} and \emph{outer spaces} of $M  \in \bigwedge X$ are, respectively, $\isp{M} = \{v\in X:v\wedge M=0\}$ and $\osp{M} = \{v\in X: v\lcontr M = 0\}^\perp$.
\end{definition}
	\SELF{$\isp{\hat{M}} = \isp{M}$, etc}

The notation and terminology is from Rosén \cite[p.\,36]{Rosen2019}, who does not define the spaces for ``inhomogeneous multivectors, since this is not natural geometrically''.
As we show, the theory works just as well with them.
For a homogeneous $H$, $\isp{H}$ is in \cite{Kozlov2000I} the \emph{annihilator} $\Ann H$,
and $\osp{H}$ is its \emph{rank space}.%
	\OMIT{\ref{pr:isp osp homog}}%
	\CITE{Shaw1983 p.365 defines rank as its dim}
In Clifford Geometric Algebra \cite{BayroCorrochano2018}, 
	\CITE{Hitzer2012}
the \emph{outer product null space} of a blade $B$ is $\isp{B}$, and its \emph{inner product null space} is $\osp{B}^\perp$.

\begin{proposition}\label{pr:blade osp=isp=space}
	$\isp{B}=\osp{B}=[B]$, for a blade $B\neq 0$. 
\end{proposition}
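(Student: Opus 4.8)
The plan is to prove the two equalities $\isp{B}=[B]$ and $\osp{B}=[B]$ separately, in each case by a double inclusion, using the factorization $B=v_1\wedge\cdots\wedge v_p$ (with $p=\grade{B}$) and the basic properties of $\wedge$ and $\lcontr$ from the preliminaries.

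First I would handle $\isp{B}=[B]$. For ``$\supseteq$'': if $v\in[B]$ then $v$ is a linear combination of $v_1,\dots,v_p$, so $v\wedge B$ is a linear combination of terms $v_i\wedge v_1\wedge\cdots\wedge v_p$, each of which vanishes by antisymmetry of $\wedge$; hence $v\in\isp{B}$. For ``$\subseteq$'': suppose $v\wedge B=0$ but $v\notin[B]$. Then $\{v,v_1,\dots,v_p\}$ is linearly independent, so $v\wedge v_1\wedge\cdots\wedge v_p=v\wedge B\neq 0$, a contradiction. (Alternatively one can invoke \Cref{pr:wedge contr ker image}\ref{it:wedge ker image}: $v\wedge B=0$ iff $B=v\wedge N$ for some $N$, and if $v\neq 0$ this forces $B$ to have $v$ as a factor, i.e.\ $v\in[B]$ — but the linear-independence argument is cleaner and self-contained.)

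Next, $\osp{B}=[B]$, i.e.\ $\{v\in X: v\lcontr B=0\}^\perp=[B]$, equivalently $\{v: v\lcontr B=0\}=[B]^\perp$. For ``$[B]^\perp\subseteq\{v:v\lcontr B=0\}$'': if $v\perp[B]$, then $v\perp v_i$ for all $i$, and expanding $v\lcontr(v_1\wedge\cdots\wedge v_p)$ by the graded Leibniz rule (\Cref{pr:contr homog}) gives a sum of terms each containing a factor $v\lcontr v_i=\inner{v,v_i}=0$, so $v\lcontr B=0$. For the reverse inclusion I would use a dimension count: the already-proven containment shows $[B]^\perp\subseteq\{v:v\lcontr B=0\}$, and it suffices to show the latter has dimension at most $n-p$, equivalently that $\{v:v\lcontr B=0\}$ contains no nonzero vector of $[B]$. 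If $v\in[B]$ is nonzero, complete it to a basis $v=w_1,w_2,\dots,w_p$ of $[B]$ with $B=\lambda\, w_1\wedge\cdots\wedge w_p$ for some scalar $\lambda\neq0$; then $v\lcontr B=\lambda\,\|v\|^2\, w_2\wedge\cdots\wedge w_p+(\text{terms in }w_1\wedge\cdots)$, and since the $w_i$ are independent this is nonzero. Hence $\{v:v\lcontr B=0\}\cap[B]=\{0\}$, so $\dim\{v:v\lcontr B=0\}\le n-p$, forcing equality with $[B]^\perp$ and thus $\osp{B}=[B]$.

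The only mild obstacle is the reverse inclusion for the outer space: one must rule out that some vector partly inside $[B]$ also lies in the contraction kernel, which is exactly the non-degeneracy of the inner product restricted to $[B]$ entering. Choosing an orthogonal basis of $[B]$ (so $B$ is a scalar multiple of an orthogonal product) makes the computation of $v\lcontr B$ for $v$ a basis vector completely transparent and isolates the single surviving term, so no real difficulty remains. Everything else is a direct application of antisymmetry of $\wedge$, the Leibniz rule of \Cref{pr:contr homog}, and a dimension argument.
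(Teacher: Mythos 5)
Your proof is correct and follows essentially the same route the paper takes: the paper's proof simply declares the identities $[B]=\{v:v\wedge B=0\}=\{v:v\lcontr B=0\}^\perp$ immediate for $B=v_1\wedge\cdots\wedge v_p$ (treating the scalar case separately), and your double-inclusion arguments -- antisymmetry plus linear independence for $\isp{B}$, the Leibniz expansion plus non-degeneracy of the inner product on $[B]$ and a dimension count for $\osp{B}$ -- are exactly the standard details behind that ``immediate.'' No gap; your argument even degenerates correctly in the scalar case $p=0$, which the paper notes explicitly.
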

\begin{proof}
	Immediate for $B$ as above. If $B$ is scalar, all spaces are $\{0\}$.
\end{proof}

Zero is an important exception: $\osp{0}=[0]=\{0\}$, but $\isp{0}=X$.

\begin{proposition}\label{pr:isp subset osp}
	$\isp{M}\subset \osp{M}$, for $0 \neq M \in \bigwedge X$.
\end{proposition}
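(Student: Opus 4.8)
The plan is to show that if $v \in \isp{M}$, i.e.\ $v \wedge M = 0$, then $v \in \osp{M}$, i.e.\ $v \lcontr N = 0 \Rightarrow \inner{v, \text{(that kind of vector)}}$... more precisely, $\osp{M} = \{w : w \lcontr M = 0\}^\perp$, so I must show $v$ is orthogonal to every $w$ with $w \lcontr M = 0$. The cleanest route is to find a single clean identity relating $v \wedge M = 0$ to the behavior of contractions. I would use \Cref{pr:wedge contr ker image}\ref{it:wedge ker image}: since $v \neq 0$ (the case $v = 0$ is trivial as $0$ lies in every subspace) and $v \wedge M = 0$, we get $M = v \wedge N$ with $N = \frac{v \lcontr M}{\|v\|^2}$. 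Actually I should first dispatch $v = 0$ separately, and note that if $M \neq 0$ then $\isp{M}$ consists only of such $v$ with $v \wedge M = 0$.

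The key computation: take any $w \in X$ with $w \lcontr M = 0$; I want $\inner{w, v} = 0$. Since $M = v \wedge N$, apply \Cref{pr:contr homog}: $w \lcontr M = w \lcontr (v \wedge N) = (w \lcontr v)\wedge N + \hat v \wedge (w \lcontr N) = \inner{w,v}\, N - v \wedge (w \lcontr N)$, using $\hat v = -v$ for a vector $v$ and $w \lcontr v = \inner{w,v}$. So $0 = \inner{w,v}\, N - v \wedge (w \lcontr N)$, giving $\inner{w,v}\, N = v \wedge (w \lcontr N)$. The right-hand side is divisible by $v$ (lies in $v \wedge \bigwedge X$), so $\inner{w,v}\, N \in v \wedge \bigwedge X$. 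If $\inner{w,v} \neq 0$ this forces $N \in v \wedge \bigwedge X$, hence $M = v \wedge N \in v \wedge (v \wedge \bigwedge X) = \{0\}$, contradicting $M \neq 0$. Therefore $\inner{w,v} = 0$, so $v \perp w$; since $w$ was an arbitrary element of $\{w : w \lcontr M = 0\}$, we conclude $v \in \{w : w \lcontr M = 0\}^\perp = \osp{M}$.

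The main obstacle is the step ``$\inner{w,v}\, N \in v\wedge\bigwedge X$ and $\inner{w,v}\neq 0$ forces $N \in v \wedge \bigwedge X$'': this is just scaling by a nonzero scalar, so it is routine, but one should be slightly careful that $N$ itself could be zero — if $N = 0$ then $M = 0$, already excluded, so $N \neq 0$ and the argument that $M = v\wedge N = 0$ is the desired contradiction goes through. An alternative, perhaps even shorter, would be to argue via duals: $v \wedge M = 0$ says $v \lcontr \rH{M} = \pm\, \rH{(v\wedge M)} = 0$ up to signs by \Cref{pr:Hodge wedge contr}, translating the wedge-kernel condition into a contraction-kernel condition for the dual, and then relate $\isp{M}$ and $\osp{M}$ to $\osp{\rH M}$ and $\isp{\rH M}$; but this needs those duality relations for the spaces, which have not yet been proved in the excerpt, so I would stick with the direct computation above.
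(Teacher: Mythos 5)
Your argument is correct: from $M = v\wedge N$ and $w\lcontr M = 0$ you get $\inner{w,v}\,N = v\wedge(w\lcontr N)$, and the divisibility-by-$v$ argument does force $\inner{w,v}=0$ on pain of $M = v\wedge N = 0$. The paper, however, reaches the conclusion without ever factoring $M$: it applies \Cref{pr:contr homog} directly to $v\wedge M$, so that \emph{both} hypotheses are consumed in one identity, $0 = w\lcontr(v\wedge M) = \inner{w,v}\,M - v\wedge(w\lcontr M) = \inner{w,v}\,M$, whence $M\neq 0$ gives $v\perp w$ immediately. So the underlying engine is the same graded Leibniz rule, but your route pays for two extra ingredients — the appeal to \Cref{pr:wedge contr ker image} to produce $N$, and the ``$\inner{w,v}N$ divisible by $v$ implies $N$ divisible by $v$ implies $v\wedge N=0$'' step — that the direct computation makes unnecessary. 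Your alternative sketch via duals would also work, but, as you note, it presupposes the relations between $\isp{\cdot}$, $\osp{\cdot}$ and the star (the paper proves those afterwards, in \Cref{pr:osp isp star}), so the direct computation is indeed the right choice of the two; it just can be done on $v\wedge M$ itself rather than on a factorization of $M$.
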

\begin{proof}
	$v\in \isp{M}$, $w\in \osp{M}^\perp
	\Rightarrow 0 = w \lcontr (v \wedge M) =	\inner{w,v} M \Rightarrow v \perp w$.
		\OMIT{\ref{pr:contr homog}}\label{uso pr:contr homog}
\end{proof}


The relation $[\rH{B}] = [B]^\perp$, for a blade $B\neq 0$, splits into the following ones (likewise for $\lH{M}$).
They hold for $M=0$ due to the above exception.

\begin{proposition}\label{pr:osp isp star}
	$\isp{\rH{M}} = \osp{M}^\perp$ and $\osp{\rH{M}} = \isp{M}^\perp$, for $M \in \bigwedge X$.
\end{proposition}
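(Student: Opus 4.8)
The plan is to derive both identities by the same two-step maneuver. First I would use \Cref{pr:Hodge wedge contr} to recognize $v\wedge\rH{M}$ and $v\lcontr\rH{M}$ (for $v\in X$) as right duals of simpler expressions, and then cancel the dual, which is a linear isomorphism of $\bigwedge X$ (star duality, \cite{Mandolesi_Contractions}). What remains is to match the resulting vanishing conditions with membership in $\osp{M}^\perp$ and $\isp{M}$, using only \Cref{df:isp osp}.

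For $\isp{\rH{M}} = \osp{M}^\perp$: setting $N=v$ in the second identity of \Cref{pr:Hodge wedge contr} gives $v\wedge\rH{M} = \rH{(M\rcontr v)}$, so by injectivity of the right dual, $v\in\isp{\rH{M}}$ iff $M\rcontr v = 0$. As $M\rcontr v$ and $v\lcontr M$ have homogeneous components in the same grades and differ only by grade-dependent signs (\cite{Mandolesi_Contractions}), this is equivalent to $v\lcontr M = 0$; and $\{w\in X: w\lcontr M = 0\}$ is, by \Cref{df:isp osp}, exactly $\osp{M}^\perp$ (the double orthogonal complement of a subspace in finite dimensions). Hence $\isp{\rH{M}} = \osp{M}^\perp$.

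For $\osp{\rH{M}} = \isp{M}^\perp$: setting $N=v$ in the first identity of \Cref{pr:Hodge wedge contr} gives $v\lcontr\rH{M} = \rH{(M\wedge v)}$, so injectivity of the right dual gives $v\lcontr\rH{M} = 0$ iff $M\wedge v = 0$ iff $v\wedge M = 0$ (the two exterior products again differing only by signs on their graded parts). Thus $\{v\in X: v\lcontr\rH{M} = 0\} = \isp{M}$, and passing to orthogonal complements yields $\osp{\rH{M}} = \isp{M}^\perp$.

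The steps are all routine; the things to be careful about are the bookkeeping — using the correct version of \Cref{pr:Hodge wedge contr} in each case, and justifying the harmless swaps $M\rcontr v\leftrightarrow v\lcontr M$ and $M\wedge v\leftrightarrow v\wedge M$, each of which only permutes and rescales graded components by $\pm1$ and so cannot affect vanishing — together with the degenerate case $M=0$, where $\rH{0}=0$ and one checks directly that $\isp{0} = X = \{0\}^\perp = \osp{0}^\perp$ and $\osp{0} = \{0\} = X^\perp = \isp{0}^\perp$, consistent with the exception recorded just before the statement.
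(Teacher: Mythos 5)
Your proof is correct and follows essentially the same route as the paper: apply \Cref{pr:Hodge wedge contr} with $N=v$, cancel the (injective) right dual, and translate $M\rcontr v=0$ and $M\wedge v=0$ back into $v\lcontr M=0$ and $v\wedge M=0$. The extra bookkeeping you include (the sign-only swaps and the $M=0$ check, which the paper handles in the remark preceding the statement) is fine but not a different argument.
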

\begin{proof}
	By \Cref{pr:Hodge wedge contr},\label{uso pr:Hodge wedge contr}
	$0 = v\wedge \rH{M} = \rH{(M\rcontr v)} \Leftrightarrow v \lcontr M = 0$, and
	$0 = v\lcontr \rH{M} = \rH{(M\wedge v)} \Leftrightarrow v \wedge M = 0$.
\end{proof}

Next we give a few alternative characterizations of these spaces.

\begin{proposition}\label{pr:isp osp homog}
	Let $H\in\bigwedge^p X$.
	\begin{enumerate}[i)]
		\item $\osp{H} = \{F\lcontr H: F \in \bigwedge^{p-1} X\}$. \label{it:osp lcontr p-1}
		\SELF{$= (\bigwedge^{p-1} X)\lcontr H$}
		\item $\isp{H} = \{H\lcontr G: G \in \bigwedge^{p+1} X\}^\perp$. \label{it:isp lcontr p+1}
		\SELF{$= (H \lcontr (\bigwedge^{p+1} X))^\perp$}
	\end{enumerate}
\end{proposition}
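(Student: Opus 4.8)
The statement is Proposition~\ref{pr:isp osp homog}, characterizing $\osp{H}$ and $\isp{H}$ for homogeneous $H \in \bigwedge^p X$ in terms of contractions by lower/higher grade multivectors. My plan is to prove part~\ref{it:osp lcontr p-1} directly from \Cref{pr:isp osp homog}\ref{it:osp lcontr p-1}'s definition together with \Cref{pr:isp osp homog}... more precisely, from \Cref{df:isp osp} and \Cref{pr:osp isp star}, and then deduce part~\ref{it:isp lcontr p+1} from part~\ref{it:osp lcontr p-1} by star duality. Throughout I'll freely use that for homogeneous $H$ the set $\{F \lcontr H : F \in \bigwedge^{p-1} X\}$ lies in $\bigwedge^1 X = X$, and similarly $\{H \lcontr G : G \in \bigwedge^{p+1} X\} \subset X$, so the set-theoretic operations make sense.

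\textbf{Part~\ref{it:osp lcontr p-1}.} Write $T = \{F \lcontr H : F \in \bigwedge^{p-1} X\} \subset X$; I want $T = \osp{H}$. First I'd reduce to the case $H \neq 0$ (if $H = 0$ both sides are $\{0\}$). The inclusion $T \subset \osp{H}$: by \Cref{df:isp osp}, $\osp{H} = \{v : v \lcontr H = 0\}^\perp$, so it suffices to show that if $v \lcontr H = 0$ then $v \perp F \lcontr H$ for every $F \in \bigwedge^{p-1}X$; by bilinearity I can take $F = w_1 \wedge \cdots \wedge w_{p-1}$ a blade, and then $\inner{v, F \lcontr H}$ can be rewritten (using the adjointness/reversion relations between $\wedge$ and $\lcontr$ from \cite{Mandolesi_Contractions}) as, up to sign, $\inner{v \wedge \tilde{F}, H}$ or equivalently as a pairing involving $v \lcontr H = 0$ — the cleanest route is $\inner{F \lcontr H, v} = \inner{H, F \wedge v} = \pm\inner{H, v \wedge F} = \pm\inner{v \lcontr H, F} = 0$. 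For the reverse inclusion $\osp{H} \subset T$: since $T$ is a subspace of $X$, it is enough to show $T^\perp \subset \osp{H}^\perp$, i.e. that $v \perp F\lcontr H$ for all $F \in \bigwedge^{p-1}X$ forces $v \lcontr H = 0$. Run the same computation backwards: $\inner{v \lcontr H, F} = \pm\inner{F \lcontr H, v} = 0$ for all $(p-1)$-blades $F$, and since these span $\bigwedge^{p-1}X$ and $v \lcontr H \in \bigwedge^{p-1}X$, non-degeneracy of the induced inner product on $\bigwedge^{p-1}X$ gives $v \lcontr H = 0$. Hence $T^\perp = \osp{H}^\perp$ and so $T = \osp{H}$.

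\textbf{Part~\ref{it:isp lcontr p+1}.} Here I'd apply the right Hodge dual. By \Cref{pr:osp isp star}, $\osp{\rH{M}} = \isp{M}^\perp$, so taking $M = H$ gives $\isp{H} = \osp{\rH{H}}^\perp$. Now $\rH{H} \in \bigwedge^{n-p}X$, so part~\ref{it:osp lcontr p-1} applied to $\rH{H}$ yields $\osp{\rH{H}} = \{F \lcontr \rH{H} : F \in \bigwedge^{n-p-1}X\}$. By \Cref{pr:Hodge wedge contr}, $\rH{(H \wedge G)} = G \lcontr \rH{H}$; as $G$ ranges over $\bigwedge^{p+1}X$ and $H \in \bigwedge^p X$, the products $H \wedge G$ are exactly the image of $\ext$-type maps, but more to the point $G \lcontr \rH{H}$ with $G \in \bigwedge^{p+1}X$ ranges over $\{\rH{(H\wedge G)}\}$... wait — I need $F \in \bigwedge^{n-p-1}X$, not $\bigwedge^{p+1}X$. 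Instead use the other identity: by \Cref{pr:Hodge wedge contr}, $\rH{(H \rcontr G)} = G \wedge \rH{H}$, which is not a contraction either. The right move: in part~\ref{it:osp lcontr p-1} applied to $\rH{H} \in \bigwedge^{n-p}X$, the contracting multivectors have grade $(n-p)-1 = n-p-1$, and I want to realize each $F \lcontr \rH{H}$ as $\rH{\text{something}}$. Since $\rH{}$ is (up to sign) an isometry $\bigwedge^p X \to \bigwedge^{n-p}X$, write $F = \rH{G'}$ for a unique $G' \in \bigwedge^{p+1}X$; then by \Cref{pr:Hodge wedge contr}, $F \lcontr \rH{H} = \rH{G'} \lcontr \rH{H} = \pm\,\rH{(H \wedge \text{?})}$ — I'd instead invoke the relation $\rH{A} \lcontr \rH{B} = \pm (B \wedge \tilde{?})\ldots$; cleanest is simply: $\osp{\rH{H}} = (\bigwedge^{n-p-1}X) \lcontr \rH{H}$, dualize with $\lH{}$ via $\lH{(F \lcontr \rH{H})} = \pm\, F \wedge \lH{\rH{H}} = \pm\, F \wedge H$, so $\lH{\osp{\rH{H}}} = H \wedge \bigwedge^{n-p-1}X$, hence (applying $\rH{}$ or using $\osp{\rH{H}}^\perp = \isp{H}$ together with the dual of part~\ref{it:osp lcontr p-1}) one lands on $\isp{H} = \{H \lcontr G : G \in \bigwedge^{p+1}X\}^\perp$ after matching grades. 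The bookkeeping of which dual ($\lH{}$ or $\rH{}$) and which sign to use is the only real obstacle; the structural content is entirely \Cref{pr:osp isp star} plus part~\ref{it:osp lcontr p-1} plus \Cref{pr:Hodge wedge contr}.

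\textbf{Main obstacle.} There is no deep difficulty: part~\ref{it:osp lcontr p-1} is the adjointness of $\wedge$ and $\lcontr$ plus non-degeneracy of the inner product on each graded piece, and part~\ref{it:isp lcontr p+1} is formal duality. The one place to be careful is getting the grades and Hodge-dual signs to line up in part~\ref{it:isp lcontr p+1} — specifically making sure the multivectors contracting $\rH{H}$ (grade $n-p-1$) correspond under duality to grade-$(p+1)$ multivectors contracting $H$, and that the orthogonal complements are taken inside the correct graded subspace. I'd double-check this by testing on a blade $H = B$, where $\isp{B} = [B]$ and the claimed formula reads $[B] = \{B \lcontr G : G \in \bigwedge^{p+1}X\}^\perp$, which follows since $B \lcontr (v \wedge B') \neq 0$ exactly detects $v \notin [B]$ for suitable $B'$.
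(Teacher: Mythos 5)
Your part~(i) is correct and is essentially the paper's own argument: adjointness, $\inner{v,F\lcontr H}=\inner{F\wedge v,H}$, plus nondegeneracy of the inner product on $\bigwedge^{p-1}X$, giving $T^\perp=\osp{H}^\perp$ and hence $T=\osp{H}$.

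Part~(ii) is where your write-up falls short of a proof. You commit to deducing it from (i) by star duality, but you stop exactly at the step that needs doing: after $\isp{H}=\osp{\rH{H}}^\perp$ (\Cref{pr:osp isp star}) and $\osp{\rH{H}}=\{F\lcontr\rH{H}:F\in\bigwedge^{n-p-1}X\}$ (part (i) applied to $\rH{H}$), you must still show that this subspace of $X$ equals $\Span\{H\lcontr G:G\in\bigwedge^{p+1}X\}$, and your text replaces that step with several abandoned attempts and the phrase ``after matching grades.'' The missing link is short, so supply it: given $G\in\bigwedge^{p+1}X$, let $F\in\bigwedge^{n-p-1}X$ be its preimage under the bijection $\star$, so $\rH{F}=G$; then \Cref{pr:Hodge wedge contr} gives $H\lcontr G=H\lcontr\rH{F}=\rH{(F\wedge H)}=\pm\,\rH{(H\wedge F)}=\pm\,F\lcontr\rH{H}$, and conversely every $F\lcontr\rH{H}$ arises this way. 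Thus the two sets agree up to nonzero scalars, their spans (and orthogonal complements) coincide, and your duality argument closes. Note, though, that the paper does not dualize at all: (ii) has a one-line direct proof symmetric to (i), namely $v\in\isp{H}\Leftrightarrow H\wedge v=0\Leftrightarrow\inner{H\wedge v,G}=0$ for all $G\in\bigwedge^{p+1}X\Leftrightarrow\inner{v,H\lcontr G}=0$ for all such $G$, i.e.\ $v\perp\{H\lcontr G\}$. That route avoids entirely the star-and-sign bookkeeping you yourself flag as the delicate point, and is the cleaner way to finish.
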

\begin{proof}
	\emph{(\ref{it:osp lcontr p-1})} 
	$v \in \osp{H}^\perp \Leftrightarrow \inner{v,F\lcontr H} = \inner{F\wedge v, H} = \inner{F,H\rcontr v} = 0 \ \forall F\in \bigwedge^{p-1} X$.
	%
	\emph{(\ref{it:isp lcontr p+1})} 
	$v \in \isp{H} \Leftrightarrow \inner{v,H \lcontr G} = \inner{H \wedge v, G} = 0 \ \forall G \in \bigwedge^{p+1} X$.
\end{proof}

\begin{proposition}\label{pr:osp smallest subspace with M}
	$\osp{M} = \bigcap\{V\subset X:  M\in\bigwedge V\}$ is the smallest subspace whose exterior algebra contains $M \in \bigwedge X$. 
\end{proposition}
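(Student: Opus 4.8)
The plan is to prove two statements and then combine them: (a) $M \in \bigwedge\osp{M}$, and (b) every subspace $V\subset X$ with $M\in\bigwedge V$ satisfies $\osp{M}\subset V$. Given (a) and (b), $\osp{M}$ belongs to the family $\{V\subset X: M\in\bigwedge V\}$ and is contained in every member of it, hence it is the unique smallest member; and the smallest member of a family of subspaces, when it exists, is exactly the intersection of the family, so the stated formula follows.

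For (b): since $v\mapsto v\lcontr M$ is linear, $\{v\in X: v\lcontr M=0\}$ is a subspace, so by \Cref{df:isp osp} and involutivity of orthogonal complements, $\osp{M}^\perp = \{v\in X: v\lcontr M=0\}$. If $M\in\bigwedge V$ and $v\in V^\perp$, then writing $M$ as a linear combination of blades $u_1\wedge\cdots\wedge u_r$ with all $u_i\in V$ and applying the Leibniz rule of \Cref{pr:contr homog} repeatedly (base case $v\lcontr u_i=\inner{v,u_i}=0$) gives $v\lcontr M=0$. Thus $V^\perp\subset\osp{M}^\perp$, and taking complements gives $\osp{M}\subset V$.

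For (a): put $W=\osp{M}$ and choose an orthonormal basis $e_1,\dots,e_m$ of $W^\perp$. Because $X=W\oplus W^\perp$ orthogonally, $M$ has a unique expansion $M=\sum_{\ss\in\II^m}M_\ss\wedge e_\ss$ with each $M_\ss\in\bigwedge W$, and it suffices to prove $M_\ss=0$ for every nonempty $\ss$. Fix $j\in\{1,\dots,m\}$. On the one hand $e_j\in W^\perp$, so $e_j\lcontr M=0$. On the other hand, as in (b) we have $e_j\lcontr M_\ss=0$ (since $M_\ss\in\bigwedge W$ and $e_j\perp W$), so \Cref{pr:contr homog} gives $e_j\lcontr(M_\ss\wedge e_\ss)=\hat M_\ss\wedge(e_j\lcontr e_\ss)$, and by orthonormality $e_j\lcontr e_\ss=0$ if $j\notin\ss$ while $e_j\lcontr e_\ss=\pm\,e_{\ss\backslash j}$ if $j\in\ss$. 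Hence $0=e_j\lcontr M=\sum_{\ss\ni j}\pm\,\hat M_\ss\wedge e_{\ss\backslash j}$; since $\ss\mapsto\ss\backslash j$ is a bijection from $\{\ss\in\II^m:j\in\ss\}$ onto $\{\tt\in\II^m:j\notin\tt\}$, uniqueness of the expansion forces $\hat M_\ss=0$, hence $M_\ss=0$, for every $\ss\ni j$. Letting $j$ run over $1,\dots,m$ yields $M_\ss=0$ for all nonempty $\ss$, i.e.\ $M=M_\emptyset\in\bigwedge W=\bigwedge\osp{M}$.

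The main obstacle is (a), where some care is needed to handle a nonhomogeneous $M$: one must know that the expansion $M=\sum_\ss M_\ss\wedge e_\ss$ has unique components in $\bigwedge W$ and that contracting with $e_j$ acts on it as described — both follow from the orthogonal splitting $X=W\oplus W^\perp$, orthonormality of the $e_i$, and \Cref{pr:contr homog}. Part (b) and the passage from (a)+(b) to the intersection formula are routine. (Alternatively, one could reduce (a) to homogeneous $M$ using $\osp{M}=\sum_p\osp{\comp{M}{p}}$, but the direct argument above settles all cases at once.)
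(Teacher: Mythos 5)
Your proof is correct, and its two-inclusion skeleton ($M\in\bigwedge\osp{M}$, plus $M\in\bigwedge V\Rightarrow\osp{M}\subset V$, then the intersection formula) is the same as the paper's. The difference is in how the key ingredients are obtained: the paper leans on a Part~I lemma, $v\lcontr M=0\Leftrightarrow M\in\bigwedge([v]^\perp)$, together with the identity $\bigcap_i\bigwedge([v_i]^\perp)=\bigwedge\bigl(\bigcap_i[v_i]^\perp\bigr)$ applied to a basis of $\osp{M}^\perp$, which makes both inclusions one-liners; you instead re-derive exactly these facts from scratch, proving the easy direction via the Leibniz rule of \Cref{pr:contr homog} (your part (b)) and the hard direction via the orthogonal splitting $X=\osp{M}\oplus\osp{M}^\perp$ and uniqueness of the expansion $M=\sum_\ss M_\ss\wedge e_\ss$ with $M_\ss\in\bigwedge\osp{M}$ (your part (a)). Your version buys self-containedness and handles the inhomogeneous case explicitly in one pass; the paper's version buys brevity by outsourcing the same content to the cited lemma. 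Two small remarks: in the Hermitian case $v\mapsto v\lcontr M$ is conjugate-linear rather than linear, but its kernel is still a subspace, so your identification $\osp{M}^\perp=\{v:v\lcontr M=0\}$ stands; and the ``uniqueness of the expansion'' you invoke is the linear independence of the $e_\tt$ over $\bigwedge\osp{M}$ coming from $\bigwedge(W\oplus W^\perp)\cong\bigwedge W\otimes\bigwedge W^\perp$, which is worth stating if you want the argument fully airtight.
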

\begin{proof}
	As $v\lcontr M = 0 \Leftrightarrow M \in \bigwedge([v]^\perp)$,%
		\OMIT{pr:contr v perp M} 
	a basis $(v_1,\ldots,v_k)$ of $\osp{M}^\perp$
	gives $M \in \bigcap_{i=1}^k \bigwedge([v_i]^\perp) =  \bigwedge \bigcap_{i=1}^k ([v_i]^\perp) = \bigwedge \osp{M}$.
		\OMIT{$= \bigwedge \big( (\sum_{i=1}^k [v_i])^\perp \big)$}
	And $M \in \bigwedge V \Rightarrow \osp{M} \subset V$, as $v \in V^\perp \Rightarrow V \subset [v]^\perp \Rightarrow M \in \bigwedge([v]^\perp) 
	\Rightarrow v\lcontr M = 0 
	\Rightarrow v \in \osp{M}^\perp$.
\end{proof}

Dualizing, $\isp{M} = \sum\{V\subset X: \rH{M}\in\bigwedge (V^\perp)\}$ is the largest subspace whose orthogonal complement has $\rH{M}$ in its exterior algebra. 
	\SELF{$= \left(\bigcap\{V\subset X:  \rH{M}\in\bigwedge V\}\right)^\perp$, orth compl of smallest $V$ with $\rH{M}$}


As $M \in \bigwedge \osp{M}$, the following just restates a well known result.

\begin{corollary}\label{pr:inter osp 0 wedge not 0}
	Given $M,N \in \bigwedge X$ with $\osp{M}\cap\osp{N}=\{0\}$, we have
	$M\wedge N = 0 \Leftrightarrow M=0$ or $N=0$. 
		\OMIT{\ref{pr:osp smallest subspace with M} \\ Usual result that, for $M\in\bigwedge V$ and $N\in\bigwedge W$ with $V\cap W=\{0\}$, $M\wedge N=0 \Leftrightarrow M=0$ or $N=0$. \\
		Detalhando: bases de $V$ e $W$ formam de $V\oplus W$, induzindo base de $\bigwedge(V\oplus W)$ em termos de bases de $\bigwedge V$ e $\bigwedge W$. Decompor $M$, $N$ e $M\wedge N$ nessas bases.}
\end{corollary}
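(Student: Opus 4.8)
The plan is to reduce this to the classical fact that an exterior product of two elements living in the exterior algebras of complementary subspaces vanishes only when one of the factors does. The backward implication is immediate, since $M\wedge N = 0$ whenever $M=0$ or $N=0$, so only the forward direction requires argument.

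First I would apply \Cref{pr:osp smallest subspace with M} to obtain $M\in\bigwedge\osp{M}$ and $N\in\bigwedge\osp{N}$. Setting $V=\osp{M}$ and $W=\osp{N}$, the hypothesis says $V\cap W=\{0\}$, so everything reduces to the statement: if $M\in\bigwedge V$ and $N\in\bigwedge W$ with $V\cap W=\{0\}$ and $M\wedge N=0$, then $M=0$ or $N=0$. To prove this I would pick a basis $(u_1,\ldots,u_r)$ of $V$ and a basis $(w_1,\ldots,w_s)$ of $W$; since $V\cap W=\{0\}$, their concatenation is a basis of $V\oplus W$, and the monomials $u_\ii\wedge w_\jj$ with $\ii\in\II^r$ and $\jj\in\II^s$ form a basis of $\bigwedge(V\oplus W)$. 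Expanding $M=\sum_\ii a_\ii\, u_\ii$ and $N=\sum_\jj b_\jj\, w_\jj$ gives $M\wedge N = \sum_{\ii,\jj} a_\ii b_\jj\, u_\ii\wedge w_\jj$, and linear independence of the monomials forces $a_\ii b_\jj = 0$ for all $\ii,\jj$, hence all $a_\ii=0$ or all $b_\jj=0$, i.e.\ $M=0$ or $N=0$.

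There is no genuine obstacle: the only point needing attention is recognizing that the hypothesis $\osp{M}\cap\osp{N}=\{0\}$ is exactly what lets the two exterior subalgebras interact in direct-sum fashion, and that the degenerate cases ($M$ or $N$ zero or scalar) are covered automatically since $\osp{0}=\{0\}$. As the remark preceding the statement indicates, once one knows $M\in\bigwedge\osp{M}$ the corollary is merely a restatement of the standard result, so in the write-up I would likely cite that fact rather than spell out the basis computation in full.
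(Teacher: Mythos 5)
Your proposal is correct and follows exactly the route the paper intends: invoke \Cref{pr:osp smallest subspace with M} to get $M\in\bigwedge\osp{M}$, $N\in\bigwedge\osp{N}$, and then reduce to the standard fact for exterior algebras of complementary subspaces, verified by expanding $M$, $N$ and $M\wedge N$ in the basis of $\bigwedge(\osp{M}\oplus\osp{N})$ induced by bases of the two subspaces. Nothing further is needed.
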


\begin{proposition}\label{pr:isp alternative}
	Let $M\in \bigwedge X$.
	\begin{enumerate}[i)]
		\item $\isp{M} = \{v\in X: v=0$ or $M = v\wedge N$ for $N\in\bigwedge X\}$. \label{it:isp factor v}
		
		\item $\isp{M} = [B]$ for any blade $B \neq 0$ of largest possible grade such that $M = B\wedge N$ for $N\in\bigwedge X$. \label{it:isp space max factor} 
	\end{enumerate}
\end{proposition}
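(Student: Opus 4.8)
The plan is to obtain part (i) essentially for free from \Cref{pr:wedge contr ker image}, and to prove part (ii) by a maximality/contradiction argument that feeds on part (i). For part (i): $v \in \isp{M}$ means exactly $v \wedge M = 0$, and for $v \neq 0$ the first item of \Cref{pr:wedge contr ker image} rewrites this as $M = v \wedge N$ for some $N \in \bigwedge X$; since $0$ lies in $\isp{M}$ trivially and in the right-hand set by convention, the two sets coincide and no further work is needed.

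For part (ii) I would first check the statement is well-posed. The nonzero scalar $1$ is a blade with $M = 1 \wedge M$, so blades $B \neq 0$ admitting a factorization $M = B \wedge N$ do exist, and their grades are bounded by $\dim X$; hence one of maximal grade, say $p$, exists. Fix such a $B$, and by the second item of \Cref{pr:M=BBM} assume $N \in \bigwedge([B]^\perp)$. The inclusion $[B] \subset \isp{M}$ is immediate: $v \in [B] \Rightarrow v \wedge B = 0 \Rightarrow v \wedge M = v \wedge B \wedge N = 0$.

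The reverse inclusion $\isp{M} \subset [B]$ is the heart of the matter, and I would argue by contradiction. Suppose $v \in \isp{M}$ with $v \notin [B]$; then $v \neq 0$, and splitting $v = v_1 + v_2$ along $[B] \oplus [B]^\perp$ gives $v_2 \neq 0$ and $v \wedge B = v_2 \wedge B$, which is a \emph{nonzero} blade of grade $p+1$ since $v_2$ is orthogonal to, hence outside, $[B]$. From $0 = v \wedge M = v_2 \wedge B \wedge N = \pm\, B \wedge (v_2 \wedge N)$, together with $\osp{B} \cap \osp{v_2 \wedge N} \subset [B] \cap [B]^\perp = \{0\}$ (here $\osp{v_2\wedge N} \subset [B]^\perp$ because $v_2\wedge N \in \bigwedge([B]^\perp)$), \Cref{pr:inter osp 0 wedge not 0} and $B \neq 0$ force $v_2 \wedge N = 0$. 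Applying part (i) to $N$ yields $N = v_2 \wedge N'$ for some $N' \in \bigwedge X$, so $M = B \wedge v_2 \wedge N' = \pm\,(v \wedge B) \wedge N'$ exhibits $M$ as $B' \wedge N''$ with $B' = v \wedge B$ a nonzero blade of grade $p+1 > p$, contradicting maximality. Hence $\isp{M} = [B]$, and since this holds for every maximal $B$, the choice of $B$ is immaterial.

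I expect the only real obstacle to be the bookkeeping in the last paragraph: decomposing $v$ relative to $[B]$, verifying that $v \wedge B$ is a genuine nonzero blade of strictly larger grade, and collapsing $v \wedge M = 0$ down to $v_2 \wedge N = 0$ — which is exactly where the normalization $N \in \bigwedge([B]^\perp)$ and \Cref{pr:inter osp 0 wedge not 0} are used. The reordering signs $(-1)^{|B|}$ are irrelevant to the conclusion.
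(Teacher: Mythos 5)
Your proof is correct and follows essentially the same route as the paper: part (i) from \Cref{pr:wedge contr ker image}, and for part (ii) normalizing $N\in\bigwedge([B]^\perp)$ via \Cref{pr:M=BBM}, using \Cref{pr:inter osp 0 wedge not 0} to collapse $v\wedge M=0$ to $v_2\wedge N=0$, then factoring $v_2$ out of $N$ by part (i) to contradict maximality. The only cosmetic difference is that the paper takes $0\neq v\in[B]^\perp\cap\isp{M}$ directly instead of decomposing a general $v\in\isp{M}\setminus[B]$, which amounts to the same argument.
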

\begin{proof}
	\emph{(\ref{it:isp factor v})} Follows from \Cref{pr:wedge contr ker image}.\label{uso pr:wedge contr ker image}
	%
	\emph{(\ref{it:isp space max factor})} 
	Let $M = B\wedge N$ and assume $N \in \bigwedge ([B]^\perp)$, by \Cref{pr:M=BBM}.
	Clearly, $[B] \subset \isp{M}$,
		\OMITL{$v\in [B] \Rightarrow v\wedge M = v \wedge B \wedge N = 0 \Rightarrow v\in \isp{M}$}
	and if there is $0\neq v \in [B]^\perp \cap \isp{M}$ then
	$0 = v\wedge M = \hat{B} \wedge (v \wedge N) \Rightarrow v \wedge N =0 \Rightarrow N=v \wedge L$
		\OMIT{\ref{pr:inter osp 0 wedge not 0}, $v \wedge N \in \bigwedge ([B]^\perp)$}
	for $L\in \bigwedge X$, by \ref{it:isp factor v}, so
	$M=(B\wedge v) \wedge L$ and $B$ is not maximal.
\end{proof}

As $0 = B\wedge 0$ for any $B$, this agrees with $\isp{0}=X$.

\begin{corollary}\label{pr:isp=osp blade}
	If $\isp{M} = \osp{M}$ then $M\in \bigwedge X$ is a blade.
\end{corollary}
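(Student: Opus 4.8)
The plan is to recognise the common space $V := \isp{M} = \osp{M}$ as $[B]$ for a maximal blade factor $B$ of $M$, and then to force the complementary factor to reduce to a scalar, so that $M$ is a scalar multiple of $B$.

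First I would dispose of the trivial case $M = 0$: then $\isp{0} = X$ while $\osp{0} = \{0\}$, so the hypothesis gives $X = \{0\}$ and $M$ is a scalar, hence a blade. So assume $M \neq 0$. By \Cref{pr:isp alternative}\,\emph{(\ref{it:isp space max factor})} there is a blade $B \neq 0$ with $[B] = \isp{M}$ and $M = B \wedge N$ for some $N \in \bigwedge X$, and by \Cref{pr:M=BBM} we may take $N = \frac{B \lcontr M}{\|B\|^2} \in \bigwedge([B]^\perp)$. On the other hand, since $\osp{M} = [B]$, \Cref{pr:osp smallest subspace with M} gives $M \in \bigwedge [B]$.

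It remains to see that $N$ is a scalar, for then $M = B \wedge N$ is a scalar multiple of $B$, hence a blade. Put $p = \grade{B}$, so that $\dim[B] = p$ and $\bigwedge[B] = \bigoplus_{j=0}^{p} \bigwedge^{j}[B]$ with $\bigwedge^{p}[B] = \Span\{B\}$. Decomposing $M = \sum_{j=0}^{p} \comp{M}{j}$ with $\comp{M}{j} \in \bigwedge^{j}[B]$, each $B \lcontr \comp{M}{j}$ has grade $j - p$, hence vanishes for $j < p$ (left contraction by the higher-grade $B$ gives $0$), while $\comp{M}{p} = \lambda B$ for a scalar $\lambda$; therefore $B \lcontr M = \lambda\,(B \lcontr B)$ is a scalar, and so is $N$. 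Thus $M = B \wedge N$ is a blade. (An alternative for this paragraph: using $\bigwedge([B] \oplus [B]^\perp) \cong \bigwedge[B] \otimes \bigwedge[B]^\perp$, write $N = N_0 + N'$ with $N_0$ the scalar part and $N'$ the positive-grade part of $N$; then the left side of $M - N_0 B = B \wedge N'$ lies in $\bigwedge[B]$ while the right side, if nonzero, carries a nontrivial $[B]^\perp$-factor, so $B \wedge N' = 0$, and since $\osp{B} = [B]$ and $\osp{N'} \subseteq [B]^\perp$ meet only in $\{0\}$, \Cref{pr:inter osp 0 wedge not 0} together with $B \neq 0$ forces $N' = 0$.)

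I expect the genuine content to be that last step — playing $M \in \bigwedge[B]$ against the factorization $M = B \wedge N$, $N \in \bigwedge([B]^\perp)$, to annihilate every part of $N$ but its scalar term; all the preceding is assembled directly from \Cref{pr:isp alternative,pr:M=BBM,pr:osp smallest subspace with M}. The one thing to watch is the handful of degenerate cases ($M = 0$, and $B$ scalar, i.e. $\isp{M} = \{0\}$, where $M$ is itself a scalar), which fit the argument but should be flagged.
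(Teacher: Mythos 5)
Your proposal is correct and follows essentially the same route as the paper: take a maximal factorization $M = B\wedge N$ with $[B]=\isp{M}$ and $N\in\bigwedge([B]^\perp)$ via \Cref{pr:isp alternative} and \Cref{pr:M=BBM}, use $M\in\bigwedge\osp{M}=\bigwedge[B]$ from \Cref{pr:osp smallest subspace with M}, and conclude $N$ is a scalar. You merely spell out that last step (and the degenerate cases) in more detail than the paper does.
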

\begin{proof}\OMIT{\ref{pr:osp smallest subspace with M}, \ref{pr:isp alternative}\ref{it:isp space max factor} }
	Let $M=B\wedge N$ with $[B]=\isp{M}$ and $N \in \bigwedge ([B]^\perp)$.
		\OMIT{\ref{pr:M=BBM}}
	If $\isp{M} = \osp{M}$ then $B\wedge N \in \bigwedge \osp{M} = \bigwedge [B]$, and so $N$ is a scalar.
		\OMIT{expand $N$ in basis of $V$}
\end{proof}

\begin{corollary}\label{pr:inter isp wedge 0}
	$M\wedge N \neq 0 \Rightarrow \isp{M}\cap\isp{N} = \{0\}$, for $M,N\in\bigwedge X$.
	\OMIT{\ref{pr:isp alternative}\ref{it:isp factor v}: $M = v\wedge M'$ and $N = v\wedge N'$ if there is $0\neq v\in \isp{M}\cap\isp{N}$}
\end{corollary}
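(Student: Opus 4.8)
The plan is to argue by contraposition: instead of proving $M\wedge N\neq 0 \Rightarrow \isp{M}\cap\isp{N}=\{0\}$ directly, I would assume $\isp{M}\cap\isp{N}\neq\{0\}$ and deduce $M\wedge N=0$. So the first step is simply to fix a nonzero vector $v\in\isp{M}\cap\isp{N}$.

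The second step is to extract factorizations of $M$ and $N$ from this single vector. This is exactly what \Cref{pr:isp alternative}\ref{it:isp factor v} provides: a nonzero $v$ lies in $\isp{M}$ iff $M=v\wedge M'$ for some $M'\in\bigwedge X$, and likewise $v\in\isp{N}$ gives $N=v\wedge N'$ for some $N'\in\bigwedge X$. Since our $v$ sits in both inner spaces, I get both factorizations simultaneously, with the same $v$.

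The third step is the short computation $M\wedge N = v\wedge M'\wedge v\wedge N'$. Commuting the second occurrence of $v$ leftward past $M'$ (working grade by grade, or equivalently using $M'\wedge v = v\wedge\hat{M}'$) turns this into $(v\wedge v)\wedge\hat{M}'\wedge N'$, which vanishes because $v\wedge v=0$. Hence $M\wedge N=0$, contradicting the hypothesis, so $\isp{M}\cap\isp{N}=\{0\}$.

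I do not expect any genuine obstacle: the only point requiring a touch of care is moving $v$ across the possibly inhomogeneous multivector $M'$, since the sign depends on the grade of each homogeneous component; but because $v\wedge v=0$ annihilates every resulting term irrespective of sign, even the crudest bookkeeping works. No separate treatment of scalar components of $M'$ or $N'$ is needed.
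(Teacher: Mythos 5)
Your proposal is correct and matches the paper's intended argument: the paper also invokes \Cref{pr:isp alternative}\ref{it:isp factor v} to write $M=v\wedge M'$ and $N=v\wedge N'$ for a common nonzero $v\in\isp{M}\cap\isp{N}$, forcing $M\wedge N=0$. Your sign-bookkeeping remark about moving $v$ past the inhomogeneous $M'$ is fine and, as you note, immaterial since $v\wedge v=0$ kills every term.
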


The next example shows it does not imply $\osp{M}\cap\osp{N} = \{0\}$. 

\begin{example}\label{ex:wedge e inter non 0}
	If $N = 1 + v \wedge w$ for a basis $(v,w)$ of $\R^2$ then $v\wedge N = v \neq 0$ and $\isp{v} \cap \isp{N} = [v] \cap \{0\} =\{0\}$, but $\osp{v} \cap \osp{N} = [v] \cap \R^2 \neq \{0\}$.
\end{example}

\begin{proposition}\label{pr:osp alternative}
	Let $M\in \bigwedge X$.
	\begin{enumerate}[i)]
		\item $\osp{M} = \{v\in X: v=0$ or $M = v\lcontr N$ for $N\in\bigwedge X\}^\perp$. \label{it:osp v carv}
		
		\item $\osp{M} = [B]$ for any blade $B$ of smallest possible grade such that $M = N \lcontr B$ for $N\in\bigwedge X$. \label{it:osp space max carv}
	\end{enumerate}
\end{proposition}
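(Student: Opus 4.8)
The plan is to prove both parts directly, in parallel with \Cref{pr:isp alternative} but with contractions in place of exterior products. Part \ref{it:osp v carv} is immediate: by \Cref{pr:wedge contr ker image}\ref{it:contr ker image}, for $0\neq v\in X$ one has $v\lcontr M=0$ iff $M=v\lcontr N$ for some $N\in\bigwedge X$; since also $0\lcontr M=0$, the set $\{v\in X: v\lcontr M=0\}$ coincides with $\{v\in X: v=0$ or $M=v\lcontr N$ for $N\in\bigwedge X\}$, and taking orthogonal complements gives the claim, as $\osp{M}=\{v\in X: v\lcontr M=0\}^\perp$ by definition.

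For part \ref{it:osp space max carv} the crux is to decide exactly which blades $B$ admit a representation $M=N\lcontr B$: I claim that for a nonzero blade $B$ this holds iff $\osp{M}\subset[B]$, equivalently (by \Cref{pr:osp smallest subspace with M}) iff $M\in\bigwedge([B])$. For one implication, if $M=N\lcontr B$ then by \Cref{pr:M=BBM} we may take $N\in\bigwedge([B])$, and then $M=N\lcontr B\in\bigwedge([B])$ since the contraction of two elements of $\bigwedge([B])$ lies in $\bigwedge([B])$; hence $\osp{M}\subset[B]$. For the converse, suppose $M\in\bigwedge([B])$ and write $B=c\,e_1\wedge\cdots\wedge e_k$ with $c\neq 0$ and $e_1,\ldots,e_k$ an orthonormal basis of $[B]$; then $e_\rr\lcontr B$ is a nonzero multiple of $e_{\rr'}$ for every $\rr\in\II^k$, so the elements $e_\rr\lcontr B$ span $\bigwedge([B])$ and therefore $M=N\lcontr B$ for a suitable $N\in\bigwedge([B])$. (Equivalently, $N\mapsto N\lcontr B$ is, up to the scalar $c$, the right Hodge dual on $\bigwedge([B])$ relative to the volume element $e_1\wedge\cdots\wedge e_k$, hence invertible.)

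Granting the claim, part \ref{it:osp space max carv} is a grade count. Blades $B$ with $M=N\lcontr B$ exist (take any blade spanning $\osp{M}$, or a unit $n$-blade), so the minimum of $\grade{B}=\dim[B]$ over all nonzero blades $B$ with $M=N\lcontr B$ is well defined; by the claim it is $\geq\dim\osp{M}$, and it is attained by any blade spanning $\osp{M}$. Hence any blade $B$ of smallest grade with $M=N\lcontr B$ has $\dim[B]=\dim\osp{M}$ and, at the same time, $\osp{M}\subset[B]$, forcing $[B]=\osp{M}$. (If $M$ is a scalar, $B$ may be taken to be a nonzero scalar with $[B]=\{0\}=\osp{M}$, and $M=0$ is trivial.) The one nontrivial point is the converse direction of the claim, namely that right-contraction by a nonzero blade $B$ surjects onto $\bigwedge([B])$; I would settle it by the orthonormal-basis computation above, or by invertibility of Hodge duality from \cite{Mandolesi_Contractions}. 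Everything else is bookkeeping with \Cref{pr:M=BBM}, \Cref{pr:osp smallest subspace with M}, and \Cref{pr:wedge contr ker image}.
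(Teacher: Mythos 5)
Your proof is correct, and part \ref{it:osp v carv} is handled exactly as in the paper (via \Cref{pr:wedge contr ker image}). For part \ref{it:osp space max carv}, however, you take a genuinely different route. The paper dualizes: it takes a maximal blade $A$ with $\lH{M}=A\wedge N$ as in \Cref{pr:isp alternative}\ref{it:isp space max factor}, notes $\lH{M}=A\wedge N \Leftrightarrow M = N\lcontr \rH{A}$ by \Cref{pr:Hodge wedge contr}, and concludes $\osp{M}=\isp{\lH{M}}^\perp=[A]^\perp=[\rH{A}]$ using \Cref{pr:osp isp star}, so minimal carving blades are exactly the duals of maximal factorization blades. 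You instead prove directly the characterization that a nonzero blade $B$ admits $M=N\lcontr B$ iff $\osp{M}\subset[B]$: the forward direction via \Cref{pr:M=BBM} and closedness of $\bigwedge([B])$ under contraction (one could equally cite \Cref{pr:osp isp lcontr}\ref{it:osp lcontr}, which gives $\osp{N\lcontr B}\subset\osp{B}=[B]$ outright), and the converse via the observation that $N\mapsto N\lcontr B$ restricted to $\bigwedge([B])$ is, up to a scalar, Hodge duality inside $[B]$, hence surjective onto $\bigwedge([B])$; the grade count then forces $[B]=\osp{M}$ for any minimal $B$. In effect you have proven, en route, the later \Cref{pr:outer blade carved} (which the paper leaves as an exercise in duality) and deduced the proposition from it. Your argument is self-contained and more explicit about which blades admit carvings, at the cost of an orthonormal-basis computation; the paper's argument is shorter and illustrates the systematic $\star$-duality between factorizations and carvings that the rest of Section \ref{sc:Blade Carvings} relies on. Both are sound, including your handling of the scalar and $M=0$ edge cases.
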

\begin{proof}
	\emph{(\ref{it:osp v carv})} Follows from \Cref{pr:wedge contr ker image}.
	%
	\emph{(\ref{it:osp space max carv})} 	
	If $A$ is a maximal blade such that $\lH{M} = A \wedge N$ for $N\in \bigwedge X$,
	then
	$\osp{M} = \isp{\lH{M}}^\perp = [A]^\perp = [\rH{A}]$,
	and $\lH{M} = A \wedge N \Leftrightarrow M = \rH{(A \wedge N)} = N \lcontr \rH{A}$.%
		\OMIT{\ref{pr:osp isp star}, \ref{pr:isp alternative}\ref{it:isp space max factor}, \ref{pr:Hodge wedge contr}, it:star inverses, it:left right star}
\end{proof}


\begin{example}\label{ex:spaces}
	Let $M = v_{134} - v_{145} + v_{345} + v_{1235}$ for an orthonormal basis $(v_1,\ldots,v_5)$.
	Solving $v\wedge M=0$ gives $\isp{M} = \Span\{v_1-v_3,v_3+v_5\}$,
	\SELF{ou $\Span\{v_1+v_5,v_3+v_5\}$}
	so we can factorize $M = (v_1-v_3)\wedge (v_3+v_5) \wedge (v_4-v_{25})$, for example.
	Solving $v\lcontr M=0$ we only find $v=0$, so $\osp{M} = X$.
\end{example}

\begin{example}\label{ex:spaces 2}
	Let $M = v_{123} + 2 v_{145} - v_{146}$ for an orthonormal basis $(v_1,\ldots,v_6)$.
	As $\isp{M} = [v_1]$, only multiples of $v_1$ can be factored out.
	And
	$\osp{M} = [v_5 + 2v_6]^\perp = [\rH{v_5} + 2\rH{v_6}] = [v_{12346}-2v_{12345}] = [v_{1234}\wedge (v_6-2v_5)]$,
	so, for example, 
	$M = \left(v_{46} - v_{23}\right) \lcontr \left(v_{1234}\wedge (v_6-2v_5)\right)$.%
		\SELF{and $M = (v_5 + 2v_6) \lcontr \frac{(v_5 + 2v_6)\wedge (v_{123} + 2 v_{145} - v_{146})}{5}$}
\end{example}

Now we describe how inner and outer spaces behave under outermorphisms, exterior products, contractions and Clifford products.

\begin{proposition}\label{pr:T osp isp}
	Let $T:X\rightarrow Y$ be a linear map into a Euclidean or Hermitian space $Y$ (same as $X$),
	and $M\in\bigwedge X$.
	Then $\isp{T M} \supset T(\isp{M})$ and $\osp{TM} \subset T(\osp{M})$, with equalities if $T$ is invertible.
\end{proposition}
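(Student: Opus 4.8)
The plan is to treat the outer-morphism $\bigwedge T:\bigwedge X\to\bigwedge Y$ induced by $T$, which satisfies $(\bigwedge T)(u\wedge M)=Tu\wedge(\bigwedge T)(M)$, and to read off both inclusions from the definitions of $\isp{\cdot}$ and $\osp{\cdot}$. For the inner space, if $v\in\isp{M}$ then either $v=0$, in which case $Tv=0\in\isp{TM}$ trivially, or $v\wedge M=0$; applying $\bigwedge T$ gives $Tv\wedge(\bigwedge T)(M)=0$, i.e.\ $Tv\in\isp{TM}$ (if $Tv\neq 0$; if $Tv=0$ it is again in $\isp{TM}$ by the convention). Hence $T(\isp{M})\subset\isp{TM}$. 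For the outer space I would use the characterization of \Cref{pr:osp smallest subspace with M}: $\osp{M}$ is the smallest subspace $V$ with $M\in\bigwedge V$. Since $M\in\bigwedge\osp{M}$, we get $(\bigwedge T)(M)=TM\in\bigwedge(T(\osp{M}))$, and by minimality $\osp{TM}\subset T(\osp{M})$.

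For the equality case, assume $T$ is invertible. Then $\bigwedge T$ is a bijection of $\bigwedge X$ onto $\bigwedge Y$ (here $X$ and $Y$ have the same dimension), and $T^{-1}$ is also linear, so the two inclusions applied to $T^{-1}$ and the multivector $TM$ give the reverse containments: $\isp{M}=\isp{T^{-1}(TM)}\supset T^{-1}(\isp{TM})$, hence $T(\isp{M})\supset\isp{TM}$; and similarly $\osp{M}=\osp{T^{-1}(TM)}\subset T^{-1}(\osp{TM})$, hence $T(\osp{M})\subset\osp{TM}$. Combining with the first part yields the stated equalities. One small point to note explicitly: when $T$ is invertible, $Tv=0\iff v=0$, so there is no need to worry about the degenerate branch of the definition, and $\bigwedge T$ maps nonzero multivectors to nonzero multivectors, keeping $\osp{\cdot}$ well behaved.

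The only subtlety — and the step I expect to require the most care — is the interaction with the convention $\isp{0}=X$ versus $\osp{0}=\{0\}$, and more generally the ``$v=0$ or $\dots$'' clause in \Cref{pr:isp alternative} and \Cref{pr:osp alternative}. When $T$ is not invertible, $TM$ may be $0$ even when $M\neq 0$, in which case $\isp{TM}=Y$ and the inclusion $\isp{TM}\supset T(\isp{M})$ is automatic, while for $\osp{}$ we have $\osp{0}=\{0\}\subset T(\osp{M})$, also fine; so the conventions cause no contradiction, but it is worth stating that the argument via \Cref{pr:osp smallest subspace with M} already handles $M=0$ and $TM=0$ uniformly. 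No orthogonality or metric structure is actually used in the inclusions — only linearity of $T$ and functoriality of $\bigwedge(-)$ — so I would not invoke $\inner{\cdot,\cdot}$ at all except implicitly through the definition of $\osp{}$ as a perp, which is why routing the outer-space argument through the ``smallest subspace'' description (rather than directly through $v\lcontr M$, which does not transform simply under a non-orthogonal $T$) is the cleaner path.
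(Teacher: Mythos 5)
Your proposal is correct and follows essentially the same route as the paper: the inner-space inclusion via $Tv\wedge TM = T(v\wedge M)=0$, the outer-space inclusion via $TM\in T\bigl(\bigwedge\osp{M}\bigr)\subset\bigwedge T(\osp{M})$ together with the minimality characterization of \Cref{pr:osp smallest subspace with M}, and the equalities by applying the inclusions to $T^{-1}$ and $TM$. The extra remarks on the $M=0$ and $TM=0$ conventions are careful but not a departure from the paper's argument.
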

\begin{proof}
	For $v\in\isp{M}$, $Tv \wedge TM = T(v\wedge M) = 0$, so $Tv\in\isp{TM}$.
	And $TM \in T(\bigwedge\osp{M}) = \bigwedge T(\osp{M})$.
	\OMIT{\ref{pr:osp smallest subspace with M}}
	If $T$ is invertible, $\isp{M} = \isp{T^{-1}TM} \supset T^{-1}(\isp{TM})$, so $\isp{T M} \subset T(\isp{M})$, 
	and likewise $\osp{TM} \supset T(\osp{M})$.
\end{proof}

\begin{proposition}\label{pr:osp isp wedge}
	For $M,N \in \bigwedge X$ we have the following, with equalities and $\oplus$ if $M,N\neq 0$ and $\osp{M}\cap\osp{N}=\{0\}$.%
		\SELF{If $M=N=0$ we still have $\osp{M\wedge N} = \osp{M}\oplus\osp{N} = \{0\}$. If $M=0$ or $N=0$ we have $\isp{M\wedge N} = \isp{M}+\isp{N} = X$.}%
		\CITE{Rosen proves for homogeneous, and equalities for blades}%
	\begin{enumerate}[i)]
		\item $\osp{M\wedge N} \subset \osp{M}+\osp{N}$. \label{it:osp wedge}
		\item $\isp{M\wedge N} \supset \isp{M}+\isp{N}$. \label{it:isp wedge}
			\MAYBE{poderia ser mais fácil fazer primeiro o \ref{pr:osp isp lcontr}\ref{it:osp lcontr} e então dualizar para obter este?}
	\end{enumerate}
\end{proposition}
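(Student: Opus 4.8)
The plan is to obtain the two inclusions \ref{it:osp wedge} and \ref{it:isp wedge} directly, and then upgrade them to equalities under the extra hypotheses by means of a single lemma. For \ref{it:osp wedge}, I would invoke \Cref{pr:osp smallest subspace with M}: since $M\in\bigwedge\osp{M}$ and $N\in\bigwedge\osp{N}$, we get $M\wedge N\in\bigwedge(\osp{M}+\osp{N})$, and minimality of $\osp{M\wedge N}$ forces $\osp{M\wedge N}\subset\osp{M}+\osp{N}$. For \ref{it:isp wedge}, note that $\isp{M\wedge N}$ (the set of $v$ with $v\wedge(M\wedge N)=0$) is a subspace, so it is enough to check $\isp{M}\subset\isp{M\wedge N}$ and $\isp{N}\subset\isp{M\wedge N}$; and $v\wedge M=0$ gives $v\wedge(M\wedge N)=(v\wedge M)\wedge N=0$, while $v\wedge N=0$ gives $v\wedge(M\wedge N)=\hat{M}\wedge(v\wedge N)=0$, using $v\wedge M=\hat{M}\wedge v$ to slide $v$ past $M$.

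Now assume $M,N\neq 0$ and $\osp{M}\cap\osp{N}=\{0\}$. First I would record that $M\wedge N\neq 0$ by \Cref{pr:inter osp 0 wedge not 0}, hence $\isp{M}\cap\isp{N}=\{0\}$ by \Cref{pr:inter isp wedge 0}, so both sums above are direct; also $\isp{M\wedge N}\subset\osp{M\wedge N}$ by \Cref{pr:isp subset osp}. The heart of the matter is the lemma: \emph{if $c\in\bigwedge\osp{M}$ and $d\in\bigwedge\osp{N}$ satisfy $c\wedge N+\hat{M}\wedge d=0$, then $c=\lambda\hat{M}$ and $d=-\lambda N$ for some scalar $\lambda$}. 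To prove it, I would pick bases of $\osp{M}$ and of $\osp{N}$; since $\osp{M}\cap\osp{N}=\{0\}$ these jointly form a basis of $\osp{M}\oplus\osp{N}$, so the corresponding exterior monomials $a_{\rr}$ (from $\osp{M}$) and $b_{\ss}$ (from $\osp{N}$) combine into a basis $\{a_{\rr}\wedge b_{\ss}\}$ of $\bigwedge(\osp{M}\oplus\osp{N})$. Expanding $c=\sum c_{\rr}a_{\rr}$, $\hat{M}=\sum m_{\rr}a_{\rr}$, $d=\sum d_{\ss}b_{\ss}$, $N=\sum n_{\ss}b_{\ss}$ and equating coefficients of $a_{\rr}\wedge b_{\ss}$, the hypothesis becomes $c_{\rr}n_{\ss}+m_{\rr}d_{\ss}=0$ for all $\rr,\ss$; fixing $\ss$ with $n_{\ss}\neq 0$ (possible as $N\neq 0$) gives $c=\lambda\hat{M}$, and then fixing $\rr$ with $m_{\rr}\neq 0$ (possible as $\hat{M}\neq 0$) gives $d=-\lambda N$.

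Granting the lemma, each equality reduces to it via \Cref{pr:contr homog} plus a grade count. For \ref{it:osp wedge}: if $v\lcontr(M\wedge N)=0$ then $(v\lcontr M)\wedge N+\hat{M}\wedge(v\lcontr N)=0$; writing $v=v'+v''$ with $v'\in\osp{M}$, $v''\in\osp{M}^\perp$ shows $v\lcontr M=v'\lcontr M\in\bigwedge\osp{M}$, and likewise $v\lcontr N\in\bigwedge\osp{N}$, so the lemma yields $v\lcontr M=\lambda\hat{M}$. Comparing the grade-$p$ components, with $p$ the top grade of $M$: that of $v\lcontr M$ is $v\lcontr\comp{M}{p+1}=0$, that of $\lambda\hat{M}$ is $(-1)^p\lambda\comp{M}{p}$, so $\lambda=0$; hence $v\lcontr M=v\lcontr N=0$, i.e.\ $v\in(\osp{M}\oplus\osp{N})^\perp$, and so $\osp{M\wedge N}\supset\osp{M}\oplus\osp{N}$. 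For \ref{it:isp wedge}: given $v\in\isp{M\wedge N}\subset\osp{M\wedge N}=\osp{M}\oplus\osp{N}$, write $v=v_1+v_2$ with $v_1\in\osp{M}$, $v_2\in\osp{N}$; then $0=v\wedge M\wedge N=(v_1\wedge M)\wedge N+\hat{M}\wedge(v_2\wedge N)$, the lemma gives $v_1\wedge M=\lambda\hat{M}$, and comparing components of the bottom grade of $M$ forces $\lambda=0$; hence $v_1\wedge M=v_2\wedge N=0$ and $v\in\isp{M}\oplus\isp{N}$, so $\isp{M\wedge N}\subset\isp{M}\oplus\isp{N}$.

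The one genuinely delicate point is the inhomogeneity of $M$: a vector cannot be moved past $M$ without a grade involution, which is exactly why $\hat{M}$ rather than $M$ appears in the lemma and in the identities above, and why the step ``$v\lcontr M=\lambda\hat{M}\Rightarrow\lambda=0$'' (and its wedge analogue) needs the grade-comparison argument rather than the one-line grade count that would suffice were $M$ homogeneous. Everything else is routine bookkeeping in the product basis of $\bigwedge(\osp{M}\oplus\osp{N})$.
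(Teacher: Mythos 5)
Your proof is correct, but it is organized differently from the paper's. For the outer equality the paper simply cites \Cref{pr: v contr M wedge N = 0} (a Part I result restated in the preliminaries), whereas you in effect re-prove that proposition: your proportionality lemma plus the top-grade comparison is exactly an argument that $v\lcontr(M\wedge N)=0$ forces $v\lcontr M=v\lcontr N=0$ when $\osp{M}\cap\osp{N}=\{0\}$. For the inner equality the paper avoids both the coordinate computation and the scalar $\lambda$ altogether: from $(v_1\wedge M)\wedge N=-\hat{M}\wedge(v_2\wedge N)$ it wedges with $v_1$, so that $v_1\wedge v_1=0$ gives $(v_1\wedge\hat{M})\wedge(v_2\wedge N)=0$, and then \Cref{pr:inter isp wedge 0}'s companion, \Cref{pr:inter osp 0 wedge not 0}, kills one factor and the original relation kills the other. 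What each route buys: the paper's is shorter and coordinate-free because it leans on ready-made results and on the fact that a wedge of nonzero elements of $\bigwedge\osp{M}$ and $\bigwedge\osp{N}$ cannot vanish; yours is self-contained and uniform, since the single lemma ($c\wedge N+\hat{M}\wedge d=0$ with $c\in\bigwedge\osp{M}$, $d\in\bigwedge\osp{N}$ implies $c=\lambda\hat{M}$, $d=-\lambda N$) handles both equalities, at the price of an explicit expansion in the product basis of $\bigwedge(\osp{M}\oplus\osp{N})$ and the extra step of comparing the top (resp.\ bottom) grade component of $M$ to conclude $\lambda=0$ --- a step you rightly identify as necessary precisely because $M$ is not assumed homogeneous. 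Your handling of the inhomogeneity (sliding vectors past $M$ only at the cost of the grade involution $\hat{M}$, and noting $v\lcontr M\in\bigwedge\osp{M}$, $v_1\wedge M\in\bigwedge\osp{M}$ so the lemma applies) is sound, and the direct-sum claims follow as you say from $\isp{\cdot}\subset\osp{\cdot}$ and \Cref{pr:inter osp 0 wedge not 0}.
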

\begin{proof}
	\emph{(\ref{it:osp wedge})}
		\OMIT{\ref{pr:osp smallest subspace with M}}
	$M \in \bigwedge \osp{M}$, $N \in \bigwedge\osp{N} \Rightarrow M\wedge N \in \bigwedge(\osp{M}+\osp{N})$.
		\OMITL{$M\wedge N \in \bigwedge(\osp{M}+\osp{N})$}
	\emph{(\ref{it:isp wedge})} $v\in\isp{M} + \isp{N} \Rightarrow v=u+w$ with $u\wedge M = w \wedge N = 0$, so $v\wedge M\wedge N = 0$.
		
	Now let $M,N\neq 0$ and $\osp{M}\cap\osp{N}=\{0\}$.
	\emph{(\ref{it:osp wedge})} 
	By \Cref{pr: v contr M wedge N = 0}\label{uso pr: v contr M wedge N = 0},
	$v\in\osp{M\wedge N}^\perp \Leftrightarrow v\lcontr (M \wedge N) = 0 \Leftrightarrow v\lcontr M = v\lcontr N=0 \Leftrightarrow v\in \osp{M}^\perp \cap \osp{N}^\perp = (\osp{M}\oplus\osp{N})^\perp$. 
	%
	\emph{(\ref{it:isp wedge})} 
	By Propositions \ref{pr:isp subset osp} and \ref{pr:inter osp 0 wedge not 0},
		\OMIT{and \ref{it:osp wedge}}
	$\isp{M\wedge N} \subset \osp{M\wedge N} = \osp{M}\oplus\osp{N}$,
	so $v\in\isp{M\wedge N} \Rightarrow v=u+w$ for $u\in\osp{M}$ and $w\in\osp{N}$.
	And
	$v\wedge M\wedge N=0 \Rightarrow w\wedge M\wedge N = -u\wedge M\wedge N \Rightarrow (u\wedge\hat{M}) \wedge (w\wedge N) = -u\wedge u\wedge M\wedge N = 0$.
		\OMIT{$w\wedge M\wedge N = -u\wedge M\wedge N$}
	As $u\wedge\hat{M}\in\bigwedge\osp{M}$ and $w\wedge N \in \bigwedge\osp{N}$, we have $u\wedge M=0$ or $w\wedge N=0$.
		\OMIT{\ref{pr:inter osp 0 wedge not 0}}
	But $\hat{M} \wedge (w\wedge N) = -(u\wedge M) \wedge N$, so both are $0$.
	Thus $u\in\isp{M}$, $w\in\isp{N}$ and $v\in \isp{M}\oplus\isp{N}$.%
\OMIT{$\oplus$ as $\isp{\cdot}\subset \osp{\cdot}$ and $\osp{M}\cap\osp{N}=\{0\}$}
\end{proof}

If $M\wedge N \neq 0$ then 
$\isp{M} \subset \isp{M\wedge N} \subset \osp{M\wedge N}$.
	\OMIT{\ref{pr:osp isp wedge}\ref{it:isp wedge}, \ref{pr:isp subset osp}}
So, $[B] \subset \osp{B\wedge N}$ if $B$ is a blade and $B\wedge N \neq 0$.
If $N\neq 0$ and $\osp{M}\cap\osp{N}=\{0\}$, $\osp{M} \subset \osp{M\wedge N}$
($M\wedge N \neq 0$ is not enough: e.g., $\osp{N} \not\subset \osp{v\wedge N}$ in \Cref{ex:wedge e inter non 0}).

Recall that $U \pperp V$ means $V^\perp \cap U \neq \{0\}$.

\begin{proposition}\label{pr:osp isp lcontr}
	For $M,N \in \bigwedge X$ we have the following, with equalities if $M,N\neq 0$ and $\osp{M}\not\pperp \isp{N}$.
		\SELFL{If $M=0$ or $N=0$ we still have $\osp{M\lcontr N} = \isp{M}^\perp\cap\osp{N} = \{0\}$.	If $M=N=0$ we have $\isp{M\lcontr N} = \osp{M}^\perp\cap\isp{N} = X$.}
	\begin{enumerate}[i)]
		\item $\osp{M\lcontr N} \subset \isp{M}^\perp\cap\osp{N}$. \label{it:osp lcontr}
			\SELF{$\osp{M}\cap\isp{N}^\perp=\{0\}$. For blades, it:space contr gave $=$ if $A\not\pperp B$}
		\item $\isp{M\lcontr N} \supset \osp{M}^\perp\cap\isp{N}$. \label{it:isp lcontr}
	\end{enumerate}
\end{proposition}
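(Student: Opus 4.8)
The plan is to obtain both items, together with the equality cases, by dualizing Proposition \ref{pr:osp isp wedge}, using that reversion leaves $\isp{}$ and $\osp{}$ unchanged. First I would record the elementary fact that $\isp{\tilde L} = \isp L$ and $\osp{\tilde L} = \osp L$ for every $L \in \bigwedge X$: comparing homogeneous components, reversion only rescales each of them by $\pm 1$ while $v \wedge \cdot$ and $v \lcontr \cdot$ shift grades, so $v \wedge \tilde L = 0 \Leftrightarrow v \wedge L = 0$ and $v \lcontr \tilde L = 0 \Leftrightarrow v \lcontr L = 0$. Combined with the standard reversion identity $\widetilde{M \lcontr N} = \tilde N \rcontr \tilde M$ from \cite{Mandolesi_Contractions}, this gives $\isp{M \lcontr N} = \isp{\tilde N \rcontr \tilde M}$ and $\osp{M \lcontr N} = \osp{\tilde N \rcontr \tilde M}$, reducing everything to the right contraction $\tilde N \rcontr \tilde M$.

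Next I would apply Proposition \ref{pr:Hodge wedge contr}, $\rH{(\tilde N \rcontr \tilde M)} = \tilde M \wedge \rH{\tilde N}$, and then Proposition \ref{pr:osp isp star} in the form $\osp L = \isp{\rH L}^\perp$, $\isp L = \osp{\rH L}^\perp$ (obtained by taking orthogonal complements of its two identities), to get
\[
\osp{M \lcontr N} = \isp{\tilde M \wedge \rH{\tilde N}}^\perp, \qquad \isp{M \lcontr N} = \osp{\tilde M \wedge \rH{\tilde N}}^\perp .
\]
Now Proposition \ref{pr:osp isp wedge} applied to $\tilde M \wedge \rH{\tilde N}$ gives $\isp{\tilde M \wedge \rH{\tilde N}} \supset \isp{\tilde M} + \isp{\rH{\tilde N}}$ and $\osp{\tilde M \wedge \rH{\tilde N}} \subset \osp{\tilde M} + \osp{\rH{\tilde N}}$. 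Using $\isp{\tilde M} = \isp M$, $\osp{\tilde M} = \osp M$, and (Proposition \ref{pr:osp isp star} again, with $\isp{\tilde N} = \isp N$, $\osp{\tilde N} = \osp N$) $\isp{\rH{\tilde N}} = \osp N^\perp$, $\osp{\rH{\tilde N}} = \isp N^\perp$, these become $\isp{\tilde M \wedge \rH{\tilde N}} \supset \isp M + \osp N^\perp$ and $\osp{\tilde M \wedge \rH{\tilde N}} \subset \osp M + \isp N^\perp$. Substituting into the displayed equalities and taking orthogonal complements — which reverses inclusions and turns sums into intersections of complements — yields exactly items i) and ii).

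For the equality cases, Proposition \ref{pr:osp isp wedge} gives equality once $\tilde M \neq 0$, $\rH{\tilde N} \neq 0$, and $\osp{\tilde M} \cap \osp{\rH{\tilde N}} = \{0\}$; the first two hold iff $M, N \neq 0$ (the star being invertible), and the third reads $\osp M \cap \isp N^\perp = \{0\}$, which is precisely $\osp M \not\pperp \isp N$. I do not expect a genuine obstacle — the argument is bookkeeping in the duality maps — but the step needing most care is this last translation: one must unwind $U \pperp V \Leftrightarrow V^\perp \cap U \neq \{0\}$ and check it matches the nondegeneracy condition that makes Proposition \ref{pr:osp isp wedge} sharp, and track that passing to complements flips $\subset$ to $\supset$ consistently between the two items. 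The degenerate cases $M = 0$ or $N = 0$ are handled by inspection: the two inclusions still follow from the (unconditional) displayed equalities and the unconditional part of Proposition \ref{pr:osp isp wedge}, and item i) is then an equality since $M \lcontr N = 0$ and $\isp M^\perp \cap \osp N = \{0\}$, while item ii) is an equality precisely when $M = N = 0$.
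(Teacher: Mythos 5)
Your proof is correct and takes essentially the same route as the paper's: both dualize Proposition \ref{pr:osp isp wedge} through Propositions \ref{pr:Hodge wedge contr} and \ref{pr:osp isp star}, and translate the sharpness condition into $\osp{M}\cap\isp{N}^\perp=\{0\}$, i.e.\ $\osp{M}\not\pperp\isp{N}$. The only difference is cosmetic: the paper writes $M\lcontr N = \rH{(\lH{N}\wedge M)}$ via the left dual, whereas you detour through reversion and the right dual, which is harmless since reversion leaves $\isp{\cdot}$ and $\osp{\cdot}$ unchanged.
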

\begin{proof}
	\emph{(\ref{it:osp lcontr})}  
		\OMIT{\ref{pr:osp isp star}, \ref{pr:Hodge wedge contr}, \ref{pr:osp isp wedge}, it:star inverses}
	$\osp{M\lcontr N} = \osp{\rH{(\lH{N}\wedge M)}} = \isp{\lH{N}\wedge M}^\perp \subset (\isp{\lH{N}}+\isp{M})^\perp = \isp{\lH{N}}^\perp \cap \isp{M}^\perp = \osp{N} \cap \isp{M}^\perp$, with equality if $M,N\neq 0$ and $\osp{M} \cap \osp{\lH{N}} = \osp{M}\cap\isp{N}^\perp = \{0\}$.
	\emph{(\ref{it:isp lcontr})} Similar.
\end{proof}

\begin{proposition}\label{pr:osp isp Clifford}
	Let $M,N \in \bigwedge X$.
	\begin{enumerate}[i)]
		\item $\osp{MN} \subset \osp{M}+\osp{N}$, with equality and $\oplus$ if $M,N\neq 0$ and $\osp{M}\cap\osp{N}=\{0\}$.\label{it:osp Clifford}
		
		\item $\isp{MN} \supset (\osp{N}^\perp \cap \isp{M}) + (\osp{M}^\perp \cap \isp{N})$.
			\SELF{$\oplus$ if $M\neq 0$ or $N\neq 0$}
		For $M,N\neq 0$, 
		$\isp{MN} = \osp{N}^\perp \cap \isp{M}$ if $\osp{N} \not\pperp \isp{M}$,
		or
		$\isp{MN} = \osp{M}^\perp \cap \isp{N}$ if $\osp{M} \not\pperp \isp{N}$.%
		\label{it:isp Clifford}
	\end{enumerate}
\end{proposition}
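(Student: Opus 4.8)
The plan is to reduce both parts to the corresponding $\wedge$-statements (Propositions~\ref{pr:osp smallest subspace with M}, \ref{pr:inter osp 0 wedge not 0}, \ref{pr:osp isp wedge}) via two Leibniz-type identities for the Clifford product: for $v\in X$,
\[ v\lcontr(MN)=(v\lcontr M)N+\hat M(v\lcontr N),\qquad v\wedge(MN)=(v\lcontr M)N+\hat M(v\wedge N)=(v\wedge M)N-\hat M(v\lcontr N), \]
which follow from $vL=v\lcontr L+v\wedge L$, $\hat Lv=v\wedge L-v\lcontr L$ and associativity (equivalently, from the Clifford analogues of \Cref{pr:contr homog}). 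For the containment in \emph{(i)}: by \Cref{pr:osp smallest subspace with M}, $M\in\bigwedge\osp M$ and $N\in\bigwedge\osp N$, so $MN$ lies in $\bigwedge(\osp M+\osp N)$ (the Clifford algebra of a subspace sits inside that of $X$), and \Cref{pr:osp smallest subspace with M} again gives $\osp{MN}\subset\osp M+\osp N$.

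\textbf{Equality in (i).} First I would record a Clifford analogue of \Cref{pr:inter osp 0 wedge not 0}: if $\osp A\cap\osp B=\{0\}$ and $A,B\neq 0$ then $AB\neq 0$. Indeed the grade-$(\tgrade A+\tgrade B)$ component of $AB$ equals $\comp{A}{\tgrade A}\wedge\comp{B}{\tgrade B}$ (the Clifford product of homogeneous multivectors is their wedge plus strictly lower-grade terms), and this is nonzero by \Cref{pr:inter osp 0 wedge not 0} since $\osp{\comp{A}{\tgrade A}}\subset\osp A$ and $\osp{\comp{B}{\tgrade B}}\subset\osp B$. Now assume $M,N\neq 0$ and $\osp M\cap\osp N=\{0\}$, so $\bigwedge(\osp M\oplus\osp N)=\bigoplus_{p,q}(\bigwedge^p\osp M)\wedge(\bigwedge^q\osp N)$ is bigraded, and I claim $v\lcontr(MN)=0\Leftrightarrow v\lcontr M=v\lcontr N=0$. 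The direction $\Leftarrow$ is the first Leibniz identity. For $\Rightarrow$: if both $v\lcontr M\neq 0$ and $v\lcontr N\neq 0$, then $(v\lcontr M)N=-\hat M(v\lcontr N)$ with both sides nonzero by the analogue just proved (here $v\lcontr M,\hat M\in\bigwedge\osp M$ and $N,v\lcontr N\in\bigwedge\osp N$); comparing top-grade components, $\comp{v\lcontr M}{\tgrade{v\lcontr M}}\wedge\comp{N}{\tgrade N}=-\comp{\hat M}{\tgrade M}\wedge\comp{v\lcontr N}{\tgrade{v\lcontr N}}$, and since both sides are nonzero and live in bidegrees $(\tgrade{v\lcontr M},\tgrade N)$ and $(\tgrade M,\tgrade{v\lcontr N})$ respectively, the bigrading forces $\tgrade{v\lcontr M}=\tgrade M$, contradicting $\tgrade{v\lcontr M}\le\tgrade M-1$. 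Hence one of $v\lcontr M,v\lcontr N$ vanishes, and then so does the other by the Leibniz identity and the analogue above. Therefore $v\in\osp{MN}^\perp\Leftrightarrow v\lcontr(MN)=0\Leftrightarrow v\in\osp M^\perp\cap\osp N^\perp=(\osp M\oplus\osp N)^\perp$, i.e.\ $\osp{MN}=\osp M\oplus\osp N$.

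\textbf{Part (ii).} For the containment: if $v\in\osp M^\perp\cap\isp N$ then $v\lcontr M=0=v\wedge N$, so the first wedge-Leibniz identity gives $v\wedge(MN)=0$; if $v\in\osp N^\perp\cap\isp M$ then $v\lcontr N=0=v\wedge M$, so the second gives $v\wedge(MN)=0$. Hence $\isp{MN}\supset(\osp N^\perp\cap\isp M)+(\osp M^\perp\cap\isp N)$, and when $M\neq 0$ or $N\neq 0$ the sum is direct, since its summands meet in $\isp M\cap\isp N\cap\osp M^\perp\cap\osp N^\perp\subset\osp M\cap\osp M^\perp=\{0\}$ by \Cref{pr:isp subset osp}. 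For the equalities I would dualize part (i): using $\osp{\rH L}=\isp L^\perp$, $\isp{\rH L}=\osp L^\perp$ (\Cref{pr:osp isp star}) and the Hodge relations (\Cref{pr:Hodge wedge contr}) one obtains $\isp{MN}=\osp{M\rH N}^\perp$ and, symmetrically, $\isp{MN}=\osp{(\lH M)N}^\perp$. Granting the first, $\osp M\not\pperp\isp N$ means $\osp M\cap\isp N^\perp=\osp M\cap\osp{\rH N}=\{0\}$, and $\rH N\neq 0$, so part (i) gives $\osp{M\rH N}=\osp M\oplus\isp N^\perp$, whence $\isp{MN}=(\osp M\oplus\isp N^\perp)^\perp=\osp M^\perp\cap\isp N$; the case $\osp N\not\pperp\isp M$ is symmetric via $(\lH M)N$.

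\textbf{Main obstacle.} The delicate step is the $\isp$-equality in (ii): justifying $\isp{MN}=\osp{M\rH N}^\perp$ amounts to converting the Hodge dual of a Clifford product into a Clifford product with one dualized factor, which must be phrased at the level of inner/outer spaces rather than of the multivectors themselves, since the naive identity $\rH{(MN)}=M\rH N$ is false. The alternative of mimicking (i) directly — deducing $v\lcontr M=0$ and $v\wedge N=0$ from $(v\lcontr M)N=-\hat M(v\wedge N)$ under $\osp M\not\pperp\isp N$ — runs into the fact that $\osp{v\wedge N}$ need not lie in $\isp N^\perp$, so the bigrading argument no longer applies verbatim. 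I also expect the degenerate cases $M=0$, $N=0$, and $MN=0$ to need separate (easy) verification, in line with the marginal remarks accompanying the statement.
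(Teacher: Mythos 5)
Your part (i) is correct, but note that the key equivalence you establish with the top-grade/bigrading argument — $v\lcontr(MN)=0\Leftrightarrow v\lcontr M=v\lcontr N=0$ for nonzero $M,N$ with $\osp{M}\cap\osp{N}=\{0\}$ — is exactly the quoted \Cref{pr: v contr M wedge N = 0}, which already covers the Clifford product and not only the wedge; the paper's proof of (i) simply repeats the argument of \Cref{pr:osp isp wedge}\ref{it:osp wedge} with that proposition. Your containment in (ii), via the two Clifford Leibniz identities, is also fine and is more direct than the paper's route (the paper gets both halves of the containment by dualizing).

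The genuine gap is the equality part of (ii), and it is precisely the step you flag as the ``main obstacle.'' The identity $\isp{MN}=\osp{M\rH{N}}^\perp$ on which your argument hangs does not follow from \Cref{pr:Hodge wedge contr,pr:osp isp star} (those concern $\wedge$ and $\lcontr$, not the Clifford product), and it is in fact false under the paper's conventions: in $X=\R^2$ with orthonormal $(v_1,v_2)$, take $M=v_1$ and $N=1+v_{12}$. Then $\rH{N}=N$ (since $\rH{1}=v_{12}$ and $\rH{v_{12}}=v_{12}\lcontr v_{12}=1$), so your identity would read $\isp{MN}=\osp{MN}^\perp$; but $MN=v_1+v_2$ is a nonzero vector, whose inner and outer spaces are both $[v_1+v_2]$, not the orthogonal line. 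The identity actually needed is the one from Part I that the paper uses: $\rH{(MN)}=\tilde{N}\,\rH{M}$ and $\lH{(MN)}=\lH{N}\,\tilde{M}$ — the factors swap order and a reversion appears, which costs nothing at the level of spaces because $\isp{\tilde{L}}=\isp{L}$ and $\osp{\tilde{L}}=\osp{L}$. With these, $\isp{MN}=\osp{\tilde{N}\rH{M}}^\perp$, and your own part (i) applied to the pair $(\tilde{N},\rH{M})$ gives $\isp{MN}=(\osp{N}+\isp{M}^\perp)^\perp=\osp{N}^\perp\cap\isp{M}$ whenever $\osp{N}\cap\isp{M}^\perp=\{0\}$, i.e.\ $\osp{N}\not\pperp\isp{M}$; the other case follows symmetrically from $\lH{N}\tilde{M}$. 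That is exactly the paper's proof, so your overall strategy is right, but with the surrogate identity unproven (and false in general) the conditional equalities in (ii) remain unestablished.
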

\begin{proof}
	\emph{(\ref{it:osp Clifford})} Follows as in the proof of \Cref{pr:osp isp wedge}\ref{it:osp wedge}.
	\emph{(\ref{it:isp Clifford})} $\isp{MN} = \osp{\rH{(MN)}}^\perp = \osp{\tilde{N}\rH{M}}^\perp \supset (\osp{N} + \osp{\rH{M}})^\perp = \osp{N}^\perp \cap \isp{M}$, with equality if $M,N\neq 0$ and $\osp{N} \cap \osp{\rH{M}} = \osp{N} \cap \isp{M}^\perp = \{0\}$,
	in which case $\isp{N} \cap \osp{M}^\perp = \{0\}$.
		\OMIT{$\osp{N} \not\pperp \isp{M} \Rightarrow \isp{N} \not\pperp \osp{M}$}
	The rest follows likewise, with 	
	$\lH{(MN)} = \lH{N}\tilde{M}$.
\end{proof}

If $M,N\neq 0$, $\osp{N} \not\pperp \isp{M}$
	\OMIT{$\Rightarrow \isp{N} \not\pperp \osp{M} \Rightarrow \osp{M}^\perp \cap \isp{N} = \{0\}$}
and $\osp{M} \not\pperp \isp{N}$
	\OMIT{$\Rightarrow \isp{M} \not\pperp \osp{N} \Rightarrow \osp{N}^\perp \cap \isp{M} = \{0\}$}
then $\isp{MN} = \{0\}$.
But this only happens if $\isp{M}$, $\osp{M}$, $\isp{N}$ and $\osp{N}$ have same dimension, so $M$ and $N$ are same grade blades with $\comp{MN}{0} = \inner{\tilde{M},N} \neq 0$.


The equivalence $[A] \pperp [B] \Leftrightarrow A \lcontr B =0$, for nonzero blades, splits as:

\begin{proposition}\label{pr:pperp isp osp}
	Let $M,N \in \bigwedge X$.
	\begin{enumerate}[i)]
		\item $\isp{M} \pperp \osp{N} \Rightarrow M\lcontr N = 0$. \label{it:isp pperp osp contr 0}
		\item $M\lcontr N = 0 \Rightarrow \osp{M} \pperp \isp{N}$, for $M,N\neq 0$. \label{it:contr 0 osp pperp isp}
	\end{enumerate}
\end{proposition}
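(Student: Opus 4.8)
The plan is to run the same Hodge-duality argument that proves \Cref{pr:osp isp lcontr}, converting the contraction $M\lcontr N$ into an exterior product and then reading off each implication from one of the two intersection corollaries. The workhorse is the identity $M\lcontr N = \rH{(\lH{N}\wedge M)}$ --- exactly what drives the proof of \Cref{pr:osp isp lcontr}\ref{it:osp lcontr}, and coming from \Cref{pr:Hodge wedge contr} together with the fact that $\lH{(\cdot)}$ and $\rH{(\cdot)}$ are mutually inverse up to an irrelevant sign --- so that $M\lcontr N = 0 \Leftrightarrow \lH{N}\wedge M = 0$. I will also use the left-dual analogues of \Cref{pr:osp isp star} (flagged there by ``likewise for $\lH{M}$''), namely $\isp{\lH{N}} = \osp{N}^\perp$ and $\osp{\lH{N}} = \isp{N}^\perp$.

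For part (i), unfold the hypothesis: $\isp{M}\pperp\osp{N}$ means $\osp{N}^\perp\cap\isp{M}\neq\{0\}$, i.e.\ $\isp{\lH{N}}\cap\isp{M}\neq\{0\}$. By the contrapositive of \Cref{pr:inter isp wedge 0} this forces $\lH{N}\wedge M = 0$, hence $M\lcontr N = \rH{(\lH{N}\wedge M)} = 0$. No nonvanishing assumption on $M$ or $N$ is needed here.

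For part (ii), assume $M,N\neq 0$ and $M\lcontr N = 0$; then $\lH{N}\wedge M = 0$. Since $N\neq 0$ and duality is invertible, $\lH{N}\neq 0$, so $\lH{N}$ and $M$ are two nonzero multivectors with vanishing exterior product; \Cref{pr:inter osp 0 wedge not 0}, used in contrapositive form, then gives $\osp{\lH{N}}\cap\osp{M}\neq\{0\}$, that is $\isp{N}^\perp\cap\osp{M}\neq\{0\}$, which is precisely $\osp{M}\pperp\isp{N}$.

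Since the argument is this short, the ``hard part'' is mostly bookkeeping: keeping the left and right duals straight in the analogue of \Cref{pr:osp isp star}, and matching the asymmetric roles of $\isp{(\cdot)}$ and $\osp{(\cdot)}$ in the two items against the convention $U\pperp V\Leftrightarrow V^\perp\cap U\neq\{0\}$ --- the substantive observation being that part (i) needs the inner-space corollary \Cref{pr:inter isp wedge 0} while part (ii) needs the outer-space corollary \Cref{pr:inter osp 0 wedge not 0}. As a consistency check, specializing to nonzero blades $A,B$, where $\isp{(\cdot)} = \osp{(\cdot)} = [\,\cdot\,]$ by \Cref{pr:blade osp=isp=space}, collapses both items to the familiar equivalence $[A]\pperp[B]\Leftrightarrow A\lcontr B = 0$. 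Finally, a more elementary proof of part (i) is available without duality: pick $0\neq v\in\isp{M}\cap\osp{N}^\perp$; from $v\lcontr N = 0$ write $N = v\lcontr N'$ via \Cref{pr:wedge contr ker image}\ref{it:contr ker image}, and from $v\wedge M = 0$ deduce $M\wedge v = 0$, whence $M\lcontr N = M\lcontr(v\lcontr N') = (M\wedge v)\lcontr N' = 0$ by the basic identities for left contraction.
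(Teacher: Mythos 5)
Your proof is correct, but it is organized differently from the paper's. The paper proves (i) elementarily: picking $0\neq v\in\osp{N}^\perp\cap\isp{M}$, it factors $M=v\wedge M'$ via \Cref{pr:isp alternative}\ref{it:isp factor v} and concludes $M\lcontr N=M'\lcontr(v\lcontr N)=0$; and it proves (ii) by contradiction using the equality case of \Cref{pr:osp isp lcontr}\ref{it:isp lcontr}: if $\osp{M}\not\pperp\isp{N}$ then $\osp{M}^\perp\cap\isp{N}=\isp{M\lcontr N}=\isp{0}=X$, forcing $\osp{M}=\{0\}$, so $M$ is a scalar and $M\lcontr N\neq 0$. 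You instead dualize once ($M\lcontr N=0\Leftrightarrow\lH{N}\wedge M=0$) and read both implications off the contrapositives of the wedge corollaries, \Cref{pr:inter isp wedge 0} for (i) and \Cref{pr:inter osp 0 wedge not 0} for (ii), together with the left-dual version of \Cref{pr:osp isp star}. This buys a symmetric treatment of the two items and, for (ii), avoids the contradiction argument and the scalar edge case (though it leans on the same duality bookkeeping that powers \Cref{pr:osp isp lcontr}, so it is not logically lighter); conversely, the paper's proof of (i) needs no duality at all, and the elementary argument you append for (i) is essentially its mirror image, carving $N=v\lcontr N'$ via \Cref{pr:wedge contr ker image}\ref{it:contr ker image} instead of factoring $M=v\wedge M'$. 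All steps check out, including the nonvanishing of $\lH{N}$ when $N\neq 0$ and the fact that no such hypothesis is needed in (i).
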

\begin{proof}
	\emph{(\ref{it:isp pperp osp contr 0})}	
	If there is $0\neq v \in \osp{N}^\perp \cap \isp{M}$ then $v \lcontr N = 0$ and $M=v\wedge M'$ for $M'\in \bigwedge X$, so $M\lcontr N = M' \lcontr (v \lcontr N) = 0$.
		\OMIT{\ref{pr:isp alternative}\ref{it:isp factor v}}
	\emph{(\ref{it:contr 0 osp pperp isp})} If $M\lcontr N = 0$ but $\osp{M} \not\pperp \isp{N}$ then $\osp{M}^\perp\cap\isp{N} = \isp{M\lcontr N} = X$,
		\OMIT{\ref{pr:osp isp lcontr}\ref{it:isp lcontr}, \ref{pr:blade osp=isp=space}}
	so $\osp{M}=\{0\}$.
	But then $M = \lambda$ is a scalar, and $M\lcontr N = \bar{\lambda} N \neq 0$.
\end{proof}

Converses do not hold, and $\osp{M} \perp \osp{N} \not\Rightarrow M\lcontr N = 0$:
	\SELF{$\{0\} \neq \isp{M} \perp \osp{N} \Rightarrow M\lcontr N =0$ \\
		$\osp{M} \perp \osp{N} \Rightarrow M\lcontr N = \overline{\comp{M}{0}}\, N$}

\begin{example}
	Let $(v_1,\ldots,v_4)$ be orthonormal.
	For $M=v_{12} + v_{34}$ and $N=v_{12} - v_{34}$, $M\lcontr N = 0$ but $\isp{M} = \{0\}\not\pperp \osp{N}$.
	For $M=v_1+v_{12}$ and $N=v_1$, $\osp{M} = [v_{12}] \pperp [v_1] = \isp{N}$ but $M\lcontr N = 1 \neq 0$.
	For $M=1+v_1$ and $N=v_2$, $\osp{M} = [v_1] \perp [v_2]=\osp{N}$ but $M\lcontr N = v_2 \neq 0$.
\end{example}

\section{Disassembling multivectors}\label{sc:Special operations}

Here we discuss special ways in which a general multivector can be written as a sum (\emph{decomposition}), exterior product (\emph{factorization}) or contraction (\emph{carving}) of other elements,
and show how such operations are related.

\subsection{Balanced and minimal decompositions}

First we consider a sum/decomposition $M = \sum_i M_i$ for $M,M_i\in \bigwedge X$.

\begin{proposition}\label{pr:osp isp sum}
	$\isp{M} \supset \bigcap_i\isp{M_i}$ and $\osp{M} \subset \sum_i \osp{M_i}$.
\end{proposition}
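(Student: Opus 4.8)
The plan is to prove each inclusion directly from the definitions, using the characterizations of $\isp{\cdot}$ and $\osp{\cdot}$ already established.

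For the inclusion $\isp{M} \supset \bigcap_i \isp{M_i}$, I would take $v \in \bigcap_i \isp{M_i}$, so that $v \wedge M_i = 0$ for every $i$. Then $v \wedge M = v \wedge \sum_i M_i = \sum_i (v \wedge M_i) = 0$ by linearity of the exterior product, hence $v \in \isp{M}$. This is essentially immediate.

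For the inclusion $\osp{M} \subset \sum_i \osp{M_i}$, the cleanest route is via \Cref{pr:osp smallest subspace with M}: each $M_i \in \bigwedge \osp{M_i} \subset \bigwedge\big(\sum_i \osp{M_i}\big)$, so $M = \sum_i M_i \in \bigwedge\big(\sum_i \osp{M_i}\big)$ since $\bigwedge V$ is a linear subspace of $\bigwedge X$ for any subspace $V$. By the minimality statement in \Cref{pr:osp smallest subspace with M}, $\osp{M}$ is contained in any subspace $V$ with $M \in \bigwedge V$, so $\osp{M} \subset \sum_i \osp{M_i}$. Alternatively, one could dualize the first inclusion using \Cref{pr:osp isp star}: applying $\isp{M} \supset \bigcap_i \isp{M_i}$ to $\rH{M} = \sum_i \rH{M_i}$ gives $\isp{\rH{M}} \supset \bigcap_i \isp{\rH{M_i}}$, i.e. $\osp{M}^\perp \supset \bigcap_i \osp{M_i}^\perp = \big(\sum_i \osp{M_i}\big)^\perp$, and taking orthogonal complements yields the claim; but the direct argument via \Cref{pr:osp smallest subspace with M} is shorter and more transparent.

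Neither step presents a real obstacle; the only mild subtlety worth stating explicitly is that the sum is assumed finite (or at least locally finite), which is automatic here since everything lives in the finite-dimensional $\bigwedge X$, so all the linearity manipulations are legitimate. I would present the two inclusions as a short two-sentence proof.
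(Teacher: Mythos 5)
Your proposal is correct and follows essentially the same route as the paper: the first inclusion by linearity of $v\wedge(\cdot)$, and the second by noting $M_i\in\bigwedge\osp{M_i}$ so that $M\in\bigwedge\bigl(\sum_i\osp{M_i}\bigr)$ and invoking the minimality in \Cref{pr:osp smallest subspace with M}. The duality alternative you mention is a fine extra, but the main argument coincides with the paper's.
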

\begin{proof}
	If $v\in \isp{M_i} \ \forall i$ then $v\wedge M = \sum_i v\wedge M_i = 0$, so $v\in\isp{M}$.
	And $M_i \in \bigwedge(\osp{M_i}) \ \forall i \Rightarrow M \in \bigwedge(\sum_i \osp{M_i})$.
\end{proof}

\begin{definition}
	The sum is \emph{inner balanced} if $\isp{M} = \bigcap_i \isp{M_i}$, 
	\emph{outer balanced} if $\osp{M} = \sum_i \osp{M_i}$, 
	and \emph{balanced} if both conditions hold.
\end{definition}

Inner balanced means $\isp{M} \subset \isp{M_i} \ \forall i$, or, equivalently, $v\wedge M = 0 \Leftrightarrow v\wedge M_i=0 \ \forall i$, for $v \in X$.
Outer balanced means $\osp{M} \supset \osp{M_i} \ \forall i$, or, equivalently, $v\lcontr M = 0 \Leftrightarrow v\lcontr M_i=0 \ \forall i$.

\begin{example}\label{ex:inner xor outer}
	The sum in \Cref{ex:spaces} is outer but not inner balanced, as $\osp{M}=X$ but $\isp{M} \not\subset [v_{134}]$. 
	That of \Cref{ex:spaces 2} is inner but not outer balanced:
	$(v_5+2v_6)\lcontr M =0$ but $(v_5+2v_6) \lcontr v_{145} \neq 0$. 
\end{example}

\begin{example}\label{ex:osp isp homog decomp}
	The decomposition $M = \sum_p \comp{M}{p}$ in homogeneous components is balanced: $v\wedge M = \sum_p v\wedge \comp{M}{p} = 0 \Leftrightarrow v\wedge \comp{M}{p} = 0 \ \forall p$, as the terms have distinct grades, and likewise for $\lcontr$.
		\SELF{In particular, a sum of blades of distinct grades is balanced}
\end{example}

We can find an outer balanced blade decomposition $M = \sum_i B_i$ using a basis of $\osp{M}$.
An inner balanced one, taking $M=B \wedge N$ as in \Cref{pr:isp alternative}\ref{it:isp space max factor}, and a blade decomposition $N = \sum_i A_i$, so $M = \sum_i B \wedge A_i$.
And a balanced one, in the same way, but using \Cref{pr:M=BBM} to take $N = \frac{B \lcontr M}{\|B\|^2} \in \bigwedge\osp{M}$,\label{uso pr:M=BBM}
and decomposing it in terms of a basis of $\osp{M}$.

These decompositions can help extend results from blades to general multivectors. For example, we can generalize \Cref{pr:triple subblade}\label{uso pr:triple subblade} 
as follows, using an inner balanced blade decomposition of $N$:

\begin{proposition}\label{pr:reformulated triple prods}
	Let $A$ be a blade and $L,M,N\in\bigwedge X$.
	\OMIT{\ref{pr:triple subblade},\ref{pr:osp smallest subspace with M}, \ref{pr:osp isp min blade decomp}}
	\begin{enumerate}[i)]
		\item \label{it:B sub N}
		$(A\rcontr M)\lcontr N = (P_A M)\wedge(A\lcontr N)$, if $[A]\subset \isp{N}$.
		\SELF{$N\rcontr(L\lcontr B) = (N\rcontr B)\wedge P_B L$.}
		\item \label{it:L sub N}
		$(M\rcontr L)\lcontr N = L\wedge(M\lcontr N)$, if $\osp{L}\subset \isp{N}$.
		\CITEL{\cite[p.\,594]{Dorst2007} proves \ref{it:L sub N} for 3 blades via 3 inductions on the grades. Our proof is simpler, more geometric.}
		\SELF{Mirror $N\rcontr(M\lcontr L) = (N\rcontr M) \wedge L$ (for $L\subset N$) dá it:star wedge contr se $N=\Omega$}
	\end{enumerate}
\end{proposition}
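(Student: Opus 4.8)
The plan is to reduce both statements to their blade versions in Proposition \ref{pr:triple subblade} by decomposing $N$ into blades sharing a common structure, then summing. Recall that since $[A]\subset\isp{N}$ (resp. $\osp{L}\subset\isp{N}$), Proposition \ref{pr:isp alternative}\ref{it:isp factor v} lets us write $N = B\wedge N'$ with $[B]=\isp{N}$, where by Proposition \ref{pr:M=BBM} we may take $N'\in\bigwedge([B]^\perp)$. The key is to pick an inner balanced blade decomposition of $N$, as described just before the statement: write $N' = \sum_i A_i$ (a blade decomposition using a basis of $\osp{N'}\subset[B]^\perp$), so that $N = \sum_i B\wedge A_i =: \sum_i C_i$, where each $C_i = B\wedge A_i$ is a blade with $[B]\subset[C_i]$, hence $[C_i]\supset\isp{N}$; moreover $\isp{C_i}\supset[B]=\isp{N}$ for all $i$ since the decomposition is inner balanced. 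I would first verify this construction gives, for each $i$, a blade $C_i$ with $[A]\subset[C_i]$ in case \ref{it:B sub N} (since $[A]\subset\isp{N}=[B]\subset[C_i]$) and with $L\in\bigwedge[C_i]$ in case \ref{it:L sub N} (since $\osp{L}\subset\isp{N}=[B]\subset[C_i]$, so $L\in\bigwedge\osp{L}\subset\bigwedge[C_i]$ by Proposition \ref{pr:osp smallest subspace with M}).

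For part \ref{it:B sub N}, having $[A]\subset[C_i]$ for each $i$, Proposition \ref{pr:triple subblade}\ref{it:AMB} gives $(A\rcontr M)\lcontr C_i = (P_A M)\wedge(A\lcontr C_i)$. Summing over $i$ and using bilinearity of $\rcontr$, $\lcontr$ and $\wedge$ yields
\[
(A\rcontr M)\lcontr N = \sum_i (A\rcontr M)\lcontr C_i = \sum_i (P_A M)\wedge(A\lcontr C_i) = (P_A M)\wedge(A\lcontr N),
\]
which is the claim. Part \ref{it:L sub N} is identical: with $L\in\bigwedge[C_i]$ for each $i$, Proposition \ref{pr:triple subblade}\ref{it:MAB} gives $(M\rcontr L)\lcontr C_i = L\wedge(M\lcontr C_i)$, and summing over $i$ gives $(M\rcontr L)\lcontr N = L\wedge(M\lcontr N)$.

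The main obstacle is purely bookkeeping: making sure the blade decomposition of $N$ can be chosen so that \emph{every} summand $C_i$ simultaneously satisfies the hypothesis of the relevant part of Proposition \ref{pr:triple subblade} — i.e., that factoring out a maximal blade $B$ with $[B]=\isp{N}$ and then decomposing the remainder forces $[B]\subset[C_i]$ for all $i$. This is exactly what the inner balanced construction on the preceding page provides, so the argument goes through cleanly once that is invoked; the only subtlety is the degenerate cases ($N=0$, or $N$ scalar, or $A$/$L$ scalar), which are trivially checked directly. One could alternatively give a more computational proof by expanding everything in an adapted orthonormal basis and tracking signs via Proposition \ref{pr:epsilon} and Proposition \ref{pr:higher order}, but the decomposition argument is shorter and more transparent.
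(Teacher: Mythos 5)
Your argument is correct and is exactly the route the paper intends: factor out a maximal inner blade $B$ with $[B]=\isp{N}$, use the resulting inner balanced blade decomposition $N=\sum_i B\wedge A_i$ so that every summand is a blade whose span contains $\isp{N}$, apply \Cref{pr:triple subblade} termwise, and sum by linearity. The only nit is a citation slip: the maximal factorization $N=B\wedge N'$ with $[B]=\isp{N}$ is \Cref{pr:isp alternative}\ref{it:isp space max factor}, not \ref{it:isp factor v}.
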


Star duality links inner and outer balanced decompositions:

\begin{proposition}\label{pr:balanced star}
	$\sum_i M_i$ is inner balanced $\Leftrightarrow \sum_i \rH{M_i}$ is outer balanced.
\end{proposition}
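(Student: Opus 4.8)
The plan is to reduce everything to \Cref{pr:osp isp star}, which already tells us how inner and outer spaces swap under the right dual. Recall that for a sum $M = \sum_i M_i$ we automatically have the inclusions of \Cref{pr:osp isp sum}, and ``inner balanced'' means the inclusion $\isp{M} \supset \bigcap_i \isp{M_i}$ is an equality, while ``outer balanced'' for the dual sum $\rH{M} = \sum_i \rH{M_i}$ (note the dual is linear, so this is indeed a decomposition of $\rH{M}$) means $\osp{\rH{M}} = \sum_i \osp{\rH{M_i}}$ is an equality.

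First I would apply \Cref{pr:osp isp star} to rewrite each side. On the left, $\isp{M}$ and $\isp{M_i}$ are unconstrained; but taking orthogonal complements, $\isp{M}^\perp = \osp{\rH{M}}$ and $\isp{M_i}^\perp = \osp{\rH{M_i}}$. The inclusion $\isp{M} \supset \bigcap_i \isp{M_i}$ is an equality if and only if, passing to orthogonal complements, $\isp{M}^\perp = \left(\bigcap_i \isp{M_i}\right)^\perp = \sum_i \isp{M_i}^\perp$, that is, $\osp{\rH{M}} = \sum_i \osp{\rH{M_i}}$. This last equality is precisely the definition of $\sum_i \rH{M_i}$ being outer balanced. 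So the two conditions are literally the same statement after applying the duality $\isp{M}^\perp = \osp{\rH{M}}$ and the elementary identity $\left(\bigcap_i V_i\right)^\perp = \sum_i V_i^\perp$ for subspaces of a finite-dimensional inner product space.

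Concretely, the proof is just this chain of equivalences:
\begin{proof}
	By \Cref{pr:osp isp star}, $\isp{M}^\perp = \osp{\rH{M}}$ and $\isp{M_i}^\perp = \osp{\rH{M_i}}$ for all $i$.
	Hence, using $\left(\bigcap_i \isp{M_i}\right)^\perp = \sum_i \isp{M_i}^\perp$,
	\[
		\isp{M} = \bigcap_i \isp{M_i}
		\ \Leftrightarrow\
		\isp{M}^\perp = \sum_i \isp{M_i}^\perp
		\ \Leftrightarrow\
		\osp{\rH{M}} = \sum_i \osp{\rH{M_i}},
	\]
	and the last equality says exactly that $\sum_i \rH{M_i} = \rH{M}$ is outer balanced.
\end{proof}

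I do not expect any real obstacle here: everything rests on \Cref{pr:osp isp star} and the finite-dimensionality of $X$ (so that $V^{\perp\perp} = V$ and complements turn intersections into sums). The only point worth a moment's care is that the dual is a linear isomorphism of $\bigwedge X$, so that $\rH{M} = \sum_i \rH{M_i}$ is a bona fide decomposition to which the definitions apply; and one should note the symmetry is between ``inner'' on one side and ``outer'' on the dual side — the analogous statement ``outer balanced $\Leftrightarrow$ dual is inner balanced'' follows the same way (or by applying the established equivalence to $\rH{M}$ and using that the dual is, up to sign and the left/right distinction, involutive).
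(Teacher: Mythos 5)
Your proof is correct and is essentially the paper's own argument: the paper just says the result ``follows from \Cref{pr:osp isp star}'', and your chain of equivalences (dualize, take orthogonal complements, use $(\bigcap_i V_i)^\perp = \sum_i V_i^\perp$) is exactly the spelled-out version of that. The only cosmetic remark is that in the Hermitian case the dual is conjugate-linear rather than linear, but additivity is all you need for $\rH{M}=\sum_i \rH{M_i}$, so nothing changes.
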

\begin{proof}
	Follows from \Cref{pr:osp isp star}.
\end{proof}

\begin{definition}
	A blade decomposition of $M$ is \emph{minimal} if no blade decomposition of $M$ has fewer blades.
\end{definition}

\begin{theorem}\label{pr:osp isp min blade decomp}
	Any minimal blade decomposition is balanced.
\end{theorem}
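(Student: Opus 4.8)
The plan is to reduce to the homogeneous case, induct on the grade (stepping down by one via a contraction $v\lcontr M$), and recover the outer half by star duality.

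First I would reduce to $M\in\bigwedge^pX$. If $M=\sum_iB_i$ is a minimal blade decomposition, then grouping the $B_i$ by grade gives $\comp{M}{p}=\sum_{|B_i|=p}B_i$, and each of these is again a \emph{minimal} blade decomposition of $\comp{M}{p}$: any shorter blade decomposition of some $\comp{M}{p}$ (which may be taken to use only grade-$p$ blades) would, substituted back, shorten that of $M$. Since the components have distinct grades, $\isp{M}=\bigcap_p\isp{\comp{M}{p}}$ and $\osp{M}=\sum_p\osp{\comp{M}{p}}$, so $M=\sum_iB_i$ is balanced iff each $\comp{M}{p}=\sum_{|B_i|=p}B_i$ is. Hence assume $M\in\bigwedge^pX$, with $M\neq0$ (the case $M=0$ being trivial, by the conventions $\isp0=X$, $\osp0=\{0\}$, and the empty sum).

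Next I would prove \emph{inner balance} by induction on $p$ (with, inside the step, a secondary induction on the number $k$ of blades); the grades $p\le1$ are immediate. Always $\bigcap_i\isp{B_i}\subset\isp{M}$ by \Cref{pr:osp isp sum}, and if $\isp{M}=\{0\}$ this is an equality, so let $0\neq v\in\isp{M}$, normalized. Then $v\lcontr M\neq0$ (else $v\in\isp{M}\cap\osp{M}^\perp\subset\osp{M}\cap\osp{M}^\perp=\{0\}$ by \Cref{pr:isp subset osp}), $M=v\wedge(v\lcontr M)$ by \Cref{pr:wedge contr ker image}\ref{it:wedge ker image}, and $v\lcontr M\in\bigwedge^{p-1}([v]^\perp)$ because $v\lcontr(v\lcontr M)=0$. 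From $v\lcontr M=\sum_i(v\lcontr B_i)$ and $M=v\wedge(v\lcontr M)$ one sees that the least number of blades in a decomposition of $v\lcontr M$ equals that for $M$, namely $k$; so $v\lcontr M=\sum_i(v\lcontr B_i)$ is a minimal blade decomposition with all $v\lcontr B_i\neq0$, inner balanced by the induction hypothesis. Transferring back with \Cref{pr:osp isp wedge}\ref{it:isp wedge} (an equality here, since $v,v\lcontr M\neq0$ and $[v]\cap\osp{v\lcontr M}=\{0\}$ as $\osp{v\lcontr M}\subset[v]^\perp$) gives $\isp{M}=[v]\oplus\bigcap_i[v\lcontr B_i]$. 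As $[v\lcontr B_i]\subset[B_i]=\isp{B_i}$ for each $i$, it remains only to prove the crux: $v\in[B_i]$ for every $i$; then $\isp{M}\subset\bigcap_i[B_i]$ follows by letting $v$ run over $\isp{M}$.

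For the crux, suppose $T=\{i:v\notin[B_i]\}\neq\emptyset$, so $v\wedge B_i\neq0$ exactly for $i\in T$; from $\sum_{i\in T}(v\wedge B_i)=v\wedge M=0$ with nonzero $(p{+}1)$-blade summands, $|T|\ge2$. The sub-sum $\sum_{i\in T}B_i$ is again a minimal blade decomposition and still has $v$ in its inner space (it equals $v\wedge\sum_{i\in T}(v\lcontr B_i)$), so if $|T|<k$ the secondary induction on $k$ applies and contradicts $v\notin[B_{i_0}]$; thus we may assume $T=\{1,\dots,k\}$. If $k=2$ then $v\wedge B_1=-v\wedge B_2$, so $[v]\oplus[B_1]=[v]\oplus[B_2]=:Q$ has dimension $p+1$ and $B_1,B_2\in\bigwedge^pQ$; but every element of $\bigwedge^pQ$ with $\dim Q=p+1$ is a $p$-blade, whence $M=B_1+B_2$ is a blade, contradicting minimality. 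For $k\ge3$ one writes $B_i=(v+z_i)\wedge(v\lcontr B_i)$ with $z_i\in[v]^\perp\setminus\{0\}$ and $z_i\perp[v\lcontr B_i]$ (possible since $v\wedge B_i\neq0$ and $v\lcontr B_i\neq0$ force $[v\lcontr B_i]$ to be a hyperplane of the $p$-blade $v\lcontr(v\wedge B_i)$), obtaining the relation $\sum_iz_i\wedge(v\lcontr B_i)=0$; together with the linear independence of the $v\lcontr B_i$ and the inductive balance of $v\lcontr M=\sum_i(v\lcontr B_i)$, this should let one merge two of the $B_i$ into a single blade, again shortening $M$. This merging step for $k\ge3$ is the part I expect to be the main obstacle: the $k=2$ case exhibits the mechanism, but making the reduction for larger $k$ precise is the technical heart of the proof.

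Finally, \emph{outer balance} follows by duality: the star is a grade-reversing linear isomorphism carrying blades to blades and preserving the minimal number of blade summands, so $\lH{M}=\sum_i\lH{B_i}$ is a minimal blade decomposition of the homogeneous multivector $\lH{M}$, hence inner balanced by the above, hence $M=\sum_iB_i$ is outer balanced by \Cref{pr:balanced star}. Combined with the reduction to homogeneous $M$, every minimal blade decomposition is balanced.
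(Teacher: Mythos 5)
There is a genuine gap, and it is the one you flag yourself: the merging step for $k\ge 3$ is not an incidental technicality but the entire content of the theorem, and your setup does not deliver it. From $v\wedge M=0$ you only obtain the relation $\sum_i z_i\wedge(v\lcontr B_i)=0$ with all $z_i\neq 0$; nothing in this relation (even granting the linear independence of the $v\lcontr B_i$, which minimality does give, and the inductive balance of $v\lcontr M$) visibly produces a blade decomposition of $M$ with fewer than $k$ blades, and you offer no argument beyond ``this should let one merge two of the $B_i$''. So the proof is incomplete exactly at its crux; the preliminary reductions, the $k=2$ case, and the observation that sub-sums of a minimal decomposition are again minimal are correct but do not close it.

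The underlying issue is that you run star duality in the unfavourable direction. The paper proves \emph{outer} balance directly: if $v\lcontr M=0$ but $I=\{i:v\lcontr B_i\neq 0\}\neq\emptyset$, the hypothesis itself is a linear relation among the subblades, $\sum_{i\in I}v\lcontr B_i=0$; writing $B_i=u_i\wedge(v\lcontr B_i)$ with $u_i\in[B_i]$ and substituting $v\lcontr B_m=-\sum_{i\in I\setminus m}v\lcontr B_i$ merges terms pairwise via $u_i\wedge(v\lcontr B_i)-u_m\wedge(v\lcontr B_i)=(u_i-u_m)\wedge(v\lcontr B_i)$, yielding a decomposition with fewer blades and contradicting minimality; inner balance then follows because $\rH{M}=\sum_i\rH{B_i}$ is also a minimal decomposition (\Cref{pr:balanced star}). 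In your scheme the hypothesis $v\wedge M=0$ only gives the weaker relation above, which is precisely why your merging is hard, whereas in the contraction picture the relation lives among the $v\lcontr B_i$ themselves and the merge is a one-line identity. The repair is simply to apply your own construction to $\rH{M}$ — equivalently, prove outer balance first and dualize at the end rather than at the start; once you do, the reduction to homogeneous components, the induction on the grade $p$, and the secondary induction on $k$ all become unnecessary, since the argument works in one step for arbitrary inhomogeneous $M$.
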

\begin{proof}
	If $M=\sum_{i=1}^m B_i$ is not outer balanced, for some $v\in \osp{M}^\perp$ we have $I =\{i : v\lcontr B_i \neq 0\} \neq \emptyset$.
	We can assume $m \in I$.
	For $i\in I$, as $v\lcontr B_i$ is a subblade of $B_i$ we have
	$B_i = u_i \wedge (v\lcontr B_i)$ for some $u_i\in [B_i]$.
		\OMIT{pr:characterization 1}
	As
	$v \lcontr M = 0$ gives $v\lcontr B_m = -\sum_{i\in I \backslash m} v\lcontr B_i$, 
	we find
	$M = \sum_{i\notin I} B_i + \sum_{i\in I\backslash m} u_i \wedge (v\lcontr B_i) + u_m \wedge (v\lcontr B_m) =  \sum_{i\notin I} B_i + \sum_{i\in I\backslash m} (u_i-u_m) \wedge (v\lcontr B_i)$,
	with less than $m$ blades. 
	Thus any minimal blade decomposition is outer balanced,	
	and inner balanced since $\rH{M}=\sum_{i=1}^m \rH{B_i}$ is also minimal.\OMIT{\ref{pr:balanced star}}
\end{proof}

\begin{example}
	The converse is not valid: for a basis $(v_1,v_2,v_3)$, any decomposition of $M=v_1+v_{23}$ is balanced, as $\isp{M}=\{0\}$ and $\osp{M}=X$.
\end{example}

\subsection{Blade factorizations}\label{sc:Blade Factorizations}

Now we analyze which blades can be factored out of $M\in \bigwedge X$.
\CITE{Versões mais detalhadas do 2.2.8 do Rosen e 3.8 do Kozlov2000I}

\begin{definition}\label{df:M subset N}
	An \emph{inner blade} of $M$ is a blade $B\neq 0$ 
	\SELF{Pra não ter que falar sempre que fatorar. E $B=0$ é blade de max grade (tem qualquer grade) fatorando $M=0$, mas $[0] \neq \isp{0}$}
	with $[B] \subset \isp{M}$, being \emph{maximal} if $[B]=\isp{M}$.
\end{definition}

Note that any nonzero blade is an inner blade of $M=0$.

\begin{theorem}\label{pr:B in isp factor}
	A blade $B\neq 0$ is an inner blade of $M$ if, and only if, $M=B\wedge N$ for some $N\in\bigwedge X$.
	In this case, for each complement $V$ of $[B]$ there is a unique such $N\in\bigwedge V$.
\end{theorem}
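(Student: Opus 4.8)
The plan is to prove both directions of the equivalence and then establish the uniqueness claim separately.

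\textbf{The equivalence.} For the backward direction, if $M = B\wedge N$ then any $v\in[B]$ satisfies $v\wedge M = v\wedge B\wedge N = 0$ (since $v\wedge B = 0$ for $v\in[B]$), so $[B]\subset\isp{M}$, making $B$ an inner blade. For the forward direction, suppose $B = v_1\wedge\cdots\wedge v_p \neq 0$ is an inner blade, so $[B]\subset\isp{M}$. I would argue by induction on $p$, peeling off one vector at a time: since $v_1\in\isp{M}$, \Cref{pr:isp alternative}\ref{it:isp factor v} (equivalently \Cref{pr:wedge contr ker image}\ref{it:wedge ker image}) gives $M = v_1\wedge M_1$ for some $M_1$. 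The subtle point is that I then need $v_2,\ldots,v_p\in\isp{M_1}$ to continue the induction. This should follow because for $j\geq 2$, $v_j\in\isp{M}$ means $0 = v_j\wedge M = v_j\wedge v_1\wedge M_1 = -v_1\wedge(v_j\wedge M_1)$; since $v_j\wedge M_1\in\bigwedge X$ and $v_1\wedge(v_j\wedge M_1)=0$, another application of \Cref{pr:wedge contr ker image}\ref{it:wedge ker image} shows $v_j\wedge M_1 = v_1\wedge L_j$, which does not immediately give $v_j\wedge M_1 = 0$. Here I would instead use the cleaner route via \Cref{pr:isp alternative}\ref{it:isp space max factor}: actually it is simplest to invoke that $\isp{M} = [B']$ for a maximal inner blade $B'$, but since we want an arbitrary $B$, the most direct approach is to factor out all of $B$ at once using \Cref{pr:M=BBM}\emph{(ii)} — but that presupposes $B$ can be factored out. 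So the honest inductive step needs: $v_1,\ldots,v_p$ linearly independent and all in $\isp{M}$, with $M = v_1\wedge\cdots\wedge v_k\wedge M_k$ already established, to show $v_{k+1}\in\isp{M_k}$. Choosing coordinates so that $v_1,\ldots,v_p$ are part of a basis and writing $M_k$ in the induced basis of $\bigwedge X$, the condition $v_{k+1}\wedge M = 0$ forces the component structure of $M_k$ to be divisible by $v_{k+1}$; this is the routine multilinear-algebra computation that I would not grind through in detail.

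\textbf{Uniqueness given a complement $V$.} Fix a complement $V$ of $[B]$, so $X = [B]\oplus V$. Existence of $N\in\bigwedge V$: starting from any $N'$ with $M = B\wedge N'$, write $N' = \sum N'_S$ in a basis adapted to $[B]\oplus V$; the terms of $N'$ containing a factor from $[B]$ are killed by wedging with $B$, so replacing $N'$ by its projection into $\bigwedge V$ leaves $B\wedge N'$ unchanged. (When $V = [B]^\perp$ this is exactly \Cref{pr:M=BBM}\emph{(ii)} with $N = \frac{B\lcontr M}{\|B\|^2}$.) For uniqueness, suppose $N_1, N_2\in\bigwedge V$ both satisfy $M = B\wedge N_i$. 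Then $B\wedge(N_1 - N_2) = 0$ with $N_1 - N_2\in\bigwedge V$ and $[B]\cap V = \{0\}$, hence $\osp{B}\cap\osp{N_1-N_2}\subset[B]\cap V = \{0\}$, so \Cref{pr:inter osp 0 wedge not 0} gives $N_1 - N_2 = 0$ (as $B\neq 0$).

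\textbf{Main obstacle.} The only real difficulty is the forward direction of the equivalence: showing that a whole independent family sitting inside $\isp{M}$ can be factored out \emph{simultaneously} as a single blade, rather than just one vector at a time. The clean way around the awkward inductive step is to reduce to the maximal case — by \Cref{pr:isp alternative}\ref{it:isp space max factor}, $\isp{M} = [B^\ast]$ for some maximal inner blade $B^\ast$ with $M = B^\ast\wedge N^\ast$; then since $[B]\subset[B^\ast]$, the blade $B$ divides $B^\ast$ (any subspace spanned by a sub-independent-set yields a factoring blade of the top blade), giving $B^\ast = B\wedge C$ and hence $M = B\wedge(C\wedge N^\ast)$. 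I would present the proof this way to keep it short, deferring the elementary divisibility facts to the basis computation sketched above.
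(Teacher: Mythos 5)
Your final route is exactly the paper's proof: reduce to a maximal factorization $M=B^\ast\wedge N^\ast$ with $[B^\ast]=\isp{M}$ via \Cref{pr:isp alternative}\ref{it:isp space max factor}, factor $B$ out of $B^\ast$ as a subblade, note the converse is immediate, and handle existence and uniqueness of $N\in\bigwedge V$ by projecting onto $\bigwedge V$ in an adapted basis and applying \Cref{pr:inter osp 0 wedge not 0} to $B\wedge(N_1-N_2)=0$. The abandoned one-vector-at-a-time induction (whose inductive step you rightly flag as gappy) is not needed, so the proposal as presented is correct and essentially identical to the paper's argument.
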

\begin{proof}
	\Cref{pr:isp alternative}\ref{it:isp space max factor} gives a blade $\mathbf{B}$ with $[\mathbf{B}] = \isp{M}$ and $M=\mathbf{B} \wedge \mathbf{N}$ for $\mathbf{N}\in \bigwedge X$.
	Any inner blade $B$ of $M$ is a subblade of $\mathbf{B}$, so $\mathbf{B} = B\wedge A$ for a blade $A$, and $M=B\wedge N$ for $N = A\wedge \mathbf{N}$.
	The converse is immediate.
	If $M=B\wedge N$, we can assume $N\in\bigwedge V$, since decomposing $N$ in terms of bases of $[B]$ and $V$, $B\wedge$ eliminates any term with elements of $[B]$.
	And if $M=B\wedge N'$ for $N'\in\bigwedge V$ then $B\wedge (N-N')=0$, so $N=N'$.
		\OMIT{\ref{pr:inter osp 0 wedge not 0}, $N-N' \in \bigwedge V$}
\end{proof}

\begin{definition}
	A \emph{blade factorization} $M=B\wedge N$, where $B\neq 0$ is a blade and $N\in\bigwedge X$, is:
	\begin{enumerate}[i)]
		\item \emph{tight} if $\osp{N} \cap [B] = \{0\}$; \label{it:def tight factor}
		\item \emph{orthogonal} if $\osp{N} \subset [B]^\perp$;
			\SELF{implies tight}
		\item \emph{maximal} if $[B]=\isp{M}$.
	\end{enumerate}
\end{definition}

Tight means $N$ has no needless blades that vanish as it is appended to $B$ (via $\wedge$) to form $M$.

\begin{proposition}\label{pr:blade factor}
	Let $0 \neq M=B\wedge N$ be a blade factorization.
	\begin{enumerate}[i)]
		\item It is tight $\Leftrightarrow \osp{M} = [B]\oplus\osp{N}$ and 
		$\isp{M} = [B]\oplus\isp{N}$.\label{it:disjoint}
			\SELF{$\osp{M} = [B]\oplus\osp{N} \Rightarrow$ effic. trivially por causa de $\oplus$}
		\item It is orthogonal $\Leftrightarrow N = \frac{B\lcontr M}{\|B\|^2}$. \label{it:orthogonal}
		\item If it is maximal then $\isp{N}=\{0\}$. The converse holds if it is tight.\label{it:maximal}
	\end{enumerate}
\end{proposition}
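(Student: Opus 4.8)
The plan is to prove the three items of \Cref{pr:blade factor} in order, exploiting the behaviour of inner and outer spaces under exterior products (\Cref{pr:osp isp wedge}) together with \Cref{pr:M=BBM} and \Cref{pr:triple}. Throughout, write $V = [B]$, so $M = B \wedge N$ with $B$ a nonzero blade.

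\textbf{Item (i).} For the forward direction, assume the factorization is tight, i.e.\ $\osp{N} \cap V = \{0\}$. By \Cref{pr:blade osp=isp=space}, $\osp{B} = \isp{B} = V$, so $\osp{B} \cap \osp{N} = \{0\}$. Since $M = B \wedge N \neq 0$ forces $B, N \neq 0$, \Cref{pr:osp isp wedge} applies with equalities and direct sums, giving $\osp{M} = V \oplus \osp{N}$ and $\isp{M} = \isp{B} \oplus \isp{N} = V \oplus \isp{N}$. Conversely, either of the two displayed equalities (say the outer one) gives $\osp{M} = V \oplus \osp{N}$; since $\osp{N} \subset \osp{M}$ always holds (as $N \in \bigwedge \osp{N}$ and the sum $M = B \wedge N$ contributes $\osp{N}$ — more carefully, one argues $v \lcontr N = 0$ is forced by the direct-sum structure), the sum being direct forces $\osp{N} \cap V = \{0\}$, which is tightness. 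The cleanest route for this converse is: $\osp{M} = V \oplus \osp{N}$ is a direct sum \emph{by hypothesis}, and $\osp{N} \subset \osp{M}$, $V \subset \osp{M}$, so $V \cap \osp{N} = \{0\}$ is immediate from directness of the sum.

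\textbf{Item (ii).} If the factorization is orthogonal, $\osp{N} \subset V^\perp$, so $N \in \bigwedge \osp{N} \subset \bigwedge(V^\perp) = \bigwedge([B]^\perp)$. Then \Cref{pr:triple} (with $B$ normalized to a unit blade, absorbing $\|B\|^2$ appropriately) or directly \Cref{pr:M=BBM}(ii) identifies $N$: we have $M = B \wedge N$ with $N \in \bigwedge([B]^\perp)$, and \Cref{pr:M=BBM}(ii) states this forces $N = \frac{B \lcontr M}{\|B\|^2}$. Conversely, if $N = \frac{B \lcontr M}{\|B\|^2}$, then $N = \frac{1}{\|B\|^2} B \lcontr M$, and by \Cref{pr:triple} (or the general identity $\Img \iota_B = \bigwedge([B]^\perp)$ up to the scaling), $B \lcontr M \in \bigwedge([B]^\perp)$, hence $\osp{N} \subset [B]^\perp$, i.e.\ orthogonality. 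One must be slightly careful about the normalization constant, but this is routine.

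\textbf{Item (iii).} If the factorization is maximal, $[B] = \isp{M}$. I would invoke \Cref{pr:isp alternative}\ref{it:isp space max factor}: taking $N' = \frac{B \lcontr M}{\|B\|^2} \in \bigwedge([B]^\perp)$ via \Cref{pr:M=BBM}, the factorization $M = B \wedge N'$ is orthogonal and maximal, and one checks $\isp{N'} = \{0\}$ directly: any $0 \neq v \in \isp{N'}$ would lie in $[B]^\perp$ and satisfy $v \wedge N' = 0$, whence $v \wedge M = \pm \hat B \wedge (v \wedge N') = 0$, so $v \in \isp{M} = [B]$, contradicting $v \in [B]^\perp \setminus\{0\}$. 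For a general maximal factorization $M = B \wedge N$, uniqueness in \Cref{pr:B in isp factor} for the complement $[B]^\perp$ gives $N = N'$, hence $\isp{N} = \{0\}$. For the converse under tightness: assume tight with $\isp{N} = \{0\}$. By item (i), $\isp{M} = [B] \oplus \isp{N} = [B] \oplus \{0\} = [B]$, which is exactly maximality.

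The main obstacle I anticipate is item (i)'s converse direction and the bookkeeping of which hypotheses ($M \neq 0$, hence $B, N \neq 0$) license the equality cases in \Cref{pr:osp isp wedge}; everything else is a matter of correctly citing \Cref{pr:M=BBM}, \Cref{pr:triple}, and \Cref{pr:isp alternative}, and tracking the normalization $\|B\|^2$. The structure $\osp{M} = [B] \oplus \osp{N}$ together with the always-true inclusion $\osp{N} \subset \osp{M}$ should make the converse of (i) a one-line argument, so I expect no genuine difficulty, only care.
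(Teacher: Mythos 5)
Your items (i) and (ii) are essentially sound and follow the paper's route: (i) is the equality case of \Cref{pr:osp isp wedge} (with $B,N\neq 0$ forced by $M\neq 0$, and the converse trivial from the directness of the sum), and for (ii) the forward direction is correctly pinned down either by the computation $B\lcontr(B\wedge N)=\|B\|^2 N$ for $N\in\bigwedge([B]^\perp)$ (your \Cref{pr:triple} route) or by the uniqueness statement of \Cref{pr:B in isp factor} applied to the complement $[B]^\perp$; note that \Cref{pr:M=BBM} alone only supplies the canonical representative, it does not by itself identify an arbitrary $N\in\bigwedge([B]^\perp)$ with it, so the uniqueness step should be made explicit (this is why the paper cites both \Cref{pr:M=BBM} and \Cref{pr:B in isp factor}).

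The genuine gap is in the forward direction of (iii). You prove $\isp{N'}=\{0\}$ only for the orthogonal representative $N'=\frac{B\lcontr M}{\|B\|^2}$, and then claim that for a general maximal factorization $M=B\wedge N$ ``uniqueness in \Cref{pr:B in isp factor} for the complement $[B]^\perp$ gives $N=N'$''. That uniqueness only applies to factors lying in $\bigwedge([B]^\perp)$; a maximal factorization imposes no such condition on $N$, and in general $N\neq N'$. Indeed, in \Cref{ex:factorizations} the factorization $M=B\wedge(v_4+v_{23}+v_{15})$ is maximal while $\osp{v_4+v_{23}+v_{15}}=X$, so this factor lies in $\bigwedge V$ for no complement $V$ of $[B]$ and is certainly not the orthogonal representative; your argument therefore does not establish $\isp{N}=\{0\}$ for the $N$ actually given. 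The statement is still true, and the fix is short and needs no reduction to the orthogonal case: since $B\wedge N=M\neq 0$, \Cref{pr:inter isp wedge 0} gives $[B]\cap\isp{N}=\isp{B}\cap\isp{N}=\{0\}$, while the unconditional inclusion \Cref{pr:osp isp wedge}\ref{it:isp wedge} together with maximality gives $\isp{N}\subset\isp{B}+\isp{N}\subset\isp{M}=[B]$; hence $\isp{N}=\{0\}$. This is the paper's argument. Your converse under tightness (via item (i)) is correct and coincides with the paper's.
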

\begin{proof}
	\emph{(\ref{it:disjoint})} Follows from \Cref{pr:osp isp wedge}.
	\emph{(\ref{it:orthogonal})} Follows from Propositions \ref{pr:M=BBM} and \ref{pr:B in isp factor}.
	\emph{(\ref{it:maximal})} By Propositions \ref{pr:inter isp wedge 0} and \ref{pr:osp isp wedge}, $[B]\cap\isp{N}=\{0\}$ and $\isp{N}\subset \isp{M}=[B]$. The converse follows from \ref{it:disjoint}.
		\OMIT{\ref{it:isp wedge}}
	%
\end{proof}

\begin{example}
	The converse of \ref{it:maximal} needs tightness:
	for a basis $(v_1,v_2,v_3)$, $v_{123} = v_1 \wedge N$ with $N=v_{1}+v_{23}$ is non-maximal but $\isp{N}=\{0\}$. 
\end{example}

\begin{proposition}\label{pr:maximal factor}
	Let $0 \neq M = B\wedge N = \mathbf{B}\wedge \mathbf{N}$ be blade factorizations, with $\mathbf{B}\wedge \mathbf{N}$ being maximal. Then $[B] \subset [\mathbf{B}]$, and if $B\wedge N$ is:
	\begin{enumerate}[i)]
		\item tight then $[\mathbf{B}]=[B]\oplus\isp{N}$; \label{it:I maximal II complem}
		\item orthogonal then $\isp{N} = [B\lcontr \mathbf{B}]$; \label{it:I maximal II orthog}
			\SELF{ $P_{[B]^\perp}(\isp{N}) = [\mathbf{B} \lcontr B]$ for non-orthogonal}
		\item maximal then $B=\lambda \mathbf{B}$ for a scalar $\lambda\neq 0$. \label{it:I maximal II maximal}
	\end{enumerate}
\end{proposition}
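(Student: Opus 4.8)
The plan is to read everything off from the fact that $B$ is an inner blade of $M$ together with the structural results already proved for blade factorizations, so that essentially no fresh computation is needed. First I would observe that since $B\neq 0$ and $M=B\wedge N$, \Cref{pr:B in isp factor} makes $B$ an inner blade of $M$, hence $[B]\subset\isp{M}$; and maximality of $\mathbf{B}\wedge\mathbf{N}$ means $\isp{M}=[\mathbf{B}]$, so $[B]\subset[\mathbf{B}]$ follows immediately. I would also record that $M\neq 0$ forces $N\neq 0$, which is needed below to apply \Cref{pr:isp subset osp}.

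For \ref{it:I maximal II complem}: if $B\wedge N$ is tight, \Cref{pr:blade factor}\ref{it:disjoint} gives $\isp{M}=[B]\oplus\isp{N}$, and combining with $\isp{M}=[\mathbf{B}]$ yields $[\mathbf{B}]=[B]\oplus\isp{N}$. For \ref{it:I maximal II orthog}: an orthogonal factorization is tight, so I already have $[\mathbf{B}]=[B]\oplus\isp{N}$, and it remains to identify $\isp{N}$ with $[B\lcontr\mathbf{B}]$. Since $N\neq 0$, \Cref{pr:isp subset osp} and the orthogonality hypothesis $\osp{N}\subset[B]^\perp$ give $\isp{N}\subset\osp{N}\subset[B]^\perp$, while the direct sum gives $\isp{N}\subset[\mathbf{B}]$; hence $\isp{N}\subset[\mathbf{B}]\cap[B]^\perp$. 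On the other side, because $[B]\subset[\mathbf{B}]$ the blade $B\lcontr\mathbf{B}$ is nonzero and spans exactly the orthogonal complement of $[B]$ inside $[\mathbf{B}]$, \ie $[B\lcontr\mathbf{B}]=[\mathbf{B}]\cap[B]^\perp$, of dimension $\dim[\mathbf{B}]-\dim[B]$, which is also $\dim\isp{N}$ by the direct sum; so the inclusion is forced to be an equality, $\isp{N}=[B\lcontr\mathbf{B}]$.

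For \ref{it:I maximal II maximal}: maximality of $B\wedge N$ gives $[B]=\isp{M}=[\mathbf{B}]$, and two blades spanning the same subspace are proportional, so $B=\lambda\mathbf{B}$ with $\lambda\neq 0$. The only point that needs a small argument rather than a direct citation is the identity $[B\lcontr\mathbf{B}]=[\mathbf{B}]\cap[B]^\perp$ used in \ref{it:I maximal II orthog}: I would obtain it by picking an orthonormal basis of $[\mathbf{B}]$ extending one of $[B]$, so that $B$ and $\mathbf{B}$ become scalar multiples of $e_{1\cdots k}$ and $e_{1\cdots m}$ and $B\lcontr\mathbf{B}$ a nonzero multiple of $e_{k+1\cdots m}$ --- routine from Part I --- and the degenerate cases $[B]=\{0\}$ or $[\mathbf{B}]=\{0\}$ (where $B$ or $\mathbf{B}$ is a scalar) fall out of the same formulas. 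I do not expect a genuine obstacle; the substance is entirely contained in the earlier propositions, and the work is just bookkeeping with direct sums and dimensions.
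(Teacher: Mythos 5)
Your proposal is correct and follows essentially the same route as the paper: $[B]\subset\isp{M}=[\mathbf{B}]$ via the inner-blade characterization, part (i) from \Cref{pr:blade factor}\ref{it:disjoint}, part (ii) by combining tightness with $\isp{N}\subset\osp{N}\subset[B]^\perp$ and the identity $[B\lcontr\mathbf{B}]=[\mathbf{B}]\cap[B]^\perp$ (which the paper imports from Part I rather than re-deriving), and part (iii) from $[B]=[\mathbf{B}]$. Your extra dimension count in (ii) is harmless bookkeeping equivalent to the paper's direct reading of the orthogonal direct sum.
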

\begin{proof}
	$[B]\subset \isp{M} = [\mathbf{B}]$.
	\emph{(\ref{it:I maximal II complem})} Follows from \Cref{pr:blade factor}\ref{it:disjoint}.
	\emph{(\ref{it:I maximal II orthog})} $\isp{N}\subset\osp{N}\subset [B]^\perp$,
	\OMIT{\ref{pr:isp subset osp}}
	so \ref{it:I maximal II complem} gives $\isp{N} = [B]^\perp \cap [\mathbf{B}] = [B\lcontr \mathbf{B}]$.
		\OMIT{pr:characterization 1, $B \not\pperp \mathbf{B}$ as $B \subset \mathbf{B}$}
	\emph{(\ref{it:I maximal II maximal})} $[B] = [\mathbf{B}]$.
		\OMIT{$=\isp{M}$}
\end{proof}

\begin{theorem}\label{pr:unique complete factor each V}
	Given a complement $V$ of $\isp{M}$, there is a unique (up to scalars) maximal tight blade factorization $M=B\wedge N$ with $N\in\bigwedge V$.
		\SELF{so, a unique (up to scalars) maximal orthogonal blade factorization}
\end{theorem}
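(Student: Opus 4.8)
The plan is to build the factorization using the results already established, then prove uniqueness. First I would invoke \Cref{pr:isp alternative}\ref{it:isp space max factor} (equivalently \Cref{pr:B in isp factor}) to obtain \emph{some} maximal blade factorization $M = \mathbf{B}_0 \wedge \mathbf{N}_0$ with $[\mathbf{B}_0] = \isp{M}$; then, since $V$ is a complement of $\isp{M}$, \Cref{pr:B in isp factor} lets me replace $\mathbf{N}_0$ by the unique $N \in \bigwedge V$ with $M = \mathbf{B}_0 \wedge N$. Pick any nonzero blade $B$ with $[B] = \isp{M}$ (so $B = \lambda\mathbf{B}_0$); then $M = B \wedge N$ is maximal, and I must check it is tight, i.e. $\osp{N} \cap [B] = \{0\}$. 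This follows because $\osp{N} \subset V$ (as $N \in \bigwedge V$, using \Cref{pr:osp smallest subspace with M}) and $V \cap [B] = V \cap \isp{M} = \{0\}$ by choice of $V$. So existence is essentially a repackaging of \Cref{pr:B in isp factor} plus the observation that membership in $\bigwedge V$ automatically forces tightness here.

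For uniqueness (up to scalars), suppose $M = B \wedge N = B' \wedge N'$ are two maximal tight blade factorizations with $N, N' \in \bigwedge V$. Maximality gives $[B] = \isp{M} = [B']$, so $B' = \mu B$ for some scalar $\mu \neq 0$; rescaling, assume $B' = B$. Then $B \wedge (N - N') = 0$, and since $N - N' \in \bigwedge V$ with $V \cap [B] = \{0\}$, \Cref{pr:inter osp 0 wedge not 0} (applied to the blade $B$ and the multivector $N - N'$, whose outer space lies in $V$) forces $N - N' = 0$. Hence $N = N'$, and the factorization is unique once $B$ is normalized. The phrase ``up to scalars'' then just records the freedom $B \mapsto \lambda B$, $N \mapsto \lambda^{-1} N$.

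I would also remark, following the author's marginal note, that such a factorization is automatically maximal \emph{orthogonal} when $V$ is chosen to be $\isp{M}^\perp$: in that case $\osp{N} \subset V = \isp{M}^\perp = [B]^\perp$, which is precisely orthogonality, and by \Cref{pr:blade factor}\ref{it:orthogonal} this pins down $N = \frac{B \lcontr M}{\|B\|^2}$, consistent with \Cref{pr:M=BBM}\emph{ii)}.

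The step I expect to be the genuine (if mild) obstacle is making the tightness verification airtight: one must be careful that $N \in \bigwedge V$ really does imply $\osp{N} \subseteq V$ — this is exactly \Cref{pr:osp smallest subspace with M}, which says $\osp{N}$ is the \emph{smallest} such subspace — and then that $\osp{N} \cap [B] = \{0\}$ follows because $[B] = \isp{M}$ and $V$ is a complement of $\isp{M}$. Everything else is a direct citation of \Cref{pr:B in isp factor}, \Cref{pr:maximal factor}\ref{it:I maximal II maximal} (for the $[B] = [B']$ comparison), and \Cref{pr:inter osp 0 wedge not 0}. No induction on grades is needed; the work has all been front-loaded into the inner-space machinery of \Cref{sc:Spaces of Non-Simple Multivectors}.
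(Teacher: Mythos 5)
Your proof is correct and follows essentially the same route as the paper, whose own proof is just the one-line citation of \Cref{pr:B in isp factor} and \Cref{pr:maximal factor}\ref{it:I maximal II maximal} (noting $M=0$ is trivial). The only thing you add is the explicit verification that $N\in\bigwedge V$ forces tightness via $\osp{N}\subset V$ and $V\cap\isp{M}=\{0\}$, which the paper leaves implicit, so this is a faithful, slightly more detailed version of the intended argument.
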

\begin{proof}
	Follows from Propositions \ref{pr:B in isp factor} and \ref{pr:maximal factor}\ref{it:I maximal II maximal} ($M=0$ is trivial).
\end{proof}

\begin{proposition}\label{pr:factorization decomposition}
	If a blade factorization $0\neq M=B\wedge N$ is:
	\begin{enumerate}[i)]
		\item maximal then $M=\sum_i B\wedge N_i$ is inner balanced for any decomposition $N=\sum_i N_i$; \label{it:factor decomp maximal}
		\SELF{Valeria para $M=0$}
		\item  tight then $M=\sum_i B\wedge N_i$ is outer balanced for any outer balanced decomposition $N=\sum_i N_i$. \label{it:factor decomp outer}
		\SELF{$N_i \in \bigwedge\osp{N} \ \forall i$}
		\SELF{Para valer com $M=0$ precisaria tomar $N_i=0$}
	\end{enumerate}
\end{proposition}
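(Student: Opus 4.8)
The plan is to obtain both items by combining the general sum inclusions of \Cref{pr:osp isp sum} with the wedge inclusions of \Cref{pr:osp isp wedge}: in each case one of the two required inclusions is automatic, so only its converse needs an argument. Throughout, write $M_i=B\wedge N_i$, so that $\sum_i M_i=B\wedge\sum_i N_i=B\wedge N=M$ is indeed a decomposition of $M$.

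For \ref{it:factor decomp maximal}, \Cref{pr:osp isp sum} already gives $\isp{M}\supset\bigcap_i\isp{M_i}$, so it suffices to check $\isp{M}\subset\isp{M_i}$ for every $i$. I would get this from \Cref{pr:osp isp wedge}\ref{it:isp wedge} and \Cref{pr:blade osp=isp=space}: $\isp{M_i}=\isp{B\wedge N_i}\supset\isp{B}+\isp{N_i}\supset\isp{B}=[B]$, and maximality of the factorization says precisely that $[B]=\isp{M}$. Intersecting over $i$ yields $\bigcap_i\isp{M_i}\supset[B]=\isp{M}$, hence equality. Note that no hypothesis on the decomposition $N=\sum_i N_i$ is used here.

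For \ref{it:factor decomp outer}, \Cref{pr:osp isp sum} gives $\osp{M}\subset\sum_i\osp{M_i}$, so the work is to prove $\sum_i\osp{M_i}\subset\osp{M}$. From \Cref{pr:osp isp wedge}\ref{it:osp wedge} and \Cref{pr:blade osp=isp=space}, $\osp{M_i}=\osp{B\wedge N_i}\subset\osp{B}+\osp{N_i}=[B]+\osp{N_i}$; summing over $i$ and invoking that $N=\sum_i N_i$ is outer balanced, i.e.\ $\sum_i\osp{N_i}=\osp{N}$, gives $\sum_i\osp{M_i}\subset[B]+\osp{N}$. Finally, tightness of $0\neq M=B\wedge N$ together with \Cref{pr:blade factor}\ref{it:disjoint} identifies $[B]+\osp{N}=[B]\oplus\osp{N}=\osp{M}$, closing the argument.

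I do not expect a genuine obstacle here: once the ingredients are lined up, the arguments are a few lines each. The one point that deserves attention is pinpointing where each hypothesis enters --- in particular, outer balancedness of $N=\sum_i N_i$ is really needed in \ref{it:factor decomp outer}. For instance, with an orthonormal basis $(v_1,v_2,v_3)$, $B=v_1$ and $N=v_2$ (a tight factorization of $M=v_{12}$), the non-outer-balanced decomposition $N=(v_2+v_3)+(-v_3)$ induces $M=(v_{12}+v_{13})+(-v_{13})$, for which $\osp{v_{12}+v_{13}}+\osp{v_{13}}=\Span\{v_1,v_2,v_3\}\supsetneq\Span\{v_1,v_2\}=\osp{M}$, so the induced decomposition fails to be outer balanced.
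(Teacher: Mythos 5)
Your proof is correct and follows essentially the same route as the paper's: maximality gives $\isp{M}=[B]\subset\isp{B\wedge N_i}$ via \Cref{pr:osp isp wedge}\ref{it:isp wedge}, and tightness plus \Cref{pr:blade factor}\ref{it:disjoint} gives $\osp{M}=[B]\oplus\osp{N}\supset[B]+\osp{N_i}\supset\osp{B\wedge N_i}$, exactly as in the paper (the one-sided inclusions from \Cref{pr:osp isp sum} make the other direction automatic, as you note). Your closing counterexample showing the outer-balanced hypothesis is needed in \ref{it:factor decomp outer} is a correct and worthwhile addition, though not part of the paper's argument.
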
 
\begin{proof}
	\emph{(\ref{it:factor decomp maximal})} $\isp{M} = [B] \subset \isp{B\wedge N_i}$. 
	\emph{(\ref{it:factor decomp outer})} $\osp{M} = [B] \oplus \osp{N} \supset [B] \oplus \osp{N_i} \supset \osp{B\wedge N_i}$.
		\OMIT{\ref{pr:blade factor}\ref{it:disjoint}, \ref{pr:osp isp wedge}\ref{it:osp wedge}}
\end{proof}

\begin{example}\label{ex:factorizations}
	In \Cref{ex:spaces}, $B=(v_1-v_3)\wedge(v_3+v_5)$
	\OMIT{$B = v_{13} + v_{15} - v_{35}$, $M = v_{134} - v_{145} + v_{345} + v_{1235}$}
	is a maximal inner blade, 
	and gives a maximal orthogonal factorization with $N = \frac{B\lcontr M}{\|B\|^2} = \frac{3 v_4  - v_{25} + v_{23} - v_{12}}{3}$.%
		\OMIT{$[B]\perp \osp{N}$ since $(v_1-v_3) \lcontr N = (v_3+v_5) \lcontr N = 0$}
	With $N' = v_4 + v_{23}$, a maximal tight one, as $\osp{N'} \cap [B] = [v_{234}] \cap [B] =\{0\}$.
		\OMIT{as $B \vee v_{234} = -1$ \\
		$(v_1-v_3)\lcontr N' \neq 0$ \\
		$[v_1\wedge\cdots\wedge v_p] \perp \osp{M} \Leftrightarrow v_i\lcontr M = 0 \ \forall i$, pr:regr induced basis, it:space regr}
	Since $v_4 + v_{23}$ is outer balanced, $M = B \wedge v_4 +  B \wedge v_{23}$ is balanced (indeed, a minimal blade decomposition).
		\OMIT{$M$ is not simple, as has different grades}	
	$M = B \wedge (N' + v_{15})$ is maximal but not tight, as $\osp{N' + v_{15}} = X$, and $B \wedge v_{15} = 0$.
	Also, $M = B' \wedge N''$ with $B' = v_1-v_3$ and $N'' = \frac{B' \lcontr M}{\|B\|^2} = \frac{v_{34}- 2v_{45}+v_{14}+v_{235}-v_{125}}{2}$ is orthogonal but non-maximal,
	and $\isp{N''} = [v_1+v_3+2v_5] = [B' \lcontr B]$.
\end{example}

%
%
%

\subsection{Blade carvings}\label{sc:Blade Carvings}

Next we show how to obtain $M \in \bigwedge X$ by contracting some $N$ on a blade $B$.
We call this blade carving, as $M$ is formed removing subblades of $B$ where those of $N$ project.
It dualizes the problem of blade factorization, and most results are duals of those above, being left as exercises.
	\OMIT{Ver OMMITED PROOFS; \\ Prova trocando $B$, $M$ por $\rH{B}$, $\rH{M}$ nos enunciados; \\ $N$ has no $*$}

\begin{definition}\label{df:outer blade}
	An \emph{outer blade} of $M$ is a blade $B \neq 0$ 
		\SELFL{Pra \ref{pr:outer blade carved} e \ref{pr:unique complete carv each V}, senão $B=0$ é outer blade $\forall M$ (minimal só de $M=0$), e min outer blade de $M=0$ seria $B=0$ e não $B=1$.}
	with $\osp{M} \subset [B]$, being \emph{minimal} if $[B]=\osp{M}$.
\end{definition}

\begin{definition}
	A \emph{blade carving} $M = N \lcontr B$, 
	\SELF{$\rHB{N}$}
	where $B\neq 0$ is a blade and $N\in\bigwedge X$, is:
	\begin{enumerate}[i)]
		\item \emph{tight} if $\osp{N} \cap [B]^\perp = \{0\}$; \label{it:def tight carv}
			\SELF{$N$ not pperp $B$}
		\item \emph{internal} if $\osp{N} \subset [B]$;
			\SELF{implies effic; $\osp{N} \perp [B]^\perp$}
		\item \emph{minimal} if $[B]=\osp{M}$. 
	\end{enumerate}
\end{definition}

Tight means $N$ has no needless blades whose projections on $B$ vanish, and so do not leave any carved piece of $B$ to form $M$.

\begin{proposition}\label{pr:outer dual inner}
	Let $M,N\in\bigwedge X$ and $B\neq 0$ be a blade.
	\begin{enumerate}[i)]
		\item $B$ is an inner blade of $M \Leftrightarrow \rH{B}$ is an outer blade of $\rH{M}$.
		Moreover, $B$ is maximal $\Leftrightarrow \rH{B}$ is minimal. \label{it:outer inner}
		\item $M = B \wedge N$ is a tight (\resp orthogonal, maximal) factorization $\Leftrightarrow \rH{M} = N \lcontr \rH{B}$ is a tight (\resp internal, minimal) carving. \label{it:factorization carving}
	\end{enumerate}
\end{proposition}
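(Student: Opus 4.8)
The plan is to derive everything from star duality. I would use the relation $[\rH{B}] = [B]^\perp$ for nonzero blades (recalled just before \Cref{pr:osp isp star}), \Cref{pr:osp isp star}, \Cref{pr:Hodge wedge contr}, and the fact that $\rH{\cdot}$ is a linear isomorphism of $\bigwedge X$ — so $\rH{B}\neq 0$ is again a blade, and equalities between duals reflect back. Throughout it is convenient to set $C=\rH{B}$, so that $[C]=[B]^\perp$ and, since $(\cdot)^\perp$ is an involution on subspaces of the finite-dimensional $X$, also $[C]^\perp=[B]$.

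For \emph{(\ref{it:outer inner})}: taking orthogonal complements, $[B]\subset\isp{M}$ is equivalent to $\isp{M}^\perp\subset[B]^\perp$, which by \Cref{pr:osp isp star} reads $\osp{\rH{M}}\subset[\rH{B}]$, i.e.\ $\rH{B}$ is an outer blade of $\rH{M}$; conversely every outer blade of $\rH{M}$ is of the form $\rH{B}$ since $\rH{\cdot}$ is onto. Running the same chain with equalities, $[B]=\isp{M}\Leftrightarrow[B]^\perp=\isp{M}^\perp\Leftrightarrow[\rH{B}]=\osp{\rH{M}}$, gives the maximal/minimal correspondence.

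For \emph{(\ref{it:factorization carving})}: by \Cref{pr:Hodge wedge contr}, $\rH{(B\wedge N)}=N\lcontr\rH{B}$, so $M=B\wedge N\Leftrightarrow\rH{M}=N\lcontr\rH{B}$ (the forward implication is immediate, the backward one uses that $\rH{\cdot}$ is injective). It then remains to match the three attributes. With $C=\rH{B}$ as above, the carving $\rH{M}=N\lcontr C$ is: tight iff $\osp{N}\cap[C]^\perp=\osp{N}\cap[B]=\{0\}$, which is exactly tightness of $M=B\wedge N$; internal iff $\osp{N}\subset[C]=[B]^\perp$, which is orthogonality of $M=B\wedge N$; and minimal iff $[C]=\osp{\rH{M}}$, i.e.\ $[B]^\perp=\isp{M}^\perp$ by \Cref{pr:osp isp star}, i.e.\ $[B]=\isp{M}$, which is maximality of $M=B\wedge N$.

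I do not expect a genuine obstacle: the argument is a mechanical dualization. The only point demanding care is the bookkeeping of orthogonal complements — the carving conditions are phrased through $[\rH{B}]$ and $[\rH{B}]^\perp$, so one must use $([B]^\perp)^\perp=[B]$ for them to translate back precisely to the factorization conditions, which are phrased through $[B]^\perp$ and $[B]$. This is presumably why the paper is content to leave the remaining carving statements ``as exercises''.
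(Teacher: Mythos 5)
Your argument is correct and is exactly the paper's intended route: the paper's proof simply cites \Cref{pr:Hodge wedge contr} and \Cref{pr:osp isp star}, and your write-up spells out that dualization (using $[\rH{B}]=[B]^\perp$ and $\rH{(B\wedge N)}=N\lcontr\rH{B}$) with the correct bookkeeping of orthogonal complements. Nothing further is needed.
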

\begin{proof}
	Follows from Propositions \ref{pr:Hodge wedge contr} and \ref{pr:osp isp star}.
\end{proof}

\begin{theorem}\label{pr:outer blade carved}
	A blade $B\neq 0$ is an outer blade of $M$ if, and only if, $M = N \lcontr B$ for some $N\in\bigwedge X$.
	In this case, for each complement $V$ of $[B]^\perp$
	\SELF{$V$ is subspace of maximal dim with $V \not\pperp [B]$}
	there is a unique such $N\in\bigwedge V$.
\end{theorem}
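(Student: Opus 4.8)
The statement is the exact dual of \Cref{pr:B in isp factor}, so the cleanest route is to transport that theorem through the right Hodge star rather than re-prove everything from scratch. Recall from \Cref{pr:outer dual inner}\ref{it:outer inner} that $B$ is an outer blade of $M$ if and only if $\rH{B}$ is an inner blade of $\rH{M}$. (The duality is symmetric: applying it to $\rH{M}$ and using that the double dual is the identity up to sign gives the reverse implication.) So I would first rewrite the desired equivalence as: $\rH{B}$ is an inner blade of $\rH{M}$ $\Leftrightarrow$ $\rH{M} = \rH{B} \wedge N'$ for some $N' \in \bigwedge X$. The left side is exactly the hypothesis of \Cref{pr:B in isp factor} applied to the multivector $\rH{M}$ and the blade $\rH{B}$, which yields $\rH{M} = \rH{B}\wedge N'$ for some $N'$. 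Now I apply $\rH{(\cdot)}$ to both sides and use \Cref{pr:Hodge wedge contr} (specifically $\rH{(\rH{B}\wedge N')} = N' \lcontr \rH{(\rH{B})}$) together with the star-inversion identities from \cite{Mandolesi_Contractions} to recover $M = N \lcontr B$ for a suitable $N$ (namely $N = N'$ up to a sign, since $\rH{(\rH{B})}$ equals $B$ up to a known scalar). Conversely, given $M = N\lcontr B$, dualizing turns it into a wedge factorization of $\rH{M}$ by $\rH{B}$, and \Cref{pr:outer blade carved}'s first claim follows.

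For the uniqueness statement I would again translate across the star. \Cref{pr:B in isp factor} says that for each complement $W$ of $[\rH{B}] = [B]^\perp$ there is a unique $N' \in \bigwedge W$ with $\rH{M} = \rH{B}\wedge N'$. Under dualization, a complement $V$ of $[B]^\perp$ corresponds to the complement $V$ of $[\rH{B}]$ — they are literally the same subspace, since $[\rH{B}] = [B]^\perp$ — so "complement of $[B]^\perp$" and "complement of $[\rH{B}]$" mean the same thing, and $N \in \bigwedge V \Leftrightarrow N' \in \bigwedge V$ (as $N$ and $N'$ differ only by a global sign, which does not affect which subspace's exterior algebra they lie in). Hence existence and uniqueness of $N \in \bigwedge V$ with $M = N\lcontr B$ follows directly from the corresponding assertion in \Cref{pr:B in isp factor}.

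Alternatively — and this is the version I would actually write if I wanted the proof to be self-contained rather than leaning on \Cref{pr:outer dual inner} — I could argue directly. For the "only if" direction: if $\osp{M}\subset[B]$ then, picking a blade $\mathbf{B}$ with $[\mathbf{B}] = \osp{M}$ and using \Cref{pr:osp alternative}\ref{it:osp space max carv} to write $M = \mathbf{N}\lcontr\mathbf{B}$, the inclusion $[\mathbf{B}]\subset[B]$ makes $\mathbf{B}$ a subblade of $B$, say $B = \mathbf{B}\wedge A$ (or rather $B$ "contains" $\mathbf{B}$), and then $M = \mathbf{N}\lcontr\mathbf{B} = (\mathbf{N} \text{ combined with } A)\lcontr B$ via the associativity-type identities for contractions; the precise bookkeeping here uses that $v \lcontr (w \wedge C) $ relates $v\lcontr C$ appropriately. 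The "if" direction is immediate from \Cref{pr:osp isp lcontr}\ref{it:osp lcontr}: $M = N\lcontr B$ forces $\osp{M}\subset\osp{B} = [B]$. For uniqueness in a fixed complement $V$ of $[B]^\perp$: if $M = N_1\lcontr B = N_2\lcontr B$ with $N_1,N_2 \in \bigwedge V$, then $(N_1-N_2)\lcontr B = 0$; since $V$ meets $[B]^\perp$ trivially, \Cref{pr: v contr M wedge N = 0} (or a direct basis argument splitting $B$ along $V\cap[B]$ and a complement) forces $N_1 = N_2$, and for existence one first takes any $N$ with $M = N\lcontr B$ and replaces it by its "$V$-part," discarding the portion in $\bigwedge[B]^\perp$ which contracts trivially against $B$.

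The main obstacle is none of the conceptual content — that is all settled by \Cref{pr:B in isp factor} and the duality — but rather getting the scalar/sign factors in the double-dual identity $\rH{(\rH{B})} = \pm B$ and in the passage $\rH{(\rH{B}\wedge N')} = N'\lcontr\rH{(\rH{B})}$ exactly right, so that $N$ is correctly identified and the membership $N\in\bigwedge V$ is genuinely preserved. Since the paper explicitly flags this whole subsection as consisting of duals "left as exercises," the expected write-up is short: cite \Cref{pr:B in isp factor}, \Cref{pr:Hodge wedge contr}, \Cref{pr:osp isp star} and the star-inversion lemma, and note the equivalence is obtained by dualization.
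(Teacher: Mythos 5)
Your proposal is correct and follows exactly the route the paper intends: this theorem is one of the ``duals left as exercises,'' obtained from \Cref{pr:B in isp factor} via \Cref{pr:outer dual inner} (i.e.\ Propositions \ref{pr:Hodge wedge contr} and \ref{pr:osp isp star}), with $N$ unchanged under the correspondence so that uniqueness on a complement $V$ of $[B]^\perp=[\rH{B}]$ transfers verbatim. The only refinement worth making is to avoid the double right dual altogether (its sign is grade-dependent on an inhomogeneous $M$) by dualizing as in the proof of \Cref{pr:osp alternative}\ref{it:osp space max carv}: write $\lH{M}=\lH{B}\wedge N$ and apply $\rH{(\,\cdot\,)}$, using $\rH{(\lH{M})}=M$, which yields $M=N\lcontr B$ with no sign bookkeeping.
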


\begin{proposition}\label{pr:blade carving}
	Let $0 \neq M = N \lcontr B$ be a blade carving.
	\begin{enumerate}[i)]
		\item If it is tight then $\osp{M} = \isp{N}^\perp \cap [B]$ and $\isp{M} = \osp{N}^\perp \cap [B]$.\label{it:tight}
			\SELF{Nenhuma recíproca vale}
		\item It is internal $\Leftrightarrow N = \frac{B\rcontr M}{\|B\|^2}$. \label{it:intrinsic}
			\SELF{$=\lHB{M}$ if $B$ unit}
		\item If it is minimal then $\isp{N}=\{0\}$. The converse holds if it is tight.\label{it:minimal}\SELF{not $\osp{N}$, as \ref{pr:outer dual inner} has no $\rH{N}$; \\ if $B$ not minimal its part outside $M$ is contracted by an inner blade of $N$}
	\end{enumerate}
\end{proposition}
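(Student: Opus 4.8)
The plan is to deduce \Cref{pr:blade carving} from \Cref{pr:blade factor} by duality, using \Cref{pr:outer dual inner}. The first step is to set up the correspondence: given a blade carving $0\neq M = N\lcontr B$, \Cref{pr:Hodge wedge contr} gives $\rH{(\lH{B}\wedge N)} = N\lcontr\rH{\lH{B}} = N\lcontr B = M$, so (applying $\lH{\cdot}$ and the star inversion identities) $\lH{M} = \lH{B}\wedge N$ is a blade factorization, and conversely every such factorization comes from a carving. By \Cref{pr:outer dual inner}\ref{it:factorization carving}, $M = N\lcontr B$ is tight (\resp internal, minimal) exactly when $\lH{M} = \lH{B}\wedge N$ is tight (\resp orthogonal, maximal). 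Since the auxiliary multivector $N$ is the \emph{same} on both sides, any conclusion of \Cref{pr:blade factor} involving only $N$ transfers verbatim, while those involving $M$ and $B$ must be rewritten through $[\lH{B}] = [B]^\perp$ and the left-star version of \Cref{pr:osp isp star}, namely $\isp{\lH{M}} = \osp{M}^\perp$ and $\osp{\lH{M}} = \isp{M}^\perp$ (valid because $\isp{\cdot}$ and $\osp{\cdot}$ are unaffected by the grade-wise sign differences between $\lH{\cdot}$ and $\rH{\cdot}$).

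For \ref{it:tight}, I would actually bypass the star and apply \Cref{pr:osp isp lcontr} directly to $M = N\lcontr B$: there it reads $\osp{M}\subset\isp{N}^\perp\cap\osp{B}$ and $\isp{M}\supset\osp{N}^\perp\cap\isp{B}$, with equalities when $N,B\neq 0$ and $\osp{N}\not\pperp\isp{B}$. Since $M\neq 0$ forces $N,B\neq 0$, $\osp{B}=\isp{B}=[B]$, and tightness of the carving is precisely $\osp{N}\cap[B]^\perp=\{0\}$, i.e.\ $\osp{N}\not\pperp[B]$, the inclusions become $\osp{M}=\isp{N}^\perp\cap[B]$ and $\isp{M}=\osp{N}^\perp\cap[B]$, as claimed. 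For \ref{it:intrinsic}, I would argue directly too: by \Cref{pr:osp smallest subspace with M} the carving is internal iff $N\in\bigwedge[B]$; since $[B]$ is a complement of $[B]^\perp$, \Cref{pr:outer blade carved} gives a unique such $N$, which \Cref{pr:M=BBM} identifies as $\tfrac{B\rcontr M}{\|B\|^2}$; hence internal $\Leftrightarrow N = \tfrac{B\rcontr M}{\|B\|^2}$. For \ref{it:minimal}, minimal carving $\Leftrightarrow$ maximal factorization, so \Cref{pr:blade factor}\ref{it:maximal} gives $\isp{N}=\{0\}$ (no tightness needed, as in the factorization case); the converse under tightness is immediate from \ref{it:tight}, since $\isp{N}=\{0\}$ there forces $\osp{M}=[B]$.

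The step needing the most care is the star bookkeeping of the first paragraph --- pinning down that a general blade carving really does correspond to a blade factorization of $\lH{M}$ with the same $N$ (so that statements about $\isp{N},\osp{N}$ carry over unchanged) and that ``tight/internal/minimal'' match ``tight/orthogonal/maximal'' in the right order, together with confirming the left-star analogue of \Cref{pr:osp isp star}. Once that is in place, everything reduces to a transcription of \Cref{pr:blade factor}, exactly as the surrounding text promises.
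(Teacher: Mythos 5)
Your proof is correct and follows essentially the route the paper intends: it invokes the duality of \Cref{pr:outer dual inner} to transfer \Cref{pr:blade factor}, and where you argue directly you use exactly the dual lemmas the paper supplies for this purpose (\Cref{pr:osp isp lcontr} with $\osp{N}\not\pperp[B]$ being tightness, \Cref{pr:M=BBM} with \Cref{pr:outer blade carved} for internality, and part \emph{i)} for the converse in \emph{iii)}). The star bookkeeping you flag ($\lH{M}=\lH{B}\wedge N$ with the same $N$, and the left-star analogue of \Cref{pr:osp isp star}) is legitimate and is the same manipulation the paper itself performs, e.g.\ in the proof of \Cref{pr:osp alternative}.
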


\begin{example}
	The converse of \ref{it:minimal} needs tightness:
	for $(v_1,v_2,v_3)$ orthonormal,
	$1 = N\lcontr v_{23}$ with $N=v_1+v_{23}$ is non-minimal but $\isp{N}=\{0\}$.
\end{example}

\begin{proposition}\label{pr:minimal carv}
	Let $0 \neq M = N\lcontr B = \mathbf{N} \lcontr \mathbf{B}$ be blade carvings, with $\mathbf{N} \lcontr \mathbf{B}$ being minimal. Then $[\mathbf{B}] \subset [B]$, and if $N\lcontr B$ is:
	\begin{enumerate}[i)]
		\item tight then $[\mathbf{B}] = \isp{N}^\perp \cap [B]$; \label{it:tight compare minimal}
		\item internal then $\isp{N} = [\mathbf{B} \lcontr B]$; \label{it:intrinsic compare minimal}
			\SELF{$P_B\isp{N} = [\mathbf{B} \lcontr B]$ if not internal}
		\item minimal then $B=\lambda \mathbf{B}$ for a scalar $\lambda\neq 0$. \label{it:minimal compare minimal}
	\end{enumerate}
\end{proposition}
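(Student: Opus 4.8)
The plan is to derive Proposition~\ref{pr:minimal carv} as the star dual of Proposition~\ref{pr:maximal factor}, via the dictionary of Proposition~\ref{pr:outer dual inner}. First I would pick blades $C$ and $\mathbf{C}$ dual to $B$ and $\mathbf{B}$: choose any nonzero $C$ with $[C]=[B]^\perp$ (for instance $C=\lH{\!B}$) and any nonzero $\mathbf{C}$ with $[\mathbf{C}]=[\mathbf{B}]^\perp$. Then $\rH{C}$ is a blade with $[\rH{C}]=[C]^\perp=[B]$, hence $\rH{C}=cB$ for a scalar $c\neq 0$, and likewise $\rH{\mathbf{C}}=c'\mathbf{B}$ with $c'\neq 0$. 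Setting $L=C\wedge N$, \Cref{pr:Hodge wedge contr} gives $\rH{L}=N\lcontr\rH{C}=c\,(N\lcontr B)=cM$, and the same computation with $\mathbf{C},\mathbf{N}$ gives a factorization $\mathbf{C}\wedge\mathbf{N}$ with $\rH{(\mathbf{C}\wedge\mathbf{N})}=c'M$. Since $\rH{\cdot}$ is a bijection, $\mathbf{C}\wedge\mathbf{N}$ is a scalar multiple of $L$; absorbing that scalar into $\mathbf{N}$ (which does not affect $[\mathbf{C}]$, $[\mathbf{B}]$, or any carving attribute) we obtain two blade factorizations $L=C\wedge N=\mathbf{C}\wedge\mathbf{N}$ of one nonzero $L$.

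Next I would transfer attributes through \Cref{pr:outer dual inner}\ref{it:factorization carving}: applied with $M\mapsto L$, $B\mapsto C$, $N\mapsto N$, it says $C\wedge N$ is tight, orthogonal, or maximal precisely when the carving $\rH{L}=N\lcontr\rH{C}$, i.e.\ (after rescaling $L$ and $C$, which changes none of the relevant spans) the carving $M=N\lcontr B$, is tight, internal, or minimal; likewise $\mathbf{C}\wedge\mathbf{N}$ is maximal because $\mathbf{N}\lcontr\mathbf{B}$ is. Now I invoke \Cref{pr:maximal factor} on $L=C\wedge N=\mathbf{C}\wedge\mathbf{N}$. It gives $[C]\subset[\mathbf{C}]$, and taking orthogonal complements, $[\mathbf{B}]\subset[B]$. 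If $M=N\lcontr B$ is tight then $C\wedge N$ is tight, so $[\mathbf{C}]=[C]\oplus\isp{N}$; complementing, $[\mathbf{B}]=[C]^\perp\cap\isp{N}^\perp=\isp{N}^\perp\cap[B]$, which is (i). If $M=N\lcontr B$ is internal then $C\wedge N$ is orthogonal, so $\isp{N}=[C\lcontr\mathbf{C}]$; since $[C]\subset[\mathbf{C}]$ this equals $[C]^\perp\cap[\mathbf{C}]$, and substituting $[C]^\perp=[B]$, $[\mathbf{C}]=[\mathbf{B}]^\perp$ with $[\mathbf{B}]\subset[B]$ gives $\isp{N}=[B]\cap[\mathbf{B}]^\perp=[\mathbf{B}\lcontr B]$, which is (ii). If $M=N\lcontr B$ is minimal then $C\wedge N$ is maximal, so $C=\lambda\mathbf{C}$ with $\lambda\neq 0$; applying $\rH{\cdot}$, $cB$ is a nonzero scalar multiple of $c'\mathbf{B}$, hence $B=\lambda'\mathbf{B}$ with $\lambda'\neq 0$, which is (iii).

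The routine ingredients are the bookkeeping of nonzero scalars (and possible conjugation) introduced by $\rH{\cdot}$, and the two elementary subblade identities $[C\lcontr\mathbf{C}]=[C]^\perp\cap[\mathbf{C}]$ when $[C]\subset[\mathbf{C}]$, and symmetrically $[\mathbf{B}\lcontr B]=[\mathbf{B}]^\perp\cap[B]$ when $[\mathbf{B}]\subset[B]$ --- both already used in the proof of \Cref{pr:maximal factor}. The one point requiring a little care is ensuring the two hypothesised carvings correspond to factorizations of the \emph{same} $L$, with the maximal one coming from $\mathbf{N}\lcontr\mathbf{B}$; once $C$ and $\mathbf{C}$ are chosen as above and $L$ is fixed, this is forced by injectivity of $\rH{\cdot}$, so I do not expect any real obstacle. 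Alternatively, one could mirror the proof of \Cref{pr:maximal factor} directly, using \Cref{pr:outer blade carved} and \Cref{pr:blade carving} in place of their factorization counterparts, but the dual route is shorter.
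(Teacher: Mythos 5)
Your proof is correct and is exactly the route the paper intends: the carving results in this subsection are stated as duals of the factorization ones (``left as exercises''), and your transfer of \Cref{pr:maximal factor} through the dictionary of \Cref{pr:outer dual inner}, with the scalar/conjugation bookkeeping handled as you note, is the intended argument. Nothing to add.
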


\begin{theorem}\label{pr:unique complete carv each V}
	Given a complement $V$ of $\osp{M}^\perp$, there is a unique (up to scalars) minimal tight blade carving $M = N \lcontr B$ with $N\in\bigwedge V$.
		\SELF{so, a unique (up to scalar multiplication) minimal internal blade carving}
\end{theorem}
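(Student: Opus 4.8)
This is the exact dual of \Cref{pr:unique complete factor each V}, so there are two natural routes. One is to transport that theorem through star duality: by \Cref{pr:osp isp star}, $\osp{M}^\perp=\isp{\lH{M}}$, so $V$ is a complement of $\isp{\lH{M}}$; applying \Cref{pr:unique complete factor each V} to $\lH{M}$ produces a unique (up to scalars) maximal tight factorization $\lH{M}=A\wedge N$ with $N\in\bigwedge V$, and then $M=\rH{(\lH{M})}=\rH{(A\wedge N)}=N\lcontr\rH{A}$ by \Cref{pr:Hodge wedge contr}, with \Cref{pr:outer dual inner}\ref{it:factorization carving} converting ``tight maximal factorization'' into ``tight minimal carving'' in both directions. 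The other route, which I will follow because it is self-contained, is to mimic the proof of \Cref{pr:unique complete factor each V} with the carving analogues \Cref{pr:outer blade carved} and \Cref{pr:minimal carv} in place of \Cref{pr:B in isp factor} and \Cref{pr:maximal factor}. The case $M=0$ is trivial: then $\osp{M}^\perp=X$ forces $V=\{0\}$, so $N$ is a scalar, and $N\lcontr B=0$ with $B$ a nonzero scalar forces $N=0$; so assume $M\neq 0$.

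For existence, choose a minimal outer blade $B$ of $M$, that is, a blade with $[B]=\osp{M}$, which exists by \Cref{pr:osp alternative}\ref{it:osp space max carv}. Then $B$ is an outer blade of $M$, so by \Cref{pr:outer blade carved}, applied with the complement $V$ of $[B]^\perp=\osp{M}^\perp$, there is a unique $N\in\bigwedge V$ with $M=N\lcontr B$. This carving is tight: by \Cref{pr:osp smallest subspace with M} we have $\osp{N}\subset V$, hence $\osp{N}\cap[B]^\perp\subset V\cap[B]^\perp=\{0\}$. It is minimal because $[B]=\osp{M}$.

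For uniqueness, suppose $M=N\lcontr B=\mathbf{N}\lcontr\mathbf{B}$ are two minimal tight blade carvings with $N,\mathbf{N}\in\bigwedge V$. Taking $\mathbf{N}\lcontr\mathbf{B}$ as the distinguished minimal carving in \Cref{pr:minimal carv}, minimality of $N\lcontr B$ gives $B=\lambda\mathbf{B}$ for a scalar $\lambda\neq 0$ by \Cref{pr:minimal carv}\ref{it:minimal compare minimal}. Then $M=(\lambda N)\lcontr\mathbf{B}$ with $\lambda N\in\bigwedge V$, so the uniqueness clause of \Cref{pr:outer blade carved} forces $\mathbf{N}=\lambda N$. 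Hence $(\mathbf{N},\mathbf{B})=(\lambda N,\lambda^{-1}B)$, i.e.\ the carving is unique up to scalars.

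All the computations are routine; the only points requiring care are the bookkeeping of which subspace is a complement of which (the hypothesis provides $V$ complementary to $\osp{M}^\perp$, which is exactly $[B]^\perp$ for a minimal outer blade $B$, matching the hypothesis of \Cref{pr:outer blade carved}) and the degenerate case $M=0$. The tightness step, which quietly relies on $N$ living in a space that meets $[B]^\perp$ trivially, is the one most easily overlooked.
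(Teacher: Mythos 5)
Your proof is correct and matches the paper's intended argument: the paper leaves this theorem as the star-dual of \Cref{pr:unique complete factor each V}, proved exactly as you do, either by transporting that theorem through duality or by mirroring its proof with \Cref{pr:outer blade carved} and \Cref{pr:minimal carv}\ref{it:minimal compare minimal} (with $M=0$ trivial). One cosmetic remark: in the Hermitian case $B=\lambda\mathbf{B}$ yields $M=(\bar{\lambda}N)\lcontr\mathbf{B}$, since $\lcontr$ is conjugate-linear in its first argument, but the up-to-scalars conclusion is unaffected.
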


\begin{proposition}
	If a blade carving $0 \neq M = N \lcontr B$ is:
	\begin{enumerate}[i)]
		\item minimal then $M=\sum_i N_i \lcontr B$ is outer balanced for any decomposition $N=\sum_i N_i$; \label{it:carv decomp minimal}
		\item tight then $M=\sum_i N_i \lcontr B$  is inner balanced for any outer balanced decomposition $N=\sum_i N_i$. \label{it:carv decomp tight}
		\SELF{$N_i \in \bigwedge\osp{N} \ \forall i$}
	\end{enumerate}
\end{proposition}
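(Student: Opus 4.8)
The statement is the star-dual of \Cref{pr:factorization decomposition}, so one option is to transport it through \Cref{pr:Hodge wedge contr}, \Cref{pr:outer dual inner}\ref{it:factorization carving} and \Cref{pr:balanced star}; I would instead give the short direct argument mirroring the proof of \Cref{pr:factorization decomposition}, since it avoids introducing a dual blade and keeps $N$ star-free. The common first move is to note that, by bilinearity of the contraction, $M = N\lcontr B = (\sum_i N_i)\lcontr B = \sum_i N_i\lcontr B$, so each item really concerns a decomposition of $M$, and \Cref{pr:osp isp sum} immediately supplies the inclusions $\osp{M}\subset\sum_i\osp{N_i\lcontr B}$ and $\isp{M}\supset\bigcap_i\isp{N_i\lcontr B}$. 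Only the reverse inclusions need the hypotheses.

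For \ref{it:carv decomp minimal}, I would bound each summand from above with \Cref{pr:osp isp lcontr}\ref{it:osp lcontr}: $\osp{N_i\lcontr B}\subset\isp{N_i}^\perp\cap\osp{B}\subset\osp{B}=[B]$, the last equality by \Cref{pr:blade osp=isp=space}. Minimality of the carving means $[B]=\osp{M}$, so $\sum_i\osp{N_i\lcontr B}\subset\osp{M}$; combined with \Cref{pr:osp isp sum} this gives $\osp{M}=\sum_i\osp{N_i\lcontr B}$, i.e.\ the decomposition is outer balanced.

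For \ref{it:carv decomp tight}, I would instead bound each summand from below with \Cref{pr:osp isp lcontr}\ref{it:isp lcontr}: $\isp{N_i\lcontr B}\supset\osp{N_i}^\perp\cap\isp{B}=\osp{N_i}^\perp\cap[B]$, using $\isp{B}=[B]$ from \Cref{pr:blade osp=isp=space}. Since $N=\sum_i N_i$ is outer balanced, $\osp{N_i}\subset\osp{N}$ for every $i$, hence $\osp{N_i}^\perp\supset\osp{N}^\perp$ and $\isp{N_i\lcontr B}\supset\osp{N}^\perp\cap[B]$. Tightness of the carving gives $\osp{N}^\perp\cap[B]=\isp{M}$ by \Cref{pr:blade carving}\ref{it:tight}, so $\bigcap_i\isp{N_i\lcontr B}\supset\isp{M}$; with \Cref{pr:osp isp sum} we conclude $\isp{M}=\bigcap_i\isp{N_i\lcontr B}$, i.e.\ inner balance.

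I do not foresee a genuine obstacle: the argument is essentially the dual, line by line, of the one for blade factorizations. The only things to be careful about are to invoke the \emph{unconditional} inclusions of \Cref{pr:osp isp lcontr} (not its conditional equalities), and to note that summands with $N_i\lcontr B=0$ are harmless, since $\osp{0}=\{0\}$ lies in every subspace and $\isp{0}=X$ contains every subspace, so such terms simply drop out of the relevant sum, resp.\ intersection.
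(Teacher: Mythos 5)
Your proof is correct; the only difference from the paper is the route. The paper treats this proposition as one of the ``dual exercises'': it is obtained from \Cref{pr:factorization decomposition} by star duality, transporting the factorization statement through \Cref{pr:outer dual inner}\ref{it:factorization carving} and \Cref{pr:balanced star} (with \Cref{pr:Hodge wedge contr} converting $B\wedge N$ into $N\lcontr\rH{B}$), which reuses the already-proved result verbatim but forces one to introduce the dual blade. You instead give the direct mirror of the factorization proof: additivity of $\lcontr$ in its first slot reduces everything to the decomposition $M=\sum_i N_i\lcontr B$, \Cref{pr:osp isp sum} gives one inclusion for free, and the reverse inclusions come from the unconditional parts of \Cref{pr:osp isp lcontr}\ref{it:osp lcontr} and \ref{it:isp lcontr} together with $\isp{B}=\osp{B}=[B]$ (\Cref{pr:blade osp=isp=space}), minimality $[B]=\osp{M}$ in case (i), and $\isp{M}=\osp{N}^\perp\cap[B]$ from \Cref{pr:blade carving}\ref{it:tight} plus $\osp{N_i}\subset\osp{N}$ in case (ii). Both arguments are sound; yours is self-contained on the carving side and keeps $N$ star-free, at the cost of leaning on \Cref{pr:blade carving}\ref{it:tight}, which the paper also leaves as an exercise (it does follow from the conditional equalities of \Cref{pr:osp isp lcontr}, since tightness $\osp{N}\cap[B]^\perp=\{0\}$ is exactly $\osp{N}\not\pperp\isp{B}$), while the dual route gets everything at once from the factorization case. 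Your closing remark about zero summands is also handled correctly by the conventions $\osp{0}=\{0\}$, $\isp{0}=X$.
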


\begin{example}\label{ex:carvings}
	In \Cref{ex:spaces 2}, $B = v_{1234}\wedge (v_6-2v_5)$ is a minimal outer blade, 
	and gives a minimal internal carving with 
	$N = \frac{B\rcontr M}{\|B\|^2} = \frac{v_{46} - 2v_{45} - 5v_{23}}{5}$. 
		\OMIT{$\osp{N} = [v_{234}\wedge(v_6-2v_5)] \subset [B]$ \\
			$M = v_{123} + 2 v_{145} - v_{146}$, $\osp{M} = [v_5 + 2v_6]^\perp = [v_{12346}-2v_{12345}] = [v_{1234}\wedge (v_6-2v_5)]$}
	With $N' = v_{46} - v_{23}$, a minimal tight one, 
		\OMIT{non-internal, $v_6 \in \osp{N'}$ but $v_6 \notin [B]$}
	as $\osp{N'} \cap [B]^\perp = [v_{2346}] \cap [v_5 + 2v_6] = \{0\}$.
	Since $v_{46} - v_{23}$ is outer balanced, 
	$M = v_{123} - v_{14}\wedge(v_6-2v_5)$ is balanced 
	(indeed, a minimal blade decomposition).
	$M = (N' + v_{15} + 2 v_{16}) \lcontr B$ is minimal but not tight, as $\osp{N' + v_{15} + 2 v_{16}} = X$, and $(v_{15}+2v_{16})\lcontr B = 0$.
	Also, $M = N'' \lcontr B'$ with  $B'=v_{123456}$ and $N'' = B'\rcontr M = -v_{456}-2v_{236}-v_{235}$ is internal but non-minimal,
	and $\isp{N''} = [v_5+2v_6] = [B \lcontr B']$.
\end{example}

%
%

\section{Simplicity and grades}\label{sc:Simplicity and Multivector Grades}

There is a number of known criteria that determine if a multivector can be simplified into a blade.
Our results let us easily prove them and obtain new ones.
But first we extend the concept of grade.

\subsection{Generalized grades}

For a general $0 \neq M\in\bigwedge X$, the concept of grade splits into four:

\begin{definition}\label{df:bt grades}
	The \emph{inner}, \emph{outer}, \emph{bottom} and \emph{top grades} of $M$ are
	$\igrade{M} = \dim\isp{M}$, $\ograde{M} = \dim\osp{M}$, $\bgrade{M}=\min\{p:\comp{M}{p} \neq 0\}$ and $\tgrade{M}=\max\{p:\comp{M}{p} \neq 0\}$.
	\SELF{in e out bem definidas pra $0$, excluí por simplicidade}
\end{definition}

So, $\igrade{M} = \dim \{v\in X:v\wedge M=0\}$, 
and $\ograde{M} = \codim \{v\in X:v\lcontr M=0\}$
(the \emph{rank} of $M$ \cite{Shaw1983}).
\CITE{cite[p.25]{Sternberg1964}}
$M$ is homogeneous $\Leftrightarrow \bgrade{M} = \tgrade{M}$,
and, by Propositions \ref{pr:blade osp=isp=space} and \ref{pr:isp=osp blade}, $M$ is a blade $\Leftrightarrow \igrade{M} = \ograde{M}$, 
	\OMIT{\ref{pr:isp subset osp}}
in which case all grades coincide with the usual one.

\begin{proposition}\label{pr:generalized grades}
	Let $0 \neq M\in\bigwedge X$ and $n=\dim X$.
	\begin{enumerate}[i)]
		\item $\igrade{M} \leq \bgrade{M} \leq \tgrade{M} \leq \ograde{M}$.\label{it:order grades nonzero}
		\SELF{$\igrade{0} = \dim X > 0 = \ograde{0}$, se definisse}
		\item $\igrade{M} + \ograde{\rH{M}} = \bgrade{M} + \tgrade{\rH{M}} = n$.\label{it:in + out star}
		\SELF{$= \igrade{M} + \ograde{\rH{M}}$}
	\end{enumerate}
\end{proposition}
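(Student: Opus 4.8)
The plan is to prove part (\ref{it:in + out star}) first, since the two grade identities there follow immediately from results already in hand, and then to derive part (\ref{it:order grades nonzero}) using part (\ref{it:in + out star}) together with basic facts about homogeneous components and inner/outer spaces. For (\ref{it:in + out star}), I would start from \Cref{pr:osp isp star}, which gives $\isp{\rH{M}} = \osp{M}^\perp$ and $\osp{\rH{M}} = \isp{M}^\perp$. Taking dimensions of the second identity yields $\ograde{\rH{M}} = \dim \osp{M}^\perp = n - \dim\isp{M} = n - \igrade{M}$, i.e. $\igrade{M} + \ograde{\rH{M}} = n$. For the other equality, I would recall that star duality sends a component of grade $p$ to one of grade $n-p$ (this is the relation $\grade{\rH{H}} = n - \grade{H}$ for homogeneous $H$, used implicitly throughout); hence $\comp{\rH{M}}{q} \neq 0 \Leftrightarrow \comp{M}{n-q} \neq 0$, so $\tgrade{\rH{M}} = \max\{q : \comp{M}{n-q}\neq 0\} = n - \min\{p : \comp{M}{p}\neq 0\} = n - \bgrade{M}$, giving $\bgrade{M} + \tgrade{\rH{M}} = n$.

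For part (\ref{it:order grades nonzero}), the middle inequality $\bgrade{M} \leq \tgrade{M}$ is immediate from the definitions (for $M\neq 0$ the set $\{p : \comp{M}{p}\neq 0\}$ is nonempty). The outer bound $\tgrade{M} \leq \ograde{M}$: pick $p = \tgrade{M}$, so $\comp{M}{p}\neq 0$; by \Cref{pr:osp smallest subspace with M}, $M \in \bigwedge\osp{M}$, hence $\comp{M}{p} \in \bigwedge^p\osp{M}$, which forces $p \leq \dim\osp{M} = \ograde{M}$. Dually, for the inner bound $\igrade{M} \leq \bgrade{M}$, I would apply what was just proved to $\rH{M}$: we get $\tgrade{\rH{M}} \leq \ograde{\rH{M}}$, and substituting the two identities from part (\ref{it:in + out star}) gives $n - \bgrade{M} \leq n - \igrade{M}$, i.e. $\igrade{M} \leq \bgrade{M}$. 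Alternatively, one can argue directly: by \Cref{pr:isp alternative}\ref{it:isp space max factor}, $M = B\wedge N$ with $[B] = \isp{M}$, so every nonzero component of $M$ has grade at least $\grade{B} = \igrade{M}$; either route works, and I would probably include the direct one for self-containedness and then note the duality shortcut.

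The only real subtlety — and the step I would be most careful about — is the claim that star duality exchanges bottom and top grades, i.e. that $\rH{\cdot}$ maps grade $p$ to grade $n-p$ and is a bijection on each graded piece. This is standard for the Hodge star but the paper's left/right duals are defined via contraction with a unit $\Omega\in\bigwedge^n X$, so I would want to cite the appropriate grade formula and invertibility statement from \cite{Mandolesi_Contractions} rather than re-derive it; given the "it:star inverses" and "it:left right star" references scattered in the omitted-proof notes, such results are clearly available there. Everything else is a one-line dimension count or an appeal to \Cref{pr:osp smallest subspace with M}, \Cref{pr:osp isp star}, or \Cref{pr:isp alternative}, so the proof should be short.
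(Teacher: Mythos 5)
Your proposal is correct and takes essentially the same route as the paper: part (ii) from \Cref{pr:osp isp star} together with the fact that $\star$ maps $\bigwedge^p X$ to $\bigwedge^{n-p}X$, and part (i) from $M\in\bigwedge\osp{M}$ for $\tgrade{M}\leq\ograde{M}$ and a maximal blade factorization $M=B\wedge N$ with $[B]=\isp{M}$ for $\igrade{M}\leq\bgrade{M}$ (the paper uses exactly this direct argument; your duality shortcut is a fine alternative). One harmless slip: in the dimension count for (ii) you wrote $\dim\osp{M}^\perp$ where the identity $\osp{\rH{M}}=\isp{M}^\perp$ gives $\dim\isp{M}^\perp=n-\igrade{M}$; the conclusion is unaffected.
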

\begin{proof}
	\emph{(\ref{it:order grades nonzero})} A maximal blade factorization $M = B\wedge N$
	\OMIT{exists by \ref{pr:unique complete factor each V}}
	gives $\igrade{M} = \grade{B} \leq \bgrade{M}$. 
	And $\tgrade{M} \leq \ograde{M}$ as $M\in \bigwedge\osp{M}$.
	\emph{(\ref{it:in + out star})} Follows from \Cref{pr:osp isp star}, and $\star:\bigwedge^p X \rightarrow \bigwedge^{n - p} X$.\OMIT{pr:Hodge geometric charac}
\end{proof}

Now we show $M+N$ is inner or outer balanced when $M$ and $N$ are distinct enough, as measured by how big $\isp{M}+\isp{N}$ is compared to their smallest top grade, or how small $\osp{M}\cap\osp{N}$ is compared to their largest bottom grade.
Later we show this implies $M+N$ is not simple.

\begin{theorem}\label{pr:isp M+N = inter isp}
	Let $b = \max\{\bgrade{M},\bgrade{N}\}$ and $t = \min\{\tgrade{M},\tgrade{N}\}$ for nonzero $M,N\in \bigwedge X$.
	\SELF{condições dão $M+N \neq 0$}
	\begin{enumerate}[i)]
		\item If $\dim(\isp{M}+\isp{N}) \geq t+2$ then $M+N$ is inner balanced
		and has $\igrade{M+N} \leq b-2$. \label{it:large isp + isp}
		\SELF{hypothesis gives $b\geq 2$}
		\item If $\dim(\osp{M}\cap\osp{N}) \leq b-2$ then $M+N$ is outer balanced
		and has $\ograde{M+N} \geq t+2$. \label{it:small osp inter osp}
	\end{enumerate}
\end{theorem}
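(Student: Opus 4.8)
The plan is to prove each part directly: in each case I establish the one nontrivial inclusion between the relevant spaces by playing a ``large inner/outer space'' of an auxiliary multivector against its ``small top/bottom grade'', and then read off the grade estimate from inclusion--exclusion. (Part \ref{it:small osp inter osp} could also be deduced from part \ref{it:large isp + isp} by dualizing through Propositions \ref{pr:osp isp star} and \ref{pr:balanced star}, but the direct mirror argument is just as short.)

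For \ref{it:large isp + isp}: by \Cref{pr:osp isp sum} one always has $\isp{M}\cap\isp{N}\subset\isp{M+N}$, so I only need the reverse inclusion. Given $0\neq v\in\isp{M+N}$, I set $P=v\wedge M=-\,v\wedge N$. Applying \Cref{pr:osp isp wedge}\ref{it:isp wedge} to each of the two expressions for $P$ gives $\isp{P}\supset\isp{M}$ and $\isp{P}\supset\isp{N}$, hence $\dim\isp{P}\geq\dim(\isp{M}+\isp{N})\geq t+2$. On the other hand $\tgrade{P}\leq\min\{\tgrade{M}+1,\tgrade{N}+1\}=t+1$, so if $P\neq0$ then \Cref{pr:generalized grades}\ref{it:order grades nonzero} forces $\dim\isp{P}=\igrade{P}\leq\tgrade{P}\leq t+1$, a contradiction; thus $P=0$, i.e.\ $v\wedge M=v\wedge N=0$ and $v\in\isp{M}\cap\isp{N}$. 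This gives $\isp{M+N}=\isp{M}\cap\isp{N}$, so the sum is inner balanced. For the grade bound, $\dim\isp{M}\leq\bgrade{M}$ and $\dim\isp{N}\leq\bgrade{N}$ by \Cref{pr:generalized grades}\ref{it:order grades nonzero}, so inclusion--exclusion gives $\igrade{M+N}=\dim(\isp{M}\cap\isp{N})\leq\bgrade{M}+\bgrade{N}-(t+2)$; since $\min\{\bgrade{M},\bgrade{N}\}\leq\min\{\tgrade{M},\tgrade{N}\}=t$, the right-hand side is $\leq b-2$.

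For \ref{it:small osp inter osp}: I mirror the argument with $\lcontr$ in place of $\wedge$. By \Cref{pr:osp isp sum} it suffices to prove $\osp{M}+\osp{N}\subset\osp{M+N}$, i.e.\ that $v\lcontr(M+N)=0$ implies $v\lcontr M=v\lcontr N=0$. For $0\neq v$ with $v\lcontr(M+N)=0$ I put $Q=v\lcontr M=-\,v\lcontr N$; \Cref{pr:osp isp lcontr}\ref{it:osp lcontr} applied to each expression gives $\osp{Q}\subset\osp{M}\cap\osp{N}$, so $\dim\osp{Q}\leq b-2$. But $\bgrade{Q}\geq\max\{\bgrade{M}-1,\bgrade{N}-1\}=b-1$, so if $Q\neq0$ then $\dim\osp{Q}=\ograde{Q}\geq\tgrade{Q}\geq\bgrade{Q}\geq b-1$, a contradiction; hence $Q=0$ and the sum is outer balanced. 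Then $\ograde{M+N}=\dim(\osp{M}+\osp{N})=\ograde{M}+\ograde{N}-\dim(\osp{M}\cap\osp{N})\geq\tgrade{M}+\tgrade{N}-(b-2)\geq t+2$, the last step because $\max\{\tgrade{M},\tgrade{N}\}\geq\max\{\bgrade{M},\bgrade{N}\}=b$.

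The step I expect to be the real obstacle is spotting the auxiliary multivector $P$ (resp.\ $Q$): one must notice that the equality $v\wedge M=-v\wedge N$ forces its inner space to contain \emph{both} $\isp{M}$ and $\isp{N}$ --- making $\dim\isp{P}$ too large --- while its top grade stays $\leq t+1$, and that these cannot coexist once $\dim(\isp{M}+\isp{N})\geq t+2$, by the ordering $\igrade{\,\cdot\,}\leq\tgrade{\,\cdot\,}$ of \Cref{pr:generalized grades}\ref{it:order grades nonzero}. The remainder is bookkeeping: the two unconditional numeric facts $\min\{\bgrade{M},\bgrade{N}\}\leq t$ and $\max\{\tgrade{M},\tgrade{N}\}\geq b$ (both immediate from $\bgrade{\,\cdot\,}\leq\tgrade{\,\cdot\,}$), the trivial case $v=0$, and the remark that the stated hypotheses already force $M+N\neq0$, so the grades estimated above are indeed those appearing in the statement.
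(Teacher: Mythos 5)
Your proof is correct, and it takes a genuinely different route from the paper. The paper proves \ref{it:large isp + isp} by choosing a decomposition $X = U\oplus V\oplus W\oplus Y$ adapted to $\isp{M}\cap\isp{N}$, taking maximal blade factorizations $M = A\wedge M'$, $N = B\wedge N'$, and then counting basis components in the induced basis of $\bigwedge X$ to show the two terms of $x\wedge(M+N)=0$ cannot interfere, so each vanishes; maximality (via \Cref{pr:blade factor}\ref{it:maximal}) then kills the stray components of $x$. Part \ref{it:small osp inter osp} is obtained there by star duality. You instead introduce the auxiliary multivector $P = v\wedge M = -\,v\wedge N$ (resp.\ $Q = v\lcontr M = -\,v\lcontr N$) and play the unconditional inclusions of \Cref{pr:osp isp wedge}\ref{it:isp wedge} and \Cref{pr:osp isp lcontr}\ref{it:osp lcontr}, which force $\igrade{P}\geq t+2$ (resp.\ $\ograde{Q}\leq b-2$), against the elementary grade bounds $\tgrade{P}\leq t+1$ (resp.\ $\bgrade{Q}\geq b-1$), so the ordering $\igrade{\cdot}\leq\bgrade{\cdot}\leq\tgrade{\cdot}\leq\ograde{\cdot}$ of \Cref{pr:generalized grades}\ref{it:order grades nonzero} leaves $P=0$ (resp.\ $Q=0$) as the only possibility; the reverse inclusions come from \Cref{pr:osp isp sum}, and the numerical estimates coincide with the paper's. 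There is no circularity, since all the results you invoke precede the theorem and are proved independently of it. What your argument buys is brevity and transparency: the whole burden is carried by already-established structural facts, with no choice of complements, blade factorizations, or basis bookkeeping; it also handles \ref{it:small osp inter osp} directly rather than by dualizing (either is fine). The paper's argument is more hands-on and shows concretely how the components of $x$ are eliminated, but is correspondingly longer. Your closing remarks (the case $v=0$, the fact that either hypothesis rules out $M+N=0$ so that $\igrade{M+N}$, $\ograde{M+N}$ are defined, and the inequalities $\min\{\bgrade{M},\bgrade{N}\}\leq t$, $\max\{\tgrade{M},\tgrade{N}\}\geq b$) are exactly the right loose ends, and all check out.
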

\begin{proof}
	\emph{(\ref{it:large isp + isp})} 
	Let $U,V,W,Y\subset X$ be such that $U=\isp{M}\cap\isp{N}$, $U\oplus V=\isp{M}$, $U\oplus W=\isp{N}$ and $U\oplus V\oplus W\oplus Y=X$,
	and let	$M=A\wedge M'$ and $N=B\wedge N'$ be maximal blade factorizations with $\osp{M'}\subset W\oplus Y$ and $\osp{N'}\subset V\oplus Y$.
	
	Given $x\in\isp{M+N}$, decompose it as $x=u+v+w+y$ for $u\in U$, $v\in V$, $w\in W$ and $y\in Y$.
	As $u,v\in \isp{M}$ and $u,w\in\isp{N}$, we have
	\begin{equation}\label{eq:LI sum}
		0 = x\wedge (M+N) = (w+y)\wedge A\wedge M' + (v+y)\wedge B\wedge N'.
	\end{equation}
	In the basis of $\bigwedge X$ induced by bases
		\OMIT{possibly empty}	
	$\beta_U, \beta_V, \beta_W, \beta_Y$ of the subspaces,
	nonzero components of $(w+y)\wedge A\wedge M'$ have all $r = \dim V$ elements of $\beta_V$,
		\OMIT{from $A$}
	and at most $1+\tgrade{M'}$ of $\beta_W$.
	Those of $(v+y)\wedge B\wedge N'$ have at most $1+\tgrade{N'}$ of $\beta_V$, and all $s = \dim W$ of $\beta_W$.
		\OMIT{from $B$}
	%
	For $q=\dim U$, by hypothesis $q+r+s \geq 2 + \min\{\tgrade{M'}+q+r,\tgrade{N'}+q+s\}$, so
	$r \geq 2+\tgrade{N'}$ or $s \geq 2+\tgrade{M'}$.
	Thus there are no common components
		\OMIT{as $r>1+\tgrade{N'}$ or $s>1+\tgrade{M'}$}
	in \eqref{eq:LI sum}, and $(w+y)\wedge A\wedge M' = (v+y)\wedge B\wedge N' = 0$.
	As $[A] = U\oplus V$ and $\osp{(w+y)\wedge M'} \subset W\oplus Y$, $(w+y)\wedge M' = 0$.
		\OMIT{\ref{pr:inter osp 0 wedge not 0}}
	By \Cref{pr:blade factor}\ref{it:maximal}, $\isp{M'}=\{0\}$, so $w=y=0$.
		\OMIT{$w+y=0$, $W\cap Y=\{0\}$}
	Likewise, $v=0$, so $x=u\in \isp{M}\cap\isp{N}$.
	
	Thus $\isp{M+N} \subset \isp{M}\cap\isp{N}$, and $M+N$ is inner balanced.
		\OMIT{ref{pr:osp isp sum} gives equality}
	Then $\igrade{M+N} = \dim(\isp{M}\cap\isp{N}) 
	= \igrade{M} + \igrade{N} - \dim(\isp{M}+\isp{N}) 
	\leq \igrade{M} + \igrade{N} - t - 2 
	\leq \bgrade{M} + \bgrade{N} - \min\{\bgrade{M},\bgrade{N}\} - 2 
	= b-2$.\!\OMIT{\ref{pr:generalized grades}\ref{it:order grades nonzero}; \\ $t=\min\{\tgrade{M},\tgrade{N}\} \geq \min\{\bgrade{M},\bgrade{N}\}$}
	
	\emph{(\ref{it:small osp inter osp})} Follows from \ref{it:large isp + isp} applied to $\rH{M}$ and $\rH{N}$.
	\OMIT{\ref{pr:generalized grades}\ref{it:in + out star}}
\end{proof}

\subsection{Simplicity criteria}

Now we give conditions for a multivector to be simple, \ie a blade.

\begin{proposition}\label{pr:isp=osp iff simple}
	$M$ is simple $\Leftrightarrow \isp{M}=\osp{M}$, for $0 \neq M\in\bigwedge X$. 
		\SELF{$\Leftrightarrow v\wedge M = 0 \ \forall v\in\osp{M}$}
\end{proposition}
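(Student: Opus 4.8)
The plan is to prove the equivalence by citing the two halves already established in the excerpt. For the direction $M$ simple $\Rightarrow \isp{M}=\osp{M}$, I would invoke \Cref{pr:blade osp=isp=space}, which says precisely that $\isp{B}=\osp{B}=[B]$ for any nonzero blade $B$; since a simple $M\neq 0$ is such a blade, we get $\isp{M}=\osp{M}$ immediately. For the converse $\isp{M}=\osp{M}\Rightarrow M$ simple, I would invoke \Cref{pr:isp=osp blade}, which is exactly that implication. So in principle the proof is a two-line assembly of existing results.

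However, to make the statement self-contained and to justify the phrasing, I would spell out why the converse works, essentially reproducing the short argument behind \Cref{pr:isp=osp blade}: take a maximal blade factorization $M=B\wedge N$ with $[B]=\isp{M}$ and $N\in\bigwedge([B]^\perp)$ (available by \Cref{pr:unique complete factor each V} or \Cref{pr:isp alternative}\ref{it:isp space max factor} together with \Cref{pr:M=BBM}). The hypothesis $\isp{M}=\osp{M}$ forces $M=B\wedge N\in\bigwedge\osp{M}=\bigwedge[B]$ by \Cref{pr:osp smallest subspace with M}; expanding $N$ in a basis of $[B]^\perp$, any basis vector appearing in a nonzero term of $N$ would survive in $B\wedge N$ and lie outside $[B]$, a contradiction unless $N$ is a scalar, whence $M=N\,B$ is simple.

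The only genuinely delicate point — not really an obstacle — is the role of the hypothesis $M\neq 0$: for $M=0$ one has $\osp{0}=\{0\}$ but $\isp{0}=X$, so the equivalence as stated would fail (the zero multivector is conventionally simple yet $\isp{0}\neq\osp{0}$ unless $X=\{0\}$). I would simply note that this exceptional case is excluded by the hypothesis, consistent with how \Cref{pr:blade osp=isp=space} and \Cref{pr:isp=osp blade} are stated. Beyond that, there is nothing hard here; the proof is a direct corollary of \Cref{pr:blade osp=isp=space} and \Cref{pr:isp=osp blade}.

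Concretely, I expect the written proof to read roughly: ``If $M$ is a nonzero blade, \Cref{pr:blade osp=isp=space} gives $\isp{M}=\osp{M}$. Conversely, if $\isp{M}=\osp{M}$ then $M$ is a blade by \Cref{pr:isp=osp blade}.'' — i.e. essentially a pointer to the two prior results, possibly with the one-line expansion of the converse argument above for the reader's convenience.
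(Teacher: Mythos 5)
Your proposal is correct and matches the paper's proof exactly: the paper also derives the statement directly from \Cref{pr:blade osp=isp=space} (simple $\Rightarrow \isp{M}=\osp{M}$) and \Cref{pr:isp=osp blade} (the converse). Your optional expansion of the converse merely restates the argument already given for \Cref{pr:isp=osp blade}, so nothing further is needed.
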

\begin{proof}
	Follows from Propositions \ref{pr:blade osp=isp=space} and \ref{pr:isp=osp blade}.
\end{proof}

\begin{corollary}\label{pr:Cartan criterion}
	$H\in\bigwedge^p X$ is simple $\Leftrightarrow$ the equivalent criteria below hold:%
		\OMIT{\ref{pr:isp=osp iff simple};
		for $F\in\bigwedge^{p-1} X$ and $G\in\bigwedge^{p+1} X$, $\inner{F,(H\lcontr G)\lcontr H} = \inner{(H\lcontr G)\wedge F,H} = \inner{H\lcontr G,H\rcontr F} = \inner{G,H\wedge (H\rcontr F)} = \pm\inner{G,(F\lcontr H)\wedge H}$}
	\begin{enumerate}[i)]
		\item $(F\lcontr H)\wedge H=0 \ \  \forall F\in\bigwedge^{p-1} X$.\label{it:FHH}
			\CITE{Kozlov2000I p.2246, 5.2, chama de Cartan criterion. Em p.2248 (6.2 Thm 3) usa para mostrar que blades formam subvariedade algébrica de $\bigwedge X$}
		\item $(H\lcontr G)\lcontr H = 0 \ \  \forall G\in\bigwedge^{p+1} X$. \label{it:HGH}
		\item $\inner{F\lcontr H , H\lcontr G} = 0 \ \ \forall F\in\bigwedge^{p-1} X,G\in\bigwedge^{p+1} X$.\label{it:FHHG}
	\end{enumerate}
\end{corollary}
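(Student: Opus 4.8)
The plan is to reduce everything to Proposition \ref{pr:isp=osp iff simple}, which says $H$ is simple iff $\isp{H}=\osp{H}$, together with the alternative descriptions of these spaces for homogeneous multivectors in Proposition \ref{pr:isp osp homog}. Recall from there that, for $H\in\bigwedge^p X$, $\osp{H} = \{F\lcontr H : F\in\bigwedge^{p-1} X\}$ and $\isp{H} = \{H\lcontr G : G\in\bigwedge^{p+1} X\}^\perp$. By Proposition \ref{pr:isp subset osp} we always have $\isp{H}\subset\osp{H}$, so $H$ is simple iff $\osp{H}\subset\isp{H}$, i.e.\ iff every $F\lcontr H$ (with $F\in\bigwedge^{p-1}X$) lies in $\isp{H} = \{H\lcontr G : G\in\bigwedge^{p+1}X\}^\perp$. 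Unwinding, this says $\inner{F\lcontr H,\,H\lcontr G} = 0$ for all $F\in\bigwedge^{p-1}X$ and $G\in\bigwedge^{p+1}X$, which is exactly criterion \ref{it:FHHG}. That handles the equivalence of simplicity with \ref{it:FHHG}.

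For the equivalences with \ref{it:FHH} and \ref{it:HGH}, I would rewrite the inner product $\inner{F\lcontr H,\,H\lcontr G}$ using adjunction between $\wedge$ and $\lcontr$ (and between $\wedge$ and $\rcontr$) to move factors across. On one side, $\inner{F\lcontr H,\,H\lcontr G} = \inner{(H\lcontr G)\wedge F,\,H}$ by adjunction; but also $(H\lcontr G)\wedge F$ ranges over all of $\bigwedge^p X$ as... no — more carefully, I would instead peel off only one vector at a time or use: $\inner{(H\lcontr G)\wedge F, H} = \inner{F,\, (H\lcontr G)\lcontr H}$, again by adjunction. So for fixed $G$, requiring this to vanish for all $F\in\bigwedge^{p-1}X$ is equivalent to $(H\lcontr G)\lcontr H = 0$, giving criterion \ref{it:HGH}. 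Symmetrically, $\inner{F\lcontr H,\,H\lcontr G} = \inner{H\lcontr G,\, H\rcontr F} = \inner{G,\, H\wedge(H\rcontr F)}$, and $H\wedge(H\rcontr F)$ should be rewritten as $\pm (F\lcontr H)\wedge H$; requiring vanishing for all $G\in\bigwedge^{p+1}X$ then gives $(F\lcontr H)\wedge H = 0$ for all $F$, which is \ref{it:FHH}. Thus all three criteria are equivalent to $\inner{F\lcontr H,H\lcontr G}=0$ for all $F,G$, hence to each other and to simplicity. These are precisely the manipulations sketched in the \texttt{OMIT} hint following the statement.

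The only slightly delicate point — and the main thing to get right — is the sign and grade bookkeeping in the identity relating $H\wedge(H\rcontr F)$ to $(F\lcontr H)\wedge H$ (and more generally in checking that adjunction is being applied between the correct pair of operations, since left and right contractions behave differently). One must verify that $F\lcontr H\in\bigwedge^1 X$ when $\grade F = p-1$, so that $(F\lcontr H)\wedge H$ makes sense and lands in $\bigwedge^{p+1}X$, matching the grade of $G$; similarly $H\lcontr G\in\bigwedge^1 X$ and $(H\lcontr G)\lcontr H\in\bigwedge^{p-1}X$ matches the grade of $F$. The adjunction formulas from \cite{Mandolesi_Contractions} (relating $\inner{A\wedge B,C}$ with $\inner{B, A\rcontr C}$ and $\inner{A\lcontr B,C}$ with $\inner{B,\tilde A\wedge C}$ or similar) supply the needed identities; the reversions/grade involutions they introduce are harmless here because one only cares about whether expressions vanish. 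I do not expect any real obstacle beyond this routine bilinear-form juggling.
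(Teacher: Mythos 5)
Your proposal is correct and takes essentially the same route as the paper: simplicity is reduced to $\isp{H}=\osp{H}$ via Propositions \ref{pr:isp=osp iff simple}, \ref{pr:isp osp homog} and \ref{pr:isp subset osp}. The only cosmetic difference is that the paper reads each of the three criteria directly as a relation between $\isp{H}$, $\osp{H}$ and their orthogonal complements, whereas you derive \ref{it:FHH} and \ref{it:HGH} from \ref{it:FHHG} by the adjunction identities the paper relegates to its marginal note; the sign and conjugation bookkeeping you flag is indeed immaterial, since only the vanishing of the expressions is at stake.
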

\begin{proof}
	\Cref{pr:isp osp homog} shows \ref{it:FHH}, \ref{it:HGH} and \ref{it:FHHG} mean $\osp{H} \subset \isp{H}$, $\isp{H}^\perp \subset \osp{H}^\perp$ and $\osp{H} \perp \isp{H}^\perp$.
	By \Cref{pr:isp subset osp}, they imply $\isp{H}=\osp{H}$.
\end{proof}

With the regressive product $\vee$, we also have $(F \wedge H) \vee H =0 \ \forall F \in \bigwedge^{n-p-1} X$, and $(G \vee H) \wedge H =0 \ \forall G \in \bigwedge^{n-p+1} X$.%
	\OMIT{$H$ is simple $\Leftrightarrow \rH{H}$ is simple \\ $0 = (F\lcontr \rH{H})\wedge \rH{H} = (\rH{H} \vee \rH{F}) \wedge \rH{H} = \rH{(H \wedge F)} \wedge \rH{H} = \rH{((H \wedge F) \vee H)}$,  and \\ $0 = (\rH{H}\lcontr G)\lcontr \rH{H} = (G \vee H) \lcontr \rH{H} = \rH{(H \wedge (G \vee H))}$ \\ pr:Hodge wedge contr, pr:regressive}

\begin{corollary}\label{pr:simple bivector}
	$H\in\bigwedge^2 X$ is simple $\Leftrightarrow H\wedge H=0$.
\end{corollary}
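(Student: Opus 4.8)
The plan is to deduce this directly from the general criterion of \Cref{pr:Cartan criterion}, specialized to $p=2$. By that corollary, $H\in\bigwedge^2 X$ is simple if and only if $(F\lcontr H)\wedge H = 0$ for all $F\in\bigwedge^{1} X = X$. So the task reduces to showing that the single equation $H\wedge H = 0$ is equivalent to the family of equations $(v\lcontr H)\wedge H = 0$ for all $v\in X$.

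First I would prove the implication that matters for applications: $H\wedge H = 0 \Rightarrow (v\lcontr H)\wedge H = 0$ for all $v$. The natural tool is \Cref{pr:contr homog}, the graded Leibniz rule for a vector contracting a wedge: $v\lcontr(H\wedge H) = (v\lcontr H)\wedge H + \hat H\wedge(v\lcontr H)$. Since $H$ has grade $2$, $\hat H = H$, and since $v\lcontr H$ has grade $1$ it commutes with $H$ under $\wedge$ (a vector commutes with an even multivector, or more simply grade $1$ times grade $2$ picks up sign $(-1)^{1\cdot 2}=1$), so the two terms are equal and $v\lcontr(H\wedge H) = 2\,(v\lcontr H)\wedge H$. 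Hence $H\wedge H = 0$ immediately forces $(v\lcontr H)\wedge H = 0$ for every $v\in X$, which by \Cref{pr:Cartan criterion}\ref{it:FHH} gives simplicity of $H$.

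For the converse direction one can argue even more cheaply: if $H$ is simple and nonzero, then $H = v_1\wedge v_2$ for independent vectors, so $H\wedge H = v_1\wedge v_2\wedge v_1\wedge v_2 = 0$ since a repeated vector kills the product; and $H=0$ trivially satisfies $H\wedge H=0$. So this direction needs nothing beyond the definition of a blade. (Alternatively, it also drops out of the chain above, since $(v\lcontr H)\wedge H=0$ for all $v$ combined with $v\lcontr(H\wedge H)=2(v\lcontr H)\wedge H$ forces $v\lcontr(H\wedge H)=0$ for all $v$, i.e. $\osp{H\wedge H}=\{0\}$, so $H\wedge H$ is a scalar; but $H\wedge H\in\bigwedge^4 X$, hence it is $0$.)

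The only mild subtlety — and the one place to be careful — is the low-dimensional/degenerate bookkeeping: the sign in the Leibniz rule, and making sure the argument that "$\osp{H\wedge H}=\{0\}$ and $H\wedge H$ homogeneous of positive grade $\Rightarrow H\wedge H=0$" is invoked correctly (this uses that $M\in\bigwedge\osp{M}$, i.e. \Cref{pr:osp smallest subspace with M}). None of this is a real obstacle; the statement is essentially a one-line consequence of \Cref{pr:Cartan criterion} together with the observation that for grade $2$ the single scalar equation $H\wedge H=0$ already encodes the whole family $(v\lcontr H)\wedge H=0$.
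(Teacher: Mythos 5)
Your proof is correct and follows essentially the same route as the paper: the paper's own argument is exactly the identity $(v\lcontr H)\wedge H = \tfrac{v}{2}\lcontr(H\wedge H)$ obtained from the graded Leibniz rule (\Cref{pr:contr homog}), combined with \Cref{pr:Cartan criterion}\ref{it:FHH}, with the forward direction dismissed as immediate. Your extra bookkeeping (the sign check and the alternative converse via $\osp{H\wedge H}$) is fine but not needed beyond what the paper records.
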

\begin{proof}
	\emph{($\Rightarrow$)} Immediate.
	\emph{($\Leftarrow$)} $(v\lcontr H)\wedge H = \frac{v}{2}\lcontr(H\wedge H) = 0 \ \forall v\in X$.\OMIT{it:Leibniz vector grade inv, \ref{pr:Cartan criterion}}
\end{proof}

A related result \cite[p.\,174]{Pavan2017} is that a minimal blade decomposition of $0\neq H\in\bigwedge^2 X$ has $k$ blades if, and only if, $k$ is the largest integer such that $H\wedge\cdots\wedge H \neq 0$ (with $k$ $H$'s).
So, no minimal blade decomposition of $H$ has more than $\frac{\dim X}{2}$ blades.

\begin{proposition}\label{pr:4 blades}
	If  $i=\igrade{M}$ and $o=\ograde{M}$ for $0 \neq M\in\bigwedge X$,
	the homogeneous components $\comp{M}{i}$, $\comp{M}{i+1}$, $\comp{M}{o-1}$ and $\comp{M}{o}$ are simple.
		\SELF{possibly 0} 
\end{proposition}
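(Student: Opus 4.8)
The plan is to prove each of the four components is simple by showing its inner and outer spaces coincide, invoking \Cref{pr:isp=osp iff simple}. I would handle $\comp{M}{i}$ and $\comp{M}{i+1}$ directly, and then obtain the statements about $\comp{M}{o-1}$ and $\comp{M}{o}$ by dualizing, since $\rH{M}$ has $\igrade{\rH{M}} = n - \ograde{M}$ and $\tgrade{\rH{M}} = n - \bgrade{M}$ by \Cref{pr:generalized grades}\ref{it:in + out star}, and the Hodge star sends $\comp{M}{p}$ (up to sign) to a grade-$(n-p)$ component of $\rH{M}$, preserving simplicity (a blade's dual is a blade). So the real work is with the bottom components.

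First I would take a maximal blade factorization $M = B \wedge N$ with $[B] = \isp{M}$ and $N \in \bigwedge([B]^\perp)$, guaranteed by \Cref{pr:unique complete factor each V} (or \Cref{pr:isp alternative}\ref{it:isp space max factor} together with \Cref{pr:M=BBM}). Then $\grade{B} = i$ and, crucially, by \Cref{pr:blade factor}\ref{it:maximal} we have $\isp{N} = \{0\}$, hence $\bgrade{N} \geq 1$: the factorization being orthogonal makes it tight, so maximality is equivalent to $\isp{N} = \{0\}$. Writing $N = \sum_{p \geq 1} \comp{N}{p}$ in homogeneous components, we get $\comp{M}{i} = B \wedge \comp{N}{1}$ and $\comp{M}{i+1} = B \wedge \comp{N}{2}$ (the grade-$i$ part of $B \wedge N$ comes only from the grade-$1$ part of $N$, etc.). Now $\comp{N}{1}$ is a vector, so $\comp{M}{i} = B \wedge \comp{N}{1}$ is a blade immediately. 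For $\comp{M}{i+1}$: $B \wedge \comp{N}{2}$ need not be simple just because $B$ is a blade and $\comp{N}{2}$ is a bivector — but here $\comp{N}{2} \in \bigwedge^2([B]^\perp)$, and $B \wedge \comp{N}{2} = 0$ would force (by \Cref{pr:inter osp 0 wedge not 0}, since $[B] \cap \osp{\comp{N}{2}} = \{0\}$) one factor to vanish; more to the point, I want simplicity of $\comp{M}{i+1}$, not nonvanishing. The cleanest route: if $\comp{M}{i+1} \neq 0$ then its inner space contains $[B]$ (as $[B] \subset \isp{M} \subset \isp{\comp{M}{i}}$... careful — need $\isp{\comp{M}{i+1}}$); since $\comp{M}{i+1} = B \wedge \comp{N}{2}$ with $\comp{N}{2}$ a bivector in $[B]^\perp$, by \Cref{pr:osp isp wedge} (equality case, tight factorization) $\isp{\comp{M}{i+1}} = [B] \oplus \isp{\comp{N}{2}}$ and $\osp{\comp{M}{i+1}} = [B] \oplus \osp{\comp{N}{2}}$, so it suffices that $\comp{N}{2}$ — a bivector — is simple, which is automatic only if $\comp{N}{2} \wedge \comp{N}{2} = 0$; this is NOT automatic for a general bivector. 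The genuine input must be that $\comp{M}{i+1}$, being a \emph{grade-$(i+1)$} piece sitting just above the inner grade, cannot itself have inner grade $< i+1$ unless...

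The main obstacle, then, is pinning down why $\comp{M}{i+1}$ is forced to be simple — the factorization alone does not give it. I expect the key fact is: $\igrade{\comp{M}{i+1}} \geq i$ always (as $[B] \subset \isp{M} \subset \isp{\comp{M}{i+1}}$ — this inclusion needs justification via the homogeneous-decomposition argument of \Cref{ex:osp isp homog decomp}, namely $v \wedge M = 0 \Leftrightarrow v \wedge \comp{M}{p} = 0\ \forall p$ only when... no: $v \wedge M = 0$ with $v$ a vector means $v \wedge \comp{M}{p} = 0$ for all $p$ since grades separate, so indeed $\isp{M} \subset \isp{\comp{M}{p}}$ for every $p$, giving $[B] \subset \isp{\comp{M}{i+1}}$). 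Then $\osp{\comp{M}{i+1}} \geq \tgrade{\comp{M}{i+1}} = i+1$ when $\comp{M}{i+1} \neq 0$, so $\igrade{\comp{M}{i+1}} \in \{i, i+1\}$ and $\ograde{\comp{M}{i+1}} \geq i+1$; simplicity needs these equal, i.e. $\igrade{\comp{M}{i+1}} = i+1$ and $\ograde{\comp{M}{i+1}} = i+1$. Equivalently $\comp{N}{2}$ is simple. I believe the resolution is that one should \emph{not} expect a soft proof: rather, apply \Cref{pr:isp M+N = inter isp} in contrapositive form to $M = \comp{M}{i+1} + (M - \comp{M}{i+1})$, or more likely argue that if $\comp{N}{2}$ were non-simple then $\isp{M}$ would be strictly smaller than forced — but $\isp{M}$ is what defines $i$. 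So the honest plan is: reduce (via the maximal orthogonal factorization and the equality case of \Cref{pr:osp isp wedge}) the simplicity of $\comp{M}{i}$ and $\comp{M}{i+1}$ to the simplicity of a vector and of the bivector $\comp{N}{2} \in \bigwedge^2([B]^\perp)$ respectively, and the latter from $\comp{N}{2}\wedge\comp{N}{2}=0$ — which must hold because... the grade-$(2i+2)$ component of $M \wedge M$ equals $B \wedge \comp{N}{2} \wedge B \wedge \comp{N}{2} = 0$ trivially ($B \wedge B = 0$), so that gives nothing; instead $\comp{N}{2} \wedge \comp{N}{2} = 0$ should follow from $N$ having $\isp{N} = \{0\}$ being incompatible with a rank-$\geq 2$ bivector bottom component — no, $v \wedge v \wedge w \neq 0$. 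I therefore flag this as the crux: the clean statement to use is almost surely an auxiliary observation that for \emph{any} $0 \neq L$, $\comp{L}{\bgrade{L}}$ and $\comp{L}{\bgrade{L}+1}$ are simple when $\bgrade{L} = \igrade{L}$, and I would prove it by the factorization reduction above plus \Cref{pr:simple bivector} applied to $\comp{N}{2}$, justifying $\comp{N}{2}\wedge\comp{N}{2}=0$ from the fact that $\comp{M}{i}\wedge N' = 0$ relations force it — and if that fails, fall back on showing directly that $\isp{\comp{M}{i+1}} = \osp{\comp{M}{i+1}}$ by a dimension count using \Cref{pr:generalized grades}\ref{it:order grades nonzero} applied to $\comp{M}{i+1}$, since $i \leq \igrade{\comp{M}{i+1}} \leq \bgrade{\comp{M}{i+1}} = \tgrade{\comp{M}{i+1}} = i+1 = \ograde{\comp{M}{i+1}}$ forces $\igrade{\comp{M}{i+1}} = i+1$ once we know $\ograde{\comp{M}{i+1}} \leq i+1$, and homogeneity gives $\bgrade = \tgrade = i+1$; the only gap is ruling out $\igrade{\comp{M}{i+1}} = i$, which would make $\comp{M}{i+1} = B \wedge (\text{vector})$, hence simple anyway — so in fact \emph{every} case of $\igrade{\comp{M}{i+1}} \in \{i, i+1\}$ already yields simplicity, because $\igrade = i$ forces it to be a blade ($B$ wedge a line) and $\igrade = i+1$ with $\tgrade = i+1$ means $\isp = \osp$ by the grade inequalities. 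That last remark is, I think, the actual proof, and I would write it that way.
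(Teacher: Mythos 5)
Your overall strategy (maximal blade factorization for the bottom two components, star duality for the top two) is the paper's strategy, but the execution contains an off-by-one error that manufactures the very difficulty you then spend most of the proposal fighting. In a maximal orthogonal factorization $M = B \wedge N$ with $\grade{B} = i$ and $N \in \bigwedge([B]^\perp)$, the grade-$(i+p)$ component of $M$ is $B \wedge \comp{N}{p}$; hence $\comp{M}{i} = \comp{N}{0}\, B$ and $\comp{M}{i+1} = B \wedge \comp{N}{1}$, i.e.\ a scalar multiple of $B$ and $B$ wedged with a vector, both simple with nothing left to prove. Your identities $\comp{M}{i} = B \wedge \comp{N}{1}$ and $\comp{M}{i+1} = B \wedge \comp{N}{2}$ are grade mismatches ($B \wedge \comp{N}{1}$ has grade $i+1$), and the step that produced them, ``$\isp{N}=\{0\}$, hence $\bgrade{N} \geq 1$'', is false: a nonzero scalar has trivial inner space, and indeed $\comp{N}{0} \neq 0$ exactly when $\comp{M}{i} \neq 0$ (e.g.\ $M$ a blade, $N$ a scalar). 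The bivector $\comp{N}{2}$ need not be simple and nothing forces $\comp{N}{2}\wedge\comp{N}{2}=0$ -- you sensed this correctly, but it is a phantom obstacle: once the indices are right there is no bivector to worry about. This is precisely the paper's one-line proof, which reads off $\comp{M}{i} = \comp{N}{0} A$, $\comp{M}{i+1} = A \wedge \comp{N}{1}$ from a maximal factorization $M = A\wedge N$, and $\comp{M}{o-1} = \comp{L}{1} \lcontr B$, $\comp{M}{o} = \overline{\comp{L}{0}}\, B$ from a minimal carving $M = L \lcontr B$; your duality route for the top two components is a legitimate substitute for the carving.

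Your final fallback for $\comp{M}{i+1}$ also has a hole as written. The inclusion $[B] \subset \isp{\comp{M}{i+1}}$ is correctly justified (grades separate in $v \wedge M = 0$), giving $\igrade{\comp{M}{i+1}} \in \{i, i+1\}$, and the case $\igrade{\comp{M}{i+1}} = i$ is handled soundly (a maximal factor of grade $i$ plus homogeneity forces a vector cofactor, hence a blade). But the case $\igrade{\comp{M}{i+1}} = i+1$ is not closed ``by the grade inequalities'': \Cref{pr:generalized grades}\ref{it:order grades nonzero} only gives $\ograde{\comp{M}{i+1}} \geq i+1$, and you never obtain the reverse inequality; note also that \Cref{pr:dim osp isp}, which would supply it, is a corollary of the proposition being proved, so invoking it would be circular. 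What actually closes that case is the same device as the other one: a maximal blade factor whose grade equals the grade of the homogeneous element forces a scalar cofactor, so the element is a blade. The fallback is therefore repairable, but with the correct indexing in the factorization step it is unnecessary.
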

\begin{proof}
	Given a maximal blade factorization $M=A\wedge N$ and a minimal blade carving $M= L \lcontr B$,
	let $\kappa = \comp{N}{0}$, $u = \comp{N}{1}$, $\lambda = \comp{L}{0}$ and $v = \comp{L}{1}$.
	Then
	$\comp{M}{i} = \kappa A$, $\comp{M}{i+1} = A \wedge u$, $\comp{M}{o-1} = v \lcontr B$ and $\comp{M}{o} = \bar{\lambda}\, B$.
\end{proof}

\begin{corollary}
	If $\comp{M}{p}$ is not simple then $\igrade{M}+2 \leq p \leq \ograde{M}-2$.
\end{corollary}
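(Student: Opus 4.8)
The plan is to prove the contrapositive of the corollary, which is nothing more than a restatement of \Cref{pr:4 blades}. Suppose $\comp{M}{p}$ is not simple; then in particular $\comp{M}{p} \neq 0$, so $M \neq 0$ and the inner and outer grades $i = \igrade{M}$ and $o = \ograde{M}$ are defined. By \Cref{pr:generalized grades}\ref{it:order grades nonzero} we have $i \leq \bgrade{M} \leq \tgrade{M} \leq o$, so every nonvanishing homogeneous component of $M$ has grade in the range $[i,o]$; in particular $i \leq p \leq o$. It remains to rule out the four boundary values $p \in \{i, i+1, o-1, o\}$.

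The key observation is that \Cref{pr:4 blades} says exactly that $\comp{M}{i}$, $\comp{M}{i+1}$, $\comp{M}{o-1}$ and $\comp{M}{o}$ are all simple (allowing the zero multivector as a trivial blade). Hence if $\comp{M}{p}$ fails to be simple, then $p$ cannot equal any of $i$, $i+1$, $o-1$ or $o$. Combined with $i \leq p \leq o$, this forces $i + 2 \leq p \leq o - 2$, which is the assertion.

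I expect there to be essentially no obstacle here: the statement is an immediate logical consequence of \Cref{pr:4 blades} together with the grade ordering in \Cref{pr:generalized grades}\ref{it:order grades nonzero}. The only point worth a moment's care is the degenerate situation where the range is empty, i.e.\ when $o - i < 4$; in that case \Cref{pr:4 blades} already exhausts all admissible grades with simple components, so no $p$ with $i \leq p \leq o$ can have a non-simple component, and the implication holds vacuously.

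Concretely, the proof I would write is: \emph{Proof.} If $\comp{M}{p} \neq 0$ then $M \neq 0$, and \Cref{pr:generalized grades}\ref{it:order grades nonzero} gives $\igrade{M} \leq p \leq \ograde{M}$. By \Cref{pr:4 blades}, $\comp{M}{\igrade{M}}$, $\comp{M}{\igrade{M}+1}$, $\comp{M}{\ograde{M}-1}$ and $\comp{M}{\ograde{M}}$ are simple, so if $\comp{M}{p}$ is not simple then $p \notin \{\igrade{M}, \igrade{M}+1, \ograde{M}-1, \ograde{M}\}$. Hence $\igrade{M} + 2 \leq p \leq \ograde{M} - 2$. \qed
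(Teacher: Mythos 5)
Your proof is correct and is exactly the argument the paper intends: the corollary is stated without proof as an immediate consequence of \Cref{pr:4 blades} together with the ordering $\igrade{M}\leq\bgrade{M}\leq\tgrade{M}\leq\ograde{M}$ from \Cref{pr:generalized grades}, which is precisely what you use (your appeal to ``not simple $\Rightarrow$ nonzero'' matches the paper's convention that zero components count as simple).
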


\begin{corollary}\label{pr:dim osp isp}
	For $0\neq H\in\bigwedge^p X$, there are 2 possibilities:
		\OMIT{\ref{pr:blade osp=isp=space}, \ref{pr:4 blades}}
	\begin{enumerate}[i)]
		\item $H$ is simple and $\igrade{H} = \ograde{H} = p$. \label{it:H simple}
		\item $H$ is not simple and $\igrade{H}+2 \leq p \leq \ograde{H}-2$. \label{it:H not simple}
	\end{enumerate}
\end{corollary}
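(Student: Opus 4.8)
The plan is to deduce \Cref{pr:dim osp isp} directly from \Cref{pr:generalized grades}\ref{it:order grades nonzero}, \Cref{pr:4 blades}, and the simplicity characterization in \Cref{pr:isp=osp iff simple}. Let $0\neq H\in\bigwedge^p X$ and write $i=\igrade{H}$, $o=\ograde{H}$. Since $H$ is homogeneous, $\bgrade{H}=\tgrade{H}=p$, so \Cref{pr:generalized grades}\ref{it:order grades nonzero} immediately gives $i\leq p\leq o$.

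First I would dispose of case \ref{it:H simple}: if $H$ is simple, then by \Cref{pr:blade osp=isp=space} (or the remark after \Cref{df:bt grades}) $\isp{H}=\osp{H}=[H]$, and since $H$ is a blade of grade $p$ this common space has dimension $p$, so $i=o=p$. Conversely, if $i=o$ then $i\leq p\leq o$ forces $i=p=o$; but actually for the dichotomy I only need that simplicity yields $i=o=p$, and that non-simplicity yields the strict inequalities of \ref{it:H not simple}, which together are exhaustive and mutually exclusive.

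So the real content is case \ref{it:H not simple}: assume $H$ is not simple. By \Cref{pr:4 blades}, the components $\comp{H}{i}$, $\comp{H}{i+1}$, $\comp{H}{o-1}$, $\comp{H}{o}$ are all simple. Now $H$ is homogeneous of grade $p$, so $\comp{H}{q}=0$ for every $q\neq p$ while $\comp{H}{p}=H\neq 0$. If $p$ equalled any of $i$, $i+1$, $o-1$, $o$, then $H=\comp{H}{p}$ would be one of those simple components, contradicting non-simplicity. Combined with $i\leq p\leq o$ from \Cref{pr:generalized grades}\ref{it:order grades nonzero}, ruling out $p\in\{i,i+1\}$ gives $p\geq i+2$, and ruling out $p\in\{o-1,o\}$ gives $p\leq o-2$; hence $\igrade{H}+2\leq p\leq \ograde{H}-2$.

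There is essentially no obstacle here — the proof is a short bookkeeping argument once \Cref{pr:4 blades} is in hand; the only point needing a little care is confirming that the two cases are genuinely exhaustive, which follows because either $H$ is simple or it is not, and each alternative has been pinned to the stated grade configuration.

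\begin{proof}
	\Cref{pr:generalized grades}\ref{it:order grades nonzero} gives $\igrade{H} \leq \bgrade{H} = \tgrade{H} = p \leq \ograde{H}$, since $H$ is homogeneous.
	If $H$ is simple, $\isp{H} = \osp{H} = [H]$ by \Cref{pr:blade osp=isp=space}, and as $\grade{H} = p$ this space has dimension $p$, so $\igrade{H} = \ograde{H} = p$.
	If $H$ is not simple, \Cref{pr:4 blades} says $\comp{H}{i}$, $\comp{H}{i+1}$, $\comp{H}{o-1}$, $\comp{H}{o}$ are simple for $i = \igrade{H}$, $o = \ograde{H}$.
	But $H = \comp{H}{p}$ is its only nonzero component, so $p \notin \{i,i+1,o-1,o\}$, as otherwise $H$ would be simple.
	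With $i \leq p \leq o$, this forces $\igrade{H} + 2 \leq p \leq \ograde{H} - 2$.
\end{proof}
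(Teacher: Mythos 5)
Your proof is correct and follows exactly the route the paper intends (its marginal hint cites \Cref{pr:blade osp=isp=space} and \Cref{pr:4 blades}): the simple case via $\isp{H}=\osp{H}=[H]$, and the non-simple case by noting $H=\comp{H}{p}$ cannot be one of the four simple components of \Cref{pr:4 blades}, which together with $\igrade{H}\leq p\leq\ograde{H}$ forces the stated bounds. In effect you re-derive, for homogeneous $H$, the unnamed corollary immediately preceding the statement, so nothing is missing.
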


\begin{corollary}\label{pr:dim isp + isp large homog}
	For nonzero $G,H\in\bigwedge^p X$, if $\dim(\osp{G}\cap\osp{H}) \leq p-2$ or $\dim(\isp{G}+\isp{H}) \geq p+2$ then $G+H$ is not simple.
		\SELF{\ref{pr:isp M+N = inter isp} gives more: \\
			$\dim(\isp{G}+\isp{H}) \geq p+2$ \\ $\Rightarrow G+H$ inner balanced, $\igrade{G+H} \leq p-2$; \\
			$\dim(\osp{G}\cap\osp{H}) \leq p-2$ \\ $\Rightarrow G+H$ outer balanced, $\ograde{G+H} \geq p+2$}
\end{corollary}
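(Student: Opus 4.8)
The plan is to reduce \Cref{pr:dim isp + isp large homog} directly to \Cref{pr:isp M+N = inter isp}, which is stated for arbitrary nonzero multivectors and so applies in particular to homogeneous $G,H\in\bigwedge^p X$. First I would observe that here $\bgrade{G}=\tgrade{G}=p$ and $\bgrade{H}=\tgrade{H}=p$, so in the notation of \Cref{pr:isp M+N = inter isp} we have $b=\max\{\bgrade{G},\bgrade{H}\}=p$ and $t=\min\{\tgrade{G},\tgrade{H}\}=p$. I should also note that the grade conditions force $G+H\neq 0$: if $G+H=0$ then $\isp{G}=\isp{H}=\isp{G+H}$ would be all of $X$ (so $\dim(\isp G+\isp H)=n\ge p+2$ is possible) — but then $G=-H$ gives $\osp G=\osp H$ and $\dim(\osp G\cap\osp H)=\dim\osp G\ge p\ge p-2$, and also $\dim(\isp G+\isp H)=\dim\isp G$, which by \Cref{pr:dim osp isp} is $\le p-2<p+2$; so neither hypothesis can hold when $G+H=0$. (Alternatively one can simply invoke that a blade $G$ and $-G$ have equal inner and outer spaces, so the two displayed inequalities each fail.)

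Next, under the first hypothesis $\dim(\isp{G}+\isp{H})\ge p+2=t+2$, part \ref{it:large isp + isp} of \Cref{pr:isp M+N = inter isp} gives $\igrade{G+H}\le b-2=p-2$. Under the second hypothesis $\dim(\osp{G}\cap\osp{H})\le p-2=b-2$, part \ref{it:small osp inter osp} gives $\ograde{G+H}\ge t+2=p+2$. In either case $G+H$ is a nonzero element of $\bigwedge^p X$ whose inner grade is at most $p-2$ or whose outer grade is at least $p+2$. By \Cref{pr:dim osp isp} (equivalently \Cref{pr:4 blades} or the displayed corollary after it), a nonzero homogeneous $F\in\bigwedge^p X$ is simple if and only if $\igrade{F}=\ograde{F}=p$; since $G+H$ violates $\igrade{G+H}=p$ in the first case and $\ograde{G+H}=p$ in the second, $G+H$ is not simple.

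There is essentially no obstacle here beyond bookkeeping: the real content sits in \Cref{pr:isp M+N = inter isp} and \Cref{pr:dim osp isp}, both available. The only point requiring a moment's care is ruling out $G+H=0$ so that \Cref{pr:isp M+N = inter isp} (stated for nonzero sums, or rather producing a genuine strict gap) applies cleanly; I handle that by remarking that for homogeneous multivectors $G+H=0$ forces $G$ and $H$ to be blades with $\osp G=\osp H$ and $\isp G=\isp H$, whence both displayed inequalities fail by \Cref{pr:dim osp isp}. So the proof is just: observe $b=t=p$, dispatch the $G+H=0$ case, apply \Cref{pr:isp M+N = inter isp}, and conclude via the simplicity characterization in \Cref{pr:dim osp isp}.
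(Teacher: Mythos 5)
Your proposal is correct and is essentially the paper's own argument: the paper proves this corollary in one line by citing exactly \Cref{pr:isp M+N = inter isp} (with $b=t=p$ for homogeneous $G,H$) together with \Cref{pr:dim osp isp}, which is precisely what you spell out. Your extra care in ruling out $G+H=0$ is a reasonable addition (the aside claiming $\isp{G}=\isp{H}=\isp{G+H}=X$ is not right as stated, but the argument you give after it --- that $G=-H$ forces $\dim(\osp{G}\cap\osp{H})\geq p$ and $\dim(\isp{G}+\isp{H})\leq p$, so both hypotheses fail --- is correct and suffices).
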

\begin{proof}
	Follows from Propositions \ref{pr:isp M+N = inter isp} and \ref{pr:dim osp isp}.
\end{proof}

\begin{proposition}
	For nonzero $p$-blades $A$ and $B$ with $[A]\neq[B]$, either: 
	\begin{enumerate}[i)]
		\item $\dim([A]\cap[B])+1 = p = \dim([A]+[B])-1$ and $A+B$ is simple but neither inner nor outer balanced.\label{it:p=q k=p-1 simple}
			\SELF{$[A]\cap[B] \subsetneq [A+B] \subsetneq [A]+[B]$}
		\item $\dim([A]\cap[B])+2 \leq p \leq \dim([A]+[B])-2$ and $A+B$ is not simple but is balanced.\label{it:p=q not simple}
	\end{enumerate}
\end{proposition}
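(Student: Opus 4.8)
The plan is to analyze the sum $A+B$ of two distinct nonzero $p$-blades by reducing to the geometry of the subspaces $[A]$ and $[B]$. First I would set $U = [A]\cap[B]$ and write $[A] = U\oplus V$, $[B] = U\oplus W$ with $V\cap W = \{0\}$ (and $V,W \neq \{0\}$ since $[A]\neq[B]$, hence both have dimension at least $1$). Factoring out a blade $C$ with $[C]=U$, we get $A = C\wedge A'$ and $B = C\wedge B'$ with $A'\in\bigwedge V$, $B'\in\bigwedge W$ blades of grade $r := \dim V = \dim W \geq 1$ (the two deleted dimensions are equal because $\grade A = \grade B = p$ and $\dim U$ is common). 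Thus $A+B = C\wedge(A'+B')$, and since $\osp{A'}\cap\osp{B'} = V\cap W = \{0\}$, the factorization $A+B = C\wedge(A'+B')$ is orthogonal, hence tight; by \Cref{pr:blade factor}\ref{it:disjoint} the inner/outer spaces of $A+B$ split off $[C]=U$, so it suffices to treat the ``core'' $A'+B'$ with $\osp{A'}\cap\osp{B'}=\{0\}$ and reduce everything by $\dim U$ at the end.

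Next I would dichotomize on $r$. If $r=1$, then $A'=av$ and $B'=bw$ for vectors $v\in V$, $w\in W$, scalars $a,b\neq 0$, so $A'+B' = av+bw$ is a nonzero vector, which is simple; hence $A+B = C\wedge(av+bw)$ is a blade of grade $p$, and $\dim([A]\cap[B])+1 = \dim U + 1 = p = \dim([A]+[B]) - 1$ since $\dim([A]+[B]) = \dim U + 2$. For the ``neither inner nor outer balanced'' claim: $\isp{A+B} = [C\wedge(av+bw)] = U\oplus[av+bw]$, which is not contained in $\isp{A}=[A]=U\oplus V$ (as $av+bw\notin U\oplus V$ because $bw\neq 0$), so not inner balanced; dually $\osp{A} = [A] = U\oplus V \not\subset \osp{A+B} = U\oplus[av+bw]$ (again $v\notin U\oplus[av+bw]$), so not outer balanced. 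This gives case \ref{it:p=q k=p-1 simple}.

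If $r\geq 2$, I would show $A'+B'$ is not simple, and that $A+B$ is balanced. Non-simplicity: $\isp{A'} = V$ and $\isp{B'}=W$ (blades of grade $r\geq 1$), so $\dim(\isp{A'}+\isp{B'}) = \dim V + \dim W = 2r \geq r+2$, and similarly $\osp{A'}=V$, $\osp{B'}=W$ give $\dim(\osp{A'}\cap\osp{B'}) = 0 \leq r-2$; either inequality, via \Cref{pr:dim isp + isp large homog} (or directly \Cref{pr:isp M+N = inter isp}), shows $A'+B'$ is not simple, hence $A+B = C\wedge(A'+B')$ is not simple. Balancedness: since $\osp{A'}\cap\osp{B'}=\{0\}$ with $A',B'\neq 0$, \Cref{pr:osp isp wedge}\ref{it:osp wedge} (equality case) gives outer balanced for $A'+B'$; and $\dim(\osp{A'}\cap\osp{B'}) = 0 \leq r-2$ triggers \Cref{pr:isp M+N = inter isp}\ref{it:small osp inter osp}, but more to the point the equality case of \Cref{pr:osp isp wedge}\ref{it:isp wedge} gives $\isp{A'+B'} = \isp{A'}\oplus\isp{B'}$, i.e. inner balanced too. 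Then $A+B = C\wedge(A'+B')$ is an orthogonal (hence tight, hence by \Cref{pr:blade factor}\ref{it:disjoint}) factorization with $\isp{A+B} = U\oplus\isp{A'+B'}$ and $\osp{A+B} = U\oplus\osp{A'+B'}$, and since balancedness of the summands $A = C\wedge A'$, $B = C\wedge B'$ propagates through tight factorizations by \Cref{pr:factorization decomposition} (with the trivial decomposition $A'+B'$ of itself playing the role there), $A+B = A+B$ is balanced. Finally, by \Cref{pr:dim osp isp}\ref{it:H not simple} applied to the non-simple homogeneous $A+B$, $\igrade{A+B}+2 \leq p \leq \ograde{A+B}-2$; computing $\igrade{A+B} = \dim U + \igrade{A'+B'} = \dim U + 2r - 2r = \dim U$ (using inner balancedness: $\igrade{A'+B'} = \dim(\isp{A'}\cap\isp{B'}) = 0$... wait, that would give $\igrade{A+B} = \dim U = \dim([A]\cap[B])$) and $\ograde{A+B} = \dim U + \dim(\osp{A'}+\osp{B'}) = \dim U + 2r = \dim([A]+[B])$, we get exactly $\dim([A]\cap[B]) + 2 \leq p \leq \dim([A]+[B]) - 2$, which is case \ref{it:p=q not simple}.

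The main obstacle I anticipate is bookkeeping the reduction from $A+B$ to the core $A'+B'$ cleanly — in particular making sure the inner and outer grades transfer correctly through the orthogonal factorization by $C$ (this is where \Cref{pr:blade factor}\ref{it:disjoint} and the equality cases of \Cref{pr:osp isp wedge} do the real work), and double-checking that $r\geq 1$ always and $r\geq 2$ is exactly the non-simple regime. A secondary subtlety is the ``neither inner nor outer balanced'' assertion in case \ref{it:p=q k=p-1 simple}: one must verify that $\isp{A+B}$ strictly contains $\isp{A}\cap\isp{B} = U$ and is strictly contained in $\isp{A}+\isp{B} = [A]+[B]$, so it equals neither, and dually for $\osp{\cdot}$; this is immediate once one writes $\isp{A+B} = U\oplus[av+bw]$ explicitly, but it should be stated.
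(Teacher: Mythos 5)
Your $r=1$ case is correct and is essentially the paper's own argument (factor out a common $(p-1)$-blade $C$ spanning $[A]\cap[B]$, so $A+B=(u+v)\wedge C$ is simple); you additionally spell out the failure of inner and outer balance, which the paper leaves implicit, and that part is fine. The $r\geq 2$ case, however, has genuine gaps as written. First, you invoke \Cref{pr:osp isp wedge} --- a statement about exterior products $M\wedge N$ --- to get balance of the \emph{sum} $A'+B'$, and you state inner balance as $\isp{A'+B'}=\isp{A'}\oplus\isp{B'}$; inner balance of a sum means $\isp{A'+B'}=\isp{A'}\cap\isp{B'}=\{0\}$ (you notice the inconsistency yourself at the end). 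The correct tool here is \Cref{pr:isp M+N = inter isp} applied to $A',B'$, both parts of which apply exactly because $r\geq 2$. Second, the transfers back through $C$ are asserted rather than proved: ``$A'+B'$ not simple, hence $C\wedge(A'+B')$ not simple'' needs an argument (e.g.\ if $C\wedge N$ were a nonzero blade $D$, then $[C]\subset[D]\subset[C]\oplus(V\oplus W)$ forces $D=\lambda\, C\wedge E$ for a blade $E\in\bigwedge(V\oplus W)$, and the uniqueness in \Cref{pr:B in isp factor} gives $N=\lambda E$, a blade); and propagating inner balance via \Cref{pr:factorization decomposition}\ref{it:factor decomp maximal} requires the factorization $A+B=C\wedge(A'+B')$ to be \emph{maximal}, i.e.\ $\isp{A+B}=[C]$, which is part of what you are trying to prove --- it does follow from tightness together with $\isp{A'+B'}=\{0\}$ by \Cref{pr:blade factor}\ref{it:maximal}, but you never check it (also, ``orthogonal'' would require choosing $V,W\subset U^\perp$; for arbitrary complements only tightness is automatic).

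All of this detour is avoidable, and the paper avoids it: since $\isp{A}=\osp{A}=[A]$, $\isp{B}=\osp{B}=[B]$ and $b=t=p$, the hypothesis $\dim([A]\cap[B])\leq p-2$ is equivalent, via $\dim([A]+[B])=2p-\dim([A]\cap[B])$, to $\dim([A]+[B])\geq p+2$, so \Cref{pr:isp M+N = inter isp} applies directly to $A$ and $B$ and gives both inner and outer balance, while \Cref{pr:dim isp + isp large homog} gives non-simplicity in the same stroke; the displayed dimension bounds in the second case are just a restatement of the case hypothesis, so the appeal to \Cref{pr:dim osp isp} (which anyway presupposes the non-simplicity you had not yet secured) is not needed. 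Your reduction can be repaired along the lines indicated, but the direct application to $A,B$ is both shorter and is the paper's proof.
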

\begin{proof}\OMIT{\ref{pr:isp M+N = inter isp}, \ref{pr:dim isp + isp large homog}}
	If $\dim([A]\cap[B]) = p-1$ then $A=u\wedge C$ and $B=v\wedge C$ for $u,v\in X$ and a $(p-1)$-blade $C$, with $[u]$, $[v]$, $[C]$ pairwise disjoint.
	If $\dim([A]\cap[B]) \leq p-2$ then $\dim([A]+[B]) \geq p+2$, and the result follows from Propositions \ref{pr:isp M+N = inter isp} and \ref{pr:dim isp + isp large homog}.
\end{proof}

The following less known criteria are from \cite{Eastwood2000}. 
Though based on the same idea, our proof is a little simpler and corrects an error in the original one (the decomposition of their $P$).
See \Cref{tab:symbols} for notation.

\begin{proposition}\label{pr:blade contr simple}
	For $p\geq 3$ and $1\leq r \leq p-2$,
	$H\in \bigwedge^p X$ is simple $\Leftrightarrow B\lcontr H$ is simple for every blade $B \in \bigwedge^r X$.
\end{proposition}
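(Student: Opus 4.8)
The forward implication is immediate: if $H$ is a blade, then for any blade $B$ the contraction $B\lcontr H$ is a blade or is zero. Concretely, when $B\lcontr H\neq 0$, \Cref{pr:osp isp lcontr} applies with equalities (since $\osp{B}=[B]\not\pperp[H]=\isp{H}$), giving $\osp{B\lcontr H}=\isp{B}^\perp\cap\osp{H}=[B]^\perp\cap[H]=\isp{B\lcontr H}$, so $B\lcontr H$ is simple by \Cref{pr:isp=osp blade}. Hence the content lies in the converse: assuming $B\lcontr H$ is simple for every $r$-blade $B$, we must show $H$ is simple.

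First I would reduce to $r=p-2$. If the hypothesis holds for some $r<p-2$, it also holds for $r+1$: an $(r+1)$-blade factors as $B=v\wedge B'$ with $B'$ an $r$-blade and $v\in X$, and $B\lcontr H=\pm\,v\lcontr(B'\lcontr H)$ is a vector contracted into the blade $B'\lcontr H$, hence itself a blade. Iterating, we may assume $B\lcontr H$ is simple for every $(p-2)$-blade $B$. One may also factor out the inner space: with a maximal orthogonal factorization $H=A\wedge H'$, where $[A]=\isp{H}$, $H'\in\bigwedge([A]^\perp)$ and $\isp{H'}=\{0\}$, the Leibniz rule (\Cref{pr:higher order}) together with \Cref{pr:triple,pr:M=BBM} gives $(A\wedge C)\lcontr H=\pm\|A\|^2\,(C\lcontr H')$ for any blade $C\subset[A]^\perp$ of grade two less than $H'$; thus $C\lcontr H'$ is simple for all such $C$, so it suffices to show $H'$ is simple, and if $H'$ has grade $\le 2$ this is immediate because $A$ itself gives $A\lcontr H=\|A\|^2H'$ simple. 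This extra reduction is what enables the alternative inductive proof mentioned below.

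Now I would fix an orthonormal basis $(e_1,\dots,e_n)$ of $X$ and write $H=\sum_{|\ii|=p}h_\ii\,e_\ii$. For a $(p-2)$-subset $S$ of $\{1,\dots,n\}$, the hypothesis applied to $B=e_S$ says the bivector $e_S\lcontr H=\sum_{i<j}\pm h_{S\cup\{i,j\}}\,e_{ij}$ is simple, i.e.\ $(e_S\lcontr H)\wedge(e_S\lcontr H)=0$ by \Cref{pr:simple bivector}. Reading off the coefficient of $e_{abcd}$ (for distinct $a,b,c,d\notin S$), and checking that the sign of $e_S\lcontr e_{S\cup\{x,y\}}$ is uniform in the pair $\{x,y\}$, this is precisely the three-term Pl\"ucker relation
\[
h_{S\cup ab}\,h_{S\cup cd}-h_{S\cup ac}\,h_{S\cup bd}+h_{S\cup ad}\,h_{S\cup bc}=0 .
\]
Since the three-term Pl\"ucker relations, ranging over all $(p-2)$-subsets $S$, generate the full system of Pl\"ucker relations cutting out the Grassmannian \cite{Gallier2020,Jacobson1996}, all Pl\"ucker relations hold for $H$, so $H$ is simple. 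Equivalently, and in the spirit of \cite{Eastwood2000}, this last implication can be obtained self-containedly by induction on $p$ (or on $\dim X$, also shrinking $\osp{H}$ via the reduction above): split $X=[e]\oplus[e]^\perp$, write $H=e\wedge H_1+H_0$ with $H_1=e\lcontr H$ and $H_0\in\bigwedge([e]^\perp)$, use the hypothesis to see that $H_1$ is simple and that enough contractions of $H_0$ remain simple, and deduce the required alignment of $H_0$ with $H_1$.

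The crux is exactly this final step: that the "diagonal" conditions $(e_S\lcontr H)^{\wedge 2}=0$, one per $(p-2)$-subset $S$, force all the higher Pl\"ucker relations on $H$. The difficulty is combinatorial: one must pin down the signs so that each such condition is genuinely a three-term Pl\"ucker relation, uniformly in the moving indices $a,b,c,d$, and then either invoke the classical generation result for the Grassmann ideal or run the inductive splitting argument with care. It is precisely in the bookkeeping of the latter — the decomposition of the auxiliary element ``$P$'' — that the proof in \cite{Eastwood2000} slipped, and that the present argument must get right.
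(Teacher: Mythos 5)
Your forward direction and the reduction between values of $r$ are fine, but the core of your converse argument has a genuine gap: you only ever use the hypothesis for the \emph{coordinate} blades $B=e_S$, and the claim that the resulting three-term relations ``generate the full system of Pl\"ucker relations cutting out the Grassmannian'' is false. Concretely, take $p=3$, $n=6$, $r=1$ and $H=e_{123}+e_{456}$. For every basis vector $e_s$ the contraction $e_s\lcontr H$ is a single basis bivector, hence simple, so $(e_S\lcontr H)\wedge(e_S\lcontr H)=0$ for all coordinate $S$ and all your three-term relations hold; yet $H$ is not simple (the four-term relation $h_{123}h_{456}-h_{124}h_{356}+h_{125}h_{346}-h_{126}h_{345}=1\neq 0$ fails). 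The proposition itself survives because its hypothesis quantifies over \emph{all} blades, and indeed $(e_1+e_4)\lcontr H=e_{23}+e_{56}$ is not simple -- but your proof never uses such non-coordinate blades. The underlying point is that $(F\lcontr H)\wedge H=0$ is linear in $F$, so it may be checked on basis elements (this is why the reduced relations \eqref{eq:Plucker 2} work), whereas your condition $(B\lcontr H)\wedge(B\lcontr H)=0$ is quadratic in $B$, so verifying it only on coordinate blades genuinely loses information.

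The alternative route you sketch at the end (``split $X=[e]\oplus[e]^\perp$, \dots, deduce the required alignment of $H_0$ with $H_1$'') is where all the actual work lies, and it is left unproved; note that it cannot be a reformulation of ``this last implication'', since that implication is false as stated. The paper's proof does exactly this hard part for $r=1$: after normalizing $v_1\lcontr H=v_{2\cdots p}$, it expands $H=v_{1\cdots p}+w\wedge v_{2\cdots p}+\sum_\ii F_\ii\wedge w_\ii+G$ and kills the $F_\ii$ using simplicity of $v_k\lcontr H$ for $k=2,\dots,p$, and then kills $G$ by applying the hypothesis to the \emph{non-coordinate} vector $v_1+u$ with $u\in\osp{G}$ -- precisely the kind of contraction your argument omits; the general $r$ then follows by downward induction via $(B\wedge v)\lcontr H=v\lcontr(B\lcontr H)$ (your upward reduction to $r=p-2$ is equally legitimate). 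To repair your proof you would need either to exploit simplicity of $B\lcontr H$ for non-coordinate $B$ (e.g.\ by polarizing the quadratic condition in $B$), or to carry out the inductive splitting argument in full detail.
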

\begin{proof}
	($\Leftarrow$) Let $r=1$ and $0 \neq v_1 \in \osp{H}$. 
	By hypothesis, $0 \neq v_1 \lcontr H = v_{2\cdots p}$ with 
	$(v_1,\ldots,v_p)$ orthonormal (w.l.o.g.).
	Let $(w_1,\ldots,w_q)$ be a basis of $[v_{1\cdots p}]^\perp$.
	As $v_1\lcontr (H-v_{1\cdots p})=0$, we have $\osp{H-v_{1\cdots p}} \subset [v_1]^\perp$, so%
	\begin{equation*}
		H = v_{1\cdots p} + w\wedge v_{2\cdots p} + \sum_{\ii\in\II^q, 1 < |\ii| < p} F_\ii \wedge w_\ii + G,
	\end{equation*}
	for $w \in [w_{1\cdots q}]$, $F_\ii \in \bigwedge^{p-|\ii|} [v_{2\cdots p}]$ and $G\in \bigwedge^p [w_{1\cdots q}]$.
	For distinct $j,k \in \{2,\ldots,p\}$, and with $'$ indicating an index is absent, we have
	$v_j = \pm v_{1\cdots j'\cdots p} \lcontr H = \pm v_{1\cdots j' \cdots k' \cdots p} \lcontr (v_k \lcontr H) \in \osp{v_k \lcontr H} = [v_k \lcontr H]$,
	\OMIT{aqui usa orthonormalidade}
	by hypothesis.
	Thus $0 = v_j \wedge (v_k \lcontr H) = \sum_{\ii\in\II^q, 1 < |\ii| < p}  v_j \wedge (v_k \lcontr F_\ii) \wedge w_\ii$, 
		\OMIT{$v_j \wedge (v_k \lcontr (v_{1\cdots p} + w\wedge v_{2\cdots p})) = 0$ due to $v_j \wedge$, and $v_j \wedge (v_k \lcontr G) = 0$ due to $v_k\lcontr$}
	and so $v_j \wedge (v_k \lcontr F_\ii) = 0 \ \forall\, \ii$ and $j\neq k$.
	Then $[v_{2\cdots k' \cdots p}] \subset \isp{v_k \lcontr F_\ii}$, which requires $v_k \lcontr F_\ii=0$, as its grade is $p-|\ii|-1 < p-2$.
	Thus $\osp{F_\ii} \subset [v_{2\cdots p}] \subset \osp{F_\ii}^\perp$, and so $F_\ii = 0 \ \forall \ii$.
	We are left with $H = (v_1+w) \wedge v_{2\cdots p} + G$.
	But if $G\neq 0$, for $0\neq u \in \osp{G} \cap [w]^\perp$
		\OMIT{existe pois $\dim \osp{G}\geq p \geq 3$}
	we would have $(v_1+u)\lcontr H = v_{2\cdots p} + u\lcontr G$, which for $p\geq 3$ is not simple, contradicting the hypothesis.
	For $r>1$, the result follows by induction, using $(B\wedge v) \lcontr H = v \lcontr (B \lcontr H)$.
		\OMIT{If holds for $<r$, $v\lcontr(A\lcontr H) = (A\wedge v) \lcontr H$ simples $\forall v\in X, A\in \bigwedge^{r-1} \Rightarrow A\lcontr H$ simples $\forall A \Rightarrow H$ simples}
	($\Rightarrow$) Immediate.
\end{proof}

\begin{proposition}\label{pr:Cartan criterion 2}
	$H\in\bigwedge^p X$ is simple $\Leftrightarrow$ the equivalent criteria below hold:
	\begin{enumerate}[i)]
		\item $(F\lcontr H)\wedge H=0 \ \ \forall F\in\bigwedge^{p-2} X$.
		\item $(H\lcontr G)\lcontr H = 0 \ \  \forall G\in\bigwedge^{p+2} X$.
		\item $\inner{F\lcontr H , H\lcontr G} = 0 \ \  \forall F\in\bigwedge^{p-2} X,G\in\bigwedge^{p+2} X$.\label{it:Cartan 2-3}
	\end{enumerate}
\end{proposition}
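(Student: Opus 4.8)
The plan is to first observe that criteria (i), (ii) and (iii) are equivalent, and then prove ``$H$ simple $\Leftrightarrow$ (i)''. The equivalence follows, by the same adjointness manipulations as in the proof of \Cref{pr:Cartan criterion}, from the chain (valid up to signs, for $F\in\bigwedge^{p-2}X$ and $G\in\bigwedge^{p+2}X$)
\begin{equation*}
	\inner{F,(H\lcontr G)\lcontr H} = \inner{(H\lcontr G)\wedge F,H} = \inner{H\lcontr G,H\rcontr F} = \inner{G,H\wedge(H\rcontr F)} = \pm\inner{G,(F\lcontr H)\wedge H},
\end{equation*}
so all three conditions assert that $\inner{G,(F\lcontr H)\wedge H}$ vanishes for all such $F,G$.

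For ($\Rightarrow$): if $H$ is simple then $\osp{H}=[H]$ has dimension $p$, and \Cref{pr:osp isp lcontr}\ref{it:osp lcontr} gives $\osp{F\lcontr H}\subset\osp{H}$ for every $F\in\bigwedge^{p-2}X$, so $F\lcontr H\in\bigwedge^2[H]$ and hence $(F\lcontr H)\wedge H\in\bigwedge^{p+2}[H]=\{0\}$.

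For ($\Leftarrow$) I would induct on $p$. For $p\leq 1$ there is nothing to prove, since $H$ is trivially simple; for $p=2$, criterion (i) with $F=1$ reads $H\wedge H=0$, so $H$ is simple by \Cref{pr:simple bivector}. For $p\geq 3$, assume the statement in grade $p-1$. By \Cref{pr:blade contr simple} with $r=1$ it suffices to show that $v\lcontr H\in\bigwedge^{p-1}X$ is simple for every $v\in X$; fix $v$ (the case $v\lcontr H=0$ being trivial). By the inductive hypothesis this amounts to checking $(F\lcontr(v\lcontr H))\wedge(v\lcontr H)=0$ for all $F\in\bigwedge^{p-3}X$. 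Put $C=(v\wedge F)\lcontr H=F\lcontr(v\lcontr H)\in\bigwedge^2 X$; criterion (i) applied to $v\wedge F\in\bigwedge^{p-2}X$ gives $C\wedge H=0$, and contracting with $v$, using \Cref{pr:contr homog} together with $\hat C=C$,
\begin{equation*}
	0 = v\lcontr(C\wedge H) = (v\lcontr C)\wedge H + C\wedge(v\lcontr H).
\end{equation*}
Since $v\lcontr C = v\lcontr\bigl((v\wedge F)\lcontr H\bigr) = (v\wedge F\wedge v)\lcontr H = 0$, this yields $C\wedge(v\lcontr H)=0$, as needed.

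The main obstacle is the induction step, and everything there hinges on the single identity $v\lcontr\bigl((v\wedge F)\lcontr H\bigr)=(v\wedge F\wedge v)\lcontr H=0$, which kills the unwanted term; this is precisely what makes the argument shorter than the one in \cite{Eastwood2000} and sidesteps the flawed decomposition there. The only delicate point is that $v\wedge F$ may vanish, but then $C=0$ and the claim is immediate.
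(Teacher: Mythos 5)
Your proof is correct, and its ($\Leftarrow$) direction takes a genuinely different route from the paper's. The paper applies \Cref{pr:blade contr simple} once, at the other end of the allowed range $r=p-2$: for a blade $B\in\bigwedge^{p-2}X$ it writes $(B\lcontr H)\wedge(B\lcontr H)=\pm B\lcontr((B\lcontr H)\wedge H)=0$ (an identity whose justification implicitly rests on the higher-order Leibniz rule of \Cref{pr:higher order} together with the vanishing of the cross terms $(B\lcontr H)\rcontr v$ for $v\in[B]$), concludes from \Cref{pr:simple bivector} that every $B\lcontr H$ is a simple bivector, and gets simplicity of $H$ in one step. You instead induct on $p$ and use \Cref{pr:blade contr simple} only with $r=1$: criterion (i) for $H$ is pushed down to criterion (i) for $v\lcontr H$ via the first-order rule \Cref{pr:contr homog} applied to $0=v\lcontr(C\wedge H)$ with $C=(v\wedge F)\lcontr H$, the unwanted term dying because $v\lcontr C=(v\wedge F\wedge v)\lcontr H=0$; \Cref{pr:simple bivector} enters only at the base case $p=2$. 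Both arguments rely on the same two external ingredients, Propositions \ref{pr:blade contr simple} and \ref{pr:simple bivector}, but yours trades the paper's single higher-order computation for an induction needing nothing beyond the grade-one Leibniz rule, which is sound and arguably more self-contained, while the paper's version is shorter once its displayed identity is granted. Your handling of the equivalence of (i)--(iii) and of the ($\Rightarrow$) direction (via $\osp{F\lcontr H}\subset\osp{H}$, so $(F\lcontr H)\wedge H\in\bigwedge^{p+2}[H]=\{0\}$) matches what the paper leaves as immediate, so there are no gaps.
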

\begin{proof}
	Equivalence is due to 
	$\inner{G,(F\lcontr H)\wedge H} 
	= \inner{G\rcontr H, F \lcontr H}
	= \inner{F \wedge (G\rcontr H), H}
	= \inner{F, H \rcontr (G\rcontr H)}$.
	\OMIT{$= \pm\inner{G,(F\lcontr H)\wedge H}$}
	($\Rightarrow$) Immediate.
	($\Leftarrow$) 
	For any blade $B\in\bigwedge^{p-2} X$ we have $(B\lcontr H) \wedge (B\lcontr H) = \pm B\lcontr ((B\lcontr H) \wedge H) = 0$,
	so $B\lcontr H$ is simple, by \Cref{pr:simple bivector}.
	The result follows from \Cref{pr:blade contr simple}.
\end{proof}

These criteria can be easier to check than those of \Cref{pr:Cartan criterion}, since
$\dim \bigwedge^{p-2} X < \dim \bigwedge^{p-1} X$ or $\dim \bigwedge^{p+2} X < \dim \bigwedge^{p+1} X$.

\subsubsection{Plücker relations}

Let $H = \sum_{\ii\in\II^n_p} \lambda_\ii v_\ii \in\bigwedge^p X$ for an orthonormal basis $(v_1,\ldots,v_n)$ of $X$ and scalars $\lambda_\ii$.
The \emph{Plücker relations} \cite{Gallier2020,Jacobson1996} 
	\CITE{Gallier2020 p.119, quase \eqref{eq:Plucker} \\
	Jacobson1996 p.110 formato \eqref{eq:Plucker 1}}
are quadratic equations the $\lambda_\ii$'s satisfy if, and only if, $H$ is simple.
They describe an embedding of the Grassmannian of $p$-dimensional subspaces of $X$ as an algebraic variety in the projective space $\mathds{P}(\bigwedge^p X)$,
and can be written (see \Cref{tab:symbols}) as%
\begin{equation}\label{eq:Plucker 1}
	\sum_{i=0}^{p} (-1)^i \lambda_{\jj k_i} \lambda_{\kk \backslash k_i} = 0 \ \  \forall\, \jj\in\II^n_{p-1}, \kk=(k_0,\ldots,k_{p}) \in \II^n_{p+1},
\end{equation}
if we set $\lambda_{\jj k_i} = 0$ if $k_i \in \jj$, otherwise $\lambda_{\jj k_i} = \epsilon_{\jj k_i} \lambda_{\jj \cup k_i}$ (so it switches sign with each index transposition).
In the end, one must order the indices to simplify repeated terms.
Another formulation, that avoids this, is%
\begin{equation}\label{eq:Plucker}
	\sum_{k\in\kk \backslash \jj} (-1)^{\pairs{\jj\triangle\kk}{k}} \lambda_{\jj\cup k} \lambda_{\kk \backslash k} = 0 \ \ \forall\, \jj\in\II^n_{p-1},\kk\in\II^n_{p+1}.
\end{equation} 
It is proven as \eqref{eq:Plucker 2} below, with \Cref{pr:Cartan criterion}.
	\OMIT{\ref{it:FHHG}}
Equivalence with \eqref{eq:Plucker 1} is due to $(-1)^{\pairs{\jj\triangle\kk}{k}} = (-1)^{\pairs{\jj}{k} + \pairs{\kk}{k}} = \epsilon_{\jj k} \cdot (-1)^{\pairs{\kk}{k}}$,
by \Cref{pr:epsilon}. \label{uso pr:epsilon}

There is much redundancy in this system.
If $\jj\subset\kk$ then $\kk=\jj\cup k_1 k_2$ for $k_1,k_2 \in \kk \backslash \jj$ with $k_1<k_2$, and we obtain $\lambda_{\jj\cup k_1}\lambda_{\jj\cup k_2} - \lambda_{\jj\cup k_2}\lambda_{\jj\cup k_1} = 0$,
which is trivial.
If $|\jj\cap\kk|=p-2$,
\OMIT{$\jj=(\jj\cap\kk)\cup k$ and $\kk=(\jj\cap\kk)\cup(k_1 k_2 k_3)$}
swapping the only index of $\jj \backslash (\jj\cap\kk)$ and each of the 3 indices of $\kk \backslash (\jj\cap\kk)$ we find 4 equivalent equations.
For $|\jj\cap\kk|<p-2$ all equations are nonequivalent.

A reduced set of equations that can play the same role is:

\begin{theorem}\label{pr:new Plucker}
	$H$ is simple if, and only if,
	\begin{equation}\label{eq:Plucker 2}
		\sum_{\substack{\kk_2 \in \II_2^n, \kk_2 \subset \kk \backslash \jj}} (-1)^{\pairs{\jj\triangle\kk}{\kk_2}} \lambda_{\jj\cup \kk_2} \lambda_{\kk \backslash \kk_2} = 0 \ \ \forall\, \jj\in\II^n_{p-2},\kk\in\II^n_{p+2}.
	\end{equation} 
\end{theorem}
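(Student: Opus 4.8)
The plan is to obtain \eqref{eq:Plucker 2} by applying the simplicity criterion of \Cref{pr:Cartan criterion 2}, in the first of its three equivalent forms, ``$(F\lcontr H)\wedge H=0$ for all $F\in\bigwedge^{p-2}X$''. Since $F\mapsto(F\lcontr H)\wedge H$ is linear in $F$ and $\{v_\jj:\jj\in\II^n_{p-2}\}$ is a basis of $\bigwedge^{p-2}X$, this is equivalent to asking that $(v_\jj\lcontr H)\wedge H=0$ for every $\jj\in\II^n_{p-2}$; working with this wedge form rather than the inner-product form also keeps the resulting relations free of complex conjugates in the Hermitian case. (The cases $p\le1$ and $p\ge n-1$ are vacuous on both sides, so we may assume $2\le p\le n-2$.)

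The next step is to expand everything in the orthonormal basis of $\bigwedge X$ induced by $(v_1,\dots,v_n)$. By orthogonality, $v_\jj\lcontr v_\ii=0$ unless $\jj\subset\ii$; writing then $\ii=\jj\cup\kk_2$ with $\kk_2=\ii\backslash\jj\in\II^n_2$, using $v_{\jj\cup\kk_2}=\epsilon_{\jj\kk_2}\,v_\jj\wedge v_{\kk_2}$, and noting that $v_\jj\lcontr(v_\jj\wedge v_{\kk_2})=v_{\kk_2}$ since $v_\jj$ is a unit blade and $v_{\kk_2}\in\bigwedge([v_\jj]^\perp)$ (\Cref{pr:triple}, or \Cref{pr:M=BBM}), one gets $v_\jj\lcontr H=\sum\epsilon_{\jj\kk_2}\lambda_{\jj\cup\kk_2}\,v_{\kk_2}$, the sum running over $\kk_2\in\II^n_2$ disjoint from $\jj$. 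Wedging this with $H=\sum_\ii\lambda_\ii v_\ii$ and using $v_{\kk_2}\wedge v_{\kk\backslash\kk_2}=\epsilon_{\kk_2(\kk\backslash\kk_2)}\,v_\kk$, the coefficient of $v_\kk$ in $(v_\jj\lcontr H)\wedge H$, for each $\kk\in\II^n_{p+2}$, works out to $\sum_{\kk_2\subset\kk\backslash\jj}\epsilon_{\jj\kk_2}\,\epsilon_{\kk_2(\kk\backslash\kk_2)}\,\lambda_{\jj\cup\kk_2}\,\lambda_{\kk\backslash\kk_2}$ (the two index conditions $\kk_2\cap\jj=\emptyset$ and $\kk_2\subset\kk$ combining into $\kk_2\subset\kk\backslash\jj$). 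So the criterion amounts exactly to the vanishing of all these coefficients.

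The last and only delicate point is the sign identity $\epsilon_{\jj\kk_2}\,\epsilon_{\kk_2(\kk\backslash\kk_2)}=-(-1)^{\pairs{\jj\triangle\kk}{\kk_2}}$; once it is established the overall $-1$ disappears and the coefficient equations become precisely \eqref{eq:Plucker 2}, so \Cref{pr:Cartan criterion 2} yields the theorem. By \Cref{pr:epsilon} the left-hand side equals $(-1)^{\pairs{\jj}{\kk_2}+\pairs{\kk_2}{\kk\backslash\kk_2}}$, and since $\pairs{\kk_2}{\kk\backslash\kk_2}+\pairs{\kk\backslash\kk_2}{\kk_2}=2p$ is even, the exponent may be replaced by $\pairs{\jj}{\kk_2}+\pairs{\kk\backslash\kk_2}{\kk_2}$. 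Splitting $\jj=(\jj\cap\kk)\sqcup(\jj\backslash\kk)$ and $\kk\backslash\kk_2=(\jj\cap\kk)\sqcup(\kk\backslash(\jj\cup\kk_2))$ (valid because $\kk_2\cap\jj=\emptyset$) and using additivity of $\pairs{\cdot}{\kk_2}$ over disjoint unions, the term $\pairs{\jj\cap\kk}{\kk_2}$ occurs twice and cancels modulo $2$, leaving $\pairs{\jj\backslash\kk}{\kk_2}+\pairs{\kk\backslash(\jj\cup\kk_2)}{\kk_2}$; since $\jj\triangle\kk=(\jj\backslash\kk)\sqcup(\kk\backslash(\jj\cup\kk_2))\sqcup\kk_2$ and $\pairs{\kk_2}{\kk_2}=1$, this is $\pairs{\jj\triangle\kk}{\kk_2}-1\equiv\pairs{\jj\triangle\kk}{\kk_2}+1\pmod 2$, which is the claimed identity. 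The main obstacle throughout is this bookkeeping with the $\epsilon$-signs; everything else is a mechanical expansion in the orthonormal basis.
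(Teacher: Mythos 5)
Your proof is correct and follows essentially the same route as the paper: both reduce the theorem to \Cref{pr:Cartan criterion 2} and then expand in the induced orthonormal basis, with identical $\epsilon$-sign bookkeeping via \Cref{pr:epsilon} leading to the same identity $\epsilon_{\jj\kk_2}\,\epsilon_{(\kk\backslash\kk_2)\kk_2}=(-1)^{\pairs{\jj\triangle\kk}{\kk_2}-1}$. The only (harmless) differences are that you use the equivalent criterion $(F\lcontr H)\wedge H=0$ instead of the inner-product form \ref{it:Cartan 2-3} used in the paper, which indeed avoids the final conjugation of the $\lambda$'s, and that in the Hermitian case the map $F\mapsto(F\lcontr H)\wedge H$ is conjugate-linear rather than linear in $F$ --- still additive, so restricting to the basis blades $v_\jj$ remains sufficient.
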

\begin{proof}
	\Cref{pr:Cartan criterion 2}\ref{it:Cartan 2-3} means $\sum_{\ii,\ll\in\II^n_p} \bar{\lambda}_\ii \bar{\lambda}_\ll \inner{v_\jj \lcontr v_\ii,v_\ll \lcontr v_\kk} = 0 \ \forall\, \jj\in\II^n_{p-2}$, $\kk\in\II^n_{p+2}$.%
		\OMIT{No final conjuga a equação}
	The inner product is $0$ unless
	$\ii=\jj\cup \kk_2$ and $\ll=\kk \backslash \kk_2$
	for $\kk_2\in\II^n_2$ with $\kk_2 \subset \kk \backslash \jj$,
	in which case 
	$\inner{v_\jj \lcontr v_\ii,v_\ll \lcontr v_\kk} = 
	\epsilon_{\jj \kk_2} \epsilon_{(\kk \backslash \kk_2)\kk_2} = (-1)^{\pairs{\jj}{\kk_2}+\pairs{\kk \backslash \kk_2}{\kk_2}} = (-1)^{\pairs{\jj}{\kk_2}+\pairs{\kk}{\kk_2}-1} = (-1)^{\pairs{\jj\triangle\kk}{\kk_2}-1}$.%
		\OMIT{\ref{pr:epsilon}, eq:contr induced basis}
\end{proof}

Now $\jj\subset\kk$ no longer gives a trivial equation, but each term appears twice in the sum.
If $|\jj\cap\kk|=p-3$,
\OMIT{$\jj=(\jj\cap\kk)\cup i$ and $\kk=(\jj\cap\kk)\cup(i_1 i_2 i_3 i_4 i_5)$}
swapping the only index of $\jj \backslash (\jj\cap\kk)$ and each of the 5 indices of $\kk \backslash (\jj\cap\kk)$ we find 6 equivalent equations.
For $|\jj\cap\kk|<p-3$ all equations are nonequivalent.
Though \eqref{eq:Plucker 2} can give less equations than \eqref{eq:Plucker}, some can be more complex.
Setting $\lambda_{\jj k_i k_l} = 0$ if $k_i k_l \cap \jj \neq \emptyset$, otherwise $\lambda_{\jj k_i k_l} = \epsilon_{\jj k_i k_l} \lambda_{\jj \cup k_i k_l}$,
we can rewrite it as
\begin{equation}\label{eq:Plucker 2b}
	\sum_{0\leq i < l \leq p+1} (-1)^{i+l} \lambda_{\jj k_i k_l} \lambda_{\kk \backslash k_i k_l} = 0 \ \ \forall \jj\in\II^n_{p-2}, \kk=(k_0,\ldots,k_{p+1})\in\II^n_{p+2}.
\end{equation}

\begin{example}\label{ex:Plucker 2 4}
	For $(p,n)=(2,4)$, we find
	$\lambda_{12}\lambda_{34}-\lambda_{13}\lambda_{24}+\lambda_{14}\lambda_{23}=0$
	in \eqref{eq:Plucker}
	for $(\jj,\kk) = (1,234)$, $(2,134)$, $(3,124)$ or $(4,123)$,
		\SELF{Up to sign. \\ 
			$(\jj,\kk) = (1,123)$ gives trivial $-\lambda_{12}\lambda_{13} + \lambda_{13}\lambda_{12} = 0$}
	and the other equations are trivial.
	We find the same in \eqref{eq:Plucker 1} after ordering the indices.
	In \eqref{eq:Plucker 2} or \eqref{eq:Plucker 2b}, the only possibility is $(\jj,\kk) = (\emptyset,1234)$, and gives the same equation (with each term twice repeated: e.g., $\kk_2=12$ or $34$ give $-\lambda_{12}\lambda_{34}$).
	It describes the Grassmannian as an algebraic variety of dimension $p(n-p)=4$ in a projective space of dimension $\binom{n}{p}-1=5$.
\end{example}

\begin{example}
	For $(p,n)=(2,5)$, eliminating the redundancies in \eqref{eq:Plucker} leaves a minimal set of 5 relations, for $(\jj,\kk) = (1,234)$, $(1,235)$, $(1,245)$, $(1,345)$ and $(2,345)$.
		\SELF{number minimal relations 
			$\binom{n}{p}\!\! \left[\frac12\binom{n}{p}+\frac12-\frac{1}{p+1}\binom{n+1}{p} \right]$ \\ \href{https://mathoverflow.net/questions/230710/number-of-pl\%C3\%BCcker-relations-for-a-grassmannian}{link mathoverflow number of plucker relations}}
	In \eqref{eq:Plucker 2}, $\jj=\emptyset$ and the 5 choices of $\kk\in \II_4^5$ give the same equations,
		\SELF{with terms twice repeated}
	$\lambda_{ij}\lambda_{kl} - \lambda_{ik}\lambda_{jl} + \lambda_{il}\lambda_{jk} = 0$ for $1\leq i<j<k<l \leq 5$.
		\CITE{Rosen p.69}
	Though $\dim \mathds{P}(\bigwedge^2 \R^5)=9$, the Grassmannian has dimension $6$, as it is not a complete intersection of their solution sets (in a rough analogy, it is like when 3 surfaces in $\R^3$ intersect along the same curve).
\end{example}

\begin{example}\label{ex:Plucker 3 5}
	$(p,n)=(3,5)$ dualizes the previous example.
	A minimal set in \eqref{eq:Plucker} is given by $(\jj,\kk) = (12,1345)$, $(12,2345)$, $(13,2345)$, $(14,2345)$ and $(15,2345)$.
	The same set appears in \eqref{eq:Plucker 2} for $\kk = 12345$ and $1\leq j\leq 5$.
	For $j=3$, it gives $\lambda_{123}\lambda_{345} - \lambda_{134}\lambda_{235} + \lambda_{135}\lambda_{234} = 0$,
	which occurs in \eqref{eq:Plucker} for $(13,2345)$, $(23,1345)$, $(34,1235)$ or $(35,1234)$,
	and in \eqref{eq:Plucker 2b} once we order 
	$\lambda_{312}\lambda_{345} + \lambda_{314}\lambda_{235} - \lambda_{315}\lambda_{234} - \lambda_{234}\lambda_{135} + \lambda_{325}\lambda_{134} + \lambda_{345}\lambda_{123} = 0$.
\end{example}

\begin{example}
	For $(p,n)=(3,6)$, \eqref{eq:Plucker} gives 
		\OMIT{after eliminating redundancies}
	30 equations with 3 terms, and 15 with 4.
	And \eqref{eq:Plucker 2} gives 30 with 3 terms, and 1 with 10: e.g.,
	$(\jj,\kk)=(1,12345)$ gives $\lambda_{123}\lambda_{145} - \lambda_{124}\lambda_{135} + \lambda_{125}\lambda_{134}=0$,
	while
	$(1,23456)$, $(2,13456), \ldots$ or $(6,12345)$ give $\lambda_{123}\lambda_{146} - \lambda_{124}\lambda_{356} + \lambda_{125}\lambda_{346} - \lambda_{126} \lambda_{345} + \lambda_{134}\lambda_{256} - \lambda_{135}\lambda_{246} + \lambda_{136}\lambda_{245} + \lambda_{145}\lambda_{236} - \lambda_{146}\lambda_{235} + \lambda_{156}\lambda_{234} = 0$.
\end{example}

\section{Supersymmetry}\label{sc:Supersymmetry}

Here we discuss how contractions relate to supersymmetry \cite{Oziewicz1986,Shaw1983,Varadarajan2004}.

\begin{definition}\label{df:superalgebra}
	A \emph{superalgebra} $\AA$ is a $\Z_2$-graded algebra, \ie it decomposes as $\AA=\AA_0\oplus\AA_1$ and its product respects  $\AA_i\AA_j\subset\AA_{i+j}$ for $i,j\in\Z_2$.
	The subspaces
		\SELF{$\AA_1$ is not subalgebra}
	$\AA_0$ and $\AA_1$, and their elements, are \emph{even} and \emph{odd}, and $T\in\AA_i$ has \emph{parity} $i$. 
		\SELF{$0$ is even and odd}
	The \emph{supercommutator} $\scom{S,T} = ST-(-1)^{ij}TS$, for $S\in\AA_i$\label{df:supercommutator}
	and $T\in\AA_j$, is extended linearly for all $S,T\in\AA$.
		\SELF{$\scom{S,T} = -(-1)^{st}\scom{T,S}$. \\ \emph{Super-anticommutator} is $\sacom{S,T} = ST+(-1)^{st}TS$. \\ $\sacom{S,T} = (-1)^{st}\sacom{T,S}$}
\end{definition}

If $S$ or $T$ is even, $\scom{S,T}$ is the usual commutator $[S,T] = ST-TS$.
If both are odd, it is the usual anticommutator $\{S,T\} = ST+TS$. 
	\SELF{$S=S_0+S_1$, $T=T_0+T_1$ for $S_0,T_0\in\AA_0$, $S_1,T_1\in\AA_1$, \\ $\scom{S,T} = [S_0,T_0] + [S_0,T_1]+[S_1,T_0]+\{S_1,T_1\}$, \\ $\sacom{S,T} = \{S_0,T_0\} + \{S_0,T_1\} + \{S_1,T_0\} + [S_1,T_1]$}

\begin{example}
	$\bigwedge X$ is a superalgebra, with even and odd subspaces $\bigwedge^+ X=\bigoplus_{k\in\N}\bigwedge^{2k} X$ and $\bigwedge^- X=\bigoplus_{k\in\N}\bigwedge^{2k+1} X$, respectively.
	It is \emph{supercommutative}, \ie $\scom{M,N}=0$ for all $M,N\in \bigwedge X$.
\end{example}

\begin{example}
	$\End(\AA)=\End_0(\AA)\oplus\End_1(\AA)$ is a superalgebra,
		\SELF{$\End(\AA)$ with $\scom{.,.}$ is a super-Lie algebra [p.\,89]{Varadarajan2004}}
	where $\End_i(\AA)$
	is the set of endomorphisms of parity $i$ (that map $\AA_j\rightarrow\AA_{i+j}$ $\forall j\in\Z_2$).
	Indeed, if $P_i:\AA\rightarrow\AA_i$ is the projection, $T\in\End(\AA)$ decomposes as $T=T_0 + T_1$ with $T_0 = P_0TP_0 + P_1TP_1\in\End_0(\AA)$ and $T_1 = P_1TP_0 + P_0TP_1\in\End_1(\AA)$.
		\OMIT{$T=(P_0+P_1)T(P_0+P_1)$}
	Even endomorphisms preserve parities 
	and odd ones switch them,
	so their compositions respect the grading.
\end{example}


So, $\End(\bigwedge X)$ is a superalgebra.
In \cite{Mandolesi_Contractions}, we studied exterior and interior product operators
$\ext_M, \iota_M\in\End(\bigwedge X)$ given by:

\begin{definition}
	$\ext_M (N) = M\wedge N$ and $\iota_M (N) = M\lcontr N$, for $M,N\in\bigwedge X$.
\end{definition}

They are adjoint operators, and have the parity (or lack thereof) of $M$.
	\SELF{For $H\in\bigwedge^p X$, $\ext_H$ and $\iota_H$ are homogeneous of degrees $p$ and $-p$, respectively, mapping $\bigwedge^k X \rightarrow \bigwedge^{k\pm p} X$. \\
	$\ext_H=0$ on $\bigwedge^k X$ for $k>n-p$, $\iota_H=0$ for $k<p$} 
As $\ext_M\ext_N = \ext_{M\wedge N}$ and $\iota_M\iota_N = \iota_{N\wedge M}$, 
the maps $e, \iota:\bigwedge X \rightarrow \End(\bigwedge X)$ given by $\ext(M) = \ext_M$ and $\iota(M) = \iota_M$ are, respectively, a superalgebra homomorphism
and a conjugate linear antihomomorphism.
The supercommutators in \ref{it:scom ee ii} below reflect this and the supercommutativity of $\bigwedge X$;
\ref{it:scom int_v} corresponds to
 the graded Leibniz rule and its adjoint;
and \ref{it:scom ie} and \ref{it:scom ei} are their higher order versions.
See \Cref{tab:symbols} for notation.

\begin{proposition}\label{pr:exterior and interior}
	Let $M,N\in\bigwedge X$ and $v,v_1,\ldots,v_p\in X$.
	\begin{enumerate}[i)]
		\item $\scom{\ext_M,\ext_N} = \scom{\iota_M,\iota_N} = 0$. \label{it:scom ee ii}
			\SELF{$\sacom{\ext_M,\ext_N} = 2\,\ext_{M\wedge N}$, \\ $\sacom{\iota_M,\iota_N} = 2\,\iota_{N\wedge M}$}
		\item $\scom{\iota_v,\ext_M} = \ext_{v\lcontr M}$ and $\scom{\ext_v,\iota_M} = \iota_{M\rcontr v}$. \label{it:scom int_v}
		\item $\scom{\iota_{v_{1\cdots p}},\ext_M} =  \sum_{\emptyset \neq \ii\in\II^p} \epsilon_{\ii'\ii}\, \ext(\hhat{M}{p} \rcontr \hat{v}_{\ii})\,\iota_{v_{\ii'}}$. \label{it:scom ie}
			\SELFL{Alternatives: \\ $\scom{\iota_{v_{1\cdots p}},\ext_M} =  \sum\limits_{\emptyset \neq \ii\in\II^p} \epsilon_{\ii\ii'}\, \ext(\hhat{(v_{\ii}\lcontr M)}{|\ii'|}) \iota_{v_{\ii'}}  =  \sum_{\ii\in\II^p, |\ii|\neq p} \epsilon_{\ii\ii'}\, \ext(v_{\ii'}\lcontr \hhat{M}{|\ii|})\,\iota_{v_\ii}$ \\ \, \\ Adjoints: \\ $\scom{\iota_M,\ext_{v_{1\cdots p}}} =  \sum_{\emptyset \neq \ii\in\II^p} \epsilon_{\ii'\ii}\, \ext_{v_{\ii'}}\,\iota(v_{\ii}\lcontr \hhat{M}{|\ii'|})$ \\ $\scom{\ext_M,\iota_{v_{1\cdots p}}} =  \sum_{\emptyset \neq \ii\in\II^p} \epsilon_{\ii\ii'}\, \iota_{v_\ii'}\,\ext(\hat{v}_{\ii}\lcontr \hhat{M}{p})$}
		\item $\scom{\ext_{v_{1\cdots p}},\iota_M} =  \sum_{\emptyset \neq \ii\in\II^p} \epsilon_{\ii\ii'}\, \iota(\hat{v}_{\ii} \lcontr \hhat{M}{p})\,\ext_{v_{\ii'}}$. \label{it:scom ei}
	\end{enumerate}
\end{proposition}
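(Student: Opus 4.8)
The plan is to reduce to homogeneous $M$ (both sides of each identity are linear in $M$, and $\scom{\cdot,\cdot}$ is bilinear, so it suffices to treat $M\in\bigwedge^{\grade{M}}X$), then expand $\scom{S,T}=ST-(-1)^{ij}TS$, evaluate both sides on an arbitrary $N\in\bigwedge X$, and rewrite the compositions using $\ext_M\ext_N=\ext_{M\wedge N}$, $\iota_M\iota_N=\iota_{N\wedge M}$ together with the Leibniz-type formulas of \Cref{pr:contr homog} and \Cref{pr:higher order}. Throughout I use that $\ext_M$ and $\iota_M$ have parity $\grade{M}\bmod 2$ for homogeneous $M$, and that $\hhat{M}{k}=(-1)^{k\grade{M}}M$ for such $M$ (in particular $\hat{M}=(-1)^{\grade{M}}M$).

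For \ref{it:scom ee ii}, with $M\in\bigwedge^iX$ and $N\in\bigwedge^jX$ one gets $\scom{\ext_M,\ext_N}=\ext_{M\wedge N}-(-1)^{ij}\ext_{N\wedge M}$ and $\scom{\iota_M,\iota_N}=\iota_{N\wedge M}-(-1)^{ij}\iota_{M\wedge N}$, and both vanish because $M\wedge N=(-1)^{ij}N\wedge M$. For \ref{it:scom int_v}, since $\iota_v$ is odd, $\scom{\iota_v,\ext_M}(N)=v\lcontr(M\wedge N)-(-1)^{\grade{M}}M\wedge(v\lcontr N)$; the first identity of \Cref{pr:contr homog} gives $v\lcontr(M\wedge N)=(v\lcontr M)\wedge N+(-1)^{\grade{M}}M\wedge(v\lcontr N)$, so the remainder is $(v\lcontr M)\wedge N=\ext_{v\lcontr M}(N)$. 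The identity $\scom{\ext_v,\iota_M}=\iota_{M\rcontr v}$ follows the same way from the second identity of \Cref{pr:contr homog}.

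For \ref{it:scom ie}, note $\iota_{v_{1\cdots p}}$ has parity $p\bmod 2$, so $\scom{\iota_{v_{1\cdots p}},\ext_M}(N)=v_{1\cdots p}\lcontr(M\wedge N)-(-1)^{p\grade{M}}M\wedge(v_{1\cdots p}\lcontr N)$. Expanding the first term by \Cref{pr:higher order}\ref{it:generalized Leibniz}, the summand indexed by $\ii=\emptyset$ has $\ii'=(1,\ldots,p)$, $\epsilon_{\ii'\ii}=1$ and $\hat{v}_\emptyset=1$ (so $\hhat{M}{p}\rcontr\hat{v}_\emptyset=\hhat{M}{p}$), hence equals $\hhat{M}{p}\wedge(v_{1\cdots p}\lcontr N)=(-1)^{p\grade{M}}M\wedge(v_{1\cdots p}\lcontr N)$, which exactly cancels the reversed-order term of the supercommutator; the surviving sum over $\emptyset\neq\ii\in\II^p$ is precisely $\sum_{\emptyset\neq\ii\in\II^p}\epsilon_{\ii'\ii}\,\ext(\hhat{M}{p}\rcontr\hat{v}_\ii)\,\iota_{v_{\ii'}}$ applied to $N$. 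Part \ref{it:scom ei} is obtained identically, starting from $\scom{\ext_{v_{1\cdots p}},\iota_M}(N)=v_{1\cdots p}\wedge(M\lcontr N)-(-1)^{p\grade{M}}M\lcontr(v_{1\cdots p}\wedge N)$ and using \Cref{pr:higher order}\ref{it:v1p wedge M contr N}, whose $\ii=\emptyset$ term again cancels the reversed-order term.

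I expect the only real difficulty to be sign bookkeeping: confirming that the parity exponent $(-1)^{p\grade{M}}$ coming from $\scom{\cdot,\cdot}$ matches the factor in $\hhat{M}{p}=(-1)^{p\grade{M}}M$, and checking that the $\ii=\emptyset$ term of each higher-order Leibniz identity reproduces $M\wedge(v_{1\cdots p}\lcontr N)$ (resp. $M\lcontr(v_{1\cdots p}\wedge N)$) on the nose, including the $\epsilon$-sign. Once the homogeneous case is settled, the general statement follows by linearity in $M$.
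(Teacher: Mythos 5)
Your proposal is correct and follows essentially the same route as the paper: part \ref{it:scom ee ii} from the (anti)homomorphism identities and supercommutativity of $\wedge$, part \ref{it:scom int_v} as a restatement of \Cref{pr:contr homog}, and parts \ref{it:scom ie}--\ref{it:scom ei} as \Cref{pr:higher order} with the $\ii=\emptyset$ term (which equals $(-1)^{p\grade{M}}\ext_M\iota_{v_{1\cdots p}}$ on homogeneous $M$) moved to the left to absorb the second term of the supercommutator. Your sign checks, including $\epsilon_{\ii'\ii}=1$ for $\ii=\emptyset$ and $\hhat{M}{p}=(-1)^{p\grade{M}}M$, are exactly the bookkeeping the paper leaves implicit.
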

\begin{proof}
	\emph{(\ref{it:scom ee ii})} Immediate. 
		\OMIT{$\ext_M\ext_N = \ext_{M\wedge N}$ and $\iota_M\iota_N = \iota_{N\wedge M}$, or from supercommut. of $\bigwedge X$ and superalgebra (anti)homomorph.}
	\emph{(\ref{it:scom int_v})} This is \Cref{pr:contr homog}.\label{uso2 pr:contr homog}
	\emph{(\ref{it:scom ie},\ref{it:scom ei})} These are \Cref{pr:higher order},\label{uso pr:higher order}
	with the term for $\ii=\emptyset$
		\OMIT{$(-1)^{pq}\ext_M\iota_{v_{1\cdots p}}$ for $M \in \bigwedge^q X$}
	moved to the left.
		\OMIT{Can assume $M\in \bigwedge^q X$. \\ $\iota_B\ext_M =  \sum_{\ii\in\II^p} \epsilon_{\ii\ii'} \ext_{v_{\ii'}\lcontr \hhat{M}{|\ii|}}\,\iota_{v_\ii}
		= (-1)^{pq}\ext_M\iota_B + \sum_{\ii\in\II^p, \ii\neq 1\cdots p} \epsilon_{\ii\ii'} \ext_{v_{\ii'}\lcontr \hhat{M}{|\ii|}}\,\iota_{v_\ii}$}
%
\end{proof}

In particular, $\scom{\iota_v,\ext_w}  = \inner{v,w}\mathds{1}$
	\SELF{$= \{\iota_v,\ext_w\}$}
for $v,w\in X$.


\begin{example}\label{ex:supercommutator}
	$\scom{\iota_{v_{12}},\ext_M} = \ext(M \rcontr v_{12})  + \ext(M \rcontr v_1)\iota_{v_2} - \ext(M \rcontr v_2)\iota_{v_1}$,
	and 
	$\scom{\ext_{v_{12}},\iota_M} = - \iota(v_1\lcontr M)\ext_{v_2} + \iota(v_2\lcontr M)\ext_{v_1} + \iota(v_{12}\lcontr M)$.
		\OMIT{\ref{pr:exterior and interior}\ref{it:scom ie}}
\end{example}


\subsection{Creation and annihilation operators}\label{sc:Creation and Annihilation Operators}

Fix an orthonormal basis $(v_1,\ldots,v_n)$, and let $\rr,\ss\in\MM^n$ (see \Cref{tab:symbols}).

\begin{definition}\label{df:annihilation creation}
	$a_\rr^\dagger = \ext_{v_\rr}$ and $a_\rr = \iota_{v_\rr}$
		\SELFL{precisa $\rr$ para concatenations}
	are \emph{creation} and \emph{annihilation operators}, respectively.
		\SELF{$\lH{(a_\ii v_\jj)} = (-)^{\ii(n-\jj)} a_\ii^\dagger \lH{v_\jj}$ \\ $\lH{(a_\ii^\dagger v_\jj)} \!\!=\!\! (-)^{\ii(n-\jj-\ii)} a_\ii \lH{v_\jj}$ \\ $\rH{(a_\ii v_\jj)} = (-)^{\ii(\jj+1)} a_\ii^\dagger \rH{v_\jj}$ \\ $\rH{(a_\ii^\dagger v_\jj)} = (-)^{\ii\jj} a_\ii \rH{v_\jj}$}
\end{definition}

They are adjoints, with $a_\rr^\dagger v_\ss = \delta_{\rr\cap \ss = \emptyset}\, v_{\rr\ss}$
and 
$a_\rr v_\ss = \delta_{\rr \subset \ss} \epsilon_{\rr(\ss \backslash \rr)} v_{\ss \backslash \rr}$ (so $a_\rr v_{\rr\ss} = v_\ss$ if $\rr \cap \ss = \emptyset$).
Also,
$a_\rr^\dagger a_\ss^\dagger = \delta_{\rr\cap \ss = \emptyset}\, a_{\rr\ss}^\dagger$ and $a_\rr a_\ss = \delta_{\rr\cap \ss = \emptyset}\, a_{\ss\rr}$ (with $\rr$ and $\ss$ swapped),
so
$(a_\rr^\dagger)^2 = a_\rr^2 = 0$ if $\rr\neq\emptyset$.
Note that $a_\emptyset^\dagger = a_\emptyset = \Id$.
For a single index $i$, $a_i^\dagger a_i + a_i a_i^\dagger = \Id$.

These properties explain the terminology, from Physics \cite{Oziewicz1986}:
if the $v_i$'s are single-fermion quantum states, $v_\ss$ is a multi-fermion state with a particle in each $v_i$ for $i\in\ss$;
acting on it, $a_\rr^\dagger$ and $a_\rr$ add or remove fermions in the states specified by $\rr$, if possible (each $v_i$ can have only one fermion).


\begin{example}\label{ex:a a+}
	$a_{14}^\dagger v_2 = -v_{124}$, $a_{14} v_{124} = -v_2$ and $a_{14}^\dagger v_1 = a_{14} v_{123} = 0$.
\end{example}

The following expansions of supercommutators in terms of lower order operators seem to be new.

\begin{proposition}\label{pr:scom expansion}
	For $\ii,\jj\in\II^n$, 
	we have $\scom{a_\ii,a_\jj} = \scom{a_\ii^\dagger,a_\jj^\dagger} = 0$, 
	\begin{align}
		\scom{a_\ii^\dagger,a_\jj} &= \sum\limits_{\emptyset\neq \ll \subset \ii \cap \jj} (-1)^{1 + |\ll| + |\ii\triangle\jj > \ll|} \cdot a_{\ii \backslash \ll}^\dagger \, a_{\jj \backslash \ll}, \text{ and} \label{eq:scom ai+ aj}
		\\
		\scom{a_\ii,a_\jj^\dagger} &= \sum\limits_{\emptyset\neq \ll \subset \ii \cap \jj} (-1)^{1 + |\ll| + |\ll > \ii\triangle\jj|} \cdot a_{\ii \backslash \ll} \, a_{\jj \backslash \ll}^\dagger. \label{eq:scom ai aj+}
	\end{align}
\end{proposition}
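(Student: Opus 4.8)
The plan is to derive \eqref{eq:scom ai+ aj} from the higher-order formula in \Cref{pr:exterior and interior}\ref{it:scom ie}, specialized to $v_{1\cdots p} \rightsquigarrow v_\jj$ and $M \rightsquigarrow v_\ii$, and then obtain \eqref{eq:scom ai aj+} by taking adjoints. The vanishing of $\scom{a_\ii,a_\jj}$ and $\scom{a_\ii^\dagger,a_\jj^\dagger}$ is immediate from \ref{it:scom ee ii}, since these are supercommutators of two $\ext$'s or two $\iota$'s. So the work is entirely in the mixed case. First I would recall from \Cref{df:annihilation creation} that $a_\jj = \iota_{v_\jj}$ and $a_\ii^\dagger = \ext_{v_\ii}$, so that \ref{it:scom ie} gives
\[
	\scom{a_\ii^\dagger,a_\jj} = -(-1)^{|\ii||\jj|}\scom{a_\jj,a_\ii^\dagger} = -(-1)^{|\ii||\jj|}\scom{\iota_{v_\jj},\ext_{v_\ii}},
\]
and then expand $\scom{\iota_{v_\jj},\ext_{v_\ii}}$ using \ref{it:scom ie}. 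In that expansion the sum is over nonempty $\kk \subset \jj$ (reindexing $\ii \to \kk$, $\jj \to \ii$ of \ref{it:scom ie} to match the present notation), with typical term $\epsilon_{(\jj\backslash\kk)\kk}\,\ext(\hhat{v_\ii}{|\jj|} \rcontr \hat{v}_{\kk})\,\iota_{v_{\jj\backslash\kk}}$.

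The main obstacle — and the place where care is needed — is the sign and support bookkeeping. I would proceed in three steps. (1) Identify which terms survive: $\hat{v}_\kk \lcontr \hhat{v_\ii}{|\jj|}$ (up to signs, a contraction of $v_\kk$ into $v_\ii$) is nonzero exactly when $\kk \subset \ii$, hence when $\kk \subset \ii\cap\jj$; relabel $\kk =: \ll$, recovering the index set $\emptyset \neq \ll \subset \ii\cap\jj$ of the claimed formula. On such a term, $v_\ll \lcontr v_\ii = \pm v_{\ii\backslash\ll}$, so $\ext(\cdots)\iota_{v_{\jj\backslash\ll}} = \pm\, a_{\ii\backslash\ll}^\dagger a_{\jj\backslash\ll}$, matching the operator content of \eqref{eq:scom ai+ aj}. (2) Assemble the sign: collect the $\epsilon_{(\jj\backslash\ll)\ll}$ from \ref{it:scom ie}, the sign from $v_\ll\lcontr v_\ii = \epsilon_{\ll(\ii\backslash\ll)}v_{\ii\backslash\ll}$ (or its reverse-contraction analogue), the grade-involution signs $\hhat{v_\ii}{|\jj|}$ and $\hat v_\ll$, and the leading $-(-1)^{|\ii||\jj|}$. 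Using \Cref{pr:epsilon} to rewrite each $\epsilon$ as $(-1)^{\pairs{\cdot}{\cdot}}$ and collecting exponents mod $2$, I expect everything to collapse to $(-1)^{1+|\ll|+\pairs{\ii\triangle\jj}{\ll}}$; the key cancellation is that the many $\pairs{\cdot}{\cdot}$-terms involving $\ll$ against subsets of $\ii$ and of $\jj$ recombine, via $\pairs{\ii}{\ll}+\pairs{\jj}{\ll} \equiv \pairs{\ii\triangle\jj}{\ll} \pmod 2$ (since indices in $\ii\cap\jj\supset\ll$ contribute an even amount — noting $\ll$ itself is excluded on both sides consistently), into the single symmetric-difference count $|\ii\triangle\jj>\ll|$. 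The grade-involution signs should absorb the leftover $(-1)^{|\ii||\jj|}$ together with the various $|\ll|\cdot(\text{something})$ factors, leaving just the stray $(-1)^{1+|\ll|}$.

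(3) For \eqref{eq:scom ai aj+}, I would take the adjoint of \eqref{eq:scom ai+ aj}. Since $\adj{a_\rr} = a_\rr^\dagger$ and $\adj{a_\rr^\dagger} = a_\rr$, and since $(ST)^\dagger = \adj{T}\adj{S}$ while $\adj{\scom{S,T}} = \pm\scom{\adj{T},\adj{S}}$ with the sign controlled by parities (which match on both sides of \eqref{eq:scom ai+ aj}), adjoining the right-hand side of \eqref{eq:scom ai+ aj} turns each summand $a_{\ii\backslash\ll}^\dagger a_{\jj\backslash\ll}$ into $a_{\jj\backslash\ll}^\dagger a_{\ii\backslash\ll}$, i.e.\ $\scom{a_\jj,a_\ii^\dagger}$ gets expanded; swapping the names $\ii\leftrightarrow\jj$ and using $\ii\triangle\jj = \jj\triangle\ii$ together with $\pairs{\ll}{\ii\triangle\jj} = \pairs{\jj\triangle\ii}{\ll}$ reversed (which is exactly why \eqref{eq:scom ai aj+} has $|\ll>\ii\triangle\jj|$ where \eqref{eq:scom ai+ aj} has $|\ii\triangle\jj>\ll|$) yields the stated formula. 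I expect this last step to be routine once the sign in \eqref{eq:scom ai+ aj} is pinned down; the real effort is the mod-$2$ exponent arithmetic in step (2), which I would lay out as a short displayed computation rather than prose.
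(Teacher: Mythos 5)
Your treatment of the vanishing cases and of \eqref{eq:scom ai+ aj} is essentially the paper's own argument: expand $\scom{a_\jj,a_\ii^\dagger}=\scom{\iota_{v_\jj},\ext_{v_\ii}}$ by \Cref{pr:exterior and interior}\ref{it:scom ie}, note that $v_\ii\rcontr v_\ll$ survives only for $\ll\subset\ii$, and collapse the signs with \Cref{pr:epsilon} via $\epsilon_{(\jj\backslash\ll)\ll}\,\epsilon_{(\ii\backslash\ll)\ll}=(-1)^{\pairs{\ii\triangle\jj}{\ll}}$ before multiplying by $(-1)^{1+|\ii||\jj|}$. That part is fine, even if the mod-$2$ bookkeeping is only sketched.

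Step (3), however, has a genuine gap: taking adjoints of \eqref{eq:scom ai+ aj} cannot produce \eqref{eq:scom ai aj+}. The adjoint of the left side is $(\scom{a_\ii^\dagger,a_\jj})^\dagger=\scom{a_\jj^\dagger,a_\ii}$, and the adjoint of each summand $a_{\ii\backslash\ll}^\dagger\,a_{\jj\backslash\ll}$ is $a_{\jj\backslash\ll}^\dagger\,a_{\ii\backslash\ll}$ --- still a creation--annihilation product. After renaming $\ii\leftrightarrow\jj$ (and using the symmetry of $\ii\triangle\jj$ and of $\pairs{\ii\triangle\jj}{\ll}$ under that swap) you recover \eqref{eq:scom ai+ aj} verbatim, not \eqref{eq:scom ai aj+}, whose summands $a_{\ii\backslash\ll}\,a_{\jj\backslash\ll}^\dagger$ have the opposite ordering. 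The two expansions are genuinely different operator identities, not rearrangements of one another: the paper points out (see \Cref{ex:scom}) that $a_{\ii\backslash\ll}^\dagger a_{\jj\backslash\ll}\neq\pm\,a_{\jj\backslash\ll}a_{\ii\backslash\ll}^\dagger$ in general, and the signs also differ term by term, since $\ll$ is disjoint from $\ii\triangle\jj$, so $\pairs{\ii\triangle\jj}{\ll}+\pairs{\ll}{\ii\triangle\jj}=|\ll|\,|\ii\triangle\jj|$, which is odd when both factors are odd. So your claim that the swap ``is exactly why'' the second formula carries $|\ll>\ii\triangle\jj|$ does not follow from any adjoint manipulation. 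The repair is to run the same direct computation a second time, now starting from \Cref{pr:exterior and interior}\ref{it:scom ei} applied to $\scom{\ext_{v_\ii},\iota_{v_\jj}}$, whose expansion naturally comes out in $\iota(\cdot)\,\ext_{(\cdot)}$ order, i.e.\ in $a\,a^\dagger$ products, and then collapse the signs exactly as before; this is what the paper's ``likewise'' refers to.
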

\begin{proof}
	The first two follow from \Cref{pr:exterior and interior}\ref{it:scom ee ii}.
	By \Cref{pr:exterior and interior}\ref{it:scom ie},
	\begin{align*}
	\scom{a_\jj,a_\ii^\dagger} 
	&= \sum\limits_{\emptyset \neq \ll \subset \jj}  \epsilon_{(\jj \backslash \ll) \ll} \cdot (-1)^{|\ii||\jj|+|\ll|} \cdot \ext_{v_\ii \rcontr v_\ll} \, \iota_{v_{\jj \backslash \ll}} \\
	&= \sum\limits_{\emptyset \neq \ll \subset \jj} 	 \epsilon_{(\jj \backslash \ll) \ll} \cdot (-1)^{|\ii||\jj|+|\ll|} \cdot \delta_{\ll \subset \ii} \cdot \epsilon_{(\ii \backslash \ll)\ll} \cdot a_{\ii \backslash \ll}^\dagger \, a_{\jj \backslash \ll},
	\end{align*}
	and \eqref{eq:scom ai+ aj} follows as
	$\scom{a_\ii^\dagger, a_\jj} 
	= (-1)^{1+|\ii||\jj|} \cdot \scom{a_\jj,a_\ii^\dagger}$
	and \Cref{pr:epsilon} gives
	$\epsilon_{(\jj \backslash \ll) \ll} \, \epsilon_{(\ii \backslash \ll)\ll} 
	= (-1)^{\pairs{\jj \backslash \ll}{\ll} + \pairs{\ii \backslash \ll}{\ll}}
	= (-1)^{\pairs{\ii \triangle \jj}{\ll}}$, 
	for $\ll \subset \ii \cap \jj$.
	Likewise, \eqref{eq:scom ai aj+} is obtained using \Cref{pr:exterior and interior}\ref{it:scom ei}.
\end{proof}

As $\scom{a_\ii^\dagger,a_\jj} = (-1)^{1+|\ii||\jj|} \cdot \scom{a_\ii,a_\jj^\dagger}$, the main difference between \eqref{eq:scom ai+ aj} and \eqref{eq:scom ai aj+} is that the first expansion uses $a^\dagger a$'s, the other $a a^\dagger$'s.
Below, it is the opposite, $\scom{a_\ii,a_\ii^\dagger}$ is expanded in $a^\dagger a$'s, while $\scom{a_\ii^\dagger,a_\ii}$ uses $a a^\dagger$'s.

\begin{corollary}\label{pr:scom expansion i=j} 
	$\scom{a_\ii,a_\ii^\dagger} = \sum\limits_{\jj \varsubsetneq \ii} (-1)^{|\jj|} a_{\jj}^\dagger a_{\jj}$
	and
	$\scom{a_\ii^\dagger,a_\ii} = \sum\limits_{\jj \varsubsetneq \ii} (-1)^{|\jj|} a_{\jj} a_{\jj}^\dagger$,
	for $\ii\in\II^n$.
\end{corollary}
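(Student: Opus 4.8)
The plan is to read off both identities as the special case $\jj = \ii$ of \Cref{pr:scom expansion}, after a trivial reindexing of the sum and one application of the antisymmetry of the supercommutator. Note first that when $\jj = \ii$ we have $\ii \cap \jj = \ii$ and $\ii \triangle \jj = \emptyset$, so the exponents $|\ii\triangle\jj > \ll|$ in \eqref{eq:scom ai+ aj} and $|\ll > \ii\triangle\jj|$ in \eqref{eq:scom ai aj+} both vanish; and since $a_\ii$ and $a_\ii^\dagger$ are operators of parity $|\ii| \bmod 2$, the definition of the supercommutator gives at once $\scom{a_\ii,a_\ii^\dagger} = -(-1)^{|\ii|^2}\scom{a_\ii^\dagger,a_\ii} = (-1)^{1+|\ii|}\scom{a_\ii^\dagger,a_\ii}$, and symmetrically with the two operators swapped.

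For the first identity I would take $\jj = \ii$ in \eqref{eq:scom ai+ aj}, which becomes $\scom{a_\ii^\dagger,a_\ii} = \sum_{\emptyset\neq\ll\subset\ii}(-1)^{1+|\ll|}\,a_{\ii\backslash\ll}^\dagger\,a_{\ii\backslash\ll}$. Substituting $\jj = \ii\backslash\ll$ — so that $\jj$ runs over the proper subsets of $\ii$ and $|\ll| = |\ii|-|\jj|$ — this reads $\scom{a_\ii^\dagger,a_\ii} = (-1)^{1+|\ii|}\sum_{\jj\varsubsetneq\ii}(-1)^{|\jj|}\,a_\jj^\dagger\,a_\jj$, and the sign relation above then gives $\scom{a_\ii,a_\ii^\dagger} = \sum_{\jj\varsubsetneq\ii}(-1)^{|\jj|}\,a_\jj^\dagger\,a_\jj$. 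The second identity follows identically, starting from \eqref{eq:scom ai aj+} with $\jj = \ii$, which yields $\scom{a_\ii,a_\ii^\dagger} = \sum_{\emptyset\neq\ll\subset\ii}(-1)^{1+|\ll|}\,a_{\ii\backslash\ll}\,a_{\ii\backslash\ll}^\dagger$; the same substitution and sign flip produce $\scom{a_\ii^\dagger,a_\ii} = \sum_{\jj\varsubsetneq\ii}(-1)^{|\jj|}\,a_\jj\,a_\jj^\dagger$.

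There is no genuine difficulty here — the argument is pure bookkeeping — and the only place to be careful is the sign accounting: checking that $|\ii\triangle\jj > \ll|$ drops out when $\jj=\ii$, tracking $|\ll| = |\ii|-|\jj|$ through the substitution, and noting that the factor $(-1)^{1+|\ii|}$ appears twice (once from the antisymmetry flip, once from the reindexed exponent) and cancels, leaving only $(-1)^{|\jj|}$. As a sanity check, for a single index $\ii=(i)$ the only proper subset of $\ii$ is $\emptyset$ and both formulas reduce to $\scom{a_i,a_i^\dagger} = \scom{a_i^\dagger,a_i} = \Id$.
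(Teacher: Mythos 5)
Your proposal is correct and follows essentially the same route as the paper: specialize \eqref{eq:scom ai+ aj} (resp.\ \eqref{eq:scom ai aj+}) to $\jj=\ii$ so that $\ii\triangle\jj=\emptyset$ kills the pair-count exponent, reindex $\jj=\ii\backslash\ll$, and absorb the resulting $(-1)^{1+|\ii|}$ via the antisymmetry $\scom{a_\ii,a_\ii^\dagger}=(-1)^{1+|\ii|}\scom{a_\ii^\dagger,a_\ii}$. The sign bookkeeping is exactly what the paper does, so nothing further is needed.
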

\begin{proof}
	By \eqref{eq:scom ai+ aj}, 
	$\scom{a_\ii^\dagger,a_\ii} 
	= \sum\limits_{\emptyset\neq \ll \subset \ii} (-1)^{1 + |\ll|} a_{\ii \backslash \ll}^\dagger a_{\ii \backslash \ll}
	= \sum\limits_{\jj \varsubsetneq \ii} (-1)^{1 + |\ii| + |\jj|} a_{\jj}^\dagger a_{\jj}$,
	and $\scom{a_\ii,a_\ii^\dagger} = (-1)^{1 + |\ii|} \scom{a_\ii^\dagger,a_\ii}$.
	Likewise, \eqref{eq:scom ai aj+} gives the other formula.
\end{proof}

\begin{example}\label{ex:scom}
	With $\ii\neq\jj$, we have for example:
	\begin{itemize}
		\item $\scom{a_{14}^\dagger,a_{23}} = 0$, 
			\SELF{$= \scom{a_{23},a_{14}^\dagger}$}
		as $\ii\cap\jj=\emptyset$, and so $a_{14}^\dagger a_{23} = a_{23} a_{14}^\dagger$.
		\item $\scom{a_{2347}^\dagger,a_{136}} = -a_{247}^\dagger\, a_{16}$, 
			\SELF{$= -\scom{a_{136},a_{2347}^\dagger}$. Note that $a_{247}^\dagger\, a_{16} = a_{16}\, a_{247}^\dagger$, as $[a_{247}^\dagger,a_{16}] = \scom{a_{247}^\dagger,a_{16}} = 0$.}
		as $\ii\cap\jj = 3$ and $\ii\triangle\jj = 12467$.
		\item $\scom{a_{1236}^\dagger,a_{13467}} = -a_{236}^\dagger\, a_{3467} + a_{126}^\dagger\, a_{1467} - a_{123}^\dagger\, a_{1347} + a_{26}^\dagger\, a_{467} - a_{23}^\dagger\, a_{347} + a_{12}^\dagger\, a_{147} + a_{2}^\dagger\, a_{47}$,
		as $\ii\cap\jj = 136$ and $\ii\triangle\jj = 247$.
		By \eqref{eq:scom ai aj+},
		$\scom{a_{13467},a_{1236}^\dagger} = +a_{3467}\, a_{236}^\dagger - a_{1467}\, a_{126}^\dagger + a_{1347}\, a_{123}^\dagger + a_{467}\, a_{26}^\dagger - a_{347}\, a_{23}^\dagger + a_{147}\, a_{12}^\dagger - a_{47}\, a_{2}^\dagger$. 
		Since $\scom{a_{1236}^\dagger,a_{13467}} = - \scom{a_{13467},a_{1236}^\dagger}$, 
		comparing the expansions one might think that $a_{\ii \backslash \ll}^\dagger a_{\jj \backslash \ll} = \pm a_{\jj \backslash \ll} a_{\ii \backslash \ll}^\dagger$, which is false: e.g.,
		$a_{12}^\dagger a_{147} v_{47} = 0$ but 
		$a_{147} a_{12}^\dagger v_{47} = v_2$. 
	\end{itemize}
\end{example}

\begin{example}\label{ex:scom i=j}
	Patterns are simpler when $\ii=\jj$: 
	\begin{itemize}
		\item $\scom{a_{1}^\dagger,a_{1}} = \scom{a_{1},a_{1}^\dagger} = \Id$.
		\item $\scom{a_{14}^\dagger, a_{14}} = \Id - a_{1} a_{1}^\dagger - a_{4} a_{4}^\dagger$
		and $\scom{a_{14},a_{14}^\dagger} = \Id - a_{1}^\dagger a_{1} - a_{4}^\dagger a_{4}$.
		Even though $a_\ii a_\ii^\dagger \neq \pm a_\ii^\dagger a_\ii$,
		we have $\scom{a_{14}^\dagger, a_{14}} = - \scom{a_{14},a_{14}^\dagger}$,
		since $a_{1}^\dagger a_{1} + a_{1} a_{1}^\dagger = a_{4}^\dagger a_{4} + a_{4} a_{4}^\dagger = \Id$.
		\item $\scom{a_{124}^\dagger,a_{124}} = 
		\Id - a_{1}^\dagger\, a_{1} - a_{2}^\dagger\, a_{2} - a_{4}^\dagger\, a_{4} + a_{12}^\dagger\, a_{12} + a_{14}^\dagger\, a_{14} + a_{24}^\dagger\, a_{24}$.
			\SELF{$\scom{a_{124},a_{124}^\dagger} = \Id - a_{1}\, a_{1}^\dagger - a_{2}\, a_{2}^\dagger - a_{4}\, a_{4}^\dagger + a_{12}\, a_{12}^\dagger + a_{14}\, a_{14}^\dagger +	a_{24}\, a_{24}^\dagger$. We have $\scom{a_{124}^\dagger,a_{124}} = \scom{a_{124},a_{124}^\dagger}$, though $a_1^\dagger\, a_1 \neq \pm a_1\, a_1^\dagger$, for example.}
	\end{itemize}
\end{example}


These expansions can be helpful, but are not intuitive.
Next we give better ways to look at these supercommutators.

\subsubsection{Supercommutator \texorpdfstring{$\scom{a_\ii^\dagger,a_\ii}$}{} }

The case $\ii=\jj$ can be understood analyzing the following self-adjoint operators, as $\scom{a_\ii^\dagger,a_\ii} = n_\ii - (-1)^{|\ii|} m_\ii$.
	\OMIT{$-(-1)^{|\ii|}$ é do $\scom{\cdot,\cdot}$}

\begin{definition}\label{df:vacancy occupancy}
	$m_\ii = a_\ii a_\ii^\dagger$ and $n_\ii = a_\ii^\dagger a_\ii$ are \emph{vacancy} and \emph{occupancy operators}, respectively, for $\ii\in\II^n$.
\end{definition}

In Physics, for a single $i$, $n_i$ is a \emph{number operator} (its eigenvalues give the number $0$ or $1$ of fermions in $v_i$), and $m_i = \Id - n_i$.
	\OMIT{\ref{pr:exterior and interior}\ref{it:scom int_v}}
As \ref{it:mv nv} below shows, 
$m_\ii$ (resp. $n_\ii$) tests if all $v_i$'s with $i\in\ii$ are vacant (resp. occupied).

\begin{proposition}\label{pr:m n}
	Let $\ii,\jj\in\II^n$.
		\SELF{$m_\emptyset = n_\emptyset = \Id$ \\ $m_\ii^2=m_\ii$, $n_\ii^2=n_\ii$}
	\begin{enumerate}[i)]
		\item $m_\ii v_\jj = \delta_{\ii \cap \jj = \emptyset} \, v_\jj$ and $n_\ii v_\jj = \delta_{\ii \subset \jj} \, v_\jj$. \label{it:mv nv}

		\item $m_\ii m_\jj = m_{\ii\cup\jj}$ and $n_\ii n_\jj = n_{\ii\cup\jj}$.\label{it:mimj ninj}
			\OMIT{$\VV_\ii \cap \VV_\jj = \VV_{\ii\cup\jj}$, idem $\WW$}
			\SELF{$m_\ii n_\jj v_\kk = n_\jj m_\ii v_\kk = v_\kk \delta_{\ii\subset \kk', \jj\subset \kk}$}
		\item $[m_\ii,m_\jj] = [n_\ii,n_\jj] = [m_\ii,n_\jj] = 0$.
			\OMIT{orthog projs commute}
		\item $m_\ii = \prod_{i\in\ii} m_i$ and $n_\ii = \prod_{i\in\ii} n_i$. \label{it:m n prod}
			\OMIT{$\VV_\ii = \cap_{i\in\ii}\VV_i$ and $\WW_\ii = (\VV_\ii)^\perp = (\oplus_{i\in\ii}\VV_i)^\perp = \bigcap_{i\in\ii}\VV_i^\perp = \bigcap_{i\in\ii}\WW_i$}
		\item $m_\ii = \sum_{\jj \subset \ii} (-1)^{|\jj|} n_\jj$ and  $n_\ii = \sum_{\jj \subset \ii} (-1)^{|\jj|} m_\jj$. \label{it:m sum n}
		\item The following diagram is commutative. Stars can be left or right, but must be equal (\resp different) for equal (\resp opposite) arrows. \label{it:m n star}
		\begin{equation}\label{eq:m n star}
			\begin{tikzcd}
				\bigwedge X \arrow[d,swap,"m_\ii"] \arrow[r,"\star"] & \bigwedge X \arrow[l] \arrow[d,"n_\ii"] \\
				\bigwedge X \arrow[r] & \bigwedge X  \arrow[l,"\star"]
			\end{tikzcd}
		\end{equation}		
	\end{enumerate}
\end{proposition}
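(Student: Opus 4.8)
The plan is to identify $m_\ii$ and $n_\ii$ with orthogonal projections and then deduce all six items from that. Since $(v_1,\dots,v_n)$ is orthonormal, $v_\ii$ is a unit blade with $[v_\ii]=\Span\{v_i:i\in\ii\}$, so \Cref{pr:triple} applied to $B=v_\ii$ gives at once that $m_\ii=a_\ii a_\ii^\dagger=\iota_{v_\ii}\ext_{v_\ii}=\PP_{\VV_\ii}$ and $n_\ii=a_\ii^\dagger a_\ii=\ext_{v_\ii}\iota_{v_\ii}=\PP_{\WW_\ii}$, where $\VV_\ii=\Img\iota_{v_\ii}=\bigwedge([v_\ii]^\perp)$ and $\WW_\ii=\Img\ext_{v_\ii}=v_\ii\wedge\bigwedge([v_\ii]^\perp)$ (for $\ii=\emptyset$ both are $\Id$). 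In the orthonormal basis $\{v_\kk:\kk\in\II^n\}$ of $\bigwedge X$, the space $\VV_\ii$ is spanned by the $v_\kk$ with $\kk\cap\ii=\emptyset$ and $\WW_\ii$ by the $v_\kk$ with $\ii\subset\kk$, so $m_\ii$ and $n_\ii$ merely keep or kill each basis vector; this is exactly \ref{it:mv nv}, $m_\ii v_\jj=\delta_{\ii\cap\jj=\emptyset}\,v_\jj$ and $n_\ii v_\jj=\delta_{\ii\subset\jj}\,v_\jj$ (one could also read \ref{it:mv nv} off the explicit actions of $a_\ii^\dagger$ and $a_\ii$ on the $v_\jj$ recorded after \Cref{df:annihilation creation}).

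Item \ref{it:mv nv} makes all the $m_\ii,n_\jj$ simultaneously diagonal in $\{v_\kk\}$, which reduces the rest to arithmetic on eigenvalues. The commutation relations of the third item are then immediate. For \ref{it:mimj ninj}, $m_\ii m_\jj v_\kk=\delta_{\ii\cap\kk=\emptyset}\delta_{\jj\cap\kk=\emptyset}\,v_\kk=\delta_{(\ii\cup\jj)\cap\kk=\emptyset}\,v_\kk=m_{\ii\cup\jj}v_\kk$, and likewise $n_\ii n_\jj v_\kk=\delta_{\ii\subset\kk}\delta_{\jj\subset\kk}\,v_\kk=\delta_{\ii\cup\jj\subset\kk}\,v_\kk=n_{\ii\cup\jj}v_\kk$; iterating this over the singletons of $\ii$ gives \ref{it:m n prod}. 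For \ref{it:m sum n} I would note first that for a single index $m_i+n_i=\Id$ (as $\delta_{i\notin\jj}+\delta_{i\in\jj}=1$ on each $v_\jj$), so $m_i=\Id-n_i$ and $n_i=\Id-m_i$; then \ref{it:m n prod} together with the commutativity gives $m_\ii=\prod_{i\in\ii}(\Id-n_i)=\sum_{\jj\subset\ii}(-1)^{|\jj|}n_\jj$ and, symmetrically, $n_\ii=\prod_{i\in\ii}(\Id-m_i)=\sum_{\jj\subset\ii}(-1)^{|\jj|}m_\jj$.

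Item \ref{it:m n star} unwinds to the identities $\star\, m_\ii=n_\ii\,\star$ and $\star\, n_\ii=m_\ii\,\star$, one pair for each of the two duals $\star=\lH{(\cdot)}$ and $\star=\rH{(\cdot)}$; the unlabelled arrows of \eqref{eq:m n star} are then forced, since $\lH{(\cdot)}$ and $\rH{(\cdot)}$ are mutually inverse. I would prove these geometrically. Each dual is a norm-preserving bijection of $\bigwedge X$, and by \Cref{pr:osp isp star} (with \Cref{pr:osp smallest subspace with M} and \Cref{pr:B in isp factor}) it interchanges $\VV_\ii$ and $\WW_\ii$: for instance $M\in\VV_\ii\Rightarrow\osp{M}\subset[v_\ii]^\perp\Rightarrow[v_\ii]\subset\isp{\rH{M}}\Rightarrow\rH{M}\in\WW_\ii$, and injectivity promotes these inclusions to equalities. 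A norm-preserving bijection carrying a subspace $U$ onto $U'$ carries $U^\perp$ onto $U'^\perp$ and hence intertwines the orthogonal projections onto them; applying this with $(U,U')=(\VV_\ii,\WW_\ii)$ and with $(U,U')=(\WW_\ii,\VV_\ii)$ yields the two identities, for $\star=\lH{(\cdot)}$ and $\star=\rH{(\cdot)}$ alike.

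I do not expect any step to be genuinely hard. The points that need care are: the preliminary remark that \Cref{pr:triple} applies because $v_\ii$ is a \emph{unit} blade; and item \ref{it:m n star}, where one must read the left/right-star convention of \eqref{eq:m n star} correctly and verify (via \Cref{pr:osp isp star}) that each dual really does swap $\VV_\ii$ with $\WW_\ii$.
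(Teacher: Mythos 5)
Your proof is correct, and for items (i)--(v) it is essentially the paper's argument: the paper also establishes \ref{it:mv nv} by computing $a_\ii^\dagger, a_\ii$ on the basis $\{v_\jj\}$, derives \ref{it:mimj ninj}--\ref{it:m n prod} from the resulting diagonal action, and gets \ref{it:m sum n} from \ref{it:m n prod} together with $m_i+n_i=\Id$, exactly as you do; your opening identification $m_\ii=\PP_{\Img a_\ii}$, $n_\ii=\PP_{\Img a_\ii^\dagger}$ via \Cref{pr:triple} is not used in the paper's proof itself but appears verbatim in the discussion immediately after the proposition, so it is a welcome (and legitimate) reordering rather than a new idea. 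The genuine divergence is item \ref{it:m n star}: the paper proves it by a one-line sign computation in the induced basis, $\rH{(m_\ii v_\jj)}=\delta_{\ii\cap\jj=\emptyset}\,\rH{v_\jj}=\delta_{\ii\subset\jj'}\,\epsilon_{\jj\jj'}v_{\jj'}=n_\ii(\rH{v_\jj})$, with the other three relations "similar", whereas you argue structurally: each star is a (possibly conjugate-linear) isometric bijection which, by \Cref{pr:osp isp star} (plus \Cref{pr:osp smallest subspace with M} and \Cref{pr:B in isp factor}), swaps $\VV_\ii=\bigwedge([v_\ii]^\perp)$ with $\WW_\ii=v_\ii\wedge\bigwedge([v_\ii]^\perp)$, and an isometry carrying one subspace onto another intertwines the corresponding orthogonal projections. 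Your route is coordinate-free, avoids any $\epsilon$-bookkeeping, and makes the duality in Fig.\,\ref{fig:Venn 2 3} conceptually transparent; its only cost is that the "inclusions become equalities" step needs either the dimension count $\dim\VV_\ii=\dim\WW_\ii$ or the fact that the two duals are mutually inverse, which you should state explicitly, while the paper's computation is shorter and also pins down the left/right-star bookkeeping of \eqref{eq:m n star} concretely.
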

\begin{proof}
	%
	\emph{(\ref{it:mv nv})} Follows by direct computation.
		\OMIT{If $\ii\cap\jj = \emptyset$ then $a_\ii^\dagger v_\jj = v_{\ii\jj}$ and $a_\ii v_{\ii\jj} = v_\jj$ (eq:contr induced basis). Also, $a_\ii^\dagger v_{\jj} = v_\ii \wedge v_\jj = 0 \Leftrightarrow \ii\cap\jj \neq \emptyset$, 		and $a_\ii v_\jj = v_\ii \lcontr v_\jj = 0 \Leftrightarrow \ii\not\subset\jj$.}
	\emph{(\ref{it:mimj ninj}--\ref{it:m n prod})} Follow from \ref{it:mv nv}.
	\emph{(\ref{it:m sum n})} Follows from \ref{it:m n prod} and $m_\emptyset = n_\emptyset = m_i + n_i = \Id$.
	\emph{(\ref{it:m n star})} $\rH{(m_\ii v_\jj)} = \delta_{\ii\cap\jj=\emptyset} \rH{v_\jj \!} = \delta_{\ii\subset\jj'} \epsilon_{\jj\jj'}\, v_{\jj'} = \epsilon_{\jj\jj'}\, n_\ii v_{\jj'} = n_\ii(\rH{v_\jj \!})$.
	Other relations are similar.	
\end{proof}

\begin{example}\label{ex:m n}
	$m_{14} v_{2} = v_{2}$, $n_{14} v_{124} = v_{124}$ and $m_{14} v_{12} = n_{14} v_{234} = 0$.
	Also,
	$m_{14} = \Id - n_1 - n_4 + n_{14}$, and
	$n_{14} = \Id - m_1 - m_4 + m_{14}$.
\end{example}

Note how \ref{it:m sum n} relates to \Cref{pr:scom expansion i=j},
	\MAYBE{$m_\ii$, $n_\ii$ appear in $\scom{a_\ii,a_\ii^\dagger} = m_\ii - (-1)^{|\ii|} n_\ii$ and $\scom{a_\ii^\dagger,a_\ii} = n_\ii - (-1)^{|\ii|}m_\ii$ with the same signs as in these decompositions. Has meaning?}
and gives decompositions of the identity:
$\Id = m_\ii - \sum_{\emptyset\neq \jj \subset \ii} (-1)^{|\jj|} n_\jj = n_\ii - \sum_{\emptyset\neq \jj \subset \ii} (-1)^{|\jj|} m_\jj$.
This can be understood via the inclusion-exclusion principle of combinatorics and \Cref{pr:triple},\label{uso pr:triple}
which shows $m_\ii = \PP_{\Img a_\ii}$ and $n_\ii = \PP_{\Img a_\ii^\dagger}$ are orthogonal projections onto%
	\SELF{$\Img a_\ii = \Span\{v_\jj:\ii\subset\jj'\}$ \\ $\Img a_\ii^\dagger = \Span\{v_{\ii\jj}:\ii\cap\jj = \emptyset\}$ \\
	Mutually inverse isometries: \\ $a_\ii:\Img a_\ii^\dagger \rightarrow \Img a_\ii$, \\ $a_\ii^\dagger:\Img a_\ii \rightarrow \Img a_\ii^\dagger$}
	\MAYBE{$\Img \ext_B = B \wedge \bigwedge([B]^\perp)$ 
	for any blade $B$, and $\Img \iota_B = \bigwedge ([B]^\perp)$ if $B\neq 0$. Generaliza com $\isp{M}$?}
\begin{align*}
	\Img a_\ii &= (\ker a_\ii^\dagger)^\perp = \bigwedge([v_\ii]^\perp) = \Span\{v_\jj:\jj\in\II^n,\ii\cap\jj = \emptyset\}, \text{ and} \\
	\Img a_\ii^\dagger &= (\ker a_\ii)^\perp 
	= v_\ii \wedge \bigwedge([v_\ii]^\perp) = \Span\{v_{\jj}:\jj\in\II^n,\ii\subset\jj\}.
\end{align*}

\begin{example}
	$\scom{a_{14}^\dagger,a_{14}} = n_{14} - m_{14}  = \PP_{\jj\supset 14} - \PP_{\jj\cap 14 = \emptyset}$,
	where $\PP_{\mathbf{P}(\jj)}$ is the orthogonal projection onto $\Span\{v_\jj : \mathbf{P}(\jj) \text{ is true}\}$.
		\SELFL{$\scom{a_{124}^\dagger,a_{124}} = n_{124} + m_{124}  = \PP_{\jj\supset 124} + \PP_{\jj\cap 124 = \emptyset}$}
	The expansions of $m_{14}$ and $n_{14}$ in \Cref{ex:m n}
	mean
	$\Id = \PP_{\jj\cap 14=\emptyset} + \PP_{\jj\supset 1} + \PP_{\jj\supset 4} - \PP_{\jj\supset 14}
	= \PP_{\jj\supset 14} + \PP_{\jj\cap 1=\emptyset} + \PP_{\jj\cap 4=\emptyset} - \PP_{\jj\cap 14=\emptyset}$.
	Similar interpretations can be obtained in Fig.\,\ref{fig:Venn 2 3} for $\ii=124$.
	Compare with \Cref{ex:scom i=j}.
\end{example}

\begin{figure}[t]
	\centering
	\begin{subfigure}[b]{0.405\textwidth}
		\includegraphics[width=\textwidth]{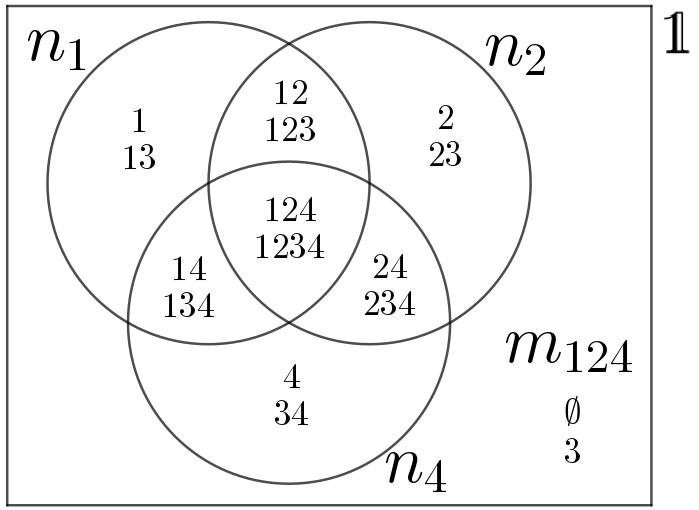}
		\caption{$\Id = m_{124} + n_1 + n_2 + n_4 - n_{12} - n_{14} - n_{24} + n_{124}$}
		\label{fig:Venn 3}
	\end{subfigure}
	\hspace{0.05\textwidth}
	\begin{subfigure}[b]{0.4\textwidth}
		\includegraphics[width=\textwidth]{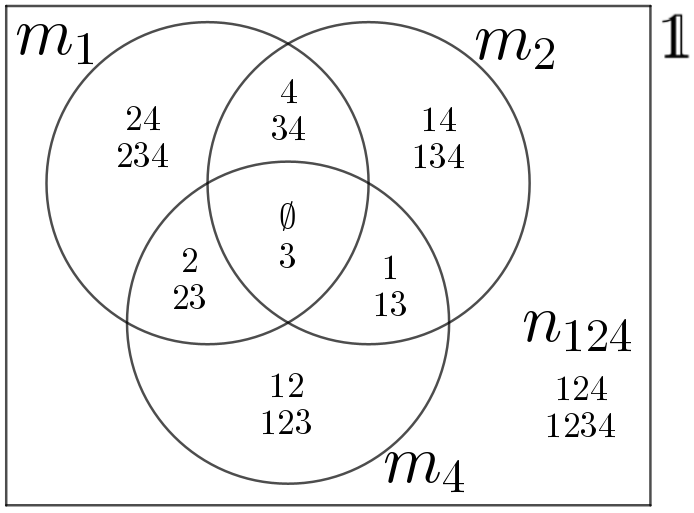}
		\caption{$\Id = n_{124} + m_1 + m_2 + m_4 - m_{12} - m_{14} - m_{24} + m_{124}$}
		\label{fig:Venn 3b}
	\end{subfigure}
	\caption{Decompositions of $\Id$ via inclusion-exclusion principle, for $\ii=124$ in $X=\Span\{v_1,\ldots,v_4\}$. 
	(a) $n_1$, $n_2$, $n_4$ project onto subspaces of $\bigwedge X$ spanned by $v_{\jj}$'s with $\jj$ in the corresponding circles, $n_{12}$, $n_{14}$, $n_{24}$, $n_{124}$ project in their intersections, and $m_{124}$ in their complement. 
	(b) Alternative decomposition, switching the roles of $m$'s and $n$'s. 
	The symmetry between the diagrams reflects the duality given by \eqref{eq:m n star}.
	}
	\label{fig:Venn 2 3}
\end{figure}

\begin{corollary}\label{pr:scom m n}
	For $\ii,\jj\in\II^n$,
	$\scom{a_\ii^\dagger,a_\ii} v_\jj = 
	\begin{cases}
		v_\jj &\text{if } \emptyset \neq \ii\subset\jj,\\
		(-1)^{|\ii|+1} v_\jj &\text{if } \emptyset \neq \ii\subset\jj', \\
		0 &\text{otherwise}.
	\end{cases}$ \label{it:scom v}
	\OMITR{\ref{pr:m n}\ref{it:mv nv} \\ 
		$\ii=\emptyset$ dá $v_\jj-v_\jj$}
\end{corollary}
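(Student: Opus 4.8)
The plan is to reduce everything to the identity $\scom{a_\ii^\dagger,a_\ii} = n_\ii - (-1)^{|\ii|} m_\ii$ recorded just before \Cref{df:vacancy occupancy}, and then to evaluate the right-hand side on a basis element $v_\jj$ using the explicit action of the vacancy and occupancy operators from \Cref{pr:m n}\ref{it:mv nv}.

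Concretely, I would first write $\scom{a_\ii^\dagger,a_\ii}\, v_\jj = n_\ii v_\jj - (-1)^{|\ii|} m_\ii v_\jj = \delta_{\ii\subset\jj}\, v_\jj - (-1)^{|\ii|}\,\delta_{\ii\cap\jj=\emptyset}\, v_\jj$. Next I would note that $\ii\cap\jj=\emptyset$ is exactly the condition $\ii\subset\jj'$, and that when $\ii\neq\emptyset$ the two conditions $\ii\subset\jj$ and $\ii\subset\jj'$ are mutually exclusive, so at most one of the two deltas can be nonzero. Splitting into the three exhaustive cases $\emptyset\neq\ii\subset\jj$, $\emptyset\neq\ii\subset\jj'$, and ``neither'', the expression collapses to $v_\jj$, to $-(-1)^{|\ii|}v_\jj=(-1)^{|\ii|+1}v_\jj$, and to $0$ respectively, which is precisely the stated formula. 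For completeness one may remark that the excluded value $\ii=\emptyset$ would give $v_\jj-v_\jj=0$, consistent with $a_\emptyset=a_\emptyset^\dagger=\Id$ and $\scom{\Id,\Id}=0$.

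There is essentially no obstacle here; the only point that deserves a moment's care is the disjointness of the cases $\ii\subset\jj$ and $\ii\subset\jj'$ for nonempty $\ii$, which is what makes the piecewise description well defined and ensures the two terms $n_\ii v_\jj$ and $m_\ii v_\jj$ never contribute simultaneously.
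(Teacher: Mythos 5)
Your proposal is correct and matches the paper's intended argument: the corollary is meant to follow directly from the identity $\scom{a_\ii^\dagger,a_\ii} = n_\ii - (-1)^{|\ii|} m_\ii$ together with \Cref{pr:m n}\ref{it:mv nv}, exactly as you do, including the observation that $\ii\cap\jj=\emptyset$ means $\ii\subset\jj'$ and that $\ii=\emptyset$ yields $v_\jj-v_\jj=0$.
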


For $\scom{a_\ii,a_\ii^\dagger} = (-1)^{|\ii|+1} \cdot \scom{a_\ii^\dagger,a_\ii}$, the first two conditions are swapped.

\subsubsection{Supercommutator \texorpdfstring{$\scom{a_\ii^\dagger,a_\jj}$}{} } \label{sc:General supercommutator}

The case $\ii\neq \jj$ is best understood by analyzing how $\scom{a_\ii^\dagger,a_\jj}$ acts on the $v_\kk$'s.
This has not been described before, as far as we know.
See \Cref{tab:symbols} for notation.

\begin{theorem}
	Separate the indices of $\ii,\jj,\kk\in\II^n$ into pairwise disjoint  $\aa,\bb,\cc,\dd,\ee,\xx,\yy\in\II^n$ 
		\SELF{works with $\MM^n$}
	as in the Venn diagram of Fig.\,\ref{fig:venn}. Then
		\SELF{$\sacom{a_{\ii}^\dagger,a_{\jj}}$ has $\delta_{\cc=\emptyset} + \delta_{\dd=\emptyset}$}
		\SELF{$\ii = \jj$ gives \ref{pr:scom m n}\ref{it:scom v}: $\xx=\yy=\aa=\bb=\emptyset$. If $\dd=\emptyset$, $\ii=[\cc]$, $\kk=[\cc\ee]$, $\ii\subset\kk$. If $\cc=\emptyset$, $\ii=[\dd]$, $\kk=[\ee]$, $\ii\subset\kk'$. If $\cc,\dd\neq\emptyset$, $\ii=[\cc\dd]$, $\kk=[\cc\ee]$, $\ii\not\subset \kk,\kk'$. \\
		It is only way to have $\scom{a_\ii^\dagger,a_\jj} v_\kk =  \pm v_\kk$}
	\begin{equation}\label{eq:scom ai+ aj vk}
		\scom{a_\ii^\dagger,a_\jj} v_\kk = \delta_{\xx\yy=\emptyset} \cdot (\delta_{\dd=\emptyset} - \delta_{\cc=\emptyset}) \cdot (-1)^{|\dd|+\pairs{\aa\bb}{\dd\ee}} v_{\ord{\aa\cc\ee}}.
	\end{equation}
\end{theorem}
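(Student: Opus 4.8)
The plan is to expand the supercommutator from its definition and evaluate each of the two resulting compositions directly on $v_\kk$. I will use the action formulas $a_\rr^\dagger v_\ss = \delta_{\rr\cap\ss=\emptyset}\,v_{\rr\ss}$ and $a_\rr v_\ss = \delta_{\rr\subset\ss}\,\epsilon_{\rr(\ss\backslash\rr)}\,v_{\ss\backslash\rr}$ from \Cref{sc:Creation and Annihilation Operators}, the identity $\epsilon_{\rr\ss}=(-1)^{\pairs{\rr}{\ss}}$ for disjoint $\rr,\ss\in\II^n$ (\Cref{pr:epsilon}), and $\pairs{\rr}{\ss}+\pairs{\ss}{\rr}=|\rr|\,|\ss|$ for disjoint $\rr,\ss$ (immediate from the definition of $\pairs{\cdot}{\cdot}$). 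Since $a_\ii^\dagger=\ext_{v_\ii}$ has parity $|\ii|$ and $a_\jj=\iota_{v_\jj}$ has parity $|\jj|$, we have $\scom{a_\ii^\dagger,a_\jj}=a_\ii^\dagger a_\jj-(-1)^{|\ii||\jj|}\,a_\jj a_\ii^\dagger$. With the blocks of Fig.\,\ref{fig:venn} labelled so that $\ii=\aa\cup\xx\cup\cc\cup\dd$, $\jj=\yy\cup\bb\cup\cc\cup\dd$ and $\kk=\ee\cup\xx\cup\bb\cup\cc$, I would handle the two terms separately.

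For $a_\ii^\dagger a_\jj v_\kk$: the factor $a_\jj v_\kk$ is nonzero only if $\jj\subset\kk$, which forces $\yy=\dd=\emptyset$; then $\kk\backslash\jj$ has index set $\xx\cup\ee$, and applying $a_\ii^\dagger$ is nonzero only if $\ii\cap(\kk\backslash\jj)=\emptyset$, which (as $\xx\subset\ii$) forces $\xx=\emptyset$ as well. Under $\xx=\yy=\dd=\emptyset$ one has $\ii=\aa\cup\cc$, $\jj=\bb\cup\cc$, $\kk\backslash\jj=\ee$, so composing the two $\epsilon$'s gives $a_\ii^\dagger a_\jj v_\kk=(-1)^{\pairs{\jj}{\ee}+\pairs{\ii}{\ee}}\,v_{\ord{\aa\cc\ee}}=(-1)^{\pairs{\aa\bb}{\ee}}\,v_{\ord{\aa\cc\ee}}$, where the last equality uses $\pairs{\jj}{\ee}+\pairs{\ii}{\ee}=\pairs{\bb}{\ee}+\pairs{\aa}{\ee}+2\pairs{\cc}{\ee}\equiv\pairs{\aa\bb}{\ee}\pmod 2$.

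For $a_\jj a_\ii^\dagger v_\kk$: the factor $a_\ii^\dagger v_\kk$ is nonzero only if $\ii\cap\kk=\emptyset$, forcing $\xx=\cc=\emptyset$; then $a_\jj$ on $v_{\ii\kk}$ is nonzero only if $\jj\subset\ii\cup\kk$, forcing $\yy=\emptyset$. Under $\xx=\yy=\cc=\emptyset$ one has $\ii=\aa\cup\dd$, $\jj=\bb\cup\dd$, $\kk=\ee\cup\bb$, with $(\ii\cup\kk)\backslash\jj$ of index set $\aa\cup\ee$, so composing the $\epsilon$'s gives $a_\jj a_\ii^\dagger v_\kk=(-1)^{\pairs{\ii}{\kk}+\pairs{\jj}{\aa\ee}}\,v_{\ord{\aa\ee}}$. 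The one step with genuine bookkeeping — and the place I expect a sign slip to be most likely — is to simplify $|\ii||\jj|+\pairs{\ii}{\kk}+\pairs{\jj}{\aa\ee}$: expanding every bracket and $|\ii||\jj|$ over the blocks $\aa,\bb,\dd,\ee$ and cancelling symmetric pairs via $\pairs{\rr}{\ss}+\pairs{\ss}{\rr}=|\rr|\,|\ss|$ reduces it to $|\dd|+\pairs{\aa\bb}{\dd\ee}\pmod 2$. Hence $(-1)^{|\ii||\jj|}a_\jj a_\ii^\dagger v_\kk=(-1)^{|\dd|+\pairs{\aa\bb}{\dd\ee}}\,v_{\ord{\aa\cc\ee}}$ (here $\cc=\emptyset$, so $v_{\ord{\aa\ee}}=v_{\ord{\aa\cc\ee}}$).

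Finally I would assemble the two computations. Each term vanishes unless $\xx=\yy=\emptyset$, i.e.\ unless $\delta_{\xx\yy=\emptyset}=1$; granting that, the first term is present exactly when $\dd=\emptyset$ and the second exactly when $\cc=\emptyset$, and in either case its value is a sign times $(-1)^{|\dd|+\pairs{\aa\bb}{\dd\ee}}\,v_{\ord{\aa\cc\ee}}$ (when $\dd=\emptyset$ this factor is just $(-1)^{\pairs{\aa\bb}{\ee}}$, matching the first computation). Subtracting the second term from the first, the $\pm1$ coefficients combine into $\delta_{\dd=\emptyset}-\delta_{\cc=\emptyset}$, which vanishes when $\cc=\dd=\emptyset$ — consistent with $\ii\cap\jj=\emptyset\Rightarrow\scom{a_\ii^\dagger,a_\jj}=0$ — and this gives exactly \eqref{eq:scom ai+ aj vk}.
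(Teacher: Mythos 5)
Your proof is correct and is essentially the paper's own argument: expand $\scom{a_\ii^\dagger,a_\jj}$ into its two compositions, note both vanish unless $\xx=\yy=\emptyset$ (with $\dd=\emptyset$ needed for the first and $\cc=\emptyset$ for the second), and track signs through pair counts — the mod-2 reduction $|\ii||\jj|+\pairs{\ii}{\kk}+\pairs{\jj}{\aa\ee}\equiv|\dd|+\pairs{\aa\bb}{\dd\ee}$ that you flagged does check out. The only cosmetic differences are that your $\xx$ and $\yy$ are swapped relative to the paper's labelling (the paper takes $\xx=\jj\backslash(\ii\cup\kk)$, $\yy=(\ii\cap\kk)\backslash\jj$), which is immaterial since the formula involves them only through $\delta_{\xx\yy=\emptyset}$, and that the paper organizes the same sign bookkeeping by first passing to unordered concatenated indices and then collecting the four $\epsilon$ reordering factors via \Cref{pr:epsilon}.
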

\begin{proof}		
	$\scom{a_\ii^\dagger,a_\jj} v_\kk =  
	a_\ii^\dagger a_\jj v_\kk - (-1)^{|\ii| |\jj|} a_\jj a_\ii^\dagger v_\kk$, 
	and both terms vanish if $\xx = \jj \backslash (\ii\cup\kk)\neq\emptyset$ 
	or $\yy = \ii \cap (\kk \backslash \jj) \neq \emptyset$.
	If $\xx = \yy=\emptyset$ then
	$\ii = \ord{\aa\cc\dd}$, $\jj = \ord{\bb\cc\dd}$ and $\kk = \ord{\bb\cc\ee}$. 
	We have
	$a_{\aa\cc\dd}^\dagger\, a_{\bb\cc\dd}\, v_{\bb\cc\ee} 
	= \delta_{\dd=\emptyset}\, a_{\aa\cc}^\dagger\, v_\ee
	= \delta_{\dd=\emptyset}\, v_{\aa\cc\ee}$,
	while 
	$a_{\bb\cc\dd}\, a_{\aa\cc\dd}^\dagger\, v_{\bb\cc\ee}
	= \delta_{\cc=\emptyset}\, a_{\bb\dd}\, v_{\aa\dd\bb\ee} 
	= \delta_{\cc=\emptyset}\cdot (-1)^{|\aa||\bb\dd|+|\bb||\dd|} v_{\aa\ee}
	= \delta_{\cc=\emptyset}\cdot  (-1)^{|\ii| |\jj|+|\dd|} v_{\aa\cc\ee}$.
	Thus
		\OMIT{$a_{[\rr]} = \iota_{v_{[\rr]}} = 	\iota_{\epsilon_\rr v_{\rr}}  = \epsilon_\rr a_{\rr}$, likewise $a_{[\rr]}^\dagger = \epsilon_\rr a_{\rr}^\dagger$}
	$\scom{a_\ii^\dagger,a_\jj} v_\kk 
	= \epsilon_{\aa\cc\dd} \epsilon_{\bb\cc\dd}\, \epsilon_{\bb\cc\ee}  \scom{a_{\aa\cc\dd}^\dagger,a_{\bb\cc\dd}} v_{\bb\cc\ee}
	= \epsilon_{\aa\cc\dd}\, \epsilon_{\bb\cc\dd}\, \epsilon_{\bb\cc\ee}\, \epsilon_{\aa\cc\ee} \cdot (\delta_{\dd=\emptyset} - \delta_{\cc=\emptyset})  \cdot (-1)^{|\dd|}  v_{\ord{\aa\cc\ee}}$.
	The result follows since \Cref{pr:epsilon} gives
	$\epsilon_{\aa\cc\dd}\, \epsilon_{\bb\cc\dd}\, \epsilon_{\bb\cc\ee}\, \epsilon_{\aa\cc\ee}
	= \epsilon_{\aa\cc}^2\, \epsilon_{\cc\dd}^2\, \epsilon_{\bb\cc}^2\, \epsilon_{\cc\ee}^2\, \epsilon_{\aa\dd}\, \epsilon_{\bb\dd}\, \epsilon_{\bb\ee}\, \epsilon_{\aa\ee} 
	= (-1)^{\pairs{\aa}{\dd} + \pairs{\bb}{\dd} + \pairs{\bb}{\ee} + \pairs{\aa}{\ee}}
	= (-1)^{\pairs{\aa\bb}{\dd\ee}}$.
\end{proof}

\begin{figure}
	\centering
	\includegraphics[width=0.25\linewidth]{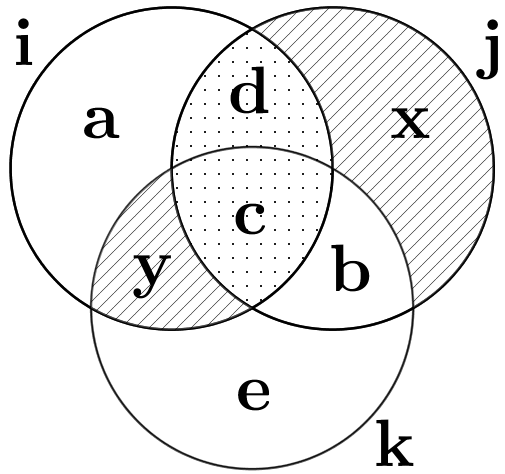}
	\caption{Partitioning the indices of $\ii,\jj,\kk\in\II^n$ as above,
		if $\xx$, $\yy$, and only one of $\cc$ or $\dd$, are empty then $\scom{a_ \ii^\dagger,a_\jj} v_\kk = \pm v_{\aa\cc\ee} = \pm v_{[\kk \cup ( \ii \backslash  \jj)] \backslash ( \jj \backslash  \ii)}$,
		otherwise $\scom{a_\ii^\dagger,a_\jj} v_\kk = 0$.}
	\label{fig:venn}
\end{figure}


\begin{corollary}\label{pr:scom ai aj+ ek = 0}
	$\scom{a_\ii^\dagger,a_\jj} v_\kk = 0$ in the following cases, and only in them:
	\begin{enumerate}[i)]
		\item $\jj \backslash (\ii \cup \kk) \neq \emptyset$. \label{it:x}
		\item $(\ii \cap \kk) \backslash  \jj \neq \emptyset$. \label{it:y}
		\item $(\ii \cap \jj) \backslash \kk \neq \emptyset$ and $\ii \cap \jj \cap \kk \neq \emptyset$. \label{it:c u}
		\item $\ii \cap \jj = \emptyset$. \label{it:c u empty}
	\end{enumerate}
\end{corollary}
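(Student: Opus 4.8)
The plan is to read everything off the explicit formula for $\scom{a_\ii^\dagger,a_\jj}\, v_\kk$ established in the theorem above, namely
\[
\scom{a_\ii^\dagger,a_\jj}\, v_\kk = \delta_{\xx\yy=\emptyset} \cdot \bigl(\delta_{\dd=\emptyset} - \delta_{\cc=\emptyset}\bigr) \cdot (-1)^{|\dd|+\pairs{\aa\bb}{\dd\ee}}\, v_{\ord{\aa\cc\ee}}.
\]
Since $v_{\ord{\aa\cc\ee}}$ is a basis blade, hence nonzero, and the sign $(-1)^{|\dd|+\pairs{\aa\bb}{\dd\ee}}$ is a unit, the left side vanishes exactly when the integer $\delta_{\xx\yy=\emptyset}\,(\delta_{\dd=\emptyset} - \delta_{\cc=\emptyset})$ is $0$. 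That integer is $0$ in precisely two (overlapping) situations: $\xx$ or $\yy$ is nonempty, or $\delta_{\dd=\emptyset} = \delta_{\cc=\emptyset}$, i.e.\ $\cc$ and $\dd$ are both empty or both nonempty. So the task reduces to rephrasing these two situations in terms of $\ii,\jj,\kk$.

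For the rephrasing I would use the identifications of the seven Venn regions already extracted in the theorem's proof: $\aa = \ii \backslash (\jj\cup\kk)$, $\bb = (\jj\cap\kk) \backslash \ii$, $\cc = \ii\cap\jj\cap\kk$, $\dd = (\ii\cap\jj) \backslash \kk$, $\ee = \kk \backslash (\ii\cup\jj)$, $\xx = \jj \backslash (\ii\cup\kk)$ and $\yy = (\ii\cap\kk) \backslash \jj$. Then $\xx\neq\emptyset$ is exactly \ref{it:x} and $\yy\neq\emptyset$ is exactly \ref{it:y}, so ``$\xx$ or $\yy$ nonempty'' is covered by items \ref{it:x} and \ref{it:y}. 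Since $\cc$ and $\dd$ are disjoint with $\cc\cup\dd = \ii\cap\jj$: ``$\cc,\dd$ both nonempty'' says $\ii\cap\jj\cap\kk\neq\emptyset$ together with $(\ii\cap\jj)\backslash\kk\neq\emptyset$, which is \ref{it:c u}; and ``$\cc,\dd$ both empty'' says $\ii\cap\jj=\emptyset$, which is \ref{it:c u empty}. This shows each of \ref{it:x}--\ref{it:c u empty} forces $\scom{a_\ii^\dagger,a_\jj}\, v_\kk = 0$.

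It remains to verify the ``only in them'': if none of \ref{it:x}--\ref{it:c u empty} holds, the coefficient should be a unit. Indeed, failure of \ref{it:x} and \ref{it:y} gives $\xx = \yy = \emptyset$, so $\delta_{\xx\yy=\emptyset}=1$; failure of \ref{it:c u empty} gives $\ii\cap\jj\neq\emptyset$, hence $\cc\cup\dd\neq\emptyset$; and failure of \ref{it:c u} then rules out $\cc$ and $\dd$ being simultaneously nonempty, so exactly one of them is nonempty and $\delta_{\dd=\emptyset}-\delta_{\cc=\emptyset}=\pm1$. Hence the coefficient is $\pm1$ and $\scom{a_\ii^\dagger,a_\jj}\, v_\kk = \pm v_{\ord{\aa\cc\ee}}\neq0$, completing the equivalence.

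I do not expect any real obstacle here: given the theorem this is purely Boolean bookkeeping. The single point I would be careful to state is that \ref{it:c u} and \ref{it:c u empty} are not mutually exclusive with \ref{it:x} and \ref{it:y} and do not need $\xx\yy=\emptyset$ to imply vanishing --- they already kill the factor $\delta_{\dd=\emptyset}-\delta_{\cc=\emptyset}$ on their own --- so the four items should be read as a union describing the zero locus, not as a case partition. I would also add one line confirming \ref{it:c u empty} is not implied by the other three (e.g.\ $\ii=\{1\}$, $\jj=\{2\}$, $\kk=\{2,3\}$ meets only \ref{it:c u empty}), so that no item is redundant.
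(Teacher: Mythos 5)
Your proposal is correct and follows essentially the same route as the paper: the paper's proof is just the terse identification $\xx\neq\emptyset$, $\yy\neq\emptyset$, ``$\cc,\dd\neq\emptyset$'', ``$\cc=\dd=\emptyset$'' for items i)--iv), read off from the formula \eqref{eq:scom ai+ aj vk}, exactly as you do. Your write-up merely makes explicit the ``only in them'' direction and the non-exclusivity of the cases, which the paper leaves implicit.
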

\begin{proof}
	\emph{(\ref{it:x})} $\xx\neq\emptyset$.
	\emph{(\ref{it:y})} $\yy\neq\emptyset$.
	\emph{(\ref{it:c u})} $\dd\neq\emptyset$ and $\cc\neq\emptyset$. \emph{(\ref{it:c u empty})} $\dd=\cc=\emptyset$.
\end{proof}

\begin{corollary}
	$\scom{a_\ii^\dagger,a_\jj} = 0 \Leftrightarrow \ii\cap\jj=\emptyset$.
\end{corollary}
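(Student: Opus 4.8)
The plan is to deduce this directly from Corollary~\ref{pr:scom ai aj+ ek = 0}, which lists exactly the situations in which $\scom{a_\ii^\dagger,a_\jj} v_\kk = 0$. Since the $v_\kk$ with $\kk\in\II^n$ form a basis of $\bigwedge X$, the operator $\scom{a_\ii^\dagger,a_\jj}$ is zero if and only if it annihilates every $v_\kk$, i.e.\ if and only if for each $\kk\in\II^n$ at least one of the four conditions (i)--(iv) of that corollary holds. The key observation is that conditions (i), (ii), (iii) all involve $\kk$, whereas condition (iv), namely $\ii\cap\jj=\emptyset$, does not.

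For the implication $(\Leftarrow)$: if $\ii\cap\jj=\emptyset$, then condition (iv) holds for every $\kk$, so $\scom{a_\ii^\dagger,a_\jj} v_\kk = 0$ for all $\kk$ and the operator vanishes. (Equivalently, the sum in \eqref{eq:scom ai+ aj} over $\emptyset\neq\ll\subset\ii\cap\jj$ is empty.)

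For $(\Rightarrow)$, I argue by contraposition: assuming $\ii\cap\jj\neq\emptyset$, I exhibit a single $\kk$ for which none of (i)--(iv) holds. The choice $\kk=\jj$ works: then $\jj\backslash(\ii\cup\kk)=\emptyset$, $(\ii\cap\kk)\backslash\jj=(\ii\cap\jj)\backslash\jj=\emptyset$, and $(\ii\cap\jj)\backslash\kk=\emptyset$, so (i), (ii) and (iii) all fail, while (iv) fails by hypothesis. Hence $\scom{a_\ii^\dagger,a_\jj} v_\jj\neq 0$ and so $\scom{a_\ii^\dagger,a_\jj}\neq 0$. Concretely, in the notation of Fig.~\ref{fig:venn} this choice gives $\xx=\yy=\dd=\emptyset$ and $\cc=\ii\cap\jj\neq\emptyset$, whence \eqref{eq:scom ai+ aj vk} yields $\scom{a_\ii^\dagger,a_\jj} v_\jj = \pm\, v_{\ord{\ii\cup\jj}}\neq 0$.

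There is no real obstacle here; the statement is a clean corollary of the case analysis already carried out. The only point that needs a word of care is precisely the one above — that conditions (i)--(iii) depend on $\kk$ while (iv) does not — so that the "bad" case $\ii\cap\jj\neq\emptyset$ cannot be salvaged by any choice of $\kk$, and indeed the naive test vector $v_\jj$ already detects it.
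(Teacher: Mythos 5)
Your argument is correct and follows essentially the paper's own route: the paper also deduces the statement from Corollary~\ref{pr:scom ai aj+ ek = 0}, exhibiting a witness $\kk$ for which none of (i)--(iv) holds when $\ii\cap\jj\neq\emptyset$ (the paper takes $\kk=\jj\backslash\ii$, you take $\kk=\jj$, and both choices work). One small slip in your closing remark: with $\kk=\jj$ one has $\aa=\ii\backslash\jj$, $\cc=\ii\cap\jj$, $\ee=\emptyset$, so \eqref{eq:scom ai+ aj vk} gives $\scom{a_\ii^\dagger,a_\jj}v_\jj = v_{\ord{\aa\cc}} = v_\ii$, not $\pm\,v_{\ord{\ii\cup\jj}}$ --- this does not affect the nonvanishing conclusion, which your main argument already establishes.
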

\begin{proof}
	If $\ii\cap\jj \neq \emptyset$, none of the above conditions holds for $\kk =  \jj \backslash  \ii$.
		\OMIT{\ref{pr:scom ai aj+ ek = 0} \\ $ \jj \backslash  ( \ii \cup ( \jj \backslash   \ii)) =  \jj \backslash  ( \jj\cup \ii) = \emptyset$, \\ $( \ii \cap ( \jj \backslash  \ii)) -  \jj = \emptyset$, \\ $\ii \cap \jj \cap ( \jj \backslash  \ii) = \emptyset$}
\end{proof}

\begin{example}\label{ex:scom i j}
	Compare the following with \Cref{ex:scom}.
	\begin{itemize}
		\item $\scom{a_{2347}^\dagger,a_{136}} v_{1356} = (1-0) \cdot (-1)^{0+\pairs{12467}{5}} v_{23457} = v_{23457}$, 
		since
		$\xx=\yy=\dd=\emptyset$, $\aa=247$, $\bb=16$, $\cc=3$ and $\ee=5$.
		\item $\scom{a_{1236}^\dagger,a_{13467}} v_{457} = (0-1) \cdot (-1)^{3+\pairs{247}{1356}} v_{25} = -v_{25}$, 
		since
		$\xx=\yy=\cc=\emptyset$, $\aa=2$, $\bb=47$, $\dd=136$ and $\ee=5$.
		\item $\scom{a_{12}^\dagger,a_{234}} v_{45} = \scom{a_{123}^\dagger,a_{24}} v_{2345} = \scom{a_{123}^\dagger,a_{234}} v_{345} = \scom{a_{14}^\dagger,a_{23}} = 0$, as each corresponds to a case of \Cref{pr:scom ai aj+ ek = 0}.
	\end{itemize}
\end{example}

To see how \eqref{eq:scom ai+ aj} relates to \eqref{eq:scom ai+ aj vk},
note that, for $\emptyset\neq \ll \subset \ii \cap \jj = \ord{\cc\dd}$,
we have $a_{\ii \backslash \ll}^\dagger a_{\jj \backslash \ll} v_\kk = a_{\ord{\yy\aa\cc\dd \backslash \ll}}^\dagger \, a_{\ord{\xx\bb\cc\dd \backslash \ll}} \, v_{\ord{\yy\bb\cc\ee}} = 0$ 
unless $\xx = \yy=\emptyset$ and $\dd\subset\ll$,
in which case
$\ll=\ord{\dd\mm}$ for $\mm\subset \cc$,
and 
$a_{\ii \backslash \ll}^\dagger a_{\jj \backslash \ll} v_\kk 
= a_{\ord{\aa\cc \backslash \mm}}^\dagger\, a_{\ord{\bb\cc \backslash \mm}}\, v_{\ord{\bb\cc\ee}} 
= \epsilon_{\aa\cc \backslash \mm} \, \epsilon_{\bb\cc \backslash \mm} \, \epsilon_{\bb\cc\ee} \cdot a_{\aa\cc \backslash \mm}^\dagger\, a_{\bb\cc \backslash \mm}\, v_{(\bb\cc \backslash \mm)\mm\ee} 
= \epsilon_{\aa\cc \backslash \mm} \, \epsilon_{\bb\cc \backslash \mm} \, \epsilon_{\bb\cc\ee} \, \epsilon_{\aa\cc\ee} \cdot v_{\ord{\aa\cc\ee}} 
= (-1)^{\pairs{\aa\bb}{\cc \backslash \mm} + \pairs{\aa\bb}{\cc\ee} } \, v_{\ord{\aa\cc\ee}}
= (-1)^{\pairs{\aa\bb}{\mm\ee}} \, v_{\ord{\aa\cc\ee}}$.
%
Then \eqref{eq:scom ai+ aj} gives
\begin{align*}
	\scom{a_\ii^\dagger,a_\jj} v_\kk 
	&= \sum_{\mm \subset \cc, \dd\mm\neq\emptyset} (-1)^{1+|\dd|+|\mm|+\pairs{\aa\bb}{\dd\mm} + \pairs{\aa\bb}{\mm\ee}} \, v_{\ord{\aa\cc\ee}} \\
	&= (-1)^{|\dd|+|\aa\bb>\dd\ee|}\, v_{\ord{\aa\cc\ee}} \cdot \sum_{\mm \subset \cc, \dd\mm\neq\emptyset} (-1)^{1+|\mm|}.
\end{align*}
If $\cc=\dd=\emptyset$, the sum has no terms.
If $\cc,\dd\neq\emptyset$, it cancels out, as the number of subsets $\mm\subset\cc$ with $|\mm|$ even or odd is the same.
If $\cc\neq\emptyset$ and $\dd=\emptyset$ then $\mm=\emptyset$ is excluded from the sum, one odd $|\mm|$ term does not cancel out, and
$\scom{a_\ii^\dagger,a_\jj} v_\kk = (-1)^{|\dd|+|\aa\bb>\dd\ee|}\, v_{\ord{\aa\cc\ee}}$.
If $\cc=\emptyset$ and $\dd\neq\emptyset$, the only term is $\mm=\emptyset$, 
so 
$\scom{a_\ii^\dagger,a_\jj} v_\kk = -(-1)^{|\dd|+\pairs{\aa\bb}{\dd\ee}} v_{\ord{\aa\cc\ee}}$.

\section*{Acknowledgments}

The author would like to thank Dr. Leo Dorst of the University of Amsterdam for his comments and suggestions.

%


\begin{thebibliography}{10}
	
	\bibitem{BayroCorrochano2018}
	E.~Bayro-Corrochano, \emph{Geometric algebra applications vol. {I}: Computer
		vision, graphics and neurocomputing}, Springer, 2018.
	
	\bibitem{Dorst2007}
	L.~Dorst, D.~Fontijne, and S.~Mann, \emph{Geometric algebra for computer
		science: an object-oriented approach to geometry}, Elsevier, 2007.
	
	\bibitem{Eastwood2000}
	M.~G. Eastwood and P.~W. Michor, \emph{Some remarks on the {P}l\"ucker
		relations}, Rend. Circ. Mat. Palermo, Suppl. \textbf{63} (2000), 85--88.
	
	\bibitem{Gallier2020}
	J.~Gallier and J.~Quaintance, \emph{Differential geometry and {L}ie groups : a
		second course}, Springer Nature, 2020.
	
	\bibitem{Jacobson1996}
	N.~Jacobson, \emph{Finite-dimensional division algebras over fields}, Springer,
	1996.
	
	\bibitem{Kozlov2000I}
	S.~E. Kozlov, \emph{Geometry of real {G}rassmann manifolds. {P}arts {I}, {II}},
	J. Math. Sci. \textbf{100} (2000), no.~3, 2239--2253.
	
	\bibitem{Mandolesi_Contractions}
	A.~L.~G. Mandolesi, \emph{Multivector contractions revisited, part {I}},
	arXiv:2205.07608 [math.GM] (2022).
	
	\bibitem{Oziewicz1986}
	Z.~Oziewicz, \emph{From {G}rassmann to {C}lifford}, Clifford Algebras and Their
	Applications in Mathematical Physics (J.~S.~R. Chisholm and A.~K. Common,
	eds.), Springer Netherlands, 1986, pp.~245--255.
	
	\bibitem{Pavan2017}
	V.~Pavan, \emph{Exterior algebras: Elementary tribute to {G}rassmann's ideas},
	Elsevier, 2017.
	
	\bibitem{Rosen2019}
	A.~Ros{\'e}n, \emph{Geometric multivector analysis}, Springer-Verlag, 2019.
	
	\bibitem{Shaw1983}
	R.~Shaw, \emph{Linear algebra and group representations: Multilinear algebra
		and group representations}, vol.~II, Academic Press, 1983.
	
	\bibitem{Varadarajan2004}
	V.~S. Varadarajan, \emph{Supersymmetry for mathematicians: An introduction},
	vol.~11, American Mathematical Soc., 2004.
	
\end{thebibliography}

\providecommand{\bysame}{\leavevmode\hbox to3em{\hrulefill}\thinspace}
\providecommand{\MR}{\relax\ifhmode\unskip\space\fi MR }
\providecommand{\MRhref}[2]{%
	\href{http://www.ams.org/mathscinet-getitem?mr=#1}{#2}
}
\providecommand{\href}[2]{#2}

\end{document}